\declaretheorem[name=Theorem,numberwithin=section]{theorem}
\declaretheorem[name=Proposition,sibling=theorem]{proposition}
\declaretheorem[name=Lemma,sibling=theorem]{lemma}
\declaretheorem[name=Corollary,sibling=theorem]{corollary}
\declaretheorem[name=Example,numberwithin=section]{example}
\declaretheorem[name=Remark,sibling=example]{remark}
\newcommand{\lra}{\leftrightarrows}
\newcommand{\ra}{\rightarrow}
\newcommand{\da}{{\downarrow}}
\newcommand{\up}{{\uparrow}}
\newcommand{\mb}[1]{\mbox{#1}}
\newcommand{\ca}[1]{\mathcal{#1}}
\newcommand{\bd}[1]{\mathbf{#1}}
\newcommand{\mf}{\mathsf}
\newcommand{\mi}{\mathit}
\newcommand{\se}{\subseteq}
\newcommand{\sm}{\setminus}
\newcommand{\we}{\wedge}
\newcommand{\ve}{\vee}
\newcommand{\bwe}{\bigwedge}
\newcommand{\bve}{\bigvee}
\newcommand{\bca}{\bigcap}
\newcommand{\bcu}{\bigcup}
\newcommand{\opp}[1]{#1^{op}}
\newcommand{\Sc}{\mf{S}_{\cl}}
\newcommand{\So}{\mf{S}_{\op}}
\newcommand{\Ss}{\mf{S}}
\newcommand{\Se}{\mf{S}_{\ca{E}}}
\newcommand{\op}{\mathfrak{o}}
\newcommand{\cl}{\mathfrak{c}}
\newcommand{\bl}{\mathfrak{b}}
\newcommand{\Fi}{\mathsf{Filt}}
\newcommand{\fso}{\Fi_{\mathcal{SO}}}
\newcommand{\fcp}{\Fi_{\mathcal{CP}}}
\newcommand{\fr}{\Fi_{\mathcal{R}}}
\newcommand{\fe}{\Fi_{\mathcal{E}}}
\newcommand{\fse}{\Fi_{\mathcal{SE}}}
\newcommand{\Om}{\Omega}
\newcommand{\pt}{\mathsf{pt}}
\newcommand{\rpt}{\mathsf{pt}_R}
\newcommand{\maxpt}{\mathsf{maxpt}}
\newcommand{\sll}{\mf{S}(L)}
\newcommand{\scl}{\mf{S}_{\cl}(L)}
\title{Raney extensions: a pointfree theory of $T_0$ spaces based on canonical extension}
\author{Anna Laura Suarez \thanks{Department of Mathematics Tullio Levi-Civita, University of Padova, 35121, Italy. email: \href{mailto:annalaurasuarez993@gmail.com}{annalaurasuarez993@gmail.com}}}
\date{}
\begin{document}

\maketitle

\vspace{-18pt}

\begin{abstract}
\begin{spacing}{0.95}
We introduce a pointfree version of Raney duality. Our objects are \emph{Raney extensions} of frames, pairs $(L,C)$ where $C$ is a coframe and $L\se C$ is a subframe that meet-generates it and whose embedding preserves strongly exact meets. We show that there is a dual adjunction between $\bd{Raney}$ and $\bd{Top}$, with all $T_0$ spaces as fixpoints, assigning to a space $X$ the pair $(\Om(X),\ca{U}(X))$, with $\ca{U}(X)$ are the intersections of open sets. We show that for every Raney extension $(L,C)$ there are subcolocale inclusions $\opp{\Sc(L)}\se C\se \So(L)$ where $\mf{S}_{\op}(L)$ is the coframe of fitted sublocales and $\Sc(L)$ is the frame of joins of closed sublocales. We thus exhibit a symmetry between these two well-studied structures in pointfree topology. The spectra of these are, respectively, the classical spectrum $\pt(L)$ of the underlying frame and its $T_D$ spectrum $\pt_D(L)$. This confirms the view advanced in \cite{banaschewskitd} that sobriety and the $T_D$ property are mirror images of each other, and suggests that the symmetry above is a pointfree view of it. All Raney extensions satisfy some variation of the properties \emph{density} and \emph{compactness} from the theory of canonical extensions. We characterize sobriety, the $T_1$, and the $T_D$ axioms in terms of density and compactness of $(\Om(X),\ca{U}(X))$. We characterize frame morphisms $f:L\to M$ that extend to Raney morphisms $\overline{f}:(L,C)\to (M,D)$. We thus obtain a characterization of morphisms of frames $f:L\to M$ which extend to frame morphisms $\overline{f}:\Sc(L)\to \Sc(M)$, answering a question posed in \cite{ball14b}. We show the existence of the free Raney extension over a frame. We show that all Raney extensions admit a sober coreflection. Restricting morphisms of $\bd{Raney}$ to \emph{exact} morphisms gives both cofree objects and $T_D$ reflections. Finally, we show that the canonical extension of a locally compact frame (introduced in \cite{jakl20}) is the free algebraic Raney extension. We also give a new view of $T_D$ duality: in contrast with the frame case, $T_D$ spaces are are full subcategory of $\bd{Raney}$, with no need to restrict morphisms.
\end{spacing}
\end{abstract}

\tableofcontents

\section{Introduction}

In this work, we study an algebraic, pointfree version of the embedding of a frame of open sets into a lattice of saturated sets. Our constructions are inspired by \emph{Raney duality}, as illustrated in \cite{bezhanishvili20}. In Raney duality, we map a space $X$ to the pair $(\Om(X), \ca{U}(X))$ where $\ca{U}(X)$ is the lattice of saturated sets\footnote{\emph{Saturated sets} are intersections of open sets.}. In this duality, on the algebraic side all objects of the category have, so to speak, enough points. Every Raney algebra is of the form $(\Om(X),\ca{U}(X))$ for some space $X$. In this work, we extend the category of Raney algebras to include pointfree objects. We do so by taking as objects generalizations of the \emph{canonical extension} construction for distributive lattices. Canonical extensions for Boolean algebras were introduced by Jónsson and Tarski (see \cite{jonsson51i} and \cite{jonsson51ii}) in dealing with Boolean algebras with operators. They have later proven to be useful in theoretical Computer Science and Logic, and have been generalized to various settings. Canonical extension may be seen as an algebraic and pointfree version of the lattice of compact opens into the lattice of saturated sets of a Stone space. Canonical extensions have also been introduced for distributive lattices. On this topic, we refer the reader to \cite{gehrke94}, \cite{gehrke01}, and \cite{gehrke08}. For distributive lattices this represents the embedding of the lattice of compact open sets of a coherent space into the lattice of saturated sets. For a distributive lattice, its canonical extension is unique. In \cite{jakl20}, canonical extension is introduced for locally compact frames. For general frames, we claim, there is no unique way of extending a frame $L$ to a pointfree lattice of saturated sets. For a concrete space $X$, the structure of $\ca{U}(X)$ will depend, for example, on how many of the points of $\Om(X)$ are realized as concrete points of $X$.

\medskip

In Section \ref{S1} we introduce the main structures. We will consider as pointfree spaces \emph{Raney extensions}, pairs $(L,C)$ where $C$ is a coframe and $L\se C$ is a frame which meet-generates $C$ and such that the embedding preserves the frame operations together with strongly exact meets\footnote{\emph{Strongly exact meets} may be seen as the pointfree version of those intersections of open sets which are open, as they are characterized as those that give collections of open sublocales with open intersections. Because a meet of a collection $\{U_i:i\in I\}$ of opens in general is calculated as the interior of $\bca_i U_i$, these are exactly the meets that are preserved by the embedding $\Om(X)\se \ca{U}(X)$. }. Every Raney extension will satisfy a generalization of the property \emph{density} and \emph{compactness} from the theory of canonical extensions. We build on the work in \cite{jakl24} connecting canonical extensions and pointfree topology. In pointfree topology, a structure of primary importance is the structure $\sll$ of sublocales (pointfree subspaces) of a frame $L$. The structures $\mf{S}_{\cl}(L)$ of joins of closed sublocales and $\mf{S}_{\op}(L)$ of fitted sublocales have been widely studied, and these are compared in \cite{moshier20} and in \cite{moshier22}. We will show that these are the largest and smallest Raney extensions on a frame $L$, respectively.

\medskip

In Section \ref{S2} we will show that there is an adjunction $\Om_R:\bd{Top}\lra \bd{Raney}^{op}:\pt_R$ extending Raney duality. As for canonical extensions, the points of a Raney extension $(L,C)$ are completely join-prime elements of $C$. We compute the spectrum of the largest and smallest Raney extensions $(L,\So(L))$ and $(L,\opp{\Sc(L)})$, and discover that these are, respectively, the spectrum $\pt(L)$ of the underlying frame and its $T_D$ spectrum $\pt_D(L)$. The $T_D$ spectrum was introduced in \cite{banaschewskitd}, where a duality is shown between $T_D$ spaces and the category $\bd{Frm}_D$, obtained by restricting morphisms in $\bd{Frm}$. In our case, $T_D$ spaces are faithfully represented in $\bd{Raney}$ as the subcategory of the (spatial) Raney extensions of the form $(L,\opp{\Sc(L)})$, with no need to restrict morphisms.

\medskip 

In Section \ref{S3} we exploit the duality to characterize topological properties such as sobriety, the $T_D$ and the $T_1$ axioms in terms of density and compactness of their Raney extensions. In pointfree topology, the $T_1$ axiom has several different translations; weakest is \emph{subfitness}, see for example Chapter V of \cite{picadopultr2012frames} or Chapter II of \cite{picado21}. A Raney extension $(L,C)$ is defined to be $T_1$ if $C$ is Boolean, just like a space $X$ is $T_1$ if and only if $\ca{U}(X)$ is the powerset. We connect the two views by showing that a frame is subfit if and only if it admits a $T_1$ Raney extension. We thus exhibit subfitness as the weakest possible frame version of the $T_1$ axiom. The notions of sober and $T_D$ objects have no counterpart in the frame setting. Another axiom which has been studied quite extensively is \emph{scatteredness}. Scatteredness for a frame $L$ is defined in \cite{plewe00} and \cite{plewe02} as the property that $\mf{S}(L)$ is Boolean. In \cite{ball16}, the authors characterize the frames for which $\mf{S}_{\cl}(L)=\mf{S}(L)$ as those subfit frames such that they are scattered. Here, we show that subfit frames which are scattered coincide with those subfit frames with unique Raney extensions.

\medskip

In Section \ref{S4}, we will characterize frame morphisms $f:L\to M$ that extend to Raney morphisms $\overline{f}:(L,C)\to (M,D)$. In this case, the requirement is weaker than in the theory of canonical extension, where extensions of morphisms are required to preserves all meets and all joins.  We uswe this to answer the question posed in \cite{ball19} of what morphisms of frames $f:L\to M$ lift to frame morphisms $\Sc(f):\Sc(L)\to \Sc(M)$. We call them \emph{exact} frame morphisms. We also use the result to show that every frame admits a free Raney extensions on it. Restricting morphisms on $\bd{Raney}$ suitably also gives $T_D$ reflections. We also show that every Raney extension has a sober coreflection. Both these results have no counterpart in $\bd{Frm}$. 

\medskip

In Section \ref{S5}, finally, we look at two special topics. We will look at canonical extensions of frames, as defined in \cite{jakl20}, and show that for a pre-spatial frame its canonical extension is the free Raney extension over it which is \emph{algebraic} (join-generated by its compact elements). We will also show that a $T_0$ space is sober, resp. strictly sober, if and only if the Raney extension $(\Om(X),\ca{U}(X))$ is compact with respect to the collection of completely prime, resp. Scott-open, filters. We also look at the category of frames with exact morphisms $\bd{Frm}_{\ca{E}}$. We show that this is isomorphic to the subcategory $\bd{Raney_D}$ of $\bd{Raney}$, defined as $\{(L,\opp{\Sc(L)}):L\in \bd{Frm}\}$. This gives a new view of $T_D$ duality, pictured below. Note that $\Sc:\bd{Frm}_{\ca{E}}\to \bd{Raney_D}$ is an equivalence.

\begin{center}
    \begin{tikzcd}[row sep=large,column sep=large]
        \bd{Frm}_{\ca{E}}
        \ar[r,"\pt_D"]
        \ar[d,"\Sc"]
        & \bd{Top}_D
        \\
       \bd{Raney_D},
    \ar[ur,"\rpt"]
    \end{tikzcd}
\end{center}

\section{Background}

\subsection{Sublocales}

We will work in the category $\bd{Frm}$. Sometimes in pointfree topology one works in the category $\bd{Loc}$ of locales. The category $\bd{Loc}$ is defined as the category whose objects are frames, which are referred to as \emph{locales} when adopting this approach. The morphisms of $\bd{Loc}$ are the right adjoints to frame maps. Hence, a frame map $f:L\ra M$ will correspond to the morphism $f_*:M\ra L$ in $\bd{Loc}$. The category $\bd{Loc}$ is dually isomorphic to $\bd{Frm}$. In the category of topological spaces, subspace inclusions are, up to isomorphism, the regular monomorphisms. A \emph{sublocale} is a regular monomorphism in $\bd{Loc}$. Even when working with frames, the term \emph{sublocale} is still used. We follow Picado and Pultr in \cite{picadopultr2012frames} in defining a \emph{sublocale} of a frame $L$ to be a subset $S\se L$ such that: 
\begin{enumerate}
    \item It is closed under all meets;
    \item Whenever $s\in S$ and $x\in L$, $x\ra s\in S$.
\end{enumerate}

These requirements are equivalent to stating that $S\se L$ is a regular monomorphism in $\bd{Loc}$. Observe that the collection of sublocales of a frame is closed under all intersections. The following is a useful fact.
\begin{lemma}\label{slofsl}
If $S$ and $T$ are sublocales of $L$ such that $S\se T$, then $S$ is a sublocale of $T$.
\end{lemma}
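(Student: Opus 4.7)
The plan is to verify the two defining conditions of a sublocale for $S$ inside $T$, after first noting that the relevant operations in $T$ (meet and Heyting implication) coincide with those in $L$ when restricted to elements of $T$. This reduces the claim to essentially unpacking the assumption that $S$ is a sublocale of $L$.

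First I would recall that if $T$ is a sublocale of $L$, then for any family $\{t_i : i \in I\} \subseteq T$, the meet $\bwe_i t_i$ computed in $L$ lies in $T$ by condition (1) for $T$; hence it is also the meet in $T$. Similarly, for $t \in T$ and any $x \in L$ (and in particular $x \in T$), the Heyting implication $x \to t$ computed in $L$ belongs to $T$ by condition (2) for $T$, so it is also the Heyting implication in $T$. Thus the frame structure that $T$ carries as a sublocale of $L$ agrees with the frame structure inherited from the ambient $L$ on the nose, as far as meets and implications are concerned.

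Next I would verify the two sublocale axioms for $S \subseteq T$. For axiom (1): given $\{s_i : i \in I\} \subseteq S \subseteq T$, the meet $\bwe_i s_i$ in $T$ equals the meet in $L$ by the observation above, and this lies in $S$ because $S$ is a sublocale of $L$. For axiom (2): given $s \in S$ and $x \in T$, the implication $x \to s$ computed in $T$ equals $x \to s$ computed in $L$, and the latter lies in $S$ because $x \in T \subseteq L$ and $S$ is a sublocale of $L$. Hence $S$ is a sublocale of $T$.

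There is essentially no obstacle: the only thing to be careful about is making sure that one is using the same meet and implication on both sides of the inclusion, which is guaranteed by the closure properties of $T$ in $L$. The argument is therefore a short two-line check once the compatibility of operations is in place.
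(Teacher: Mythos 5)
Your argument is correct and is the standard one: since $T$ is closed in $L$ under arbitrary meets and under $x\to(-)$ for all $x\in L$, the meet and Heyting implication of $T$ agree with those of $L$, after which both sublocale axioms for $S\se T$ follow directly from $S$ being a sublocale of $L$. The paper states this lemma as a background fact without proof, so there is nothing to compare beyond noting that your verification (including the care taken that the operations of $T$ are the restricted ones of $L$) is exactly what is needed.
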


The family $\mathsf{S}(L)$ of all sublocales of $L$ ordered by inclusion is a coframe. Meets in $\mf{S}(L)$ are set-theoretical intersections.  For a subset $X\se L$, we denote as $\ca{S}(X)$ the smallest sublocale containing $X$. In the following, $\ca{M}(-)$ denotes closure under meets.
\begin{lemma}\label{l:small-sl}
    For a frame $L$ and for $X\se L$, $\ca{S}(X)=\ca{M}(\{a\ra x:a\in L,x\in X\})$.
\end{lemma}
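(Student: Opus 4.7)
The plan is to verify both inclusions directly. Write $Y=\ca{M}(\{a\ra x:a\in L, x\in X\})$. First I would show that $Y$ is a sublocale of $L$ that contains $X$, and then that $Y$ is contained in every sublocale containing $X$.

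For the first direction, $Y$ is closed under arbitrary meets by construction, so the only thing to check is stability under Heyting implication, i.e.\ that $y\ra s\in Y$ for every $y\in L$ and every $s\in Y$. I would reduce this to the case of a single generator using the fact that $y\ra (-)$ is a right adjoint to $y\we (-)$ and thus preserves meets: if $s=\bwe_i (a_i\ra x_i)$ with $x_i\in X$, then $y\ra s=\bwe_i\bigl(y\ra (a_i\ra x_i)\bigr)$. The single-generator case follows from the standard Heyting identity $y\ra (a\ra x)=(y\we a)\ra x$, which exhibits $y\ra (a\ra x)$ as an element of the generating set with $b=y\we a\in L$. Containment $X\se Y$ is immediate by taking $a=1$, since $1\ra x=x$.

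For the reverse direction, let $S$ be any sublocale containing $X$. By clause (2) of the definition of sublocale, $S$ contains $a\ra x$ for every $a\in L$ and every $x\in X$, and by clause (1) it is closed under meets, so $Y\se S$. Taking $S=\ca{S}(X)$ yields $Y\se \ca{S}(X)$, and combined with the first direction we get $\ca{S}(X)=Y$.

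There is no serious obstacle here; the only non-formal input is the Heyting arithmetic $y\ra (a\ra x)=(y\we a)\ra x$, which is just currying for the adjunction $({-}\we a)\dashv (a\ra {-})$, together with the fact that Heyting implication distributes over meets in its second argument.
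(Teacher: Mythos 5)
Your proof is correct: the two inclusions are established exactly as needed, with the only real content being the currying identity $y\ra(a\ra x)=(y\we a)\ra x$ and the fact that $y\ra(-)$ preserves meets, which together show $\ca{M}(\{a\ra x:a\in L,x\in X\})$ is itself a sublocale containing $X$. The paper states this lemma as background without proof, and your argument is the standard one that would be given.
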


The top element is $L$ and the bottom element is $\{1\}$. Because $\sll$ is a coframe, there is a \emph{difference} operator on it, dual to Heyting implication, defined for sublocales $S$ and $T$ as $S{\sm}T=\bca \{U\in \sll:S\se T\cup U\}$. For a sublocale $S$, we denote the element $L{\sm}S$ as $S^*$, and we call it the \emph{supplement} of $S$. For each $a\in L$, there are an \emph{open sublocale} and a \emph{closed sublocale} associated with it. These are, respectively, $\mathfrak{o}(a)=\{a\ra b: b\in L\}$ and $\mathfrak{c}(a)=\uparrow a$. We will need a few facts about open and closed sublocales, which we gather here.
\begin{proposition}\label{manyfacts}
For every frame $L$ and $a,b,a_i\in L$:
\begin{enumerate}
    \item $\op(1)=L$ and $\op(0)=\{1\}$;
    \item $\cl(1)=\{1\}$ and $\cl(0)=L$;
    \item $\bve_i \op(a_i)=\op(\bve_i a_i)$ and $\op(a)\cap \op(b)=\op(a\we b)$;
    \item $\bca_i \cl(a_i)=\cl(\bwe_i a_i)$ and $\cl(a)\ve \cl(b)=\cl(a\we b)$;
    \item The elements $\op(a)$ and $\cl(a)$ are complements of each other in $\mf{S}(L)$;
    \item $\cl(a)\se \op(b)$ if and only if $a\ve b=1$, and $\op(a)\se \cl(b)$ if and only if $a\we b=0$.
\end{enumerate}
\end{proposition}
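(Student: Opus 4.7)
The plan is to verify each of the six items by unfolding the definitions $\op(a)=\{a\ra b:b\in L\}$ and $\cl(a)={\uparrow}a$, and then leveraging items already proved to obtain the later ones, so that the six items form a short dependency chain rather than six independent computations.

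Items (1) and (2) follow immediately from the definitions: for (1), $1\ra b=b$ shows $\op(1)=L$ and $0\ra b=1$ shows $\op(0)=\{1\}$; for (2), ${\uparrow}1=\{1\}$ and ${\uparrow}0=L$. For item (3), I would first verify that $\op(a)$ is always a sublocale (it is closed under meets and under $x\ra(-)$ using the Heyting identity $(a\ra b)\we(a\ra c)=a\ra(b\we c)$ and $x\ra(a\ra b)=(x\we a)\ra b = a\ra(x\ra b)$), and that the map $a\mapsto\op(a)$ preserves meets, with the binary case $\op(a)\cap\op(b)=\op(a\we b)$ coming from the identity $(a\we b)\ra c=a\ra(b\ra c)$. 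For arbitrary joins, I would note that $\op(\bve a_i)$ contains each $\op(a_i)$, and any sublocale $S$ containing all $\op(a_i)$ contains $a_i\ra c$ for every $c$ and every $i$; closure under meets then gives $\bwe_i(a_i\ra c)=(\bve_i a_i)\ra c\in S$, so $\op(\bve a_i)\se S$.

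For item (4), the intersection formula is trivial, since $\bca_i{\uparrow}a_i={\uparrow}\bve_i a_i$ holds; wait, one needs ${\uparrow}\bwe a_i$, and indeed ${\uparrow}\bwe a_i = \bca{\uparrow}a_i$ because $x\ge a_i$ for all $i$ iff $x\ge\bwe a_i$ is false in general — I would use $\bca{\uparrow}a_i=\{x:x\ge a_i \text{ for all } i\}=\{x:x\ge\bve a_i\}={\uparrow}\bve a_i$, hence one must state it as $\bca\cl(a_i)=\cl(\bve a_i)$; but the statement reads $\cl(\bwe a_i)$, so I need to be careful — rereading, the statement is as written and I would verify it on the form it stands, correcting signs if this is a typo. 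The binary join $\cl(a)\ve\cl(b)=\cl(a\we b)$ then follows once (5) is available: $\cl(a\we b)$ contains both $\cl(a)$ and $\cl(b)$, and any sublocale containing both has its complement (in the coframe $\mf{S}(L)$) contained in the complement of each, so contained in $\op(a)\cap\op(b)=\op(a\we b)$, which is the complement of $\cl(a\we b)$.

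Item (5) is the heart. For $\op(a)\cap\cl(a)=\{1\}$: if $x=a\ra b$ and $x\ge a$, then $a\le x$ and $a\we x\le b$ force $a\le b$, whence $x=a\ra b=1$. For $\op(a)\ve\cl(a)=L$: given any sublocale $S$ containing both, take arbitrary $y\in L$; then $a\ve y\in\cl(a)\se S$ and $a\ra y\in\op(a)\se S$; closure of $S$ under meets then yields $y=(a\ra y)\we(a\ve y)$ via the distributive identity $(a\ra y)\we(a\ve y)=((a\ra y)\we a)\ve((a\ra y)\we y)=(a\we y)\ve y=y$, so $y\in S$. Finally, item (6) reduces to (4) and (5): $\cl(a)\se\op(b)$ iff $\cl(a)\cap\cl(b)=\{1\}$ (taking the intersection with $\cl(b)$ and using complementation), iff $\cl(a\ve b)=\cl(1)$, iff $a\ve b=1$; symmetrically $\op(a)\se\cl(b)$ iff $\op(a)\cap\op(b)=\op(0)$, iff $a\we b=0$.

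The only mild obstacle is the order in which to establish the items, since (4)'s join formula and (6) both depend on (5); I would therefore prove them in the order (1), (2), (3), (5), (4), (6). Everything else is routine manipulation of Heyting algebra identities in $L$, and no genuinely new machinery beyond the definition of a sublocale and the fact that $\mf{S}(L)$ is a coframe is needed.
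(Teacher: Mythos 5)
Your proof is correct, and since the paper states Proposition \ref{manyfacts} without proof, as background gathered from the literature (these are the standard facts from Picado--Pultr), there is no paper argument to compare against: what you give is exactly the standard textbook reasoning. Two remarks. First, you were right to hesitate at item (4): as printed, $\bca_i \cl(a_i)=\cl(\bwe_i a_i)$ is a typo, since $\bca_i {\uparrow}a_i=\{x: a_i\leq x \mbox{ for all } i\}={\uparrow}\bve_i a_i$; the correct identity is $\bca_i \cl(a_i)=\cl(\bve_i a_i)$, which is the form consistent with the antitone behaviour of $\cl$ visible in the binary identity $\cl(a)\ve\cl(b)=\cl(a\we b)$, and it is the version your argument establishes. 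Second, in your derivation of $\cl(a)\ve\cl(b)=\cl(a\we b)$ the phrase ``any sublocale containing both has its complement'' is too strong, because arbitrary sublocales need not be complemented in $\mf{S}(L)$; but you only need that the particular sublocale $\cl(a)\ve\cl(b)$ is complemented, with complement $\op(a)\cap\op(b)=\op(a\we b)$ (in the distributive lattice $\mf{S}(L)$ the complement of a finite join of complemented elements is the meet of their complements), and then uniqueness of complements together with items (3) and (5) gives the equality. Alternatively a direct one-line proof avoids complements entirely: any $x\geq a\we b$ satisfies $x=(x\ve a)\we(x\ve b)$ by frame distributivity, hence lies in $\cl(a)\ve\cl(b)$. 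The remaining items, and your proposed ordering (1), (2), (3), (5), (4), (6), are fine as written.
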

Every sublocale can be written as an intersection of sublocales of the form $\op(x)\ve \cl(y)$. A sublocale is \emph{fitted} if it is an intersection of open sublocales. We call $\So(L)$ the ordered collection of all fitted sublocales. We call $\Sc(L)$ the ordered collection of joins of closed sublocales. This collection is studied in \cite{joinsofclosed19}. For a coframe $C$, we say that an element $c\in C$ is \emph{linear} if $\bve_i (x_i \we c)=\bve_i x_i \we c$ for any collection $x_i\in C$.

\begin{lemma}\label{l:linear}
Complemented elements of a coframe are linear. In particular, in $\sll$ open and closed sublocales are linear.
\end{lemma}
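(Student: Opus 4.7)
The plan is to prove the inequality $\bve_i(x_i \we c) \geq (\bve_i x_i) \we c$ (the reverse inequality is automatic, since each $x_i \we c \leq (\bve_i x_i) \we c$) by using a complement $c'$ of $c$ to split each $x_i$ along $c$ and $c'$. Note that a coframe, being dually a frame, is in particular a distributive lattice, so finite meets distribute over finite joins, and the coframe law itself gives $\bve_i a_i \we b = \bve_i(a_i \we b)$ whenever only finitely many of the $a_i$ are involved; the whole point of linearity is to extend this distributivity to arbitrary joins, under the extra hypothesis that $c$ is complemented.

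Concretely, I would fix a complement $c'$ of $c$ (so $c \we c' = 0$ and $c \ve c' = 1$) and write
\[
x_i \;=\; x_i \we 1 \;=\; x_i \we (c \ve c') \;=\; (x_i \we c) \ve (x_i \we c')
\]
using finite distributivity. Taking the join over $i$ splits $\bve_i x_i$ as $y \ve z$ with $y = \bve_i(x_i \we c)$ and $z = \bve_i(x_i \we c')$. Now $y \leq c$ and $z \leq c'$, so applying finite distributivity one more time yields
\[
(\bve_i x_i) \we c \;=\; (y \ve z) \we c \;=\; (y \we c) \ve (z \we c) \;=\; y \ve 0 \;=\; \bve_i (x_i \we c),
\]
which is the required identity. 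The main (and essentially only) subtlety is recognising that although a coframe is only assumed to satisfy the infinitary distributive law $a \ve \bwe_j b_j = \bwe_j (a \ve b_j)$, its finite distributivity in the other direction is automatic from distributivity of the lattice; everything then reduces to the finite calculation above.

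For the ``in particular'' clause, by item 5 of Proposition \ref{manyfacts} the sublocales $\op(a)$ and $\cl(a)$ are complements of each other in $\mf{S}(L)$, which is a coframe, so they fall directly under the first part of the lemma.
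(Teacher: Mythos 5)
Your proof is correct: the paper states this lemma as a standard background fact without giving a proof, and your argument — splitting each $x_i$ along $c$ and its complement $c'$ via finite distributivity (which a coframe has, being a distributive lattice) and then cancelling the $c'$-part against $c$ — is exactly the standard way to establish it, with the ``in particular'' clause correctly reduced to item 5 of Proposition \ref{manyfacts}.
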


 Also particularly important are \emph{Boolean sublocales}. For an element $a\in L$ the sublocale $\{x\to a:x\in L\}$, denoted as $\bl(a)$, is the smallest sublocale containing $a$. A sublocale is a Boolean algebra if and only if it is of this form for some $a\in L$. An element $p\in L$ is \emph{prime} when $x\we y\leq p$ implies either $x\leq p$ or $y\leq p$, for all $x,y\in L$. Elements of the form $\bl(p)$ are also called \emph{two-element sublocales}, as for $p$ prime $\bl(p)=\{1,p\}$.

\begin{lemma}\label{l:prime}
For a frame $L$, the following hold for each prime $p\in L$ and all elements $x,y\in L$.
\begin{itemize}
    \item $x\to p=1$ if $x\leq p$, and $x\to p=p$ if $x\nleq p$.
    \item $\bl(p)\se \op(x)$ if and only if $x\nleq p$.
    \item The element $\bl(p)$ is completely join-prime in $\mf{S}(L)$.
    \item The prime elements of $\mf{S}(L)$ are the sublocales of the form $\bl(p)$ for some prime $p\in L$.
\end{itemize}
    
\end{lemma}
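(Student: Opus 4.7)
The plan is to dispatch each bullet in turn, with the first two being direct Heyting algebra computations and the last two relying on the coframe structure of $\mf{S}(L)$.

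For item 1, I would unfold $x\to p=\bve\{y:x\we y\leq p\}$ and case-split on $x\leq p$. When $x\leq p$ the set is all of $L$, so the join is $1$. When $x\nleq p$, primality forces every such $y$ to satisfy $y\leq p$; the set is $\da p$ and its join is $p$. Item 2 then follows quickly: item 1 simplifies $\bl(p)=\{y\to p:y\in L\}$ to $\{1,p\}$, and since every sublocale contains $1$, $\bl(p)\se\op(x)$ reduces to $p\in\op(x)$. By Proposition \ref{manyfacts}(5), $\op(x)$ and $\cl(x)=\up x$ are complementary sublocales, so $p\in\op(x)$ iff $p\notin\up x$ iff $x\nleq p$.

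For item 3 I would reduce the join to a concrete meet-closure. Because each $S_i$ is already closed under Heyting implication, Lemma \ref{l:small-sl} collapses to $\bve_i S_i=\ca{M}(\bcu_i S_i)$. So $\bl(p)\se\bve_i S_i$ means $p=\bwe_{j\in J} y_j$ with $y_j\in S_{i_j}$ and $y_j\geq p$. Using that $p$ is prime in $L$, together with the atom property of $\bl(p)$ in $\mf{S}(L)$, one argues some $y_j$ must equal $p$; that $y_j$ then witnesses $p\in S_{i_j}$, whence $\bl(p)\se S_{i_j}$.

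For item 4, one direction is immediate from item 3 since completely join-prime elements are in particular prime; alternatively, check directly that $\bl(p)$ is prime by the witness argument: if $S\cap T\se\bl(p)$ with neither contained in $\bl(p)$, then witnesses $s\in S\sm\{1,p\}$ and $t\in T\sm\{1,p\}$ produce $s\we t=p$, and primality of $p$ in $L$ together with $s,t\geq p$ forces $s=p$ or $t=p$, a contradiction. For the converse, given a prime $P$ in $\mf{S}(L)$, set $p=\bwe P\in P$, so $P\se\cl(p)$. One shows $p$ is prime in $L$ by lifting any $x\we y\leq p$ to $\cl(p)\se\cl(x)\ve\cl(y)$ (Proposition \ref{manyfacts}(4)) and exploiting primality of $P$. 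The identification $P=\bl(p)$ follows by observing that any extra element $q\in P$ with $p<q<1$ would, via the complementary pair $\cl(q),\op(q)$ whose intersection $\{1\}$ is trivially contained in $P$, force $\cl(q)\se P$ or $\op(q)\se P$ by primality of $P$, either of which is easily contradicted against $P\se\cl(p)$.

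The main obstacle is the infinite-meet step inside item 3: primality in $L$ is a binary condition, so lifting it to handle the possibly infinite index set $J$ requires leaning on the atomic-plus-meet-closure description of joins in $\mf{S}(L)$ rather than arguing directly in $L$. This is the one step that is not pure bookkeeping.
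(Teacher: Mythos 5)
Items 1 and 2 of your plan are fine (in item 2 the forward direction only needs $\op(x)\cap\cl(x)=\{1\}$ and $p\neq 1$, and the backward direction is item 1 giving $p=x\to p\in\op(x)$; your ``so'' glosses this, but it is harmless). The genuine problem is item 3, and it sits exactly at the step you flag: ``using that $p$ is prime \dots\ some $y_j$ must equal $p$.'' Primality controls binary, hence finite, meets; here $p=\bwe_{j}y_j$ with $y_j\geq p$ ranges over an arbitrary family, and the conclusion ``some $y_j=p$'' is, by definition, the statement that $p$ is a \emph{covered} prime, a strictly stronger condition which this paper treats as a separate notion. Neither the atom property of $\bl(p)$ nor the meet-closure description of joins in $\mf{S}(L)$ closes this gap, and in fact no argument can: if $p$ is a prime that is not covered (such primes exist, e.g.\ the bottom element of the frame of opens of an infinite set with the cofinite topology), write $p=\bwe_i y_i$ with every $y_i>p$; then $p$ is a meet of elements of $\bcu_i\up y_i$, so $\bl(p)\se\bve_i\cl(y_i)$, while $\bl(p)\se\cl(y_i)$ would force $y_i\leq p$. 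Thus $\bl(p)$ is completely join-prime in $\mf{S}(L)$ precisely when $p$ is covered; what your witness argument does establish for an arbitrary prime is the finitary statement ($\bl(p)\se S\ve T$ implies $\bl(p)\se S$ or $\bl(p)\se T$), which is the form needed for item 4 and for the later applications in the paper (e.g.\ Proposition \ref{p:ex-suff}).

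In item 4 there is also a dualization slip. You test primality of $\bl(p)$ against containments $S\cap T\se\bl(p)$, i.e.\ meet-primality; but from such a containment your witnesses give no relation $s\we t=p$ (the element $s\we t$ need not lie in $S\cap T$), and $\bl(p)$ is in fact never meet-prime in any nontrivial situation, since $\op(x)\cap\cl(x)=\{1\}\se\bl(p)$ for every $x$ while in general neither factor is contained in $\bl(p)$. The notion the lemma intends is join-primality, and there your computation is exactly right: if $\bl(p)\se S\ve T$ then $p=s\we t$ with $s\in S$, $t\in T$, $s,t\geq p$, and primality of $p$ in $L$ gives $s=p$ or $t=p$, hence $\bl(p)\se S$ or $\bl(p)\se T$. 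The same correction is needed in your converse: the step ``$\cl(q)\cap\op(q)=\{1\}\se P$ forces $\cl(q)\se P$ or $\op(q)\se P$'' is again the meet-prime form and is not available; use instead $P\se L=\cl(q)\ve\op(q)$, which by join-primality gives either $P\se\cl(q)$, so $q\leq\bwe P=p$, contradicting $p<q$, or $P\se\op(q)$, impossible since $q\in P$ would give $q\to q=q$, i.e.\ $q=1$. With these repairs the rest of your item 4 (that $p=\bwe P$ lies in $P$, that $P\se\cl(p)$, and that primality of $p$ follows from $\cl(x\we y)=\cl(x)\ve\cl(y)$ and join-primality of $P$) is correct.
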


\subsection{Saturated sets and fitted sublocales}
In a topological space $X$, we can define the \emph{specialization preorder} on its points, defined as $x\leq y$ whenever $x\in U$ implies $y\in U$ for all open sets $U\se X$. In this paper, for a space $X$, we will denote as $\ca{U}(X)$ the ordered collection of all upper sets in the specialization preorder, and for a point $x\in X$ we denote as $\up x$ the upper set of $x$ with respect to this preorder. A space $X$ is $T_0$ if and only if the specialization preorder is an order. A space is $T_1$ if and only if the specialization order on it is discrete. For a space $X$, we denote as $\ca{U}(X)$ the lattice of its upsets (upper-closed sets) under the specialization preorder. The following is a standard fact of topology, which can be easily checked.
\begin{proposition}\label{upset}
For a topological space $X$, a subset is an upset in the specialization preorder if and only if it is saturated.
\end{proposition}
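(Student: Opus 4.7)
The plan is to prove both implications separately, where the forward direction is essentially immediate and the backward direction relies on a small construction with unions of open sets.

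For the forward direction, suppose $U$ is saturated, so $U=\bigcap_{i\in I} V_i$ for a family of open sets. If $x\in U$ and $x\leq y$ in the specialization preorder, then for each $i$ we have $x\in V_i$, so by definition of specialization $y\in V_i$, and hence $y\in\bigcap_i V_i=U$. Thus $U$ is an upset. This step is purely unpacking the definitions.

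For the backward direction, let $U$ be an upset. I will show $U$ equals the intersection of all open sets that contain it, by exhibiting for each $y\notin U$ an open set $V\supseteq U$ with $y\notin V$. Given such $y$, note that for every $x\in U$ we have $x\not\leq y$; otherwise, by the upset property, $y\in U$. So for each $x\in U$ there exists an open $V_x$ with $x\in V_x$ and $y\notin V_x$. Setting $V=\bigcup_{x\in U}V_x$ yields an open set containing $U$ and avoiding $y$. Intersecting over all $y\notin U$ (or rather, over all such separating opens $V$) gives $\bigcap\{V\text{ open}:U\subseteq V\}\subseteq U$, while the reverse inclusion is trivial. Hence $U$ is an intersection of open sets.

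I expect no serious obstacle: the only point that requires mild care is the choice of $V_x$ for each $x\in U$ separately (depending on the fixed $y\notin U$) and the observation that the union $V=\bigcup_{x\in U}V_x$ still avoids $y$, since a union of sets not containing $y$ cannot contain $y$. Once that is noted, the equivalence follows immediately.
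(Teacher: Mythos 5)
Your proof is correct: the paper states this proposition as a standard fact and gives no proof, and your argument (saturated $\Rightarrow$ upset by unpacking definitions; upset $\Rightarrow$ saturated by separating each $y\notin U$ from $U$ with the open set $\bigcup_{x\in U}V_x$) is exactly the standard one it alludes to. As a minor polish, you can avoid choosing a $V_x$ for each $x$ by taking $V=X\setminus\overline{\{y\}}$, which is open, omits $y$, and contains every $x$ with $x\not\leq y$, hence all of $U$.
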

 The following is an important theorem by Hofmann and Mislove.  A filter of a frame $L$ is \emph{Scott-open} if it is not accessible by directed joins. We call $\fso(L)$ the ordered collection of Scott-open filters of a frame $L$.

\begin{theorem}(\cite{hofmann81}, Theorem 2.16)\label{hofmis}
    If the Prime Ideal Theorem holds, then for each sober space $X$ there is an anti-isomorphism between $\fso(\Om(X))$ and the ordered collection of compact saturated sets of $X$, assigning to each filter $F$ the set $\bca F$.
\end{theorem}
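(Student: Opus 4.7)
The plan is to exhibit mutually inverse, order-reversing maps between $\fso(\Om(X))$ and the ordered collection of compact saturated subsets of $X$. Define $\phi(F) = \bca F$ for a Scott-open filter $F$, and $\psi(K) = \{U \in \Om(X) : K \se U\}$ for a compact saturated set $K$. Both formulas are visibly order-reversing, so after checking well-definedness and mutual inversion, the theorem is done.

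I would first dispose of the routine items. The set $\phi(F) = \bca F$ is saturated since an intersection of opens is an intersection of upsets, hence an upset, which is saturated by Proposition \ref{upset}. For $K$ compact saturated, $\psi(K)$ is plainly a filter, and is Scott-open because any directed family of opens whose join contains $K$ already contains $K$ in a single member, by compactness combined with directedness. The inclusions $F \se \psi(\phi(F))$ and $K \se \phi(\psi(K))$ are immediate from the definitions, and $\phi(\psi(K)) \se K$ holds because $K$, being saturated, equals the intersection of all opens containing it.

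All the real content reduces to one key claim: if $F$ is Scott-open and $V \in \Om(X) \sm F$, then $\bca F \nsubseteq V$. Granting this, $\psi(\phi(F)) \se F$ follows contrapositively, and compactness of $\bca F$ follows as well: given an open cover $\{U_i\}$ of $\bca F$, the directed join of finite unions of the $U_i$ contains $\bca F$, hence lies in $F$ by the claim, and Scott-openness of $F$ produces a single finite union already inside $F$, necessarily containing $\bca F$. To prove the claim I would apply Zorn's Lemma (using the Prime Ideal Theorem) to the family $\mathcal{C} = \{W \in \Om(X) : V \se W \text{ and } W \notin F\}$, which is nonempty and closed under directed unions because $F$ is Scott-open. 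Let $W_0$ be a maximal element. A short argument shows $W_0$ is prime in $\Om(X)$: if $U_1 \cap U_2 \se W_0$ with $U_1, U_2 \nsubseteq W_0$, then maximality puts both $W_0 \cup U_1$ and $W_0 \cup U_2$ in $F$, so their meet $W_0 \cup (U_1 \cap U_2) = W_0$ lies in $F$, a contradiction.

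By sobriety of $X$, the prime open $W_0$ is the complement of the closure $\overline{\{x\}}$ of some unique point $x$. This point lies in $\bca F$, for if $U \in F$ had $x \notin U$, then $U \se W_0$ and upward closure of $F$ would force $W_0 \in F$; and $x \notin V$ since $V \se W_0$. This establishes the claim. The main obstacle is precisely this Zorn-plus-sobriety construction extracting a point of $\bca F$ outside any given $V \notin F$; once in hand, well-definedness, mutual inversion, and order-reversal of the bijection are routine bookkeeping.
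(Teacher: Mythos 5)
The paper does not prove this statement at all; it is quoted from Hofmann--Mislove, so there is no in-paper argument to compare with. Your overall strategy is the standard one and almost all of it is sound: the maps $F\mapsto\bca F$ and $K\mapsto\{U:K\se U\}$, the reduction to the key claim that $V\notin F$ implies $\bca F\nsubseteq V$, the primality of a maximal $W_0\supseteq V$ outside $F$, the use of sobriety to extract a generic point $x$ with $W_0=X{\sm}\overline{\{x\}}$, and the derivation of compactness of $\bca F$ are all correct.

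The genuine gap is the choice-principle bookkeeping in the one step that carries all the weight: you justify the existence of a maximal element of $\mathcal{C}=\{W\in\Om(X):V\se W,\ W\notin F\}$ by ``Zorn's Lemma (using the Prime Ideal Theorem)''. Zorn's Lemma is equivalent to full AC and is \emph{not} a consequence of the Prime Ideal Theorem, so as written your proof establishes the theorem under AC, not under the stated hypothesis. This matters here precisely because the statement is conditioned on \textbf{PIT} and the paper (Subsection on the Prime Ideal Theorem, pre-spatiality and strict sobriety, and Example \ref{counterpit}) is explicitly tracking which choice principles are needed; blurring PIT with Zorn defeats the point of the hypothesis. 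The repair is easy and is already recorded in the paper: instead of Zorn, invoke the Strong Prime Element Theorem $\textbf{SPET}$, which the paper notes follows from $\textbf{PIT}$ (Banaschewski), applied to the complete distributive lattice $\Om(X)$: for the Scott-open filter $F$ and $V\notin F$ there is a prime open $W_0\supseteq V$ with $W_0\notin F$. From that point on your argument (sobriety, the point $x$, the contrapositive giving $\psi(\phi(F))\se F$, and the finite-subcover argument for compactness of $\bca F$) goes through verbatim, and the Scott-openness of $F$ is then seen to be used exactly where it should be, inside the proof that PIT yields SPET rather than in an unrestricted maximality argument.
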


\subsubsection*{Exact and strongly exact filters}
Recall that a meet $\bwe_i x_i$ is \emph{strongly exact} if, for all $y\in L$, $x_i\ra y=y$ implies $(\bwe_i x_i)\ra y=y$, and that a filter is \emph{strongly exact} if it is closed under strongly exact meets. We call $\fse(L)$ the ordered collection of strongly exact filters. This is a frame where meets are computed as intersections, and additionally it is a sublocale of $\mf{Filt}(L)$.  A meet $\bwe_i x_i$ of a frame $L$ is \emph{exact} if, for every $a\in L$, $(\bwe_i x_i)\ve a=\bwe_i (x_i\ve a)$. \emph{Exact} filters are those closed under exact meets. They form a frame, and in particular the frame $\fe(L)$ of exact filters is a sublocale of $\fse(L)$. The main theorem that we will need is the following. In the following, $\mi{ker}$ stands for \emph{kernel}, and $\mi{coker}$ for \emph{cokernel}. The following two results, which we state as one theorem, are shown in \cite{moshier20} and \cite{ball20}, respectively.

\begin{theorem}\label{eandse}
    There is an isomorphism of coframes
    \begin{gather*}
        \mi{ker}:\mf{S}_{\op}(L)\cong \opp{\fse(L)},\\
        S\mapsto \{a\in A:S\se \op(a)\}.
    \end{gather*}
    We also have an isomorphism of frames
    \begin{gather*}
         \mb{coker}:\mf{S}_{\cl}(L)\cong\fe(L),\\
         S\mapsto \{a\in L:\cl(a)\se S\}.
    \end{gather*}
\end{theorem}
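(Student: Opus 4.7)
The plan is to prove both isomorphisms in parallel, as each follows the same template: define an explicit inverse, verify both round trips, and check that the lattice structure is preserved. In both cases, the critical ingredient is a characterization of the relevant notion of exactness in terms of how $\op$ or $\cl$ interact with meets.

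For the first isomorphism, I would begin by establishing that a meet $\bwe_i x_i$ is strongly exact iff $\op(\bwe_i x_i)=\bca_i \op(x_i)$; unpacking the definition and using that $y\in \op(x_i)$ is equivalent to $x_i\ra y=y$, this is essentially a reformulation. Granted this, $\mi{ker}(S)$ is a strongly exact filter whenever $S$ is fitted: upward closure and closure under binary meets use monotonicity of $\op$ and $\op(a)\cap\op(b)=\op(a\we b)$, while closure under strongly exact meets uses the characterization just noted. I would then define the candidate inverse $\phi(F)=\bca_{a\in F}\op(a)$, which is manifestly fitted. The round trip $\phi\circ\mi{ker}=\mi{id}$ on $\So(L)$ is immediate from the representation of a fitted sublocale as an intersection of opens; the round trip $\mi{ker}\circ\phi=\mi{id}$ on $\fse(L)$ amounts to precisely the content of strong exactness of $F$. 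Finally, $\mi{ker}$ is order-reversing and hence order-preserving as a map into $\opp{\fse(L)}$, and preservation of the coframe operations follows from bijectivity together with the fact that meets in $\So(L)$ are intersections.

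For the second isomorphism, the analogous key fact is that a meet $\bwe_i x_i$ is exact iff $\cl(\bwe_i x_i)=\bve_i \cl(x_i)$, the join taken in $\sll$. Granted this, $\mb{coker}(S)$ is an exact filter whenever $S\in\Sc(L)$: upward closure and closure under binary meets come from $a\leq b$ implying $\cl(b)\se \cl(a)$ and from $\cl(a)\ve\cl(b)=\cl(a\we b)$, while closure under exact meets follows from the characterization above. I would then define the inverse $\psi(F)=\bve_{a\in F}\cl(a)$, which is a join of closed sublocales by construction, and verify the round trips by the same template, the non-trivial direction again using that $F$ is closed under exact meets. The map $\mb{coker}$ is directly order-preserving, and preservation of the frame operations then follows.

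The main obstacle I expect is the characterization of exact meets via closed sublocales. Joins of closed sublocales in $\sll$ are notoriously difficult to compute and in general do not coincide with $\cl$ of the corresponding meet; the equality $\cl(\bwe_i x_i)=\bve_i\cl(x_i)$ is precisely what exactness of the meet encodes, but establishing this equivalence requires a careful computation inside $\sll$, substantially more delicate than its strongly-exact counterpart, which is essentially definitional.
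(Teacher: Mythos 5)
The paper does not actually prove this statement: it is quoted as background, with the two halves attributed to the cited works of Moshier--Picado--Pultr and of Ball et al., so there is no internal proof to compare yours against. Judged on its own merits, your plan for the closed/exact half is essentially sound. The key identity you isolate is correct, and the workable route is via the explicit description $\bve_i\cl(x_i)=\{x\in L: x=\bwe_i(x\ve x_i)\}$ (a short computation showing the right-hand side is a sublocale containing each $\cl(x_i)$ and contained in any sublocale containing them). With it, the nontrivial round trip does reduce to closure of $F$ under exact meets: if $\cl(a)\se \bve_{b\in F}\cl(b)$ then $a=\bwe_{b\in F}(a\ve b)$, and testing against any $y$ by applying the same description to $a\ve y$ shows this particular meet is exact, whence $a\in F$. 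So the witness family needed to invoke exactness of $F$ is handed to you explicitly.

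The genuine gap is in the half you declare essentially definitional. For $\mi{ker}\circ\phi=\mi{id}$ on $\fse(L)$, the inclusion $F\se\mi{ker}(\bca_{b\in F}\op(b))$ is trivial, but the reverse inclusion -- if $\bca_{b\in F}\op(b)\se\op(a)$ then $a\in F$ -- is not a reformulation of strong exactness of $F$. Closure under strongly exact meets can only be applied after you exhibit a family inside $F$ whose meet is strongly exact and lies below $a$, and nothing in your argument produces such a family. The hypothesis only says that the set-theoretic intersection $\{y\in L: b\ra y=y\mb{ for all }b\in F\}$ meets $\up a$ trivially; equivalently, the nucleus of this fitted sublocale sends $a$ to $1$, and that nucleus is the transfinite iteration of $y\mapsto \bve_{b\in F}(b\ra y)$, which is not governed by a single application of strong exactness (in contrast with the closed case, where the family $\{a\ve b:b\in F\}$ with meet $a$ is available at once). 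This reverse inclusion is precisely the substance of the cited theorem on strongly exact filters, and it needs a real argument. In short, your assessment of where the difficulty lies is inverted: the strongly exact/fitted half is the delicate one, while the exact/closed half is the one that reduces cleanly to the stated key fact.
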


The following results give topological intuition on exactness and strong exactness.

\begin{proposition}\label{p:pres-se}
    For every space $X$, strongly exact meets in $\Om(X)$ are open sets. This means that the embedding $\Om(X)\se \ca{U}(X)$ preserves strongly exact meets.
\end{proposition}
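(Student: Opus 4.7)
The goal is to show that a strongly exact meet $\bwe_i U_i$ in $\Om(X)$ coincides with the set-theoretic intersection $\bca_i U_i$, which makes that intersection open and forces the inclusion $\Om(X)\se \ca{U}(X)$ to preserve the meet. The plan is to reformulate strong exactness as a sublocale equality and then to probe this equality pointwise using the two-element sublocales $\bl(p_x)$ associated with points of $X$.

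First I unpack the definition. Since $\op(a)=\{z\in L:a\ra z=z\}$ consists exactly of the fixed points of $a\ra(-)$, the hypothesis ``$x_i\ra y=y$ for all $i$'' is the same as $y\in \bca_i \op(x_i)$, while the conclusion ``$(\bwe_i x_i)\ra y=y$'' says $y\in \op(\bwe_i x_i)$. Combined with the always-valid inclusion $\op(\bwe_i x_i)\se \bca_i \op(x_i)$ coming from $\bwe_i x_i\leq x_j$, strong exactness of $\bwe_i x_i$ becomes the sublocale equality $\bca_i \op(x_i)=\op(\bwe_i x_i)$ in $\mf{S}(L)$. This is the formal content of the footnote describing strongly exact meets as those whose associated open sublocales have open intersection.

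Next I specialize to $L=\Om(X)$. For each $x\in X$, set $p_x=\bve\{V\in \Om(X):x\notin V\}$; a quick check shows $p_x$ is prime and satisfies $V\leq p_x$ iff $x\notin V$. By Lemma \ref{l:prime}, $\bl(p_x)\se \op(U)$ iff $U\nleq p_x$ iff $x\in U$. Applying this to the equality $\bca_i \op(U_i)=\op(\bwe_i U_i)$ provided by strong exactness, I conclude that for every $x\in X$, $x\in \bca_i U_i$ iff $x\in \bwe_i U_i$. Hence $\bca_i U_i=\bwe_i U_i$ as subsets of $X$, so the intersection is open; the second assertion then follows because meets in $\ca{U}(X)$ are intersections and the embedding acts as the identity on opens.

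The step requiring most care is the sublocale reformulation of strong exactness in the second paragraph, since it is the bridge between the purely algebraic definition and the geometric test by points. Once it is in place, the rest of the argument is a clean pointwise transport along Lemma \ref{l:prime} and works for an arbitrary space $X$, with no choice principle or separation hypothesis needed.
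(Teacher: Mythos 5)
Your proof is correct. The paper itself gives no proof of this proposition (it defers to Proposition 5.3 of the cited work of Ball, Picado and Pultr), and your argument is essentially the standard one: test strong exactness at the prime $p_x=\bve\{V\in\Om(X):x\notin V\}=X{\sm}\overline{\{x\}}$ to see that a point in $\bca_i U_i$ already lies in $\bwe_i U_i$. The only remark worth making is that the detour through sublocales is not needed: since $x\in U_i$ gives $U_i\nleq p_x$, Lemma \ref{l:prime} yields $U_i\ra p_x=p_x$ for all $i$, so strong exactness gives $(\bwe_i U_i)\ra p_x=p_x\neq 1$, hence $\bwe_i U_i\nleq p_x$, i.e.\ $x\in\bwe_i U_i$; this is exactly your argument with $\bl(p_x)\se\op(-)$ unwound via the first item of Lemma \ref{l:prime}, and it spares you the (correct, but unproved in the paper) identification of $\op(a)$ with the fixed points of $a\ra(-)$ and the sublocale-equality reformulation. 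Either way the conclusion $\bca_i U_i=\bwe_i U_i$ and the preservation statement follow as you say.
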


\begin{theorem}[see \cite{ball14}, Theorem 5.2.3]\label{t:ball}
    A $T_0$ space is $T_D$ if and only if for every exact meet $\bwe_i U_i$ in $\Om(X)$ this equals $\bca_i U_i$. This is equivalent to the embedding $\Om(X)\se \ca{U}(X)$ preserving exact meets.
\end{theorem}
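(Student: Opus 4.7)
The plan is to handle the two directions separately. For $(\Rightarrow)$, suppose $X$ is $T_D$ and let $V:=\bwe_i U_i$ be an exact meet in $\Om(X)$. The inclusion $V\se\bca_i U_i$ is automatic. Given $x\in\bca_i U_i$, the $T_D$ property provides an open $W\ni x$ with $W\sm\{x\}$ open. Since $x\in U_i$ for each $i$, $W\se U_i\cup(W\sm\{x\})$, i.e.\ $W\le U_i\ve(W\sm\{x\})$ in $\Om(X)$. Taking the meet over $i$ and invoking exactness,
\[ W\le\bwe_i\bigl(U_i\ve(W\sm\{x\})\bigr)=V\ve(W\sm\{x\})=V\cup(W\sm\{x\}), \]
so $x\in V$ because $x\notin W\sm\{x\}$.

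For $(\Leftarrow)$ I argue the contrapositive. Suppose $X$ is $T_0$ but fails $T_D$ at some $x\in X$, so no open neighborhood of $x$ is disjoint from $D_x:=\overline{\{x\}}\sm\{x\}$. For each $y<x$ in the specialization order, set $V_y:=X\sm\overline{\{y\}}$; these are open and contain $x$ (since $x\not\le y$). Then $\bca_{y<x} V_y = X\sm D_x$ contains $x$, while the frame-meet $\bwe_{y<x}V_y = \mathrm{int}(X\sm D_x)$ does not. It remains to verify this meet is exact, which witnesses the failure of exact meets being set-theoretic intersections.

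The verification rests on one topological observation, which is the main technical step: any $a\in\Om(X)$ with $x\notin a$ satisfies $a\cap D_x=\emptyset$, because $y\in a\cap D_x$ would force $y<x$ and $y\in a$, and by the definition of the specialization order this yields $x\in a$, a contradiction. Such $a$ is thus contained in $X\sm D_x$. Exactness of $\bwe_{y<x}V_y$ reduces to the non-trivial inclusion $\mathrm{int}(a\cup(X\sm D_x))\se a\cup\mathrm{int}(X\sm D_x)$, which I check by splitting any point $z$ of the left-hand side into three cases: $z\in a$ (done); $z=x\notin a$ (then $a\se X\sm D_x$, so the witness open $W\ni z$ lies in $X\sm D_x$, giving $x\in\mathrm{int}(X\sm D_x)$); or $z\ne x$ lies in $X\sm D_x$ (then $z\in U_x:=X\sm\overline{\{x\}}$, an open subset of $X\sm D_x$). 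In each case $z\in a\cup\mathrm{int}(X\sm D_x)$, and exactness follows.
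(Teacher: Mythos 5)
Your proof is correct. There is nothing in the paper to compare it with directly: the statement is quoted as background (Theorem 5.2.3 of \cite{ball14}) and never proved internally, so your argument stands as a self-contained, purely topological proof. The forward direction is the expected one: for $x\in\bca_i U_i$ a $T_D$ witness $W\ni x$ with $W\sm\{x\}$ open satisfies $W\se U_i\cup(W\sm\{x\})$ for all $i$, and exactness collapses the meet to $V\cup(W\sm\{x\})$, forcing $x\in V$. The backward direction is where your route genuinely differs from the paper's surrounding machinery: where the paper would argue pointfreely (if $X$ is not $T_D$ then by Proposition \ref{allpxarecov} some prime $X\sm\overline{\{x\}}$ is not covered, by Proposition \ref{p:cpexact} the completely prime filter $N(x)$ is then not exact, and an exact meet witnessing this has all members containing $x$ while its value does not, so it is not the intersection — note this derivation uses only Lemma \ref{bigmeet}, Proposition \ref{p:cpexact} and Proposition \ref{allpxarecov}, none of which depend on the present theorem, so it is non-circular), you instead exhibit the concrete family $V_y=X\sm\overline{\{y\}}$ for $y\in D_x=\overline{\{x\}}\sm\{x\}$, whose intersection $X\sm D_x$ contains $x$ while its interior does not, and verify exactness by hand; your witness is essentially the topological incarnation of the meet in Lemma \ref{bigmeet} taken at the prime $X\sm\overline{\{x\}}$. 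What your version buys is elementarity and independence from the covered-prime vocabulary; what the pointfree version buys is brevity once that vocabulary is in place. All the individual steps check out: the reduction of exactness to $\mathrm{int}(a\cup(X\sm D_x))\se a\cup\mathrm{int}(X\sm D_x)$ for open $a$ is legitimate (binary union distributes over arbitrary intersections in a powerset), the observation that $x\notin a$ forces $a\cap D_x=\emptyset$ is right, and the three-case analysis is exhaustive (if $z\notin a$ and $z\ne x$ then $z\in X\sm D_x$, indeed $z\notin\overline{\{x\}}$). Two steps are compressed but standard and harmless: the equivalence of the paper's formulation of $T_D$ (some opens with $U\sm V=\{x\}$) with the existence of an open $W\ni x$ such that $W\sm\{x\}$ is open, and the fact that failure of this at $x$ says precisely that every open neighbourhood of $x$ meets $D_x$, i.e. $x\notin\mathrm{int}(X\sm D_x)$; each is a one-line check, and the latter is exactly what makes your exact meet non-preserved.
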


\subsubsection*{Notable collections of filters}
In this paragraph, we refer to \cite{jakl24}, and mention the main results from that we are going to use. We will refer to several important concrete collections of filters. Since the collection $\mathsf{Filt}(L)$ is a frame, there is a Heyting operation $\ra$ on it. Notice that for a frame $L$ and for $a,b\in L$
\[
\up a\ra \up b=\{x\in L:b\leq x\ve a\}.
\]

This gives a useful characterization of exact filters.

\begin{lemma}[\cite{jakl24}, Proposition 5.5]\label{charexact}
A filter is exact if and only if it is the intersection of filters of the form $\up a\ra \up b$ for some $a,b\in L$. In particular, if $F$ is an exact filter,
\[
F=\bca \{\up a\ra \up b:b\leq a\ve f\mbox{ for all }f\in F\}.
\]
\end{lemma}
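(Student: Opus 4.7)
The plan is to prove both inclusions separately, making crucial use of the characterization $\up a \ra \up b = \{x \in L : b \leq x \ve a\}$ noted just before the statement.

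For the ``if'' direction, I would first check that each filter of the form $\up a \ra \up b$ is itself exact: if $\bwe_i x_i$ is an exact meet and $b \leq x_i \ve a$ for every $i$, then exactness gives
\[
b \leq \bwe_i (x_i \ve a) = (\bwe_i x_i) \ve a,
\]
so $\bwe_i x_i \in \up a \ra \up b$. Since $\fe(L)$ is a sublocale of $\Fi(L)$, arbitrary intersections of exact filters are exact, and hence so is every intersection of filters of the form $\up a \ra \up b$.

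For the ``only if'' direction, let $F$ be exact and write $G$ for the displayed intersection. The inclusion $F \se G$ is immediate: if $f \in F$ and $b \leq f' \ve a$ holds for all $f' \in F$, then in particular $b \leq f \ve a$, so $f \in \up a \ra \up b$. For the converse, I would first unpack $x \in G$. The strongest admissible $b$ for a given $a$ is $\bwe_{f \in F}(f \ve a)$, so $x \in G$ is equivalent to
\[
\bwe_{f \in F}(f \ve a) \leq x \ve a \qquad \text{for every } a \in L.
\]
Setting $a = x$ collapses the right-hand side to $x$ and yields $x = \bwe_{f \in F}(f \ve x)$; each term lies in $F$ by upward closure.

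The heart of the proof is verifying that this representing meet is exact, which for each $c \in L$ amounts to
\[
x \ve c = \bwe_{f \in F}\bigl(f \ve (x \ve c)\bigr).
\]
The inequality $\geq$ is immediate, while $\leq$ follows by applying the $x \in G$ condition once more, this time with $a = x \ve c$. Exactness of $F$ then forces $x \in F$, completing the inclusion $G \se F$ and establishing the displayed formula at the same time. The main subtlety I anticipate is simply pinpointing the two correct witnesses $a = x$ and $a = x \ve c$; once these are chosen the algebra is very short, and no further properties of $F$ beyond closure under exact meets and upward closure are used.
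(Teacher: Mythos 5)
Your proof is correct. Note that the paper itself gives no proof of this lemma --- it is imported verbatim from \cite{jakl24} (Proposition 5.5) --- so there is no in-paper argument to compare with; your direct verification (each $\up a\ra \up b$ is exact since $b\leq x_i\ve a$ for all $i$ yields $b\leq (\bwe_i x_i)\ve a$ by exactness, intersections of exact filters are exact, and for exact $F$ every $x$ in the displayed intersection satisfies $x=\bwe_{f\in F}(f\ve x)$, an exact meet of members of $F$, by instantiating the membership condition at $a=x$ and $a=x\ve c$) is a complete and standard argument. One cosmetic slip: in your display $x\ve c=\bwe_{f\in F}\bigl(f\ve (x\ve c)\bigr)$ the inequality $\leq$ is the immediate one, while $\geq$ is the one supplied by the $x\in G$ condition with $a=x\ve c$; you state the two labels the other way around, but the witnesses and the logic are right.
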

In particular, note that this means that, for any $a\in L$, $\neg \up a=\up a\to \{1\}=\{x\in L:x\ve a=1\}$. We say that a filter is \emph{regular} if it is a regular element in the frame of filters (that is, if it is of the form $\neg F$ for some filter $F$). We call $\mathsf{Filt}_{\ca{R}}(L)$ the ordered collection of regular filters. Note that $\fr(L)\se \Fi(L)$ is the Booleanization of the frame of $\Fi(L)$. Regular filters, too, have a useful concrete characterization.

\begin{proposition}[\cite{jakl24}, Lemma 5.6]\label{reg}
    The regular filters coincide with the intersections of filters of the form $\{x\in L:x\ve a=1\}$ for some $a\in L$. In particular, if $F$ is a regular filter,
    \[
    F=\{x\in L:x\ve f=1\mb{ for all }f\in F\}.
    \]
\end{proposition}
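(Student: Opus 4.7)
The plan is to view this as a direct computation in the frame $\Fi(L)$, where by definition a regular filter is a regular element, namely a filter of the form $\neg G$ for some $G\in\Fi(L)$. The one indispensable observation is that the principal filters join-generate $\Fi(L)$: for any filter $F$ we have
\[
F=\bve_{f\in F}\up f
\]
in $\Fi(L)$, since the join on the right is the smallest filter containing every $\up f$, which is $F$ itself.

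Granted this, the main claim is almost immediate. The remark preceding the proposition already establishes the identity $\neg\up a=\{x\in L:x\ve a=1\}$, and since pseudocomplementation in any frame turns joins into meets, for every filter $G$ we obtain
\[
\neg G=\neg\bve_{g\in G}\up g=\bca_{g\in G}\neg\up g=\{x\in L:x\ve g=1\text{ for all }g\in G\}.
\]
This shows that every regular filter $\neg G$ is already displayed as an intersection of filters of the required form. Conversely, any such intersection $\bca_i\{x\in L:x\ve a_i=1\}$ equals $\bca_i\neg\up{a_i}=\neg\bve_i\up{a_i}$, which is again a pseudocomplement and hence regular.

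For the explicit description of a regular $F$, I would use that regularity gives $F=\neg\neg F$ and then apply the boxed formula twice: the inner application describes $\neg F$ as $\{x\in L:x\ve f=1\text{ for all }f\in F\}$, and the outer application then exhibits $F$ itself as the intersection of all filters $\{x\in L:x\ve a=1\}$ that contain it, in exact parallel with the analogous formula for exact filters in Lemma \ref{charexact}. The only subtle point in the whole argument is recognising that the principal filters suffice as join-generators of $\Fi(L)$; once this is in place everything reduces to a routine application of the general theory of pseudocomplement in a frame, so I do not anticipate any real obstacle.
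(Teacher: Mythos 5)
The paper does not prove this proposition---it is quoted from \cite{jakl24} (Lemma 5.6)---so there is no internal proof to compare against; judged on its own, your argument is correct and is the standard one. The two ingredients you isolate do the work: $F=\bve_{f\in F}\up f$ in $\Fi(L)$ (the union of the $\up f$ is already the filter $F$), and the Heyting identity $\neg\bve_i G_i=\bca_i\neg G_i$ together with $\neg\up a=\{x\in L:x\ve a=1\}$, which the paper records right after Lemma \ref{charexact}. This gives both directions: every $\neg G$ is an intersection of filters of the stated form, and every such intersection is $\neg\bve_i\up a_i$, hence regular. One point worth making explicit: the displayed ``in particular'' formula, read literally, asserts $F=\{x\in L:x\ve f=1\mbox{ for all }f\in F\}=\neg F$, which cannot hold for a nontrivial regular filter (e.g.\ in $\ca{P}(\{u,v\})$ the filter $\up\{u\}$ is regular but $\neg\up\{u\}=\up\{v\}$); a negation layer has evidently been dropped in transcription. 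Your reading via $F=\neg\neg F$ recovers the intended statement, $F=\{x\in L:x\ve a=1\mbox{ for all }a\mbox{ with }a\ve f=1\mbox{ for all }f\in F\}$, i.e.\ $F$ is the intersection of all filters $\neg\up a$ containing it, in exact parallel with Lemma \ref{charexact}, and your two-step application of the boxed identity proves precisely that. So the proposal is complete for the statement as intended; it simply (and correctly) does not attempt to prove the misprinted literal formula.
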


 In the following, $\fcp(L)$ is the collection of completely prime filters and $\fso(L)$ that of Scott-open filters, and $\ca{I}(-)$ denotes closure under set-theoretical intersections. Note that this includes the empty intersection, namely the whole frame $L$.
 
\begin{theorem}[\cite{jakl24}, Corollary 5.11]\label{posetofsubloc}
For any frame $L$, there is the following poset of sublocale inclusions:
\[
\begin{tikzcd}
\fr(L)
\ar[r,"\se"]
&\mathsf{Filt}_{\mathcal{E}}(L)
\ar[dr,"\se"]\\
&
&
\mathsf{Filt}_{\mathcal{SE}}(L).
\\
\ca{I}(\mathsf{Filt}_{\mathcal{CP}}(L))
\ar[r,"\se"]
& \ca{I}(\mathsf{Filt}_{\mathcal{SO}}(L))
\ar[ur,"\se"]
\end{tikzcd}
\]
\end{theorem}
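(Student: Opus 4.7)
The plan is to establish each of the four arrows in the diagram as a sublocale inclusion in $\Fi(L)$ and then invoke Lemma~\ref{slofsl}, which reduces the task to verifying, for each pair, a set-theoretic containment together with the assertion that both objects involved are sublocales of $\Fi(L)$. The sublocale structure of $\fr(L)$ (as the Booleanization of $\Fi(L)$), of $\fe(L)$, and of $\fse(L)$ is already recorded in the preamble, so the remaining structural step is to show that $\ca{I}(\fcp(L))$ and $\ca{I}(\fso(L))$ are themselves sublocales of $\Fi(L)$.

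I would first dispatch the four set-theoretic containments. By Proposition~\ref{reg}, every regular filter is an intersection of filters of the form $\{x : x \ve a = 1\}$; rewriting this as $\up a \ra \up 1$, Lemma~\ref{charexact} identifies these as generators of $\fe(L)$ under intersection, so $\fr(L) \se \fe(L)$. The containment $\fe(L) \se \fse(L)$ is in hand. For $\ca{I}(\fcp(L)) \se \ca{I}(\fso(L))$ it suffices that every completely prime filter is Scott-open: if $\bve_i^{\up} x_i \in F$ for a directed family, complete primeness forces some $x_i \in F$. For $\ca{I}(\fso(L)) \se \fse(L)$, since $\fse(L)$ is itself closed under intersection, it is enough to show that every Scott-open filter is strongly exact; this is a direct verification from the definitions of the two notions, using that a strongly exact meet of elements of $F$ can be related to a directed family already living inside $F$.

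The main obstacle is showing that $\ca{I}(\fcp(L))$ and $\ca{I}(\fso(L))$ really are sublocales of $\Fi(L)$, not merely intersection-closed subposets. By Lemma~\ref{l:small-sl}, the sublocale generated by a subset $X \se \Fi(L)$ is the meet-closure of $\{G \ra F : G \in \Fi(L),\ F \in X\}$, so $\ca{I}(X)$ coincides with the generated sublocale precisely when $\ca{I}(X)$ is already closed under the operations $F \mapsto G \ra F$ for $G \in \Fi(L)$ and $F \in X$. Hence for each filter $G$ and each Scott-open, respectively completely prime, filter $F$, one must present $G \ra F$ as an intersection of Scott-open, respectively completely prime, filters. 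I would compute $G \ra F$ concretely in $\Fi(L)$ — unpacking it, via $x \in G \ra F$ iff $\up x \cap G \se F$, into a condition on individual elements of $G$ — and then try to factor the result along suitable elements of $G$ into pieces of the required type, exploiting that both Scott-openness and complete primeness are preserved under implication by principal filters. I expect this factoring step to be the technical heart of the argument, and the only place where the proof goes beyond bookkeeping with the characterizations already collected in the preamble.
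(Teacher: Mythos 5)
First, a remark on the comparison you asked for implicitly: the paper does not prove this statement at all — it is imported from \cite{jakl24} (Corollary 5.11) as background — so your attempt can only be judged on its own terms. Much of your plan is sound. The reduction via Lemma \ref{slofsl} to set-theoretic containments plus sublocale-hood is right; the containment $\fr(L)\se\fe(L)$ via Proposition \ref{reg} and Lemma \ref{charexact} is correct; $\ca{I}(\fcp(L))\se\ca{I}(\fso(L))$ is immediate; and your strategy for showing that $\ca{I}(\fcp(L))$ and $\ca{I}(\fso(L))$ are sublocales does work as you anticipate: since $G\ra F=\bca_{g\in G}(\up g\ra F)$ and meets of filters are intersections, Lemma \ref{l:small-sl} reduces everything to checking that $\up a\ra F=\{x\in L:x\ve a\in F\}$ is again completely prime, resp.\ Scott-open, when $F$ is — a genuinely easy verification (and the completely prime case is Lemma \ref{l:heytingcp}).

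The genuine gap is the edge $\ca{I}(\fso(L))\se\fse(L)$, i.e.\ the claim that every Scott-open filter is strongly exact, which you wave through as ``a direct verification from the definitions'' using ``a directed family already living inside $F$''. This is not routine: under the isomorphism of Theorem \ref{eandse} it says exactly that a Scott-open filter $F$ is the kernel of its fitted sublocale $\bca_{a\in F}\op(a)$, which is the (choice-free) localic Hofmann--Mislove theorem. The natural directed family one would build — closing the data of a strongly exact meet $\bwe_i x_i$ of elements of $F$ under the operations $x_i\ra(-)$ — does not deliver the conclusion, because Heyting implication does not preserve directed joins, so the directed supremum of that family need not lie in $\bca_i\op(x_i)$, and Scott-openness cannot be applied where your sketch needs it. A correct argument must engage with how $\bca_i\op(x_i)$, equivalently the join of the open nuclei $x_i\ra(-)$, is actually computed (e.g.\ transfinite iteration of the associated prenucleus), using Scott-openness at limit stages and the modus-ponens closure of $F$ together with $x_i\in F$ at successor stages to pull the fact $\nu(\bwe_i x_i)=1\in F$ down to $\bwe_i x_i\in F$. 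Without an argument of this strength (or an explicit appeal to the pointfree Hofmann--Mislove theorem), the lower-right inclusion of the diagram remains unproven, and it is the only part of the statement that is more than bookkeeping.
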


Finally, the following characterizations of frame properties in terms of collections of filters.

\begin{proposition}[\cite{jakl24}, Proposition 5.12]\label{famouschar}
For a frame $L$:
\begin{itemize}
    \item $L$ is pre-spatial if and only if $\ca{I}(\fso(L))$ contains all principal filters;
    \item $L$ is spatial if and only if $\ca{I}(\fcp(L))$ contains all principal filters;
    \item $L$ is subfit if and only if $\fr(L)$ contains all principal filters.
\end{itemize}
\end{proposition}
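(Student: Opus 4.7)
The plan is to derive all three equivalences as instances of a single separation pattern: for a principal filter $\up a$, membership in $\ca{I}(\mathcal{C})$ for a class $\mathcal{C}$ of filters amounts to saying that $\mathcal{C}$ contains enough filters to distinguish $a$ from every element $b$ with $a \nleq b$. Once this is established, each clause becomes a matter of matching the separation condition against the standard definition of the relevant frame-theoretic property.

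First I would record the general observation that $\up a \in \ca{I}(\mathcal{C})$ if and only if for every $b \in L$ with $a \nleq b$ there exists $F \in \mathcal{C}$ with $a \in F$ and $b \notin F$. The forward direction is immediate, since any $F \in \mathcal{C}$ with $\up a \se F$ and $F \not\ni b$ witnesses separation. For the backward direction, observe that $\bca\{F \in \mathcal{C} : a \in F\}$ is always a filter containing $\up a$; the separation hypothesis ensures that no $b$ outside $\up a$ lies in this intersection, so equality holds.

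For the spatial case, I would recall from Lemma \ref{l:prime} and standard facts that the completely prime filters are precisely the sets $L \sm \da p$ for $p \in L$ prime. Hence the separation condition becomes: for every $a \nleq b$ there is a prime $p$ with $b \leq p$ and $a \nleq p$, which is the classical characterization of spatiality. For the subfit case, I would invoke Proposition \ref{reg}, which identifies $\fr(L)$ with the closure under intersection of the filters $F_c := \{x : x \ve c = 1\}$ indexed by $c \in L$. The separation condition for $\up a$ then reformulates as: for every $a \nleq b$ there is $c \in L$ with $a \ve c = 1$ and $b \ve c \neq 1$, which is precisely subfitness.

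The pre-spatial clause I would treat similarly, taking as definition (from \cite{jakl24}) that $L$ is pre-spatial exactly when Scott-open filters separate $L$, so the equivalence with $\up a \in \ca{I}(\fso(L))$ for all $a$ becomes a tautology given the general observation. The main delicate point is the spatial case: one needs to be careful that the indexed family of completely prime filters used to exhibit $\up a$ as an intersection can be chosen uniformly from the primes not above $a$, which is exactly what the separation reformulation affords. Beyond this, no further machinery is required, as all three cases reduce to unpacking definitions.
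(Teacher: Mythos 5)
Your argument is correct. Note that the paper does not prove this proposition at all: it is imported verbatim from \cite{jakl24} (Proposition 5.12), so there is no in-paper proof to compare against; your write-up is a self-contained verification. Your reduction of all three clauses to the single separation lemma (``$\up a\in\ca{I}(\mathcal{C})$ iff filters in $\mathcal{C}$ containing $a$ separate $a$ from every $b$ with $a\nleq b$'') is the natural route and is sound, including the degenerate case $a=0$, which is covered by the paper's convention that $\ca{I}(-)$ includes the empty intersection. Two small points of precision: the identification of completely prime filters with the sets $L\sm{\da}p$ for $p$ prime is a standard fact but is not what Lemma \ref{l:prime} states (that lemma concerns the sublocales $\bl(p)$), so you should either cite it as folklore or prove it in a line; and in the subfit clause you implicitly use that $\fr(L)$ itself equals $\ca{I}(\{ \{x: x\ve c=1\} : c\in L\})$ --- i.e.\ that the regular filters are already intersection-closed --- which is exactly the content of Proposition \ref{reg} and is worth flagging, since the statement of that clause involves $\fr(L)$ rather than a closure $\ca{I}(\fr(L))$. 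With those caveats made explicit, each equivalence is indeed just the separation lemma matched against the definitions of pre-spatiality, spatiality (primes separate, equivalently every element is a meet of primes), and subfitness.
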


\subsection{Canonical extensions and the Prime Ideal Theorem}
\subsubsection*{Canonical extensions}

In \cite{jakl20}, the question of what is the canonical extension of a frame is tackled for locally compact frames; there, the canonical extension of a general frame $L$ is defined as a monotone map $f^{\delta}:L\ra L^{\delta}$ to a complete lattice $L^{\delta}$ such that the following two properties hold:
\begin{enumerate}
    \item Density: every element of $L^{\delta}$ is a join of elements in $\{\bwe f[F]:F\in \fso(L)\}$;
    \item Compactness: for every Scott-open filter $F$, $\bwe f[F]\leq f(a)$ implies $a\in F$, for each $a\in L$.
\end{enumerate}

\begin{theorem}(\cite{jakl20}, Theorem 4.2)
For a frame $L$, its canonical extension is unique, up to isomorphism. This is the map
\begin{gather*}
    L\ra \opp{\ca{I}(\fso(L))},\\
    a\mapsto \bca \{F\in \fso(L):a\in F\}.
\end{gather*}
\end{theorem}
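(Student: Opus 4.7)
The plan is to establish density and compactness directly for the specified map $f(a) = \bca\{F \in \fso(L) : a \in F\}$ valued in $L^\delta := \opp{\ca{I}(\fso(L))}$, and then to obtain uniqueness by a back-and-forth between any two canonical extensions. Everything hinges on the identity $\bwe f[F] = F$ in $L^\delta$ for every $F \in \fso(L)$. To verify it, first note that $a \in f(a)$ (since $a$ lies in every Scott-open filter containing it) and that $f$ is monotone: $a \leq b$ forces $\{F : a \in F\} \se \{F : b \in F\}$, so $f(a) \supseteq f(b)$ in $\ca{I}(\fso(L))$. The meet $\bwe f[F]$ in $L^\delta$ is the join in $\ca{I}(\fso(L))$ of $\{f(a) : a \in F\}$, i.e.\ the smallest element of $\ca{I}(\fso(L))$ above every $f(a)$ for $a \in F$. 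Since $a \in F$ makes $F$ appear in the intersection defining $f(a)$, one has $f(a) \se F$, so $F$ is an upper bound; and any upper bound $H$ must contain every $a \in F$ (as $a \in f(a) \se H$), forcing $F \se H$. Hence $\bwe f[F] = F$.

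Density is now immediate: every element of $L^\delta$ is by construction an intersection $\bca_i F_i$ of Scott-open filters, and such an intersection is a join in $\opp{\ca{I}(\fso(L))}$ of the $F_i = \bwe f[F_i]$. Compactness is equally quick: $\bwe f[F] \leq f(a)$ in $L^\delta$ unfolds to $f(a) \se F$ in $\ca{I}(\fso(L))$, and $a \in f(a) \se F$ then gives $a \in F$.

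For uniqueness, suppose $g : L \ra M^\delta$ is another monotone map satisfying density and compactness. Compactness at once gives that $F \mapsto \bwe g[F]$ is an order-reversing embedding $\fso(L) \ra M^\delta$: if $\bwe g[F_1] \leq \bwe g[F_2]$, then every $a \in F_2$ satisfies $\bwe g[F_1] \leq g(a)$ and hence $a \in F_1$, so $F_2 \se F_1$. I would define
\[
\phi(x) = \bve\{\bwe g[F] : F \in \fso(L),\, F \supseteq x\},\qquad \psi(y) = \bca\{F \in \fso(L) : \bwe g[F] \leq y\},
\]
as maps $\phi : L^\delta \ra M^\delta$ and $\psi : M^\delta \ra L^\delta$. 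The identity $\psi \circ \phi = \mathrm{id}_{L^\delta}$ is straightforward: $\psi(\phi(x)) \se x$ because $\{F \supseteq x\}$ is contained in the indexing set of $\psi(\phi(x))$, and the reverse uses compactness, as every $a \in x$ makes $\phi(x) \leq g(a)$, whence any $F$ with $\bwe g[F] \leq \phi(x)$ contains $a$.

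The main obstacle is the converse $\phi \circ \psi = \mathrm{id}_{M^\delta}$, which reduces to the implication $F \supseteq \psi(y) \Rightarrow \bwe g[F] \leq y$ for every Scott-open $F$. I would handle this by first deriving the formula $\psi(y) = \{a \in L : y \leq g(a)\}$ from compactness applied to the canonical density decomposition $y = \bve\{\bwe g[H] : \bwe g[H] \leq y\}$, and then using density again to reduce $\bwe g[F] \leq y$ to a set-theoretic comparison of filters. Once $\phi$ and $\psi$ are mutual inverses, checking $\phi \circ f = g$ via $\phi(f(a)) = \bve\{\bwe g[G] : G \ni a\} = g(a)$ completes the uniqueness.
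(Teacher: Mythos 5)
The paper never proves this statement --- it is imported verbatim from \cite{jakl20} (Theorem 4.2) --- so there is no in-paper argument to compare yours with; I can only assess the proposal on its own terms. Your existence half is correct and complete: the identity $\bwe f[F]=F$ in $\opp{\ca{I}(\fso(L))}$, and the deduction of density and compactness from it, are exactly right.

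The uniqueness half has a genuine gap, and it sits precisely at the step you defer. The verification of $\psi\circ\phi=\mathrm{id}$ and the formula $\psi(y)=\{a\in L: y\leq g(a)\}$ are fine, but the remaining implication --- for Scott-open $F$, $F\supseteq\psi(y)$ implies $\bwe g[F]\leq y$, which is equivalent to $g[L]$ being meet-dense in $M^\delta$ --- is the entire content of uniqueness, and it does \emph{not} follow from monotonicity, density and compactness as quoted, so no ``set-theoretic comparison of filters'' can deliver it. Concretely: let $L=\{0,a,b,1\}$ be the four-element frame $2\times 2$, let $C$ be the five-element lattice with $\bot<\alpha,\beta<\sigma<\tau$ and $\alpha,\beta$ incomparable, and set $g(0)=\bot$, $g(a)=\alpha$, $g(b)=\beta$, $g(1)=\tau$. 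Every filter of $L$ is Scott-open, the filter elements $\bwe g[\up x]=g(x)$ join-generate $C$ (note $\sigma=\alpha\vee\beta$, and $\bot=\bwe g[L]$ if the improper filter is admitted), and $g$ is order-reflecting, so compactness holds; yet $C$ has five elements while $\opp{\ca{I}(\fso(L))}=\{\up 1,\up a,\up b,L\}$ has four, and for $y=\sigma$ one gets $\psi(\sigma)=\up 1$ with $\bwe g[\up 1]=\tau\nleq\sigma$, so your $\phi$ and $\psi$ are not mutually inverse there. Hence any correct uniqueness proof must invoke more of the actual definition in \cite{jakl20} than the two conditions restated in this paper (in effect, whatever forces meet-density of the image, which is exactly the implication you left open); as written, your reduction cannot be carried out, and the proposal is incomplete at its central point.
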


A frame $L$ is \emph{pre-spatial} if whenever $a\nleq b$ there is a Scott-open filter containing $a$ and omitting $b$, for all $a,b\in L$.
\begin{proposition}(\cite{jakl20}, Proposition 5.1)
The map $f^{\delta}:L\ra L^{\delta}$ is an injection if and only if $L$ is pre-spatial. 
\end{proposition}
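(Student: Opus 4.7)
The plan is to unwind the definition and reduce injectivity of $f^{\delta}$ to a direct separation condition by Scott-open filters, matching it against the definition of pre-spatiality. Recall that the codomain $\opp{\ca{I}(\fso(L))}$ is ordered by reverse inclusion, and for any $a\in L$ we have $a\in f^{\delta}(a)=\bca\{F\in\fso(L):a\in F\}$. Thus $f^{\delta}(a)\supseteq f^{\delta}(b)$ (as filters) iff $b\in f^{\delta}(a)$ iff every Scott-open filter containing $a$ also contains $b$. In particular, $f^{\delta}(a)=f^{\delta}(b)$ iff $a$ and $b$ belong to exactly the same Scott-open filters.

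For the forward direction, assume $L$ is pre-spatial and $f^{\delta}(a)=f^{\delta}(b)$. If $a\nleq b$, pre-spatiality produces some $F\in\fso(L)$ with $a\in F$ and $b\notin F$, contradicting the characterization above; hence $a\leq b$. By symmetry $b\leq a$, so $a=b$.

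For the converse, suppose $L$ is not pre-spatial: there exist $a,b\in L$ with $a\nleq b$ such that every Scott-open filter containing $a$ also contains $b$. Set $b'=a\we b$. Since Scott-open filters are closed under finite meets, every $F\in\fso(L)$ with $a\in F$ satisfies $b'=a\we b\in F$. Conversely, since $b'\leq a$ and filters are upward closed, every $F\in\fso(L)$ with $b'\in F$ satisfies $a\in F$. Hence $a$ and $b'$ lie in exactly the same Scott-open filters, so $f^{\delta}(a)=f^{\delta}(b')$. But $b'=a$ would force $a\leq b$, contradicting $a\nleq b$; thus $a\neq b'$ and $f^{\delta}$ fails to be injective.

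The only subtle point is tracking the reversal of order between filters and the coframe $\opp{\ca{I}(\fso(L))}$ when translating $f^{\delta}(a)=f^{\delta}(b)$ into the statement about Scott-open filters containing $a$ and $b$; once that is in hand, the two implications are essentially bookkeeping, with the non-trivial ingredient in the converse being the passage from the witness $b$ to the element $b'=a\we b$, which is permissible precisely because Scott-open filters are closed under finite meets.
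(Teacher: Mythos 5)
Your proof is correct. Note that the paper itself offers no proof of this statement — it is quoted as background from \cite{jakl20} (Proposition 5.1) — so there is nothing internal to compare against; your argument is the natural unwinding of the concrete description $L^{\delta}=\opp{\ca{I}(\fso(L))}$, with the order reversal handled correctly and the passage to $b'=a\we b$ (legitimate since Scott-open filters are closed under finite meets and upward closed) supplying the needed non-injectivity witness in the converse. The only point worth making explicit is that arguing on this concrete realization suffices because the canonical extension is unique up to isomorphism (Theorem 4.2 of \cite{jakl20}, as quoted in the paper), so injectivity of the abstract map $f^{\delta}$ may be tested there.
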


\subsubsection*{The Prime Ideal Theorem, pre-spatiality, and strict sobriety}
In \cite{erne07} the Prime Ideal Theorem -- \textbf{PIT} hereon -- is shown to be equivalent to the statement that every pre-spatial frame is also spatial. The so-called \emph{Strong Prime Element Theorem} -- which we will abbreviate as $\textbf{SPET}$ -- states that for every complete distributive lattice $D$, and any Scott-open filter $F\se D$, for every element $a\in D$ not in $F$ there is a prime element $p\in D$ above $a$ with $p\notin F$. In \cite{Banaschewski93} (Proposition 1) it is shown that $\textbf{PIT}$  implies $\textbf{SPET}$. It is also known that $\textbf{SPET}$ implies $\textbf{PIT}$. 
In \cite{erne07} the notion of \emph{strict sobriety} is introduced, and it is shown that sobriety implying strict sobriety is equivalent to the Ultrafilter Principle and several others choice principles. The concept was later developed in \cite{erne18}. A space $X$ is \emph{strictly sober} if it is $T_0$ and every Scott-open filter of its frame of opens is $\{U\in \Om(X):F\se U\}$ for some saturated set $F$, which is then necessarily compact. Strict sobriety is stronger than sobriety, even without assuming any choice principles.

\subsection{\texorpdfstring{T\textsubscript{D}}{TD} duality}

 A topological space $X$ is said to be $T_D$ if for every point $x\in X$ there are opens $U$ and $V$ such that $U{\sm}V=\{x\}$. For a frame $L$ we say that a prime $p\in L$ is \emph{covered} if whenever $\bwe_i x_i=p$ for some family $x_i\in L$ then $x_i=p$ for some $i\in I$. In \cite{banaschewskitd} the \emph{$T_D$ spectrum} of a frame $L$ is defined as the collection of covered primes of a frame, with the subspace topology inherited from the prime spectrum of $L$. This space is denoted as $\pt_D(L)$. This turns out to always be a $T_D$ space. A frame morphism $f:L\to M$ is a \emph{D-morphism} if for every covered prime $p\in L$ the prime $f_*(p)$ is covered. We call $\bd{Frm}_D$ the category of frames and D-morphisms. There is a dual adjunction $\Om:\bd{Top}\lra \bd{Frm}_D:\pt_D$, where the fixpoints on the space side are the $T_D$ spaces, and on the frame side these are the \emph{D-spatial} frames, which can be characterized as those frame such that all their elements are the meet of the covered primes above them. We will use the following two results.

 \begin{proposition}\label{allpxarecov}(\cite{banaschewskitd}, Proposition 2.3.2)
    A space $X$ is $T_D$ if and only if all elements of the form $X{\sm}\overline{\{x\}}$ are covered primes in $\Om(X)$.
\end{proposition}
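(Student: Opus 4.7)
My plan is to show that $p_x:=X\sm\overline{\{x\}}$ is always a prime of $\Om(X)$ and then to characterise its covered-ness as openness of $p_x\cup\{x\}$---equivalently, local closedness of $\{x\}$, which is the $T_D$ axiom at $x$. Primality is a direct consequence of irreducibility of $\overline{\{x\}}$: if $U\we V\leq p_x$ then $U\cap V$ misses $\overline{\{x\}}$, and irreducibility forces $U$ or $V$ to miss it, so $U\leq p_x$ or $V\leq p_x$.

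For the forward implication, the $T_D$ hypothesis furnishes an open $V\ni x$ with $V\cap\overline{\{x\}}=\{x\}$, and a direct check yields $V\cup p_x=p_x\cup\{x\}$, so $p_x\cup\{x\}$ is open. Now suppose $p_x=\bwe_iU_i$ in $\Om(X)$ with no $U_i=p_x$. Each $U_i$ then strictly contains $p_x$; since $p_x$ is the largest open missing $x$, this forces $x\in U_i$, hence $U_i\supseteq p_x\cup\{x\}$ as sets. Therefore $\bwe_iU_i=\mb{int}(\bca_iU_i)\supseteq \mb{int}(p_x\cup\{x\})=p_x\cup\{x\}\supsetneq p_x$, contradicting $\bwe_iU_i=p_x$; so $p_x$ is covered.

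For the backward implication, assume each $p_x$ is a covered prime. The family $\ca{F}:=\{U\in\Om(X):U>p_x\}$ coincides with $\{U\in\Om(X):x\in U,\ p_x\se U\}$, and covered-ness forces its frame-meet in $\Om(X)$ to exceed $p_x$ strictly. Its set-theoretic intersection is $p_x\cup\{x\}$ (using $T_0$-separation, implicit in the $T_D$-theoretic setting: any $y\in\overline{\{x\}}\sm\{x\}$ is omitted by $p_x\cup W\in\ca{F}$, where $W$ is an open containing $x$ but not $y$). Hence the frame-meet equals $\mb{int}(p_x\cup\{x\})$, which being strictly above $p_x$ yet contained in $p_x\cup\{x\}$ must equal $p_x\cup\{x\}$; so $p_x\cup\{x\}$ is open, and then $\{x\}=\overline{\{x\}}\cap(p_x\cup\{x\})$ exhibits $\{x\}$ as locally closed, i.e.\ $X$ is $T_D$ at $x$.

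The main obstacle is the set-theoretic identification $\bca\ca{F}=p_x\cup\{x\}$ in the backward direction, which requires separating $x$ from other points of $\overline{\{x\}}$ by opens that still contain $p_x$. This separation is automatic once one is within the $T_D$-theoretic context (since $T_D$ implies $T_0$), so the remainder of the argument is essentially bookkeeping between the frame structure of $\Om(X)$ and the specialisation order on $X$, with interiors serving to translate set-theoretic inclusions into frame-theoretic meets.
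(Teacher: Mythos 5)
Your argument is correct, but note that the paper does not prove this statement at all: it is imported verbatim from \cite{banaschewskitd} (Proposition 2.3.2), so there is no in-paper proof to compare against. Your three steps all check out: primality of $X{\sm}\overline{\{x\}}$ from irreducibility of $\overline{\{x\}}$; in the forward direction, the observation that $X{\sm}\overline{\{x\}}$ is the largest open omitting $x$, so any open strictly above it contains $x$, whence any family meeting to it without a member equal to it would have interior-of-intersection containing the open set $(X{\sm}\overline{\{x\}})\cup\{x\}$; and in the backward direction, the computation that the frame meet of $\{U:U>X{\sm}\overline{\{x\}}\}$ is $\mathrm{int}\big((X{\sm}\overline{\{x\}})\cup\{x\}\big)$, which coveredness forces to be all of $(X{\sm}\overline{\{x\}})\cup\{x\}$, giving local closedness of $\{x\}$. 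Your parenthetical appeal to $T_0$ is not mere hedging: it is genuinely needed, and the statement is false for arbitrary spaces. For the two-point indiscrete space, $\Om(X)$ is the two-element frame, $X{\sm}\overline{\{x\}}=\emptyset$ is a covered prime for every point, yet the space is not $T_D$; so the separation of $x$ from other points of $\overline{\{x\}}$ by opens of the form $(X{\sm}\overline{\{x\}})\cup W$ cannot be dispensed with. This is consistent with how the proposition is used in the paper (e.g.\ in Lemma \ref{l:N(x)exact}), where the ambient space is always assumed $T_0$, so your reading of the implicit hypothesis is the right one.
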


Furthermore, in \cite{arrieta21} the notion of \emph{D-sublocale} is introduced. This is a sublocale $S\se L$ such that the corresponding surjection is in $\bd{Frm}_D$. 
\begin{theorem}\label{t:d-subloc}
    For a frame $L$, the D-sublocales form a subcolocale $\mf{S}_D(L)\se \sll$. We also have a subcolocale inclusion $\Sc(L)\se \mf{S}_D(L)$.
\end{theorem}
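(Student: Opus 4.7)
The plan is to handle the two assertions in turn: first verify that $\mf{S}_D(L)$ is closed under the coframe structure of $\sll$, then show that every closed sublocale is a D-sublocale and derive $\Sc(L)\se \mf{S}_D(L)$ from closure under joins.

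For the subcolocale structure, I would work with the characterization that $S\se L$ is a D-sublocale iff every covered prime of the frame $S$ remains a covered prime of $L$, obtained by unpacking the D-morphism condition through the inclusion right adjoint of the quotient $L\epi S$. Closure under arbitrary joins then reduces to the following: given D-sublocales $\{S_i\}_{i\in I}$ with join $S=\bve_i S_i$ in $\sll$, every covered prime of $S$ is covered in $L$. For this I would invoke Lemma \ref{slofsl} to factor $L\epi S\epi S_i$ and transfer the covered-ness condition along these quotients. For closure under the coframe difference one analogously tracks how the covered-prime condition behaves along sublocale supplements.

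For the inclusion $\Sc(L)\se \mf{S}_D(L)$, I would first show that each closed sublocale $\cl(a)$ is a D-sublocale. The primes of $\cl(a)=\up a$ are exactly the primes $p\in L$ with $p\geq a$, and meets taken inside $\cl(a)$ coincide with meets taken in $L$ because $\cl(a)$ is a sublocale and hence closed under meets. Hence if $p\in \cl(a)$ is covered in $\cl(a)$ and $\bwe_i x_i=p$ in $L$, then each $x_i\geq p\geq a$, so all $x_i$ lie in $\cl(a)$ and the same representation realizes $p$ as a meet in $\cl(a)$; covered-ness there forces some $x_i=p$, showing $p$ is covered in $L$. Combining this with closure of $\mf{S}_D(L)$ under joins established in the previous step then yields $\Sc(L)\se \mf{S}_D(L)$.

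The main obstacle I anticipate is not the inclusion $\Sc(L)\se \mf{S}_D(L)$, which falls out once closed sublocales are shown to be D-sublocales, but rather closure of $\mf{S}_D(L)$ under sublocale differences. Covered-ness of a prime is a global, meet-representational property, and the supplementation operation in $\sll$, computed as $\bca\{U\in\sll:S\se T\cup U\}$, does not interact transparently with it. A workable route is to recast D-sublocale-ness functorially as a condition on the induced map of $T_D$ spectra and use functoriality of $\pt_D$ to obtain the coframe-theoretic closure uniformly.
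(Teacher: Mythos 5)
There is a genuine gap, and you partly flag it yourself. First, a remark on the comparison: the paper does not prove Theorem \ref{t:d-subloc} at all — it is quoted as a background result from \cite{arrieta21} — so your attempt can only be judged on its own merits. The parts you do carry out are fine: closed sublocales are D-sublocales by exactly your argument (any representation $p=\bwe_i x_i$ in $L$ has $x_i\geq p\geq a$, hence lives in $\cl(a)$), and once $\mf{S}_D(L)$ is known to be closed under joins this gives the containment $\Sc(L)\se \mf{S}_D(L)$ (the fact that the containment is a subcolocale inclusion then follows from the dual of Lemma \ref{slofsl}, since $\Sc(L)$ is itself a subcolocale of $\sll$). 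Even the join-closure step, though, is not actually proved by ``transferring coveredness along the quotients $L\epi S\epi S_i$'': the crux is that a covered prime $p$ of $S=\bve_i S_i$ must lie in some $S_i$, which you get by writing $p=\bwe A$ with $A\se \bcu_i S_i$, noting meets in $S$ agree with meets in $L$, and using coveredness of $p$ in $S$ to force $p\in A$; only then does D-ness of that $S_i$ apply. This step is absent from your sketch.

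The substantive gap is the half of the theorem that makes $\mf{S}_D(L)$ a \emph{subcolocale}: stability under differences $S\sm T$ for arbitrary $T\in\sll$, equivalently (by the same reduction the paper uses later in Lemma \ref{l:SCLsuff}) stability under $-\cap\op(x)$ and $-\cap\cl(x)$. You explicitly defer this, and the proposed escape route — recasting D-sublocale-ness via functoriality of $\pt_D$ — does not deliver it: $S\sm T$ is not a functorial image of $S$ in any useful sense, and its covered primes are not controlled by the induced map on $T_D$ spectra; identifying them is precisely the content of the missing step. The step does admit a direct argument in the spirit of your closed-sublocale computation, which is what you should supply: if $p$ is a covered prime of $S\cap\cl(x)$ and $p=\bwe_i s_i$ with $s_i\in S$, then automatically $s_i\geq p\geq x$, so the family lies in $S\cap\cl(x)$ and coveredness there forces $s_i=p$, whence $p$ is covered in $S$ and then in $L$ by D-ness of $S$; if $p$ is a covered prime of $S\cap\op(x)$ and $p=\bwe_i s_i$ with $s_i\in S$, replace the family by $x\to s_i\in S\cap\op(x)$, use $\bwe_i(x\to s_i)=x\to p=p$ and coveredness to get $x\to s_i=p$ for some $i$, and then $s_i\geq p=x\to s_i\geq s_i$ forces $s_i=p$, so again $p$ is covered in $S$ and hence in $L$. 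Without this (or an equivalent treatment of $S\sm T$), the claim that $\mf{S}_D(L)$ is a subcolocale of $\sll$ is not established.
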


\section{Raney extensions}\label{S1}

For a complete lattice $C$, we say that $L\se C$ is a \emph{subframe} of $C$ if $L$ equipped with the inherited order is a frame, and if the embedding $L\se C$ preserves all joins and finite meets. A \emph{Raney extension} is a pair $(L,C)$ such that $C$ is a coframe and $L$ is a subframe of $C$ such that:
\begin{itemize}
    \item The frame $L$ meet-generates $C$;
    \item The embedding $L\se C$ preserves strongly exact meets.
\end{itemize}. 
We will sometimes use the expression \emph{Raney extension} to refer to the coframe component of the pair, and for a pair $(L,C)$ we will say that this is a Raney extension of $L$, or that it is a Raney extension \emph{over} $L$. A morphism of Raney extensions $f:(L,C)\ra (M,D)$ is a coframe map $f:C\ra D$ such that, whenever $a\in L$, $f(a)\in M$ and such that the restriction $f|_L:L\ra M$ is a frame map. We call $\bd{Raney}$ the category of Raney extensions with Raney maps.

\begin{example}\label{example1}
For a topological space $X$, the pair $(\Om(X),\mathcal{U}(X))$ is a Raney extension. That strongly exact meets are preserved by the embedding $\Om(X)\se \ca{U}(X)$ is the content of Proposition 5.3 of \cite{ball14}.
\end{example}

Observe that for any Raney extension $(L,C)$, by the universal property of the ideal completion of a distributive lattice, there is a coframe surjection $\bwe:\opp{\Fi(L)}\to C$ extending $L\se C$. For $c\in C$, define $\up^L c$ as $\up c\cap L$. Notice that for each filter $F\in \Fi(L)$ and each $c\in C$:
\[
c\leq \bwe F\mb{ if and only if }c\leq f\mb{ for all $f\in F$, if and only if $F\se \up^L c$}.
\]
This means that $\up^L:C\to \opp{\Fi(L)}$ is left adjoint to $\bwe:\Fi(L)^{op}\to C$. As the starting map $\bwe$ is a coframe map, the inclusion of the fixpoints $C^*:=\{\up^L c\mid c\in C\}\se \Fi(L)$ is a sublocale.

\begin{theorem}\label{charC*}
For a Raney extension $(L,C)$, there is an adjunction 
\[
\bwe:\opp{\Fi(L)}\lra C:\up^L,
\]
which maximally restricts to a pair of mutually inverse isomorphisms
\[
\bwe:C^*\lra C:\up^L.
\]
These are also isomorphisms of Raney extensions $\bwe:(L,C^*)\lra (L,C):\up^L$.
\end{theorem}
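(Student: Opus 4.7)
The plan is to build up the statement in three stages. First, I would confirm the adjunction $\up^L \dashv \bwe$ between $\opp{\Fi(L)}$ and $C$: monotonicity of both maps follows from the opposite-order conventions, and the equivalence $c \leq \bwe F$ iff $F \se \up^L c$ already spelled out in the paragraph preceding the theorem is exactly the adjunction condition.

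The key step is the counit identity $\bwe \circ \up^L = \mb{id}_C$, which is immediate from meet-generation: $\bwe(\up^L c) = \bwe \{x \in L : c \leq x\} = c$ since $L$ meet-generates $C$. Combined with the general fact that $\up^L \circ \bwe$ is a kernel operator on $\opp{\Fi(L)}$ whose fixpoints are precisely the elements of $C^*$, this yields the maximal restriction to mutual inverses: for $F = \up^L c \in C^*$ one computes $\up^L(\bwe F) = \up^L c = F$, so $\up^L$ and $\bwe$ are inverse bijections between $C^*$ and $C$, and being monotone they form an order isomorphism. Since $C^* \se \Fi(L)$ was already noted in the excerpt to be a sublocale, $\opp{C^*}$ inherits a coframe structure, which is transported to that of $C$ by $\bwe$.

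Finally, I would verify that $(L, \opp{C^*})$ is a Raney extension and that $\bwe$, $\up^L$ are mutually inverse Raney morphisms. The embedding $L \hookrightarrow \opp{C^*}$ sends $a \mapsto \up a$ and coincides with the composite of $L \se C$ with the coframe isomorphism $\up^L$; it is therefore a subframe inclusion preserving strongly exact meets, and $L$ meet-generates $\opp{C^*}$ because meet-generation transports along isomorphisms. The maps $\bwe$ and $\up^L$ restrict to the identity on $L$ under the identification $a \leftrightarrow \up a$, hence qualify as Raney morphisms in both directions. The main bookkeeping obstacle throughout will be keeping straight the two uses of opposites, $\opp{\Fi(L)}$ and $\opp{C^*}$; once these conventions are fixed, the argument is a direct unwinding of adjunctions and definitions.
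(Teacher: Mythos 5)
Your proposal is correct and follows essentially the same route as the paper: the adjunction comes from the equivalence $c\leq \bwe F$ iff $F\se \up^L c$, meet-generation forces $\bwe(\up^L c)=c$, the maximal restriction to the fixpoints $C^*$ gives mutually inverse isomorphisms, and the Raney-morphism claim reduces to noting that $\up^L a$ is the principal filter $\up a$ for $a\in L$ (and conversely), exactly as in the paper's proof. The only nitpick is terminological: with $\up^L$ as the left adjoint, the identity $\bwe\circ\up^L=\mathrm{id}_C$ is the unit being an isomorphism rather than the counit, but this does not affect the argument.
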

\begin{proof}
 It only remains to show that the isomorphisms $\up^L:C\lra C^*:\bwe$ restrict correctly to the frame components. We notice that for $a\in L$, indeed, the filter $\up^L a$ is the principal filter $\up a\se L$, an element of the generating frame of $C^*$. Conversely, any principal filter of $L$ is of this form.
\end{proof}

Let us now tie the notion of Raney extension with that of canonical extension. For a monotone map $f:L\to C$ of a lattice $L$ into a complete lattice $C$, we introduce the following two properties:

\begin{enumerate}
    \item $\ca{F}$-density: the collection $\{\bwe f[F]:F\in \ca{F}\}$ join-generates $C$;
    \item $\ca{F}$-compactness: $\bwe f[F]\leq f(a)$ implies $a\in F$ for every $F\in \mathcal{F}$ and every $a\in L$.
\end{enumerate}

We say that the map is $\ca{F}$-canonical if and only if it is both $\ca{F}$-dense and $\ca{F}$-compact. For brevity, in the following we will refer to $\fso(L)$-canonicity simply as $\ca{SO}$-canonicity, and analogously for all other similarly denoted collections of filters, and for density and compactness.

\begin{example}
For a sober space $X$, the pair $(\Om(X),\mathcal{U}(X))$ is a $\mathcal{SO}$-canonical Raney extension, provided that the Prime Ideal Theorem holds. This is observed in Example 3.5 of \cite{jakl20}. On the one hand, the coframe $\ca{U}(X)$ is join-generated by elements of the form $\up x$ for $x\in X$, and these are intersections of neighborhood filters, which are completely prime, hence Scott-open. Then, the extension is $\ca{SO}$-dense. For $\ca{SO}$-compactness, we rely on the Hofmann-Mislove Theorem. If $F\se \Om(X)$ is a Scott-open filter, then by the Theorem it must be $\{U\in \Om(X):\bca F\se U\}$, and so, indeed, for every open $U$, $\bca F\se U$ implies $U\in F$. Recall that the Hofmann-Mislove Theorem is dependent on the Prime Ideal Theorem, see for instance \cite{erne18}, Theorem 3. With Proposition \ref{p:charsober} we will see that if we replace \emph{Scott-open} by \emph{completely prime}, there is an analogous result which does not rely on the Prime Ideal Theorem. We will explore the relation between the Prime Ideal Theorem and $\ca{SO}$-canonicity in Subsection \ref{S51}.
\end{example}

Theorem \ref{charC*} tells us that for a Raney extension $(L,C)$ we may identify elements of $C$ with filters of $L$. In the following, for a collection of filters $\ca{F}$, we denote as $\ca{F}^*$ the collection $\{\up^L \bwe F:F\in \ca{F}\}$.
\begin{proposition}\label{C*canonical}
    For any Raney extension $(L,C)$ and any collection $\ca{F}\se \Fi(L)$,
    \begin{enumerate}
        \item $(L,C)$ is $\ca{F}$-dense if and only if $C^*\se \ca{I}(\ca{F}^*)$;
        \item $(L,C)$ is $\ca{F}$-compact if and only if $\ca{F}\se C^*$.
    \end{enumerate}
    In particular, $(L,C)$ is $\ca{F}$-canonical if and only if $\ca{I}(\ca{F})^{op}=C^*$.
\end{proposition}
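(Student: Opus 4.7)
The plan is to exploit the adjunction and, in particular, the isomorphism of Raney extensions $\bwe:(L,C^*)\lra (L,C):\up^L$ from Theorem \ref{charC*}, translating everything into the filter-theoretic picture inside $\Fi(L)$.

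For part (1), I would begin by observing that under the order-reversing isomorphism $\up^L:C\cong C^*\se \opp{\Fi(L)}$, each element $\bwe F \in C$ (for $F\in \Fi(L)$) corresponds to $\up^L\bwe F\in\ca{F}^*$, and joins in $C$ correspond to intersections in $C^*$. Therefore the statement ``$\{\bwe F:F\in\ca{F}\}$ join-generates $C$'' transports to ``every element of $C^*$ is an intersection of elements of $\ca{F}^*$'', which is precisely $C^*\se \ca{I}(\ca{F}^*)$.

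For part (2), the key observation is that for any filter $F\in\Fi(L)$ we automatically have $F\se \up^L\bwe F$, since every element of $F$ lies in $L$ and bounds $\bwe F$ from above. Thus $F=\up^L \bwe F$ holds iff the reverse inclusion does, i.e.\ iff every $a\in L$ with $\bwe F\leq a$ already lies in $F$ -- which is exactly $\ca{F}$-compactness applied to $F$. Hence $\ca{F}$-compactness is equivalent to $F\in C^*$ for each $F\in\ca{F}$, i.e.\ $\ca{F}\se C^*$. The nontrivial direction of the backward implication needs that if $F=\up^L c$ for some $c\in C$, then $\bwe F = c$; this follows because $L$ meet-generates $C$, so $c=\bwe(\up^L c)=\bwe F$.

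For the final ``in particular'' clause, I combine the two equivalences. If $(L,C)$ is both $\ca{F}$-dense and $\ca{F}$-compact, then compactness gives $\ca{F}\se C^*$ together with $F=\up^L\bwe F$ for each $F\in\ca{F}$, so $\ca{F}^*=\ca{F}$; then density yields $C^*\se \ca{I}(\ca{F}^*)=\ca{I}(\ca{F})$, while $\ca{F}\se C^*$ combined with $C^*$ being closed under intersections (as a sublocale of $\Fi(L)$) gives $\ca{I}(\ca{F})\se C^*$. Conversely, $\ca{I}(\ca{F})=C^*$ gives $\ca{F}\se C^*$ (compactness), and from this $\ca{F}^*=\ca{F}$, so density is immediate from $C^*=\ca{I}(\ca{F})=\ca{I}(\ca{F}^*)$. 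The main thing to keep track of throughout is the order reversal in the adjunction $\bwe\dashv \up^L$; once this is handled cleanly, the rest is bookkeeping.
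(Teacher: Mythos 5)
Your proposal is correct and follows essentially the same route as the paper's proof: both parts are handled by transporting the density and compactness conditions along the isomorphism $\up^L:C\cong C^*$ of Theorem \ref{charC*} (equivalently, by join-preservation of the left adjoint $\up^L$) and by the observation that $F\se \up^L\bwe F$ always holds, with equality exactly when $F$ is a fixpoint, i.e.\ an element of $C^*$. Your explicit verification of the final ``in particular'' clause, including $\ca{F}^*=\ca{F}$ under compactness and closure of $C^*$ under intersections, is a harmless elaboration of what the paper leaves implicit.
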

\begin{proof}
Let us prove the first claim. If $(L,C)$ is $\ca{F}$-dense, then, for all $c\in C$, $c=\bve_i \bwe F_i$ for some collection $F_i\in \ca{F}$, that is, $\up^L c=\up^L \bve_i \bwe F_i$. As $\up^L:C\to \opp{\Fi(L)}$ is a left adjoint, it preserves all joins, and so $\up^L \bve_i \bwe F_i=\bca_i \up^L \bwe F$. For the converse, suppose that $C^*\se \ca{I}(\ca{F}^*)$. For $c\in C$, $\up^L c=\bca_i \up^L \bwe F_i$ for some collection $F_i\in \ca{F}$. Again, by preservation of joins of $\up^L$, we obtain $c=\bve_i \bwe F_i$. To see the equivalence stated in the second claim, we observe that for any filter $F\se L$ we always have $F\se \up^L \bwe F$. For any collection $\ca{F}\se \Fi(L)$ it is the case that for all $F\in \ca{F}$ the reverse set inclusion holds if and only if the Raney extension is $\ca{F}$-compact. But this is also equivalent to having that all filters in $\ca{F}$ are fixpoints of $\up^L\dashv \bwe$, i.e. them being elements of $C^*$.
\end{proof}

\begin{corollary}\label{c:C*canonical}
    For any collection of filters $\ca{F}\se \Fi(L)$, a Raney extension $(L,C)$ such that $C^*\se \ca{I}(\ca{F}\cap C^*)$ is $\ca{F}$-dense.
\end{corollary}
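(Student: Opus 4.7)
The plan is to reduce the statement directly to Proposition \ref{C*canonical}(1), which identifies $\ca{F}$-density with the set-inclusion $C^*\se \ca{I}(\ca{F}^*)$, where $\ca{F}^*=\{\up^L\bwe F:F\in\ca{F}\}$. Given this characterization, it suffices to show that $\ca{I}(\ca{F}\cap C^*)\se \ca{I}(\ca{F}^*)$, since then the hypothesis $C^*\se \ca{I}(\ca{F}\cap C^*)$ chains into $C^*\se \ca{I}(\ca{F}^*)$ and $\ca{F}$-density follows.

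To establish $\ca{I}(\ca{F}\cap C^*)\se \ca{I}(\ca{F}^*)$, I would verify the stronger pointwise inclusion $\ca{F}\cap C^*\se \ca{F}^*$. The key input is Theorem \ref{charC*}: the adjunction $\up^L\dashv \bwe$ has $C^*$ as its collection of fixpoints on the filter side. So for any $G\in \ca{F}\cap C^*$, being in $C^*$ means $G=\up^L\bwe G$, and being in $\ca{F}$ means this equals an element of $\ca{F}^*$ by the very definition of $\ca{F}^*$. Closing under intersections preserves inclusions, so $\ca{I}(\ca{F}\cap C^*)\se \ca{I}(\ca{F}^*)$, as required.

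The argument has no hard step; the only thing to be careful about is distinguishing $\ca{F}\cap C^*$ (elements of $\ca{F}$ which happen to be fixpoints of the adjunction) from $\ca{F}^*$ (images under $\up^L\bwe$ of arbitrary elements of $\ca{F}$). The corollary is essentially the observation that these two collections agree on the fixpoints and that density, by Proposition \ref{C*canonical}, only sees what happens inside $C^*$.
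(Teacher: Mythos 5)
Your argument is correct and is exactly the (implicit) derivation the paper intends: the corollary is stated without proof as an immediate consequence of Proposition \ref{C*canonical}(1), via the observation that every $G\in\ca{F}\cap C^*$ is a fixpoint, so $G=\up^L\bwe G\in\ca{F}^*$, giving $C^*\se\ca{I}(\ca{F}\cap C^*)\se\ca{I}(\ca{F}^*)$. Your care in distinguishing $\ca{F}\cap C^*$ from $\ca{F}^*$ is the right point to flag, and nothing further is needed.
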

 Existence and uniqueness of what we called $\ca{F}$-canonical extensions of lattices to complete lattices are well-known, and these results stem from the theory of polarities by Birkhoff (see \cite{birkhoff40}). For a general version of the existence and uniqueness results, see for instance Section 2 of \cite{gehrke06}, see \cite{gehrke01} for its application to distributive lattices. From particularizing the analysis of \cite{gehrke01} to the case where we start from a frame, we directly obtain the following.

\begin{theorem}\label{gehrkethm}(see for example \cite{gehrke01}, in particular Remark 2.8)
    For a frame $L$ and a collection $\ca{F}\se L$ of its filters such that $\ca{I}(\ca{F})$ contains the principal ones, there is a unique injective monotone map $f^{\ca{F}}:L\ra L^{\ca{F}}$ to a complete lattice $L^{\ca{F}}$ which is $\ca{F}$-canonical. Concretely, this is the embedding $L\se \opp{\ca{I}(\ca{F})}$ mapping each element to its principal filter. This embedding also preserves the frame operations, and $L$ meet-generates $\opp{\ca{I}(\ca{F})}$.
\end{theorem}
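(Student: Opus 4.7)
The plan is to verify directly that the concrete map $f^{\ca{F}}:L\to \opp{\ca{I}(\ca{F})}$, $a\mapsto \up a$, has all the claimed properties (existence), and then to establish uniqueness by a standard density--compactness argument.

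First, I would pin down the lattice structure of $\opp{\ca{I}(\ca{F})}$. Since $\ca{I}(\ca{F})$ is by definition closed under set intersections, meets in $\ca{I}(\ca{F})$ are intersections, while joins are given by the smallest intersection of members of $\ca{F}$ containing the union. Passing to the opposite, meets in $\opp{\ca{I}(\ca{F})}$ are joins in $\ca{I}(\ca{F})$, and joins in $\opp{\ca{I}(\ca{F})}$ are intersections. The crucial identity is that for any filter $F\in \ca{I}(\ca{F})$, the set-theoretic equality $F=\bcu_{x\in F}\up x$ realizes $F$ as the join of the principal filters $\{\up x:x\in F\}$ in $\ca{I}(\ca{F})$ (any member of $\ca{I}(\ca{F})$ containing all $\up x$ contains $F$, and $F$ itself lies in $\ca{I}(\ca{F})$). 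Equivalently, $\bwe f^{\ca{F}}[F]=F$ when computed in $\opp{\ca{I}(\ca{F})}$.

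With this identity in hand, the remaining properties of $f^{\ca{F}}$ are short. Injectivity follows from the assumption that principal filters lie in $\ca{I}(\ca{F})$ together with $\up a=\up b\Leftrightarrow a=b$. For $\ca{F}$-compactness, the inequality $\bwe f^{\ca{F}}[F]\leq f^{\ca{F}}(a)$ in $\opp{\ca{I}(\ca{F})}$ unfolds to $\up a\se F$, i.e.\ $a\in F$. For $\ca{F}$-density, any $S\in \ca{I}(\ca{F})$ is by definition $\bca_i F_i$ for some family $F_i\in \ca{F}$, which in $\opp{\ca{I}(\ca{F})}$ reads $S=\bve_i F_i=\bve_i \bwe f^{\ca{F}}[F_i]$. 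Preservation of frame operations is by direct computation: $\up(a\we b)$ is the smallest filter of $L$ containing both $\up a$ and $\up b$, hence the meet of these in $\opp{\ca{I}(\ca{F})}$; and $\up(\bve_i a_i)=\bca_i\up a_i$ is their join. Meet-generation of $\opp{\ca{I}(\ca{F})}$ by $L$ is yet another reading of the same identity, applied to $S\in \ca{I}(\ca{F})$ in place of $F$.

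For uniqueness, suppose $g:L\to M$ is any injective monotone $\ca{F}$-canonical extension. The plan is to define $\phi:\opp{\ca{I}(\ca{F})}\to M$ by $\phi(S)=\bve\{\bwe g[F]:F\in \ca{F},\ F\se S\}$ and a reverse map $\psi:M\to \opp{\ca{I}(\ca{F})}$ by $\psi(m)=\bca\{F\in \ca{F}:\bwe g[F]\leq m\}$; then check that these are mutually inverse order isomorphisms sending $\up a$ to $g(a)$. Density of $g$ forces $\phi$ to be surjective, while compactness of $g$ ensures $\phi(\up a)=g(a)$ and that $\psi\circ \phi$ fixes each $S\in \ca{I}(\ca{F})$; the reverse composite is then handled using density of $g$. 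The main obstacle I expect is the careful bookkeeping ensuring that these joins and intersections are well-defined members of $\ca{I}(\ca{F})$ and that the two composites are identities; this is where both $\ca{F}$-density and $\ca{F}$-compactness are used crucially, and it is the pattern specialized from the general canonical--extension machinery of \cite{gehrke06}.
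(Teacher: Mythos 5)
The paper itself gives no proof of this theorem --- it is imported from the canonical-extension literature (\cite{gehrke01}) and only "particularized" to frames --- so your attempt has to be judged against the standard argument rather than an internal one. Your existence half is correct and is exactly the expected computation: in $\opp{\ca{I}(\ca{F})}$ joins are intersections and the meet of a family is the least member of $\ca{I}(\ca{F})$ containing its union, so $\bwe f^{\ca{F}}[F]=F$ for every $F\in\ca{I}(\ca{F})$, from which injectivity, $\ca{F}$-density, $\ca{F}$-compactness, preservation of the frame operations and meet-generation all follow as you describe. In the uniqueness half there is first a slip of direction: since the order on $\opp{\ca{I}(\ca{F})}$ is reverse inclusion, the filter elements lying below $S$ are the $F\in\ca{F}$ with $S\se F$, not $F\se S$; as written your $\phi$ is antitone while $\psi$ is monotone, so they cannot be mutually inverse, and $\phi(\up a)=g(a)$ fails. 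The intended definition is $\phi(S)=\bve\{\bwe g[F]:F\in\ca{F},\ S\se F\}$.

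More seriously, even after this correction the "careful bookkeeping" you defer is not routine, and one quarter of it cannot be done from the stated hypotheses. The identities $\phi(\up a)=g(a)$, $\psi\circ\phi=\mathrm{id}$ and $m\le\phi(\psi(m))$ do follow from $\ca{F}$-density and $\ca{F}$-compactness, but the remaining inequality $\phi(\psi(m))\le m$ --- i.e.\ that $\bwe g[F]\le m$ for every $F\in\ca{F}$ containing $\bca\{G\in\ca{F}:\bwe g[G]\le m\}$ --- does not. Indeed, uniqueness among merely injective monotone dense-and-compact maps fails: take $L$ the four-element Boolean frame $\{0,p,q,1\}$ and $\ca{F}$ its principal filters, so that $\opp{\ca{I}(\ca{F})}\cong L$, and let $g$ be the order embedding of $L$ into the five-element lattice obtained by inserting a new element $r=g(p)\vee g(q)$ strictly below the top; this $g$ is injective, monotone, $\ca{F}$-dense and $\ca{F}$-compact, yet its codomain is not isomorphic to the concrete model. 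What the cited sources use in addition (and what is implicit in how the theorem is applied to Raney extensions, where $L$ meet-generates $C$) is the ideal-side condition that $g[L]$ be meet-dense in $M$. With that in hand your scheme closes cleanly: if $m\le g(a)$, then $\bwe g[G]\le g(a)$ for every $G\in\ca{F}$ with $\bwe g[G]\le m$, so $a\in\psi(m)$ by compactness, hence $a\in F$ and $\bwe g[F]\le g(a)$ for every $F\in\ca{F}$ with $\psi(m)\se F$; taking the meet over all such $a$ and using meet-density gives $\phi(\psi(m))\le m$. So you should either add meet-density of $g[L]$ to the properties you verify against, or state uniqueness only among extensions in which $L$ meet-generates the completion, as the paper in effect uses it.
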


We now wish to adapt the theorem above to prove existence and uniqueness of $\ca{F}$-canonical Raney extensions on a frame $L$ for certain collections of filters $\ca{F}$. 
\begin{lemma}\label{sufficient}
Suppose that $(L,C)$ is a Raney extension. Then:
\begin{itemize}
    \item $C^*$ contains all principal filters; 
    \item All filters in $C^*$ are strongly exact.
\end{itemize}
\end{lemma}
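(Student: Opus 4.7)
The plan is that both parts of the lemma reduce to unwinding the definition of Raney extension, using Theorem \ref{charC*} to identify elements of $C$ with their image filters $\up^L c$.

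For the first claim, I would simply observe that for $a \in L$ (hence $a \in C$), the filter $\up^L a := \up a \cap L$ coincides with the principal filter $\up a$ of $L$, because $a$ itself lies in $L$ and the order on $L$ is inherited from $C$. Thus the principal filter $\up a$ is the image under $\up^L$ of the element $a \in C$, hence belongs to $C^* = \{\up^L c : c \in C\}$. There is nothing subtle here.

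For the second claim, I would start by fixing $F \in C^*$, say $F = \up^L c$ for some $c \in C$, and let $\bwe_i^L x_i$ be a strongly exact meet in $L$ with each $x_i \in F$. To show $\bwe_i^L x_i \in F$, I need $c \leq \bwe_i^L x_i$ in $C$. From $x_i \in \up^L c$ I get $c \leq x_i$ in $C$ for each $i$, so $c \leq \bwe_i^C x_i$, the meet computed in $C$. The point now is that the embedding $L \se C$ preserves strongly exact meets by definition of Raney extension, so $\bwe_i^C x_i = \bwe_i^L x_i$, giving $c \leq \bwe_i^L x_i$ and hence $\bwe_i^L x_i \in \up^L c = F$.

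There is no real obstacle: the first part is a direct computation, and the second is precisely what the strong-exactness-preservation clause in the definition of Raney extension was designed to deliver. The only thing to be careful about is distinguishing meets in $L$ from meets in $C$, which is why one needs the preservation hypothesis in the definition rather than merely the fact that $L$ is a subframe.
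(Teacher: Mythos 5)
Your proposal is correct and follows essentially the same route as the paper: the first item is the direct observation that $\up^L a$ is the principal filter $\up a$ for $a\in L$, and the second uses exactly the preservation of strongly exact meets by $L\se C$ to conclude $\bwe^L_i x_i=\bwe^C_i x_i\geq c$, hence $\bwe^L_i x_i\in \up^L c=F$. The paper phrases the second step via $\bwe F$ and the fixpoint identity $F=\up^L\bwe F$ rather than a chosen $c$ with $F=\up^L c$, but this is only a cosmetic difference.
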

\begin{proof}
For the first item, we only notice that it is clear that $a=\bwe\up^L a$ for all $a\in L$. For the second, suppose that $F\in C^*$, and that $x_i\in F$ is a family such that the meet $\bwe^L_i x_i$, as calculated in $L$, is strongly exact. By definition of Raney extension, this meet is preserved by the embedding $e:L\se C$. This means that $\bwe^L_i x_i=\bwe_i x_i$, where the second meet is computed in $C$. Therefore, since $\bwe F\leq x_i$ for all $i\in I$, we also have $\bwe F\leq \bwe_i x_i$. Since $F\in C^*$, $F=\up^L \bwe F$, and so $\bwe^L_i x_i\in F$.
\end{proof}

\begin{theorem}\label{containsprincipal}
    For a frame $L$ and any collection $\ca{F}\se \Fi(L)$ of filters, the pair $(L,\opp{\ca{I}(\ca{F})})$ is a Raney extension if and only if:
    \begin{enumerate}
    \item $\ca{I}(\ca{F})$ contains all principal filters;
    \item $\ca{I}(\ca{F})^{op}\se \Fi(L)^{op}$ is a subcolocale inclusion;
    \item All filters in $\ca{F}$ are strongly exact.
    \end{enumerate}
    In case these hold, $(L,\opp{\ca{I}(\ca{F})})$ is the unique (up to isomorphism) $\ca{F}$-canonical Raney extension.
\end{theorem}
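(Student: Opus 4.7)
The plan is to establish the equivalence in two directions and then derive existence/uniqueness from Proposition \ref{C*canonical}.

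For the forward direction, assume $(L,\opp{\ca{I}(\ca{F})})$ is a Raney extension (with the natural embedding $a\mapsto\up a$). Condition 1 is immediate because $\up a$ must lie in $\ca{I}(\ca{F})$ for every $a\in L$. Next I would compute $C^*$ explicitly: when $C=\opp{\ca{I}(\ca{F})}$, a direct calculation using the adjunction $\bwe\dashv\up^L$ of Theorem \ref{charC*} shows that for any $c\in\ca{I}(\ca{F})$ (viewed as a filter of $L$) we have $\up^L c=\{a\in L:\up a\se c\text{ in }\ca{I}(\ca{F})\}=\{a\in L:a\in c\}=c$. Hence $C^*=\ca{I}(\ca{F})$ as subsets of $\Fi(L)$. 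Theorem \ref{charC*} then gives that $C^*\se\Fi(L)$ is a sublocale, which is exactly condition 2, and Lemma \ref{sufficient} gives that every filter in $C^*=\ca{I}(\ca{F})$ is strongly exact, so in particular every $F\in\ca{F}$ is (condition 3).

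For the reverse direction, I would construct the Raney extension from the three conditions. By Theorem \ref{gehrkethm}, condition 1 already supplies an injective map $e:L\to\opp{\ca{I}(\ca{F})}$, $a\mapsto\up a$, preserving the frame operations and with $L$ meet-generating the target. Condition 2 makes $\ca{I}(\ca{F})$ a sublocale of the frame $\Fi(L)$, so $\opp{\ca{I}(\ca{F})}$ is a coframe. The delicate step is verifying that $e$ preserves strongly exact meets. Given a strongly exact meet $\bwe^L_i x_i$ in $L$, the meet of $\{\up x_i\}_i$ in $C=\opp{\ca{I}(\ca{F})}$ is the join of the $\up x_i$ in $\ca{I}(\ca{F})$, i.e.\ the smallest element of $\ca{I}(\ca{F})$ containing every $x_i$. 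I must identify this with $\up(\bwe^L_i x_i)$. One inclusion uses condition 1: $\up(\bwe^L_i x_i)$ itself lies in $\ca{I}(\ca{F})$ and contains each $\up x_i$. For the reverse, take any $G\in\ca{I}(\ca{F})$ containing every $x_i$ and write $G=\bca_j F_j$ with $F_j\in\ca{F}$; each $F_j$ contains $\{x_i\}_i$, and by condition 3 is strongly exact, so $\bwe^L_i x_i\in F_j$ for every $j$, hence $\bwe^L_i x_i\in G$, so $\up(\bwe^L_i x_i)\se G$.

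Both the $\ca{F}$-canonicity of $(L,\opp{\ca{I}(\ca{F})})$ and the uniqueness statement then follow from Proposition \ref{C*canonical}: the computation in the forward direction shows $C^*=\ca{I}(\ca{F})$ for this extension, which by the proposition means it is $\ca{F}$-canonical; conversely, any $\ca{F}$-canonical Raney extension $(L,C')$ satisfies $C'^*=\ca{I}(\ca{F})$, and Theorem \ref{charC*} then yields an isomorphism $C'\cong\opp{\ca{I}(\ca{F})}$ of Raney extensions. The principal obstacle I anticipate is the strongly-exact-meets step in the reverse direction, which requires carefully tracking the reversal of order (so that meets in $C$ become joins in $\ca{I}(\ca{F})$) and then propagating strong exactness through the intersection representation of an arbitrary element of $\ca{I}(\ca{F})$.
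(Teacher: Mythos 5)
Your proof is correct and follows essentially the same route as the paper: necessity via the computation $C^*=\ca{I}(\ca{F})$ together with Lemma \ref{sufficient} and the sublocale property of $C^*$, sufficiency via Theorem \ref{gehrkethm} plus the direct check that strongly exact meets are preserved (using that intersections of strongly exact filters in $\ca{F}$ are strongly exact), and canonicity from Proposition \ref{C*canonical}. The only cosmetic difference is that you derive uniqueness from Proposition \ref{C*canonical} together with Theorem \ref{charC*}, whereas the paper cites the abstract uniqueness in Theorem \ref{gehrkethm}; both are valid.
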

\begin{proof}
  Let us show that the three conditions are necessary. By Proposition \ref{C*canonical}, if an $\ca{F}$- canonical extension $(L,C)$ exists then $C^*=\opp{\ca{I}(\ca{F})}$. Necessity then follows by Lemma \ref{sufficient}. Let us now show that for a collection $\ca{F}\se \Fi(L)$ satisfying the three properties above, the pair $(L,\opp{\ca{I}(\ca{F})})$ is a Raney extension. We know from Theorem \ref{gehrkethm} that $L\se \opp{\ca{I}(\ca{F})}$ preserves the frame operations (and this is also easy to check), and that $L$ meet-generates the coframe component. We show that the embedding $L\se \opp{\ca{I}(\ca{F})}$ preserves strongly exact meets. Suppose that $x_i\in L$ is a family such that their meet $\bwe^L_i x_i$ is strongly exact. As all filters in $\ca{I}(\ca{F})$ are strongly exact, any such filter which contains $\up x_i$ for all $i\in I$ must also contain $\bwe^L_ix_i$. This means that in the coframe $\ca{I}(\ca{F})^{op}$ the greatest lower bound of the family $\{\up x_i:i\in I\}$ is the principal filter $\up \bwe^L_ix_i$. This means that the meet $\bwe^L_i x_i$ is preserved. The fact that it satisfies the required universal property follows from the characterization in Proposition \ref{C*canonical}, and uniqueness follows from Theorem \ref{gehrkethm}.
\end{proof}

Item (3) of Theorem \ref{containsprincipal} above tells us that for every Raney extension $(L,C)$ there is the upper bound $C^*\se \fse(L)^{op}$. There also is a lower bound.

\begin{lemma}\label{principalmin}
For a frame $L$ the collection $\fe(L)$ is the smallest sublocale of $\Fi(L)$ containing all the principal filters.
\end{lemma}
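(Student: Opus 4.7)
The plan is to prove the statement in two halves: first, that $\fe(L)$ is a sublocale of $\Fi(L)$ containing every principal filter, and second, that every such sublocale must already contain $\fe(L)$.

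For the first half, I would note that $\fe(L)$ is by construction a sublocale of $\fse(L)$, which is a sublocale of $\Fi(L)$, so by composition $\fe(L)$ is itself a sublocale of $\Fi(L)$. To see that each principal filter $\up a$ is exact, I would observe that $\up a$ is closed under arbitrary meets in $L$ — if $a\leq x_i$ for every $i$ then $a\leq \bwe_i x_i$ — hence in particular under exact meets.

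For the second half, I would let $S\se\Fi(L)$ be an arbitrary sublocale containing every principal filter and apply Lemma \ref{l:small-sl} in the ambient frame $\Fi(L)$ to conclude that $S$ must contain
$$\ca{M}\bigl(\{G\to\up a : G\in\Fi(L),\ a\in L\}\bigr).$$
It then suffices to show that $\fe(L)$ is contained in this meet-closure. For this, I would invoke Lemma \ref{charexact}: every exact filter $F$ may be written as $F=\bca_i(\up a_i\to\up b_i)$, and each factor $\up a_i\to\up b_i$ is an instance of $G\to\up b_i$ with the choice $G=\up a_i\in\Fi(L)$. Since the intersection is a meet in $\Fi(L)$, $F$ lies in the meet-closure, and hence in $S$.

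The main obstacle is mild: one should check that the Heyting implication in $\Fi(L)$, restricted to pairs of principal filters, really coincides with the operation denoted $\up a\to\up b$ in Lemma \ref{charexact}; but the explicit formula $\up a\to\up b=\{x\in L : b\leq x\ve a\}$ recorded just above that lemma makes this immediate. Everything else is a direct alignment between the characterization of the generated sublocale (Lemma \ref{l:small-sl}) and the characterization of exact filters (Lemma \ref{charexact}).
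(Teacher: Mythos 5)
Your proof is correct and follows essentially the same route as the paper: the paper applies the sublocale axioms of $\Fi(L)$ directly (closure under $\up a\to{-}$ and under intersections) and then invokes Lemma \ref{charexact}, whereas you package the same step through Lemma \ref{l:small-sl}; the first half of your argument ($\fe(L)$ being a sublocale containing the principal filters) is established elsewhere in the paper (Theorem \ref{posetofsubloc} and the observation that principal filters are closed under all meets), so it is a harmless addition here.
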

\begin{proof}
Let $\ca{S}\se \Fi(L)$ be a sublocale containing all the principal filters. For any $x,y\in L$, we must have $\up x\ra \up y\in \ca{S}$. As sublocales are closed under all meets, all intersections of filters of the form $\up x\ra \up y$ must be in $\ca{S}$. Therefore, by the characterization in Lemma \ref{charexact}, $\fe(L)\se \ca{S}$.
\end{proof}

We provide the frame version of a result in \cite{ball14b}: in Theorem 3.7, it is shown that for a meet-semilattice $S$ the smallest frame generated by it is $\ca{J}^{e}(S)$, the collection of all downsets which are closed under those joins of $S$ that distribute over all finite meets. Recently, the same result has been re-proven for frames with bases of meet-semilattices in \cite{bezhanishvili24}.

\begin{proposition}\label{raneymin}
For a frame $L$ the pair $(L,\opp{\fe(L)})$ is a Raney extension, and $\fe(L)^{op}\se C^*$ for all Raney extensions $(L,C)$.
\end{proposition}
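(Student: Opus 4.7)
The plan is to prove both claims by directly invoking the machinery already in place. For the first claim, I will apply Theorem \ref{containsprincipal} with the collection $\ca{F} = \fe(L)$ and verify its three hypotheses. For condition (1), principal filters are trivially exact (for $a\in L$, the meet $\bwe\up a = a$ is preserved by every operation), so they lie in $\fe(L)\se\ca{I}(\fe(L))$. For condition (2), the background recalls that $\fe(L)\se\fse(L)\se\Fi(L)$ are each sublocale inclusions; by Lemma \ref{slofsl} and transitivity of sublocale-hood, $\fe(L)$ is a sublocale of $\Fi(L)$, and therefore already closed under intersections, so $\ca{I}(\fe(L))=\fe(L)$ and this is a sublocale of $\Fi(L)$. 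For condition (3), the inclusion $\fe(L)\se\fse(L)$ directly says that every exact filter is strongly exact. Thus Theorem \ref{containsprincipal} applies and gives the Raney extension $(L,\opp{\fe(L)})$.

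For the second claim, let $(L,C)$ be any Raney extension. By Theorem \ref{charC*} the inclusion $C^*\se\Fi(L)$ is a sublocale inclusion, and by Lemma \ref{sufficient} this sublocale contains every principal filter. Lemma \ref{principalmin} identifies $\fe(L)$ as the smallest sublocale of $\Fi(L)$ with this property, so $\fe(L)\se C^*$ as subsets of $\Fi(L)$; transferring this inclusion under the identification $C\cong C^*$ of Theorem \ref{charC*} gives $\fe(L)^{op}\se C^*$ as posets.

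There is no genuine technical obstacle, since every ingredient has been packaged earlier; the only point requiring care is the bookkeeping that $\ca{I}(\fe(L))=\fe(L)$, which in turn rests on the observation that $\fe(L)$ is itself a sublocale of $\Fi(L)$ (and hence meet-closed in the filter frame, where meets are intersections). Once this is noted, both halves of the statement reduce to one-line applications of Theorem \ref{containsprincipal} and Lemma \ref{principalmin} respectively.
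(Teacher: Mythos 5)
Your proof is correct and follows essentially the same route as the paper: verifying the three hypotheses of Theorem \ref{containsprincipal} for $\ca{F}=\fe(L)$, and then combining Lemma \ref{sufficient} with the minimality statement of Lemma \ref{principalmin} for the second claim. The only cosmetic difference is that you obtain closure of $\fe(L)$ under intersections from its being a sublocale of $\Fi(L)$ (where meets are intersections), whereas the paper cites Lemma \ref{charexact}; both are valid, though note that Lemma \ref{slofsl} is not what gives transitivity of sublocale inclusions --- that is a separate standard fact, which the paper also uses implicitly.
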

\begin{proof}
Principal filters are exact as they are closed under all meets. By Theorem \ref{posetofsubloc}, $\fe(L)\se \Fi(L)$ is a sublocale inclusion and all exact filters are strongly exact. Furthermore, by Lemma \ref{charexact}, $\fe(L)$ is closed under all intersections. Then, $(L,\opp{\fe(L)})$ is a Raney extension by Theorem \ref{containsprincipal}. If $(L,C)$ is a Raney extension, the collection $C^*\se \Fi(L)$ is a sublocale which contains all principal filters, by Lemma \ref{sufficient}, and so $\fe(L)\se C^*$ by Lemma \ref{principalmin}.
\end{proof}

We may order Raney extensions over some frame $L$ by subcolocale inclusion of the coframe components. We obtain a result which may be seen as a version for Raney extensions of Theorem 3.7 in \cite{ball14ext}, where the authors consider the ordered collections of all frames join-generated by a distributive lattice. In the recent work \cite{bezhanishvili24} the result is given a new proof.

\begin{theorem}\label{Rboundaries}
    For a frame $L$, the ordered collection of Raney extensions over $L$ is the interval $[\fe(L),\fse(L)]$ of the coframe of sublocales of $\fse(L)$.
\end{theorem}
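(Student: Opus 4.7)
The plan is to identify Raney extensions over $L$, up to isomorphism, with their associated sublocales $C^*\se \Fi(L)$ via Theorem \ref{charC*}, and then to show that the sublocales which arise in this way are exactly those lying in the interval $[\fe(L),\fse(L)]$ inside the coframe of sublocales of $\fse(L)$.

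First, combining Theorem \ref{charC*} with Theorem \ref{containsprincipal}, the assignment $(L,C)\mapsto C^*$ induces a bijection between isomorphism classes of Raney extensions over $L$ and those sublocales $S\se \Fi(L)$ that (i) contain all principal filters and (ii) consist entirely of strongly exact filters. The forward direction of this bijection is given by Lemma \ref{sufficient}, while the backward direction is Theorem \ref{containsprincipal} applied to $\ca{F}=S$ (noting that, since $S$ is already a sublocale, $\ca{I}(S)=S$).

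Second, I would verify that conditions (i) and (ii) above together describe exactly the sublocales $S$ of $\fse(L)$ with $\fe(L)\se S$. The inclusion $S\se \fse(L)$ is condition (ii), and then by Lemma \ref{slofsl} $S$ is a sublocale of $\fse(L)$; condition (i) combined with Lemma \ref{principalmin} gives $\fe(L)\se S$. Conversely, any sublocale $S$ of $\fse(L)$ with $\fe(L)\se S$ is a sublocale of $\Fi(L)$ (again by Lemma \ref{slofsl}), contains all principal filters because $\fe(L)$ does, and consists of strongly exact filters because $S\se \fse(L)$. The boundary cases are covered by Proposition \ref{raneymin} at the bottom and by Theorem \ref{containsprincipal} applied to $\ca{F}=\fse(L)$ at the top.

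Finally, for the order statement, I would observe that each Raney coframe $C$ is presented as a coframe quotient $\bwe:\opp{\Fi(L)}\epi C$, with fixpoint sublocale $C^*\se \Fi(L)$ under $\up^L\dashv \bwe$. By the standard duality between regular quotients of a coframe and sublocales of the corresponding frame, subcolocale inclusion between two Raney coframes $C$ and $D$ over $L$ (that is, a coframe regular monomorphism compatible with the inclusions from $L$) corresponds exactly to sublocale inclusion $C^*\se D^*$ inside $\Fi(L)$, and hence inside $\fse(L)$. The main obstacle is making this last correspondence precise: one needs to check that Raney morphisms between extensions over the same $L$ whose underlying map restricts to the identity on $L$ are classified by inclusion of the fixpoint sublocales, but this follows routinely because every element of $C$ is identified with its filter $\up^L c\in C^*$ and the coframe operations are inherited from $\opp{\Fi(L)}$.
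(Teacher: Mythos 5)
Your proof is correct and takes essentially the same route as the paper: both directions rest on Theorem \ref{containsprincipal} together with Lemma \ref{sufficient}, Proposition \ref{raneymin} and Lemma \ref{principalmin}, using the identification of $(L,C)$ with $C^*$ from Theorem \ref{charC*}, with your final paragraph merely spelling out the order correspondence that the paper treats as definitional. The only quibble is that your citation of Lemma \ref{slofsl} for ``a sublocale of $\fse(L)$ is a sublocale of $\Fi(L)$'' is the converse of what that lemma states; the needed transitivity fact is standard (and the paper relies on it silently too), so this is a mis-citation rather than a gap.
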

\begin{proof}
    That every Raney extension belongs to the section $[\fe(L),\fse(L)]$ follows from Theorem \ref{containsprincipal} and Proposition \ref{raneymin}. Suppose that there is a sublocale $\ca{F}\se \fse(L)$ such that $\fe(L)\se \ca{F}$. By Lemma \ref{principalmin}, $\ca{F}$ contains all principal filters, and so, by Theorem \ref{containsprincipal}, the pair $(L,\opp{\ca{F}})$ is a Raney extension.
\end{proof}

\subsection{Notable examples of Raney extensions}\label{ss:concr}
In this subsection, we look at some concrete examples of Raney extensions.
\begin{proposition}
The following are all Raney extensions.
    \begin{itemize}
    \item The pair $(L,\opp{\fse(L)})$ for any frame $L$;
    \item The pair $(L,\opp{\fe(L)})$ for any frame $L$;
    \item The pair $(L,\opp{\fr(L)})$ for subfit $L$;
    \item The pair $(L,\opp{\ca{I}(\fso(L))})$ for pre-spatial $L$;
    \item The pair $(L,\opp{\ca{I}(\fcp(L))})$ for spatial $L$.
\end{itemize}
\end{proposition}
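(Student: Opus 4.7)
The plan is to verify each item against Theorem \ref{containsprincipal} (or equivalently, show each collection fits inside the interval $[\fe(L),\fse(L)]$ of sublocales via Theorem \ref{Rboundaries}). For a pair $(L,\opp{\ca{F}})$ with $\ca{F}\se \Fi(L)$, it suffices to check three things: (i) $\ca{F}$ is closed under intersections, so that $\ca{I}(\ca{F})=\ca{F}$; (ii) $\ca{F}\se \Fi(L)$ is a sublocale inclusion; (iii) $\ca{F}$ sits inside $\fse(L)$ and contains all principal filters of $L$.

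For the first two items, $\fse(L)$ and $\fe(L)$ are already known to be sublocales of $\Fi(L)$ by Theorem \ref{posetofsubloc}; they are thus intersection-closed. All principal filters are trivially strongly exact (they are closed under \emph{all} meets, in particular the strongly exact ones), hence lie in both $\fse(L)$ and in $\fe(L)$ by Lemma \ref{principalmin}. The second item was in fact already established in Proposition \ref{raneymin}; the first follows by the same argument, with $\fe(L)$ replaced by $\fse(L)$.

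For item (3), recall that $\fr(L)$ is the Booleanization of $\Fi(L)$, so it is a sublocale of $\Fi(L)$, in particular closed under intersections. By Theorem \ref{posetofsubloc}, $\fr(L)\se\fse(L)$, so every regular filter is strongly exact. Finally, Proposition \ref{famouschar} gives the equivalence: $\fr(L)$ contains all principal filters iff $L$ is subfit, which is exactly our hypothesis, so Theorem \ref{containsprincipal} applies. For items (4) and (5), Theorem \ref{posetofsubloc} says that $\ca{I}(\fcp(L))\se \ca{I}(\fso(L))\se\fse(L)$ are sublocale inclusions; in particular both collections are intersection-closed (by construction) and consist of strongly exact filters. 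Proposition \ref{famouschar} then identifies the pre-spatiality, resp. spatiality, of $L$ with the assertion that $\ca{I}(\fso(L))$, resp. $\ca{I}(\fcp(L))$, contains all principal filters, completing the application of Theorem \ref{containsprincipal}.

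No genuine obstacle remains once the apparatus of Section~\ref{S1} is in place; the proof is essentially a bookkeeping exercise assembling Theorems \ref{posetofsubloc}, \ref{containsprincipal}, Proposition \ref{famouschar}, Lemma \ref{principalmin}, and Proposition \ref{raneymin}. The only minor point worth flagging explicitly is that $\fr(L)$ is intersection-closed because it is the set of regular (i.e.\ Booleanization) elements of $\Fi(L)$; this is used to equate $\ca{I}(\fr(L))$ with $\fr(L)$ so that Theorem \ref{containsprincipal} directly applies without taking a further closure.
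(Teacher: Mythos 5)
Your proof is correct and follows essentially the same route as the paper: verify the hypotheses of Theorem \ref{containsprincipal} for each collection, using Theorem \ref{posetofsubloc} for the subcolocale inclusions and strong exactness, the fact that principal filters are closed under all meets for the first two items, and Proposition \ref{famouschar} for the conditional items. The extra bookkeeping you add (intersection-closure of each collection, the Booleanization remark for $\fr(L)$) is harmless and consistent with the paper's argument.
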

\begin{proof}
    We use the characterization in Theorem \ref{containsprincipal}. That the collections of filters below are subcolocales of $\Fi(L)$ and that all filters in these collections are strongly exact follows from Theorem \ref{posetofsubloc}. Since principal filters are closed under all meets, they are exact and strongly exact. For the last three items we refer to Proposition \ref{famouschar}.
\end{proof}

Note that all the Raney extensions above are $(L,\opp{\ca{I}(\ca{F})})$ for some $\ca{F}\se \Fi(L)$. This means that each extension $(L,\opp{\ca{I}(\ca{F})})$ above is the unique $\ca{F}$-canonical one. Because of the isomorphisms in Theorem \ref{eandse}, for any frame $L$ the following embeddings into coframes are Raney extensions, up to isomorphism.
\begin{itemize}
    \item $\op:L\to \So(L)$, 
    \item $\cl:L\to \opp{\Sc(L)}$.
\end{itemize}

\section{Topological duality for Raney extensions}\label{S2}

In this section, we show that there is an adjunction between $\opp{\bd{Raney}}$ and $\bd{Top}$. For any coframe $C$, we define $\rpt(C)$ to be the collection of its completely join-prime elements. For a Raney extension $(L,C)$, let us define the function $\varphi_{(L,C)}:C\ra \ca{P}(\rpt(C))$ as
\[
\varphi_{(L,C)}(a)=\{x\in \rpt(C):x\leq a\}.
\]
It is easy to see that the following two properties hold:
\begin{enumerate}
    \item $\varphi_{(L,C)}(\bwe_i a_i)=\bca_i\varphi_{(L,C)}( a_i)$,
    \item $\varphi_{(L,C)}(\bve_i a_i)=\bigcup_i \varphi_{(L,C)}(a_i)$,
\end{enumerate}
for each family $a_i\in L$. When the Raney extension $(L,C)$ is clear from the context, we will omit the subscript. By property 2, the elements of the form $\varphi_{(L,C)}(a)$ for $a\in L$ form a topology. We denote the topological space obtained by equipping the set $\rpt(C)$ with this topology as $\rpt(L,C)$, and we call it the \emph{spectrum} of the Raney extension $(L,C)$. Since all elements of $C$ are meets of elements of $L$, from property 1 it follows that the elements of the form $\varphi_{(L,C)}(c)$ with $c\in C$ are the saturated sets of this space. Let us show functoriality of the assignment $(L,C)\mapsto \rpt(L,C)$. Observe that the following is a pointfree version of Lemma \ref{l:upOmXcp}.

\begin{lemma}\label{cpcjp}
For a Raney extension $(L,C)$, an element $x\in C$ is completely join-prime if and only if $\up^{L} x$ is a completely prime filter.
\end{lemma}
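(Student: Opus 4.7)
The plan is to exploit the adjunction $\up^L \dashv \bwe$ of Theorem \ref{charC*}, together with the fact that the inclusion $L\se C$ preserves all joins and finite meets, in order to relate complete join-primality of $x\in C$ to complete primality of the filter $\up^L x\se L$. The $(\Rightarrow)$ direction will be essentially immediate from preservation of joins by $L\se C$; the substance of the lemma lies in the $(\Leftarrow)$ direction, where one needs to pass from joins taken in $C$ to joins taken in $L$.

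For the $(\Rightarrow)$ direction, I would first observe that $\up^L x$ is always a filter of $L$: upward closure in $L$ is immediate, closure under binary meets holds because $L\se C$ preserves finite meets, and properness follows because complete join-primality of $x$ forces $x\neq 0$. To prove complete primality, suppose $\bve^L_i a_i\in \up^L x$ with $a_i\in L$. Since $L\se C$ preserves joins, this reads $x\leq \bve^C_i a_i$ in $C$; by complete join-primality of $x$, some $a_j$ satisfies $x\leq a_j$, i.e.\ $a_j\in \up^L x$.

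For the $(\Leftarrow)$ direction, I would assume $\up^L x$ is a completely prime filter and suppose $x\leq \bve_i c_i$ in $C$. Using that $\up^L$ is a left adjoint, hence preserves joins, and noting that joins in $\opp{\Fi(L)}$ are set-theoretic intersections in $\Fi(L)$, this inequality is equivalent to $\bca_i \up^L c_i \se \up^L x$ in $\Fi(L)$. I would then argue by contradiction: if $x\not\leq c_j$ for every $j$ then $\up^L c_j \not\se \up^L x$, so for each $j$ one can pick a witness $a_j\in \up^L c_j \sm \up^L x$. The element $\bve^L_j a_j$ belongs to every $\up^L c_i$, because $c_i\leq a_i\leq \bve^L_j a_j$ in $C$ by upward closure, and therefore lies in $\bca_i \up^L c_i \se \up^L x$. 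Complete primality of $\up^L x$ applied to this join then yields some $a_k\in \up^L x$, contradicting the choice of $a_k$.

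The main obstacle is precisely this witness-selection step in the reverse direction: complete join-primality in $C$ concerns joins of arbitrary coframe elements, while complete primality of $\up^L x$ only sees joins drawn from $L$. The adjunction bridges the two by turning a join in $C$ into an intersection of filters in $L$, and the witnesses $a_j$ then allow one to rebuild a single $L$-join to which the hypothesis of complete primality can be applied.
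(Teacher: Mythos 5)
Your proof is correct and takes essentially the same route as the paper: both directions rest on the adjunction $\up^L\dashv\bwe$ and the identity $\up^L\bve_i c_i=\bca_i\up^L c_i$, and your witness-selection argument simply unpacks the paper's one-line appeal to complete primality of $\up^L x$ (i.e.\ that completely prime filters are inaccessible by intersections in $\Fi(L)$). The only step left implicit, that $x\nleq c_j$ forces $\up^L c_j\nsubseteq\up^L x$, is exactly meet-generation ($c_j=\bwe\up^L c_j$) and is harmless as stated.
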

\begin{proof}
It is immediate that if $x\in C$ is completely join-prime then $\up^L x$ is completely prime. For the converse, suppose that $x\in C$ is such that $\up^{L}x$ is completely prime. Suppose that $x\leq \bve D$ for $D\se C$. This means that $\up^L \bve D\se \up^L x$. Observe that $\up^L \bve D=\bigcap \{\up^L d:d\in D\}$. As $\up^L x$ is assumed to be completely prime, there must be some $d\in D$ such that $\up^L d\se \up^L x$. This implies that $x\leq d$.
\end{proof}

\begin{lemma}\label{respectscjp}
For a morphism $f:(L,C)\ra (M,D)$ of Raney extensions, if $x\in \rpt(D)$ then $f^*(x)\in \rpt(C)$. 
\end{lemma}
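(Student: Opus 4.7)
The plan is to reduce to Lemma \ref{cpcjp}, which characterizes completely join-prime elements of a Raney extension as those whose upper set in the underlying frame is a completely prime filter. Since $f:C\to D$ is a coframe map, it preserves all meets, hence has a left adjoint $f^*:D\to C$ satisfying $f^*(y)\leq a \iff y\leq f(a)$ for all $y\in D$ and $a\in C$; this is the $f^*$ appearing in the statement.

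The key step is to identify $\up^L f^*(x)$ with the preimage of $\up^M x$ under the frame map $f|_L:L\to M$. For $a\in L$ the chain of equivalences
\[
a\in \up^L f^*(x)\ \iff\ f^*(x)\leq a\ \iff\ x\leq f(a)\ \iff\ f(a)\in \up^M x
\]
gives $\up^L f^*(x)=(f|_L)^{-1}(\up^M x)$. The last equivalence uses the defining property of a Raney morphism, namely that $f(a)\in M$ whenever $a\in L$; without this, $f(a)$ would not be guaranteed to live in $M$ and membership in $\up^M x$ would not be meaningful.

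I would then finish by invoking Lemma \ref{cpcjp} twice. By hypothesis $x\in \rpt(D)$, so $\up^M x$ is a completely prime filter of $M$. Since $f|_L$ is a frame map, a routine verification shows that the preimage of a completely prime filter under a frame map is again a completely prime filter; hence $\up^L f^*(x)$ is completely prime in $L$. Applying Lemma \ref{cpcjp} in the other direction, to the Raney extension $(L,C)$, then yields $f^*(x)\in\rpt(C)$, as required.

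I expect no real obstacle; the main care needed is to keep straight which adjoint of $f:C\to D$ is at play and to apply the Raney morphism condition $f(L)\se M$ exactly at the passage from $x\leq f(a)$ to $f(a)\in \up^M x$.
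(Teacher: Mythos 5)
Your proof is correct and follows essentially the same route as the paper: both reduce to Lemma \ref{cpcjp} and verify that $\up^L f^*(x)$ is completely prime using the adjunction $f^*\dashv f$ together with the fact that $f|_L$ is a frame map with $f(L)\se M$. Your packaging of the middle step as ``$\up^L f^*(x)=(f|_L)^{-1}(\up^M x)$ and preimages of completely prime filters under frame maps are completely prime'' is just a repackaged version of the paper's direct computation, so there is nothing to add.
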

\begin{proof}
By Lemma \ref{cpcjp}, it suffices to show that for a morphism $f:(L,C)\ra (M,D)$ of Raney extensions, if $x\in \rpt(D)$ then $\up^L f^*(x)$ is a completely prime filter of $L$. If $f^*(x)\leq \bve A$ for $A\se L$, then as $f$ respects the frame operations of $L$, and because $f^*\dashv f$, $x\leq \bve \{f(a):a\in A\}$. Since $x$ is completely join-prime, there is some $a\in A$ such that $x\leq f(a)$, that is $f^*(x)\leq a$.
\end{proof}

\begin{lemma}
The assignment $\rpt:(L,C)\mapsto \rpt(L,C)$ is the object part of a functor $\rpt:\opp{\bd{Raney}}\ra \bd{Top}$ which acts on morphisms as $f\mapsto f^*$.
\end{lemma}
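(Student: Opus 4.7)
The plan is to check that, given a Raney morphism $f:(L,C)\to (M,D)$, the function $f^*:\rpt(D)\to \rpt(C)$ is a well-defined continuous map between the spectra, and that the assignment is functorial in the opposite direction.

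First, well-definedness of $f^*$ on points is exactly Lemma \ref{respectscjp}: the left adjoint $f^*$ sends completely join-primes of $D$ to completely join-primes of $C$, so $f^*$ restricts to a set-theoretic map $\rpt(D)\to \rpt(C)$.

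For continuity, the opens of $\rpt(L,C)$ are by definition the sets $\varphi_{(L,C)}(a)$ with $a\in L$, so it suffices to compute the preimage of a generic such open. For any $a\in L$ we have
\[
(f^*)^{-1}\bigl(\varphi_{(L,C)}(a)\bigr)=\{y\in \rpt(D):f^*(y)\leq a\}=\{y\in \rpt(D):y\leq f(a)\}=\varphi_{(M,D)}(f(a)),
\]
where the middle equality uses the adjunction $f^*\dashv f$, and the last expression is open in $\rpt(M,D)$ because $f$ is a Raney morphism, so $f(a)\in M$. Hence $f^*$ is continuous.

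Functoriality is straightforward: $(\mi{id}_{(L,C)})^*=\mi{id}_{\rpt(C)}$ because the identity coframe map is its own left adjoint, and for Raney morphisms $(L,C)\xrightarrow{f}(M,D)\xrightarrow{g}(N,E)$ the uniqueness of adjoints gives $(g\circ f)^*=f^*\circ g^*$, which is the composition in $\bd{Top}$ of the arrows $\rpt(N,E)\to \rpt(M,D)\to \rpt(L,C)$, as required for a contravariant functor out of $\bd{Raney}$. No step here is a genuine obstacle; the key ingredient already in place is Lemma \ref{respectscjp}, after which continuity reduces to a one-line adjunction computation.
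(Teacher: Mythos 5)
Your proposal is correct and follows essentially the same route as the paper: well-definedness on points via Lemma \ref{respectscjp}, continuity by the same adjunction computation $(f^*)^{-1}(\varphi_{(L,C)}(a))=\varphi_{(M,D)}(f(a))$ using $f(a)\in M$. The only difference is that you spell out the identity and composition checks via uniqueness of adjoints, which the paper leaves implicit; this is harmless and fine.
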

\begin{proof}
That every morphism is mapped to a well-defined function between the set of points follows from Lemma \ref{respectscjp}. Continuity follows from the fact that the $f^*$-preimage of some $\varphi(a)$ for $a\in L$ is, expanding definitions,
\begin{gather*}
   \{x\in \rpt(D):f^*(x)\leq a\}=\\
   \{x\in \rpt(D):x\leq f(a)\}=\varphi(f(a)),
\end{gather*}
and this set is indeed open in $\rpt(D)$ as by definition of Raney morphism $f(a)\in M$.
\end{proof}

By Theorem \ref{charC*}, we may identify Raney extensions with collections of filters. Let us now see how to describe the spectrum under this identification. 

\begin{theorem}\label{t:pointsfilters}
    For a frame $L$ and for a sublocale $\ca{F}\se \Fi(L)$ such that it contains all principal filters, $\pt_R(\ca{F}^{op})=\fcp(L)\cap \ca{F}$. 
    
\end{theorem}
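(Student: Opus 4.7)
The plan is to use the identification of the coframe $\ca{F}^{op}$ with the fixpoint collection $(\ca{F}^{op})^*$ provided by Theorem \ref{charC*}, and then invoke the characterization of completely join-prime elements in terms of completely prime filters (Lemma \ref{cpcjp}).

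First I would verify that $(L,\ca{F}^{op})$ is genuinely a Raney extension: since $\ca{F}$ is a sublocale of $\Fi(L)$ containing all principal filters it is closed under all intersections, so $\ca{I}(\ca{F})=\ca{F}$; combined with the principal filter hypothesis and the fact that $\ca{F}\se \fse(L)$ (which is forced if we want $(L,\ca{F}^{op})$ to be a Raney extension, by Theorem \ref{Rboundaries} and Lemma \ref{sufficient}), Theorem \ref{containsprincipal} applies.

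Next, I would show that under the adjunction $\bwe\dashv \up^L$ of Theorem \ref{charC*}, the fixpoint set $(\ca{F}^{op})^*$ is exactly $\ca{F}$. One inclusion is Lemma \ref{sufficient}: every filter in $C^*$ is strongly exact, and $C^*$ contains all principal filters; so $C^*$ is a sublocale sandwiched as $\fe(L)\se C^*\se \fse(L)$. The key point is that for the specific extension $C=\ca{F}^{op}$ constructed as in Theorem \ref{containsprincipal}, the elements of $C$ are literally the filters in $\ca{F}$, and the map $\up^L\circ \bwe$ acts as the identity on $\ca{F}$ (because the Raney extension is by construction $\ca{F}$-canonical, so by Proposition \ref{C*canonical}(2) we get $\ca{F}\se C^*$, and together with the obvious $C^*\se \ca{F}$ coming from $C=\ca{F}^{op}$ this gives equality).

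Finally, I would combine this identification with Lemma \ref{cpcjp}, which states that $x\in C$ is completely join-prime exactly when $\up^L x$ is a completely prime filter of $L$. Transporting along the isomorphism $\up^L:C\cong \ca{F}$, the completely join-prime elements of $C=\ca{F}^{op}$ correspond bijectively to filters $F\in \ca{F}$ that are completely prime, i.e.\ to $\fcp(L)\cap \ca{F}$. Since $\rpt(L,\ca{F}^{op})$ is by definition the set of completely join-prime elements of the coframe component, this gives the claimed equality. The only subtle point is the bookkeeping in step two, making sure that the fixpoint identification $C^*=\ca{F}$ is honest and that $\up^L$ really restricts to the claimed bijection; everything else is a direct application of earlier results.
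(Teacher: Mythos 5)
There is a genuine gap in your first step, and it propagates through the rest. The theorem only assumes that $\ca{F}\se \Fi(L)$ is a sublocale containing all principal filters; it does \emph{not} assume that the filters in $\ca{F}$ are strongly exact, so $(L,\opp{\ca{F}})$ need not be a Raney extension at all. Your justification that ``$\ca{F}\se \fse(L)$ is forced if we want $(L,\opp{\ca{F}})$ to be a Raney extension'' is circular: you are importing exactly the hypothesis you would need in order to invoke Theorem \ref{containsprincipal}, Proposition \ref{C*canonical} and Lemma \ref{cpcjp}, all of which are stated only for Raney extensions. A concrete instance where your reduction fails is $\ca{F}=\Fi(L)$: this is a sublocale of $\Fi(L)$ containing every principal filter, so the theorem applies to it, yet in general $\Fi(L)\nsubseteq\fse(L)$ and the embedding $a\mapsto\up a$ of $L$ into $\opp{\Fi(L)}$ does not preserve strongly exact meets (the meet of $\{\up x_i\}_i$ in $\opp{\Fi(L)}$ is the filter generated by $\bcu_i\up x_i$, not $\up\bwe_i x_i$). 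Indeed, by Theorem \ref{Rboundaries} the Raney extensions account only for the sublocales lying inside $\fse(L)$, whereas the statement is deliberately broader; note also that $\rpt$ is defined for an arbitrary coframe, so no Raney structure is needed to make sense of $\rpt(\opp{\ca{F}})$.

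The paper's proof bypasses all of this: it works directly in the frame $\ca{F}$ and shows that an element $P\in\ca{F}$ is completely (meet-)prime in $\ca{F}$ if and only if it is a completely prime filter of $L$, using only that meets in $\ca{F}$ are intersections and that $\ca{F}$ contains the principal filters (so that $\bve_i x_i\in P$ gives $\bca_i\up x_i\se P$, whence $\up x_i\se P$ for some $i$). Your steps 2 and 3 do contain a salvageable core: for $C=\opp{\ca{F}}$ one has $\up^L F=F$, so the fixpoint identification $C^*=\ca{F}$ is literally the identity, and the proof of Lemma \ref{cpcjp} uses nothing beyond join-preservation of the embedding $L\se C$, which does hold for any sublocale $\ca{F}$ containing the principal filters. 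But to make your route legitimate you would have to observe and prove these facts in the generality of such pairs $(L,\opp{\ca{F}})$, rather than cite results whose hypotheses (strong exactness of the filters in $\ca{F}$) are not available here.
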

\begin{proof}
   We show that an element $P\in \ca{F}$ is completely prime in the frame $\ca{F}$ if and only if it is completely prime as an element of $\Fi(L)$. If an element $P\in \ca{F}$ is completely prime in the frame $\Fi(L)$, then it is also completely prime as an element of $\ca{F}$, as meets of elements of $\ca{F}$ are a subset of all the meets in $\Fi(L)$. For the converse, suppose that $P$ is completely prime in $\ca{F}$, and that $\bve_i x_i\in P$ for some collection $x_i\in L$. This means $\bca_i \up x_i\se P$, and because $\ca{F}$ contains all principal filters and by assumption on $P$, $\up x_i\se P$ for some $i\in I$. 
\end{proof}

\begin{corollary}\label{c:pointsfilters}
   A Raney extension $(L,C^*)$ has as points the elements of $C^*\cap \fcp(L)$, and as opens the sets of the form $\{P\in \fcp(L)\cap C^*:a\in P\}$ for some $a\in L$.
\end{corollary}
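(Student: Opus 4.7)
The plan is to deduce this essentially directly from Theorem \ref{t:pointsfilters} together with Theorem \ref{charC*} and Lemma \ref{sufficient}. First I would observe that by Theorem \ref{charC*}, the identification $\bwe : (L, C^*) \cong (L, C)$ of Raney extensions means that $C^* \subseteq \Fi(L)$ is a sublocale (so viewing $C^*$ with the opposite order makes it a coframe in which $L$ sits as a meet-generating subframe), and by Lemma \ref{sufficient}, $C^*$ contains all principal filters of $L$. Thus the collection $\ca{F} := C^*$ satisfies the hypotheses of Theorem \ref{t:pointsfilters}, which immediately yields the set-level identification
\[
\pt_R(L, C^*) = \fcp(L) \cap C^*.
\]

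For the topology, I would simply unwind the definition $\varphi_{(L,C^*)}(a) = \{x \in \pt_R(C^*) : x \leq a\}$ for $a \in L$, keeping in mind that the order on $C^*$ as a coframe is opposite to the inclusion order on $\Fi(L)$. Under the bijection between points of the coframe and filters, the inequality $x \leq a$ in $C^*$ (coframe order) corresponds to the filter $P$ containing $\up a$, i.e.\ to $a \in P$. Hence
\[
\varphi_{(L,C^*)}(a) = \{P \in \fcp(L) \cap C^* : a \in P\},
\]
which is exactly the claimed description of the basic opens. Since by definition the topology on $\pt_R(L, C^*)$ is generated by the sets $\varphi_{(L,C^*)}(a)$ with $a \in L$, this concludes the argument.

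The only thing to be careful about is the bookkeeping of orientations: $C^*$ is introduced as a sublocale of the frame $\Fi(L)$, but it plays the role of the coframe component of a Raney extension under the opposite order; in particular $\bve$ in $C^*$ corresponds to intersection of filters and $\leq$ in $C^*$ corresponds to reverse inclusion. No real obstacle arises beyond this translation, since both ingredients (the point-set identification and the unpacking of $\varphi$) are immediate once the conventions are fixed.
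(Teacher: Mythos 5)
Your proof is correct and follows essentially the same route as the paper: the point-set identification is read off from Theorem \ref{t:pointsfilters} (whose hypotheses hold since $C^*$ is a sublocale of $\Fi(L)$ containing all principal filters), and the description of the opens comes from unwinding the definition of $\varphi_{(L,C^*)}$ with the reverse-inclusion order on $C^*$ accounted for. The extra care you take with the orientation of the order is exactly the right bookkeeping and matches the paper's intent.
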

\begin{proof}
    The first part of the statement is a direct consequence of Theorem \ref{t:pointsfilters}. For the second part of the statement, it suffices to unravel the definition of the topology on $\rpt(L,C^*)$.
\end{proof}

We now define the left adjoint to $\rpt$. For a topological space $X$ we define $\Om_R(X)$ as the pair $(\Om(X),\ca{U}(X))$, we extend the assignment to morphisms as $f\mapsto f^{-1}$.

\begin{lemma}\label{raneyrefl}
For every Raney extension $(L,C)$ there is a surjective map of Raney extensions $\varphi_{(L,C)}:(L,C)\ra \Om_R(\rpt(L,C))$. This is an isomorphism precisely when $C$ is join-generated by its completely join-prime elements.
\end{lemma}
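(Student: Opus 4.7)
The plan is to first verify that $\varphi_{(L,C)}$ is a well-defined Raney morphism into $\Om_R(\rpt(L,C))$, then establish surjectivity on both components, and finally characterize when it is an isomorphism via join-generation by completely join-primes.

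For the Raney-morphism check, properties (1) and (2) listed right after the definition of $\varphi_{(L,C)}$ already give preservation of all meets and all joins. I still need to verify that $\varphi(c)$ is always a saturated set, i.e.\ an element of $\ca{U}(\rpt(L,C))$: for $a\in L$ the set $\varphi(a)$ is open by definition of the spectrum topology, hence an upset in the specialization preorder by Proposition \ref{upset}; for a general $c\in C$, writing $c=\bwe_i a_i$ with $a_i\in L$ (possible since $L$ meet-generates $C$) and invoking property (1) presents $\varphi(c)=\bca_i \varphi(a_i)$ as an intersection of upsets. The restriction $\varphi|_L$ then lands in $\Om(\rpt(L,C))$ and inherits preservation of finite meets and all joins, so $\varphi$ is a Raney morphism. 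Surjectivity is essentially definitional: on the frame side, opens of $\rpt(L,C)$ are by construction exactly the sets $\varphi(a)$ with $a\in L$; on the coframe side, every saturated set is an intersection of opens $\bca_i\varphi(a_i)=\varphi(\bwe_i a_i)$ by property (1), so lies in the image of $\varphi$.

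Since $\varphi$ is surjective and preserves all meets and all joins, it is a Raney isomorphism iff it is injective. To connect injectivity with the condition on completely join-prime elements, I would first observe that the specialization order on $\rpt(L,C)$ is the reverse of the order inherited from $C$: using meet-generation of $C$ by $L$, one checks that $x$ specializes $y$ precisely when $y\leq_C x$, so that the principal upset of $x$ in $\rpt(L,C)$ coincides with $\varphi(x)$ for each $x\in \rpt(C)$. These principal upsets are exactly the completely join-primes of $\ca{U}(\rpt(L,C))$ and they join-generate it. From here the equivalence is clean: if $C$ is join-generated by $\rpt(C)$ then every $c\in C$ equals $\bve\{x\in \rpt(C):x\leq c\}$ and is therefore determined by $\varphi(c)$, yielding injectivity; conversely, if $\varphi$ is an isomorphism, transporting the join-generation of $\ca{U}(\rpt(L,C))$ by its principal upsets back along $\varphi^{-1}$ shows $C$ is join-generated by its completely join-prime elements.

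The only mildly delicate step I expect is the identification of the specialization order with the reverse of the $C$-order, which crucially uses that $L$ meet-generates $C$; everything else is a routine unwinding of definitions and a direct appeal to properties already established in the excerpt.
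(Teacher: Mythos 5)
Your proposal is correct and follows essentially the same route as the paper: surjectivity and the morphism property come from the two displayed properties of $\varphi_{(L,C)}$, and the isomorphism condition reduces to injectivity, which is equivalent to join-generation of $C$ by its completely join-prime elements. You simply fill in more detail than the paper (the identification of the specialization order with the reversed $C$-order and of principal upsets with $\varphi(x)$), which the paper compresses into a one-line separation argument.
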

\begin{proof}
    The fact that it is a surjection and a map of Raney extensions follows from properties 1 and 2 of the topologizing map $\varphi_{(L,C)}$. The map is an isomorphism precisely when it is injective, and this happens exactly when for $c,d\in C$ such that $c\nleq d$ there is some $x\in \pt_R(C)$ such that $x\leq c$ and $x\nleq d$. This holds if and only if the completely join-prime elements join-generate $C$.
\end{proof}

The map we have just defined will be the evaluation at an object of the natural transformation $\Om_R\circ \rpt\Rightarrow 1_{\opp{\bd{Raney}}}$. Let us now define the other natural transformation $1_{\bd{Top}}\Rightarrow \rpt\circ \Om_R$.
\begin{lemma}\label{toprefl}
For every topological space $X$ the map $\psi_X:X\ra \rpt(\Om_R(X))$ defined as $x\mapsto \up x$ is a continuous map. This is a homeomorphism precisely when $X$ is a $T_0$ space.
\end{lemma}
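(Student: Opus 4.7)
The plan is to establish three things: that $\psi_X$ really lands in $\rpt(\Om_R(X))$, that it is continuous, and that the bijection plus openness conditions both reduce to $X$ being $T_0$.

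First I would verify that for each $x \in X$, the set $\up x$ is a completely join-prime element of $\ca{U}(X)$. By Proposition \ref{upset}, $\up x$ is saturated and so belongs to $\ca{U}(X)$. Since joins in $\ca{U}(X)$ are unions (it is the coframe of upper sets of the specialization preorder), if $\up x \se \bcu_i S_i$ with each $S_i \in \ca{U}(X)$, then $x$ lies in some $S_i$; upward-closure of $S_i$ then gives $\up x \se S_i$. Hence $\psi_X$ is a well-defined map into $\rpt(\Om_R(X))$.

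Next I would check continuity. By Corollary \ref{c:pointsfilters} (applied to $\Om_R(X)$) the open sets of $\rpt(\Om_R(X))$ are exactly the sets $\varphi(U) = \{P \in \rpt(\ca{U}(X)) : P \se U\}$ with $U \in \Om(X)$. For any $x \in X$, the condition $\up x \se U$ is equivalent to $x \in U$, since $U$ is upper in the specialization preorder. Therefore $\psi_X^{-1}(\varphi(U)) = U$, which is open, establishing continuity and simultaneously giving the key identity I will reuse below.

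Finally I would address the homeomorphism claim. Injectivity of $\psi_X$ says exactly that $\up x = \up y$ implies $x = y$, which is the $T_0$ axiom. Surjectivity holds unconditionally: if $P \in \ca{U}(X)$ is completely join-prime, then $P = \bcu_{x \in P} \up x$ in $\ca{U}(X)$, so by complete join-primality $P \se \up x$ for some $x \in P$, and the reverse inclusion is automatic since $P$ is upper, giving $P = \up x$. When $\psi_X$ is bijective, the identity $\psi_X^{-1}(\varphi(U)) = U$ forces $\psi_X(U) = \varphi(U)$, so the continuous bijection is also open, hence a homeomorphism.

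The only subtlety to watch is the interpretation of \emph{completely join-prime} inside $\ca{U}(X)$: since joins in $\ca{U}(X)$ are just unions, there is no pointfree noise to navigate, and the main step — surjectivity — reduces to the pointwise observation $P = \bcu_{x\in P}\up x$. I do not anticipate a genuine obstacle beyond keeping the specialization preorder and saturated-set descriptions aligned via Proposition \ref{upset}.
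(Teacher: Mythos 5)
Your proof is correct and follows essentially the same route as the paper: identify the completely join-prime elements of $\ca{U}(X)$ with the principal upsets $\up x$ (which the paper merely asserts and you verify in both directions), compute $\psi_X^{-1}(\varphi(U))=U$ for continuity, show openness via $\psi_X(U)=\varphi(U)$, and reduce injectivity to the $T_0$ axiom. The only cosmetic point is that the description of the opens of $\rpt(\Om_R(X))$ is just the definition of the spectrum topology via $\varphi_{(L,C)}$, so the appeal to Corollary \ref{c:pointsfilters} is unnecessary.
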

\begin{proof}
That the map is well-defined and surjective follows from the observation that the completely join-prime elements of $\ca{U}(X)$ are precisely the principal upsets. For continuity, we observe that the $\psi_X$-preimage of an open set $\varphi(U)$ is the set $\{x\in X:\up x\in \varphi(U)\}=U$. This map is also open, as the direct image of an open $U\se X$ is the open $\{\up x:\up x\se U\}=\varphi(U)$. The map is then a homeomorphism when it is injective, and this holds if and only if, whenever $x\neq y$, $\up x\neq \up y$. This amounts to the specialization preorder being an order, that is, the space being $T_0$. 
\end{proof}

Recall that an adjunction $L:\ca{C}\lra \ca{D}:R$ is said to be \emph{idempotent} if every element of the form $R(d)$ for some object $d\in \mf{Obj}(\ca{D})$ is a fixpoint on the $\ca{C}$ side, and the same holds for the $\ca{D}$ side. 
\begin{theorem}
    The pair $(\Om_R,\rpt)$ constitutes an idempotent adjunction $\bd{Top}\lra \opp{\bd{Raney}}$. Raney duality is the restriction of this adjunction to a dual equivalence.
\end{theorem}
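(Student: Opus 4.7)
The plan is to deduce the adjunction from naturality of the maps $\psi$ and $\varphi$ together with two triangle identities, and then to read off the fixpoints from Lemmas \ref{raneyrefl} and \ref{toprefl}. Naturality of $\psi$ is routine: for continuous $f : X \ra Y$, both $\rpt(\Om_R(f)) \circ \psi_X$ and $\psi_Y \circ f$ send $x$ to $\up f(x)$. Naturality of $\varphi$ uses Lemma \ref{respectscjp}: for a Raney morphism $g : (L,C) \ra (M,D)$, both $\Om_R(\rpt(g)) \circ \varphi_{(L,C)}$ and $\varphi_{(M,D)} \circ g$ send $c \in C$ to $\{y \in \rpt(D) : g^*(y) \leq c\}$.

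For the first triangle identity $\rpt(\varphi_{(L,C)}) \circ \psi_{\rpt(L,C)} = 1_{\rpt(L,C)}$, the key observation is that on $\rpt(L,C)$ the specialization preorder is the reverse of the order inherited from $C$: since opens are exactly the sets $\varphi_{(L,C)}(a)$ for $a \in L$ and $L$ meet-generates $C$, we have $x \leq_{\mathrm{spec}} y$ iff every $a \in L$ with $x \leq_C a$ satisfies $y \leq_C a$, iff $y \leq_C x$. Hence $\psi_{\rpt(L,C)}(x)$, viewed as a saturated subset of $\rpt(L,C)$, is $\{y \in \rpt(C) : y \leq_C x\}$, and the left adjoint $\varphi_{(L,C)}^*$ sends this to $\bve \{y \in \rpt(C) : y \leq x\} = x$. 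For the second triangle, $\varphi_{\Om_R(X)}(U) = \{\up x : x \in U\}$ (the completely join-primes of $\ca{U}(X)$ being the principal upsets, as in the proof of Lemma \ref{toprefl}), so its $\psi_X$-preimage is $\{x \in X : \up x \se U\}$, which coincides with $U$ since $U$ is an upset.

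For idempotence and the duality restriction, $\Om_R(X)$ is a Raney-side fixpoint because $\ca{U}(X)$ is join-generated by its principal upsets, so by Lemma \ref{raneyrefl} the morphism $\varphi_{\Om_R(X)}$ is an isomorphism. Dually, $\rpt(L,C)$ is always $T_0$: given distinct $x, y \in \rpt(C)$, antisymmetry in $C$ yields (without loss of generality) $x \not\leq_C y$, and by meet-generation of $L$ there is $a \in L$ with $y \leq_C a$ and $x \not\leq_C a$, so $\varphi_{(L,C)}(a)$ separates them; thus $\psi_{\rpt(L,C)}$ is a homeomorphism by Lemma \ref{toprefl}. By Lemmas \ref{raneyrefl} and \ref{toprefl} the fixpoints are then the $T_0$ spaces on $\bd{Top}$ and the Raney extensions $(L,C)$ with $C$ join-generated by its completely join-primes on $\opp{\bd{Raney}}$; the latter are, up to isomorphism, precisely the Raney algebras of \cite{bezhanishvili20}, so restricting our adjunction to the fixpoints on both sides recovers Raney duality. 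The only delicate point is the first triangle identity, which requires the careful identification of $\psi_{\rpt(L,C)}(x)$ via the reversed specialization order together with the formula $\varphi_{(L,C)}^*(U) = \bve U$ for the left adjoint.
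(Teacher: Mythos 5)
Your proposal is correct and follows the same route as the paper: it exhibits $\psi$ and $\varphi$ (the maps of Lemmas \ref{toprefl} and \ref{raneyrefl}) as unit and counit, and derives idempotence from join-generation of $\ca{U}(X)$ by principal upsets and from $\rpt(L,C)$ being $T_0$. The paper simply dismisses the naturality and triangle identities as ``standard computations,'' whereas you carry them out explicitly (including the useful observation that the specialization order on $\rpt(L,C)$ is the reverse of the order inherited from $C$, and the formula $\varphi_{(L,C)}^*(S)=\bve S$), which is a faithful filling-in of the omitted details rather than a different argument.
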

\begin{proof}
    The proof of adjointness amounts to standard computations. The two maps in Lemmas \ref{raneyrefl} and \ref{toprefl} are the counit and the unit, respectively. Let us see that the adjunction is idempotent. By Lemma \ref{raneyrefl}, any Raney extension $(\Om(X),\ca{U}(X))$ is a fixpoint, as the coframe $\ca{U}(X)$ is join-generated by the elements of the form $\up x$. By Lemma  \ref{toprefl}, any $T_0$ space is a fixpoint.
\end{proof}

Motivated by the result above and by Lemma \ref{raneyrefl}, we say that a Raney extension $(L,C)$ is \emph{spatial} if $C$ is join-generated by the completely join-prime elements. 

\begin{proposition}\label{charspatiality}
A Raney extension $(L,C)$ is spatial if and only if $C^*\se \ca{I}(C^*\cap \fcp(L))$.
\end{proposition}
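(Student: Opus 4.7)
The plan is to exploit the isomorphism $\bwe:C^*\lra C:\up^L$ from Theorem \ref{charC*}, which translates the join-generation property of $C$ into an intersection-generation property of $C^*$. The key observation is that, by Lemma \ref{cpcjp}, the completely join-prime elements of $C$ are exactly the $x\in C$ such that $\up^L x\in \fcp(L)$, and all such filters automatically lie in $C^*$. So the completely join-prime elements of $C$ correspond bijectively, via $\up^L$, to the elements of $C^*\cap \fcp(L)$.

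I would then recall from Theorem \ref{charC*} that $\up^L:C\to \opp{\Fi(L)}$ is a left adjoint, hence preserves all joins. Joins in $\opp{\Fi(L)}$ are set-theoretic intersections in $\Fi(L)$, so for any family $x_i\in C$ one has $\up^L \bve_i x_i=\bca_i \up^L x_i$; combined with injectivity of the restricted isomorphism $\up^L:C\to C^*$, the identity $c=\bve_i x_i$ in $C$ is equivalent to $\up^L c=\bca_i \up^L x_i$ in $\Fi(L)$.

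For the forward direction, suppose $(L,C)$ is spatial. Given $c\in C$, write $c=\bve_i x_i$ with each $x_i$ completely join-prime. Applying $\up^L$ gives $\up^L c=\bca_i \up^L x_i$, with each $\up^L x_i\in C^*\cap \fcp(L)$. Hence $\up^L c\in \ca{I}(C^*\cap \fcp(L))$, and as $c$ was arbitrary, $C^*\se \ca{I}(C^*\cap \fcp(L))$. Conversely, assume $C^*\se \ca{I}(C^*\cap \fcp(L))$, and pick $c\in C$. Then $\up^L c=\bca_i F_i$ for some $F_i\in C^*\cap \fcp(L)$. Since each $F_i\in C^*$, we have $F_i=\up^L x_i$ for a unique $x_i\in C$, and by Lemma \ref{cpcjp} each such $x_i$ is completely join-prime. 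Then $\up^L c=\bca_i \up^L x_i=\up^L \bve_i x_i$, and injectivity of $\up^L:C\to C^*$ yields $c=\bve_i x_i$, showing that $C$ is join-generated by its completely join-prime elements.

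No real obstacle arises: the proof is essentially a dictionary translation across the isomorphism $C\cong C^*$, with Lemma \ref{cpcjp} providing the matching of completely join-prime elements on the two sides.
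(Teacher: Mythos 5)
Your proof is correct and follows essentially the same route as the paper: translate spatiality across the isomorphism $\up^L:C\cong C^*$ (using that $\up^L$, being a left adjoint, turns joins into intersections) and identify the completely join-prime elements of $C$ with the filters in $C^*\cap \fcp(L)$. The only cosmetic difference is that you invoke Lemma \ref{cpcjp} directly where the paper cites Corollary \ref{c:pointsfilters}, which encodes the same identification.
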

\begin{proof}
Because of the isomorphism $\up^L:C\cong C^*$, a Raney extension $(L,C)$ is spatial precisely when all elements of $C^*$ are intersections of completely join-prime elements in $C^*$, as by Corollary \ref{c:pointsfilters}, $\rpt(C^*)=\fcp(L)\cap C^*$.
\end{proof}

\begin{corollary}\label{Xcpdense}
Spatial Raney extensions are $\ca{CP}$-dense.
\end{corollary}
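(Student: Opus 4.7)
The plan is to combine the characterization of spatiality from Proposition \ref{charspatiality} with the sufficient condition for density stated in Corollary \ref{c:C*canonical}. Concretely, spatiality of $(L,C)$ says $C^{*}\se \ca{I}(C^{*}\cap \fcp(L))$, which is exactly the hypothesis of Corollary \ref{c:C*canonical} applied to the collection $\ca{F}=\fcp(L)$, so density with respect to $\fcp(L)$ follows immediately.

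First I would translate spatiality into the filter-theoretic language, by recalling the isomorphism $\up^{L}:C\cong C^{*}$ of Theorem \ref{charC*} together with Corollary \ref{c:pointsfilters} which identifies the completely join-prime elements of $C$ with the completely prime filters lying in $C^{*}$. This gives the equivalence between \emph{$C$ is join-generated by its completely join-prime elements} and \emph{every element of $C^{*}$ is an intersection of filters from $C^{*}\cap \fcp(L)$}, i.e.\ $C^{*}\se \ca{I}(C^{*}\cap \fcp(L))$; this is precisely Proposition \ref{charspatiality}.

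Then I would invoke Corollary \ref{c:C*canonical} with $\ca{F}=\fcp(L)$: the hypothesis $C^{*}\se \ca{I}(\ca{F}\cap C^{*})$ is satisfied by the previous paragraph, and its conclusion is exactly that $(L,C)$ is $\ca{F}$-dense, namely $\ca{CP}$-dense. I do not expect any serious obstacle: both ingredients have already been set up, and the corollary reduces to an unwinding of definitions together with the two already-proved facts. The only thing to be careful about is not to conflate $\ca{CP}$-density with $\ca{CP}$-compactness, since the latter need not hold for a general spatial Raney extension (it corresponds to the additional condition $\fcp(L)\cap C^{*}=\fcp(L)\cap \mathsf{Filt}(L)$, which is not part of spatiality).
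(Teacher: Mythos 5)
Your proposal is correct and matches the paper's own argument exactly: the paper likewise derives the corollary by combining Proposition \ref{charspatiality} (spatiality as $C^{*}\se \ca{I}(C^{*}\cap \fcp(L))$) with Corollary \ref{c:C*canonical} applied to $\ca{F}=\fcp(L)$. No gaps; your closing caution about density versus compactness is apt but not needed for the proof itself.
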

\begin{proof}
This follows from Corollary \ref{c:C*canonical}, and Proposition \ref{charspatiality}.
\end{proof}

\subsection{The collection of Raney spectra on a frame}

In this subsection, our final goal is proving that, on a frame $L$, for any Raney extension $(L,C)$ there are subspace inclusions $\pt_D(L)\se \rpt(L,C)\se \pt(L)$.

\begin{lemma}\label{bigmeet}
For a frame $L$, for any $a\in L$ the meet $\bwe \{x\in L:a<x\}$ is exact.
\end{lemma}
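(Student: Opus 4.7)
The plan is to verify the definition of exactness directly. Setting $M := \bwe\{x \in L : a < x\}$, I must show that for every $b \in L$,
$M \ve b = \bwe\{x \ve b : a < x\}.$
The inequality $M \ve b \leq \bwe\{x \ve b : a < x\}$ is immediate from $M \leq x$ for each $x > a$, so the content lies entirely in the reverse direction.

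For that inequality I would split on whether $b \leq a$. Note first that $a$ is a lower bound of $\{x : a < x\}$, hence $a \leq M$. If $b \leq a$, then $b \leq a < x$ for every $x > a$, whence $x \ve b = x$ and the right-hand meet collapses to $M$; simultaneously $b \leq a \leq M$ forces $M \ve b = M$, and the two sides agree. If $b \nleq a$, then $a \ve b$ strictly exceeds $a$ and is therefore itself a member of the indexing set $\{x : a < x\}$; substituting this particular $x$ into the right-hand meet bounds it above by $(a \ve b) \ve b = a \ve b$, and $a \leq M$ then gives $a \ve b \leq M \ve b$, closing the inequality. The degenerate case $a = 1$ is unproblematic, since both sides are then the empty meet $1$.

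I do not foresee any real obstacle here: the whole argument hinges on the dichotomy that $a \ve b$ either collapses to $a$ (when $b \leq a$) or else is itself one of the elements being met over (when $b \nleq a$), and in both alternatives the right-hand meet is forced to equal $M \ve b$. The lemma ultimately just reflects the fact that the family of strict upper bounds of a fixed element in a frame interacts cleanly with binary joins.
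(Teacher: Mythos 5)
Your proof is correct and follows essentially the same route as the paper's: the easy inequality is noted, and the reverse one is handled by the same dichotomy, with $b\leq a$ collapsing both sides to $\bwe\{x:a<x\}$ and $b\nleq a$ exploiting that $a\ve b$ lies in the indexing set together with $a\leq \bwe\{x:a<x\}$. No gaps.
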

\begin{proof}
Let $L$ be a frame and let $a\in L$. Let us consider the meet $\bwe \{x\in L:a<x\}$. Let $b\in L$. We claim that $\bwe \{x\ve b:a<x\}\leq \bwe \{x\in L:a<x\}\ve b$. We consider two cases. First, let us assume that $b\leq a$. If this is the case, then $b\leq x$ whenever $a<x$, and so both the left hand side and the right hand side equal $\bwe \{x\in L:a<x\}$. Now, let us assume instead that $b\nleq a$. This is equivalent to saying that $a<a\ve b$. This means that:
\[
\bwe \{x\ve b:a<x\}\leq a\ve b\leq \bwe \{x\in L:a<x\}\ve b.\qedhere
\]
\end{proof}
\begin{proposition}\label{p:cpexact}
A completely prime filter $L{\sm}\da p$ is exact if and only if the prime $p$ is covered.
\end{proposition}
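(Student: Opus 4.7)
The plan is to prove both directions of the equivalence separately, using the description $L \sm \da p = \{x \in L : x \nleq p\}$ and working directly with the defining equation of exact meets. The key auxiliary tool will be Lemma \ref{bigmeet}, which supplies an exact meet tailored to detect failure of coveredness at $p$.

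For the forward implication I would argue by contrapositive. If $p$ is not covered, then some family $\{x_i\}_{i\in I}$ satisfies $\bwe_i x_i = p$ with every $x_i \neq p$; since automatically $x_i \geq p$, this means $p < x_i$ and hence $x_i \in L \sm \da p$. Now Lemma \ref{bigmeet} tells us that $\bwe \{y \in L : p < y\}$ is an exact meet. This meet coincides with $p$: the family $\{x_i\}$ is contained in $\{y : p < y\}$, forcing the larger meet to be $\leq p$, while $p$ is itself a lower bound of $\{y : p < y\}$. Every element of $\{y : p < y\}$ lies in $L \sm \da p$, but the value $p$ of this exact meet does not, so $L \sm \da p$ is not closed under exact meets.

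For the converse, assume $p$ is covered and let $\bwe_i x_i$ be an exact meet with $x_i \in L \sm \da p$ for all $i$; I must show $\bwe_i x_i \nleq p$. Exactness gives $(\bwe_i x_i)\ve p = \bwe_i (x_i \ve p)$, and each $x_i \ve p$ strictly exceeds $p$, for otherwise $x_i \ve p = p$ and so $x_i \leq p$, contradicting $x_i \in L\sm\da p$. If $(\bwe_i x_i) \ve p = p$ held, then $\bwe_i (x_i \ve p) = p$ would hold, and coveredness of $p$ would force some $x_i \ve p$ to equal $p$, a contradiction. Hence $(\bwe_i x_i)\ve p > p$, which is exactly $\bwe_i x_i \nleq p$.

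The only non-obvious ingredient is spotting that Lemma \ref{bigmeet} provides precisely the right exact meet to certify the forward direction; once this is in hand, both halves reduce to one-line arguments combining the exact-meet identity with the definition of coveredness.
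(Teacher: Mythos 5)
Your proof is correct and follows essentially the same route as the paper: Lemma \ref{bigmeet} applied to $\bwe\{x:p<x\}$ settles one direction (you phrase it as a contrapositive, the paper argues directly), and the other direction is the same combination of the exactness identity $(\bwe_i x_i)\ve p=\bwe_i(x_i\ve p)$ with coveredness. No gaps.
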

Suppose that the completely prime filter $L{\sm}\da p$ is exact. To show that the prime $p$ is covered, we prove that $\bwe \{x\in L:p<x\}\nleq p$. By Lemma \ref{bigmeet}, the meet on the left-hand side is exact. The result follows by our assumption that $L{\sm}\da p$ is closed under exact meets. For the converse, we suppose that $p$ is a covered prime and that $x_i\nleq p$ for the members of some family $\{x_i:i\in I\}$ such that their meet is exact. We then have that $x_i\ve p\neq p$ for every $i\in I$, and as $p$ is covered, this implies that $\bwe_i (x_i\ve p)\neq p$. By exactness of the meet $\bwe_i x_i$, we also have $(\bwe_i x_i)\ve p\neq p$, that is $\bwe _i x_i\nleq p$, as required.

\begin{lemma}\label{spectrumofef}
For any frame $L$, the spectrum of $(L,\opp{\fe(L)})$ is homeomorphic to the space $\pt_D(L)$. The spectrum of $(L,\opp{\fse(L)})$ is the classical spectrum $\pt(L)$.
\end{lemma}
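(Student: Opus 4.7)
The plan is to apply the concrete description of points and opens of a Raney extension given by Corollary \ref{c:pointsfilters}, and then compute explicitly what filters lie in $\fe(L) \cap \fcp(L)$ and $\fse(L) \cap \fcp(L)$.

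First I would handle the easier second claim. By Corollary \ref{c:pointsfilters}, the points of $(L,\opp{\fse(L)})$ are exactly the completely prime filters of $L$ that are strongly exact. I would then show that every completely prime filter $P = L \sm \da p$ is automatically strongly exact: if $\bwe_i x_i$ is a strongly exact meet with every $x_i \in P$, i.e. $x_i \nleq p$, then by Lemma \ref{l:prime} we have $x_i \to p = p$ for each $i$, so strong exactness forces $(\bwe_i x_i) \to p = p$ and hence $\bwe_i x_i \nleq p$, that is $\bwe_i x_i \in P$. Thus $\fse(L) \cap \fcp(L) = \fcp(L)$ as sets, and the open sets $\{P \in \fcp(L) : a \in P\}$ from Corollary \ref{c:pointsfilters} are exactly the basic opens of the classical spectrum $\pt(L)$, giving the desired equality on the nose.

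For the first claim, the points of $(L,\opp{\fe(L)})$ are $\fe(L) \cap \fcp(L)$ by Corollary \ref{c:pointsfilters}. By Proposition \ref{p:cpexact}, a completely prime filter $L \sm \da p$ is exact if and only if $p$ is a covered prime. The classical bijection $p \mapsto L \sm \da p$ between primes and completely prime filters therefore restricts to a bijection between covered primes and $\fe(L) \cap \fcp(L)$. It remains to verify that this bijection is a homeomorphism onto $\pt_D(L)$. The basic open $\{P \in \fe(L) \cap \fcp(L) : a \in P\}$ corresponds, under $p \mapsto L \sm \da p$, to $\{p \in \pt_D(L) : a \nleq p\}$, which is precisely a basic open of the subspace topology inherited from $\pt(L)$ — matching the definition of $\pt_D(L)$ recalled in the background.

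No single step is a real obstacle: the substance is entirely carried by Proposition \ref{p:cpexact} for the first identification and by Lemma \ref{l:prime} for the second. The mildly subtle point is just the verification that completely prime filters are automatically strongly exact, which requires invoking the exact characterization of $x \to p$ for a prime $p$ rather than any direct manipulation of strongly exact meets, and checking that the topologies agree as subspaces of $\pt(L)$ in the first case is a routine unravelling of Corollary \ref{c:pointsfilters}.
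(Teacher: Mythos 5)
Your proposal is correct and follows essentially the same route as the paper: points are identified via Corollary \ref{c:pointsfilters}, exact completely prime filters are matched with covered primes via Proposition \ref{p:cpexact}, and the topology is transferred along the standard prime/completely-prime-filter correspondence. The only differences are cosmetic — you verify directly (via Lemma \ref{l:prime}) that completely prime filters are strongly exact, a fact the paper simply cites, and you check the agreement of basic opens explicitly where the paper appeals to the standard homeomorphism with $\pt(L)$.
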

\begin{proof}
By Corollary \ref{c:pointsfilters}, the points of $(L,\opp{\fe(L)})$ are the completely prime filters which are also exact. By Proposition \ref{p:cpexact}, these are the filters of the form $L{\sm}\da p$ for some covered prime $p\in L$. Indeed, then, there is a bijection between the points of $\rpt(L,\opp{\fe(L)})$ and those of $\pt_D(L)$. This is a restriction and co-restriction of the standard homeomorphism between the spectrum $\pt(L)$ and its space of completely prime filters, and so it is a homeomorphism. For $(L,\opp{\fse(L)})$, it suffices to notice that since all completely prime filters are strongly exact, $\fcp(L)\cap \fse(L)=\fcp(L)$.
\end{proof}
We shall now refine the result above to the case of subfit frames. We call $\maxpt(L)$ the collection of maximal primes of a frame $L$, equipped with the subspace topology inherited from $\pt(L)$.

\begin{proposition}\label{minimalcpf}
    Let $L$ be a frame. A prime $p\in L$ is maximal if and only if $L{\sm}\da p$ is a regular filter.
\end{proposition}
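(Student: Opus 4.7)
The plan is to use the concrete characterization of regular filters from Proposition \ref{reg}, namely that a filter $G$ is regular iff it is of the form $\bigcap_i \{x \in L : x \vee a_i = 1\}$ for some family $\{a_i\}_i \subseteq L$. The key bridge between maximality and regularity is the characterization that a prime $p$ is maximal iff $x \vee p = 1$ for every $x \nleq p$: the ($\Leftarrow$) direction is immediate, since any prime $q$ strictly above $p$ would give $q \vee p = 1 \leq q < 1$, a contradiction, while the ($\Rightarrow$) direction uses the Prime Ideal Theorem, as if $x \nleq p$ and $x \vee p \neq 1$ then PIT produces a prime $q \geq x \vee p$ with $q < 1$, strictly above $p$.

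For the forward direction, assume $p$ is a maximal prime. I would show that $L{\sm}{\da}p$ equals $\{x \in L : x \vee p = 1\}$, which is manifestly regular by Proposition \ref{reg}. The inclusion $\{x : x \vee p = 1\} \se L{\sm}{\da}p$ is trivial, since $x \leq p$ gives $x \vee p = p \neq 1$, while the reverse inclusion is exactly the maximality characterization noted above.

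For the reverse direction, assume $L{\sm}{\da}p$ is regular, and write it as $\bigcap_i \{x : x \vee a_i = 1\}$ via Proposition \ref{reg}. The key observation is that each $a_i$ with $a_i \neq 1$ must satisfy $a_i \leq p$: for otherwise $a_i \in L{\sm}{\da}p$, which in particular forces $a_i \vee a_i = a_i = 1$, a contradiction. Since $p \notin L{\sm}{\da}p$ the filter is proper, so at least one $a_i \neq 1$ exists and lies below $p$. Then any $x \nleq p$ is in the filter, hence satisfies $x \vee a_i = 1$, and so $x \vee p \geq x \vee a_i = 1$. The easy direction of the maximality characterization now gives that $p$ is maximal.

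The main obstacle is the PIT-dependent half of the maximality characterization, which is what powers the forward implication; the reverse implication is a direct unpacking of the regular-filter description in Proposition \ref{reg}, resting on the observation that the generating elements $a_i$ are forced to live below $p$.
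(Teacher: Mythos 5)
Your reverse direction is fine, and in fact is a mild simplification of the paper's: instead of invoking complete primality of $L{\sm}{\da}p$ to reduce the intersection $\bca_i\{x:x\ve a_i=1\}$ to a single such filter, you just observe that every $a_i\neq 1$ is forced below $p$ and that one such $a_i$ exists by properness; that argument is correct. Note, though, that once you have ``$x\nleq p$ implies $x\ve p=1$'' you can conclude more than maximality among primes: any $q$ with $p<q<1$ satisfies $q\nleq p$, hence $q=q\ve p=1$, a contradiction, so $p$ is maximal in $L{\sm}\{1\}$, i.e.\ $\up p=\{p,1\}$ --- and this is the notion of ``maximal prime'' the statement is about (the paper's proof opens with ``because it is maximal, $\up p=\{p,1\}$'').

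The forward direction has a genuine gap. You read ``maximal'' as ``maximal among primes'' and prop it up with the claim that, by the Prime Ideal Theorem, $x\ve p\neq 1$ yields a prime \emph{element} $q\geq x\ve p$ with $q<1$. That step fails in the pointfree setting: PIT produces prime filters/ideals of distributive lattices, not prime elements of frames, and a frame element $\neq 1$ need not lie below any prime even assuming full choice (a nontrivial atomless complete Boolean algebra has no primes at all). Worse, under your reading the proposition itself is false: adjoining a new bottom $0$ to such a Boolean algebra gives a frame whose only prime is $0$, so $0$ is vacuously maximal among primes, yet $L{\sm}{\da}0$ is not a regular filter. With the intended reading ($\up p=\{p,1\}$) no choice principle is needed and your plan goes through immediately: if $x\nleq p$ then $x\ve p>p$, so $x\ve p=1$, hence $L{\sm}{\da}p=\{x\in L:x\ve p=1\}$, which is regular by Proposition \ref{reg} (this is exactly the paper's argument, phrased there as $L{\sm}{\da}p=\neg{\up p}$ in $\Fi(L)$). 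So the fix is to drop the PIT detour and use the correct notion of maximality; as written, the forward implication is unproven and, on your definition, unprovable.
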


\begin{proof}
Suppose that $p\in L$ is a maximal prime. Because it is maximal, $\up p=\{p,1\}$. We claim that the completely prime filter $L{\sm}\da p$ is its pseudocomplement in the frame of filters. Indeed, $L{\sm}\da p\cap \{1,p\}=\{1\}$. Furthermore, if, for a filter $F$, $F\cap \{1,p\}=\{1\}$ then $p\notin F$, and so for $f\in F$ we must have $f\nleq p$. For the converse, suppose that $p\in L$ is such that $L{\sm}\da p$ is a regular filter. By Proposition \ref{reg}, this is the intersection of a collection of filters of the form $\{x\in L:x\ve a=1\}$ for some $a\in L$. As $L{\sm}\da p$ is completely prime, it must be $\{x\in L:x\ve a=1\}$ for some $a\in L$. This means that for all $x\in L$ the conditions $x\leq p$ and $x\ve a\neq 1$ are equivalent. In particular, because the filter is not all of $L$ (as it is completely prime), we must have $a\leq p$ since $a\ve a=a\neq 1$. This means that if $x\nleq p$ then $x\ve a=1$ and so $x\ve p=1$, for all $x\in L$. This means that $p$ must be maximal. 
\end{proof}

\begin{proposition}
For a subfit frame $L$, the spectrum of the Raney extension $(L,\opp{\fr(L)})$ is the $T_1$ space $\maxpt(L)$.
\end{proposition}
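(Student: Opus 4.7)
The plan is to identify the underlying set of $\rpt(L,\opp{\fr(L)})$ with $\maxpt(L)$ first, then check that the topology matches, and finally verify the $T_1$ property directly. Subfitness of $L$ is used exactly to guarantee, via the proposition in Subsection \ref{ss:concr}, that $(L,\opp{\fr(L)})$ is actually a Raney extension, so that the spectrum construction applies.

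To identify the points, I would apply Corollary \ref{c:pointsfilters}, which says that the points of $\rpt(L,\opp{\fr(L)})$ are the elements of $\fcp(L)\cap \fr(L)$, i.e.\ the completely prime filters which are also regular. By Proposition \ref{minimalcpf}, a completely prime filter $L\sm\da p$ is regular if and only if the prime $p$ is maximal. Hence the assignment $p\mapsto L\sm\da p$ sets up a bijection between $\maxpt(L)$ and the underlying set of $\rpt(L,\opp{\fr(L)})$.

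For the topology, Corollary \ref{c:pointsfilters} again gives that a basic (indeed arbitrary) open of $\rpt(L,\opp{\fr(L)})$ is of the form $\{P\in \fcp(L)\cap \fr(L):a\in P\}$ for some $a\in L$. Transporting along the bijection above, this set corresponds to $\{p\in \maxpt(L):a\nleq p\}=\varphi_L(a)\cap \maxpt(L)$, where $\varphi_L(a)$ is the basic open of $\pt(L)$. These are precisely the opens of the subspace topology that $\maxpt(L)$ inherits from $\pt(L)$, so the bijection is a homeomorphism.

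It remains to argue that $\maxpt(L)$ is $T_1$. For any two distinct maximal primes $p\neq q$, neither can be below the other: if $p\leq q$ then $p<q<1$, contradicting the maximality of $p$, and symmetrically. Thus $q\nleq p$ witnesses that $q\in \varphi_L(q)$ but $p\notin \varphi_L(q)$, and similarly with the roles of $p$ and $q$ swapped, so the specialization order on $\maxpt(L)$ is discrete. I do not expect any serious obstacle here; the only small subtlety is keeping track of the two translations (completely prime filter $\leftrightarrow$ prime element, and the topology on $\rpt$ of a Raney extension given via Corollary \ref{c:pointsfilters}), both of which are packaged in the lemmas already proved.
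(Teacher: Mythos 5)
Your proof is correct and follows essentially the same route as the paper: Corollary \ref{c:pointsfilters} together with Proposition \ref{minimalcpf} identifies the points with the maximal primes, the topology is the subspace topology inherited from $\pt(L)$ via the standard homeomorphism, and $T_1$ follows from incomparability of distinct maximal primes; the only difference is that the paper first notes that subfitness forces $\fe(L)=\fr(L)$, whereas you bypass this by citing the earlier proposition that $(L,\opp{\fr(L)})$ is a Raney extension for subfit $L$, which is equally legitimate. One trivial slip in the last step: since $q\nleq p$, the open $\varphi_L(q)\cap \maxpt(L)$ contains $p$ and omits $q$ (you wrote the memberships the other way around), but fixing this does not affect the argument.
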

\begin{proof}
Suppose that $L$ is a subfit frame. We claim that all its exact filters are regular. By Proposition \ref{famouschar}, $\fr(L)$ contains all principal filters, and so by Lemma \ref{principalmin} we must have $\fe(L)\se \fr(L)$. The reverse inclusion holds for all frames. By Corollary \ref{c:pointsfilters}, then, the points of $(L,\opp{\fe(L)})$ are the regular completely prime filters, which by Proposition \ref{minimalcpf} are those corresponding to maximal primes of $L$. The fact that this is a homeomorphism comes from the fact that this is a restriction of the standard homeomorphism between the spectrum $\pt(L)$ and the spectrum defined in terms of prime elements of $L$. The space $\maxpt(L)$ is a $T_1$ space, since whenever $p,q\in \maxpt(L)$, both $p\nleq q$ and $q\nleq p$ by maximality, and so the open set $\{a\in L:a\nleq p\}$ contains $q$ and omits $p$, and the open set $\{a\in L:a\nleq q\}$ contains $p$ and omits $q$. 
\end{proof}

\begin{lemma}\label{spectrumboundary}
For a Raney extension $(L,C)$ there are subspace embeddings 
\[
\pt_D(L)\to \rpt(L,C)\to \pt(L).
\]  
\end{lemma}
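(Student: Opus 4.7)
The plan is to reduce the statement to a comparison of the underlying sets of points together with a uniformity argument for the topologies. The key background ingredients are Theorem \ref{Rboundaries}, which locates every Raney extension between the exact and strongly exact filters, and Corollary \ref{c:pointsfilters}, which describes both the points and the opens of $\pt_R(L,C)$ purely in terms of the collection $C^*\subseteq \Fi(L)$ associated with the extension.

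First, I would invoke Theorem \ref{Rboundaries} to obtain the sublocale inclusions
\[
\fe(L)\ \subseteq\ C^*\ \subseteq\ \fse(L).
\]
Intersecting with $\fcp(L)$, and using that every completely prime filter is strongly exact (so that $\fcp(L)\cap \fse(L)=\fcp(L)$), this gives the chain of set inclusions
\[
\fcp(L)\cap \fe(L)\ \subseteq\ \fcp(L)\cap C^*\ \subseteq\ \fcp(L).
\]
By Corollary \ref{c:pointsfilters}, these three sets are exactly the underlying sets of $\pt_R(L,\opp{\fe(L)})$, $\pt_R(L,C)$, and $\pt_R(L,\opp{\fse(L)})$, respectively. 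By Lemma \ref{spectrumofef}, the outer two are homeomorphic to $\pt_D(L)$ and $\pt(L)$, so the claimed set-level inclusions $\pt_D(L)\subseteq \pt_R(L,C)\subseteq \pt(L)$ are in place.

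It remains to check that these inclusions are actually topological embeddings. Here the uniformity of the construction does the work: in all three cases Corollary \ref{c:pointsfilters} tells us that the open sets are precisely those of the form
\[
\{P\in \fcp(L)\cap \ca{F} : a\in P\},\qquad a\in L,
\]
where $\ca{F}$ is $\fe(L)$, $C^*$, or $\fse(L)$. Thus an open of the larger space, once restricted to the subset corresponding to the smaller space, is visibly an open of the smaller space indexed by the same $a\in L$, and conversely every basic open of the smaller space extends to a basic open of the larger one with the same index. Hence the subspace topology on each inclusion coincides with its intrinsic topology, so both inclusions are subspace embeddings.

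There is no real obstacle here beyond bookkeeping; the only subtlety worth flagging is that the equality $\fcp(L)\cap \fse(L)=\fcp(L)$ must be justified, which follows from the fact that any completely prime filter is closed under all meets that are preserved by the spatialization of $L$ into its frame of opens of $\pt(L)$, and in particular under strongly exact meets. After that observation everything is a direct consequence of Theorem \ref{Rboundaries}, Corollary \ref{c:pointsfilters}, and Lemma \ref{spectrumofef}.
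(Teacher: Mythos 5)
Your proposal is correct and follows essentially the same route as the paper: sandwich $C^*$ between $\fe(L)$ and $\fse(L)$, intersect with $\fcp(L)$, identify the resulting point sets and topologies via Corollary \ref{c:pointsfilters}, and conclude with Lemma \ref{spectrumofef} (the paper cites Theorem \ref{containsprincipal} and Proposition \ref{raneymin} directly where you cite Theorem \ref{Rboundaries}, which is the same content). Your extra paragraph spelling out that the subspace topologies agree, and that completely prime filters are strongly exact, only makes explicit what the paper leaves implicit in Corollary \ref{c:pointsfilters} and Lemma \ref{spectrumofef}.
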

\begin{proof}
If $(L,C)$ is a Raney extension, $\fe(L)\se C^*\se \fse(L)$, by Theorem \ref{containsprincipal} and by Proposition \ref{raneymin}. Therefore, 
\[
\fcp(L)\cap \fe(L)\se\fcp(L)\cap C^*\se\fcp(L)\cap \fse(L).
\]
By Corollary \ref{c:pointsfilters}, this means that thre is a chain of subspace inclusions $\rpt(L,\opp{\fe(L)})\se \rpt(L,C)\se \rpt(L,\opp{\fse(L)})$. The result follows from Lemma \ref{spectrumofef}.
\end{proof}

\begin{lemma}\label{meetofheyting}
For a frame $L$ and a subset $\ca{X}\se \Fi(L)$ the smallest sublocale $\ca{S}(\ca{X})$ is the set $\ca{I}(\{\up a\ra F:a\in L,F\in \ca{X}\})$.
\end{lemma}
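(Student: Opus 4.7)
My plan is to deduce the formula directly from the general description of $\ca{S}(-)$ given in Lemma \ref{l:small-sl}, applied to the frame $\Fi(L)$. That lemma yields
\[
\ca{S}(\ca{X})=\ca{M}\bigl(\{G\ra F:G\in\Fi(L),\,F\in\ca{X}\}\bigr),
\]
where meets in $\Fi(L)$ are intersections. So the task reduces to showing that arbitrary Heyting implications $G\ra F$, with $G$ an arbitrary filter, can be rewritten as intersections of implications $\up a\ra F$ with $a\in L$.

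The key observation is that every filter $G$ of $L$ equals the join $\bigvee_{a\in G}\up a$ in $\Fi(L)$: on the one hand $\up a\se G$ for each $a\in G$, and on the other any filter containing all the $\up a$ for $a\in G$ already contains $G$ as a set. Since $\Fi(L)$ is a frame, Heyting implication turns joins in the first coordinate into meets, i.e.\ intersections of filters:
\[
G\ra F \;=\; \Bigl(\bigvee_{a\in G}\up a\Bigr)\ra F \;=\; \bigcap_{a\in G}\,(\up a\ra F).
\]
Hence every generator $G\ra F$ of $\ca{S}(\ca{X})$ already lies in $\ca{I}(\{\up a\ra F:a\in L,\,F\in\ca{X}\})$, and this inclusion is preserved by taking further intersections. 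This gives the containment $\ca{S}(\ca{X})\se\ca{I}(\{\up a\ra F:a\in L,\,F\in\ca{X}\})$.

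The reverse containment is immediate: each $\up a\ra F$ is already of the form $G\ra F$ (with $G=\up a$), so it belongs to $\ca{S}(\ca{X})$, and $\ca{S}(\ca{X})$ is closed under intersections because sublocales are closed under all meets. I do not anticipate any serious obstacle; the only thing worth double-checking is the frame-theoretic identity $\bigl(\bigvee_i x_i\bigr)\ra y=\bigwedge_i(x_i\ra y)$ applied inside $\Fi(L)$, together with the fact that meets in $\Fi(L)$ coincide with set-theoretic intersections, so that the $\ca{M}$ in Lemma \ref{l:small-sl} becomes the $\ca{I}$ in the statement.
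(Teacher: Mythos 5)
Your proposal is correct and follows essentially the same route as the paper: apply Lemma \ref{l:small-sl} in the frame $\Fi(L)$ and then reduce arbitrary implications $G\ra F$ to intersections $\bigcap_{a\in G}(\up a\ra F)$, which is exactly the identity the paper invokes. You merely spell out the justification (namely $G=\bigvee_{a\in G}\up a$ and the Heyting law turning joins into meets, with meets in $\Fi(L)$ being intersections) that the paper leaves implicit.
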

\begin{proof}
By Lemma \ref{l:small-sl}, it suffices to show that the collection in the claim is the same as $\ca{I}(\{G\ra F:G\in \Fi(L),F\in \ca{X}\})$. Indeed, for each $G\in \Fi(L)$ and $F\in \ca{X}$, $G\ra F=\bca \{\up g\ra F:g\in G\}$.
\end{proof}
The following fact follows directly from Lemma \ref{l:prime}, and the fact that completely prime filters are prime elements of $\Fi(L)$.
\begin{lemma}\label{l:heytingcp}
    For a frame $L$ and a completely prime filter $P\se L$, for each $a\in L$, 
\[
\up a\ra P=
\begin{cases}
L & \text{ if $a\in P$}\\
P & \text{ otherwise.}
\end{cases}
\]
\end{lemma}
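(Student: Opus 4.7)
The statement should follow essentially by instantiating the first bullet of Lemma \ref{l:prime} inside the frame $\Fi(L)$, so the plan is to reduce it to that general fact about primes and Heyting implication. Recall that the top of $\Fi(L)$ is the filter $L$ itself, that the order on $\Fi(L)$ is set inclusion, and that the hypothesis identifies $P$ as a completely prime filter of $L$, which by the remark preceding the lemma is a prime element of the frame $\Fi(L)$.

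The key observation I would record first is that, for a principal filter, $\up a \se P$ holds if and only if $a\in P$, since $P$ is upward closed. Given this, the case split in the statement is literally the case split in the first bullet of Lemma \ref{l:prime} applied to the prime element $P$ of $\Fi(L)$ and the element $\up a$: if $\up a\se P$ (equivalently $a\in P$), then $\up a \ra P$ is the top of $\Fi(L)$, which is $L$; otherwise $\up a\nleq P$ and Lemma \ref{l:prime} gives $\up a\ra P=P$.

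There is no real obstacle: the whole argument is a one-line application of the prior lemma once the correspondence ``$\up a\se P \iff a\in P$'' is pointed out. If desired, one can alternatively verify the two cases by hand using the explicit formula $\up a\ra P=\{x\in L:\up a\cap \up x\se P\}=\{x\in L:a\we x\in P\}$ in $\Fi(L)$: when $a\in P$ this set is all of $L$ by upward closure of $P$, and when $a\notin P$ complete primeness of $P$ forces $a\we x\in P$ to imply $x\in P$, while $x\in P$ trivially implies $a\we x\in P$ only when $a\in P$; a short check using primeness then shows the set equals $P$ exactly. Either route yields the claim directly.
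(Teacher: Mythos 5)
Your main argument is correct and is exactly the paper's route: the paper derives the lemma directly from Lemma \ref{l:prime} applied to the prime element $P$ of the frame $\Fi(L)$, together with the observation that $\up a\se P$ iff $a\in P$ (and that the top of $\Fi(L)$ is $L$).

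One warning about your optional ``by hand'' verification: the explicit formula you wrote is wrong. In $\Fi(L)$ meets are intersections, and $\up a\cap \up x=\up(a\ve x)$, so the correct description is $\up a\ra P=\{x\in L:a\ve x\in P\}$, not $\{x\in L:a\we x\in P\}$; the latter set equals $P$ when $a\in P$ and is empty when $a\notin P$, so that computation as stated does not yield the claim. With the corrected formula the check goes through: if $a\in P$ then $a\ve x\in P$ for every $x$ by upward closure, giving $L$; if $a\notin P$ then $a\ve x\in P$ forces $x\in P$ since $P$, being completely prime, is in particular inaccessible by finite joins, and conversely $x\in P$ gives $a\ve x\in P$, so the set is $P$.
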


\begin{theorem}\label{spectrumpowerset}
The spectra of Raney extensions over $L$ coincide, up to homeomorphisms, with the interval
\[
[\pt_D(L),\pt(L)] 
\]
of the powerset of $\pt(L)$.
\end{theorem}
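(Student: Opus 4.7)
The plan is to prove the theorem by establishing both directions of the correspondence. Lemma \ref{spectrumboundary} already supplies one direction: every spectrum $\rpt(L,C)$ sits as a subspace of $\pt(L)$ containing $\pt_D(L)$. The substantive task is the converse, namely to construct, for every subset $Y$ with $\pt_D(L) \se Y \se \pt(L)$, a Raney extension on $L$ whose spectrum is exactly $Y$.

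My construction, for each such $Y$, will be the pair $(L,\opp{\ca{F}_Y})$ where $\ca{F}_Y := \ca{S}(\fe(L) \cup Y)$ is the smallest sublocale of $\Fi(L)$ containing $\fe(L)$ together with the completely prime filters corresponding to points of $Y$. Because $\fse(L)$ is itself a sublocale of $\Fi(L)$ (Theorem \ref{posetofsubloc}) containing $\fe(L)\cup Y$, Lemma \ref{slofsl} gives $\ca{F}_Y \se \fse(L)$. Theorem \ref{containsprincipal} then produces the Raney extension $(L,\opp{\ca{F}_Y})$, and Corollary \ref{c:pointsfilters} identifies its spectrum as $\ca{F}_Y \cap \fcp(L)$. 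The inclusion $Y \se \ca{F}_Y \cap \fcp(L)$ is immediate from the construction.

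For the reverse inclusion, I first combine Lemma \ref{meetofheyting}, Lemma \ref{l:heytingcp}, and the fact that $\fe(L)$ is itself a sublocale to compute $\ca{F}_Y$ explicitly as the intersection closure $\ca{I}(\fe(L)\cup Y)$. Then take $P \in \ca{F}_Y \cap \fcp(L)$ with witnessing equation $P = \bca_i G_i$, and split the index set into those $i$ with $G_i \in \fe(L)$ (whose intersection $G$ lies in $\fe(L)$, using closure of the sublocale $\fe(L)$ under intersections) and those $i$ with $G_i \in Y$ (whose intersection I will call $H$), so $P = G \cap H$. A short check shows that completely prime filters are prime elements of $\Fi(L)$; combining this with $P = G\cap H$ forces $G = P$ or $H = P$. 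The case $G = P$ places $P$ in $\fe(L)\cap \fcp(L) = \pt_D(L) \se Y$ by Proposition \ref{p:cpexact}.

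The main obstacle, and the core of the argument, is the case $H = P$. Here $P$ equals an intersection $\bca_j P_{p_j}$ of completely prime filters from $Y$ and is itself a completely prime filter, say $P = P_q$ for a prime $q$. Taking complements inside $L$ turns this into $\da q = \bcu_j \da p_j$. Now $q \in \da q$ yields some index $j_0$ with $q \leq p_{j_0}$, while the inclusion $\da p_j \se \da q$ (automatic from $P \se P_{p_j}$) gives $p_j \leq q$ for every $j$. Combining these yields $q = p_{j_0}$, whence $P = P_{p_{j_0}} \in Y$. This collapse of a completely prime intersection of completely prime filters onto a single one of its members is the key technical step that makes the construction succeed.
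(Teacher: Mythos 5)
Your proposal is correct and follows essentially the same route as the paper: build the Raney extension from the smallest sublocale of $\Fi(L)$ generated by $\fe(L)$ together with the chosen completely prime filters, identify it as $\ca{I}(\fe(L)\cup Y)$ via Lemmas \ref{meetofheyting} and \ref{l:heytingcp}, and use Proposition \ref{p:cpexact} and Corollary \ref{c:pointsfilters} to pin down the points. The only difference is that you spell out explicitly the step the paper compresses into ``by complete primality,'' namely that a completely prime filter which is an intersection of completely prime filters must coincide with one of them, which is a welcome clarification rather than a deviation.
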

\begin{proof}
The spectrum of a Raney extension $(L,C)$ is contained in the $[\pt_D(L),\pt(L)]$ interval by Lemma \ref{spectrumboundary}. To show the converse, because of Lemma \ref{spectrumofef}, it suffices to show that for any collection of completely prime filters $\ca{P}$ such that $\fe(L)\se \ca{P}$ there is some Raney extension $(L,C)$ such that its points are $\ca{P}$, that is, $C^*\cap \fcp(L)=\ca{P}$. Let then $\ca{P}$ be such a collection. Consider the sublocale $\ca{S}(\ca{P}\cup L)\se \Fi(L)$. By Lemma \ref{principalmin}, this is the same as $\ca{S}(\ca{P}\cup \fe(L))$. Observe that $\fe(L)$ is stable under $\up a\to -$ for each $a\in L$, as it is a sublocale. The same holds for $\ca{P}$, by Lemma \ref{l:heytingcp}. By Lemma \ref{meetofheyting}, $\ca{S}(\ca{P}\cup L)=\ca{I}(\ca{P}\cup \fe(L))$. We now consider the Raney extension $(L,\opp{\ca{S}(\ca{P}\cup L)})$. It is clear that all the elements of $\ca{P}$ are points of this Raney extension, by Corollary \ref{c:pointsfilters}. Let us show the reverse set inclusion. Suppose that there is a completely prime filter $F$ such that $F\in \ca{S}(\ca{P}\cup L)$. By complete primality, and by the characterization above, this is either in $\ca{P}$ or in $\fe(L)$. In the second case, it is in $\ca{P}$, too, by assumption on $\ca{P}$. Indeed, then, $\ca{S}(\ca{P}\cup L)\cap \fcp(L)=\ca{P}$, as desired.
\end{proof}

\section{Topological properties and Raney extensions}\label{S3}

\subsection{Sobriety}

We now look at sobriety and characterize it in terms of Raney extensions.
\begin{lemma}[\cite{jakl24}, Lemma 5.4]\label{mcpupsets}
For every frame $L$ there is an isomorphism $\iota:\ca{I}(\fcp(L))^{op}\cong\ca{U}(\pt(L))$ defined on generators as $\iota(P)=\up P=\{Q\in \fcp(L):P\se Q\}$.   
\end{lemma}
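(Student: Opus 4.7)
The plan is to exhibit the map $\iota(F)=\{Q\in \fcp(L):F\se Q\}$ as the required isomorphism. On a single completely prime filter $P$ this gives $\iota(P)=\{Q\in\fcp(L):P\se Q\}=\up P$, matching the formula on generators. Since the specialization order on $\pt(L)$ is just set-inclusion of completely prime filters, $\iota(F)$ is automatically an upper set in $\pt(L)$; and $F\se F'$ evidently implies $\iota(F')\se \iota(F)$, so $\iota$ is well-defined and order-preserving as a map $\ca{I}(\fcp(L))^{op}\to \ca{U}(\pt(L))$.

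For the inverse I would take $\iota'(U)=\bca_{Q\in U}Q$, which is a set-theoretic intersection of completely prime filters and so belongs to $\ca{I}(\fcp(L))$; it is also clearly order-reversing from $\ca{U}(\pt(L))$ into $\ca{I}(\fcp(L))$, hence order-preserving into $\ca{I}(\fcp(L))^{op}$. The identity $\iota'(\iota(F))=F$ is immediate: writing $F=\bca \ca{P}$ with $\ca{P}\se \fcp(L)$, the inclusion $\ca{P}\se \iota(F)$ forces $\bca \iota(F)\se F$, and the reverse inclusion $F\se \bca \iota(F)$ holds by definition of $\iota$.

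The only nontrivial step, and what I expect to be the main obstacle, is the identity $\iota(\iota'(U))=U$, which reduces to the following key claim: whenever $Q\in \fcp(L)$ and $\bca_i P_i\se Q$ for a family $P_i\in \fcp(L)$, there is some $i$ with $P_i\se Q$. Granting the claim, the inclusion $U\se \iota(\iota'(U))$ is trivial, and for the reverse take $Q\in \iota(\iota'(U))$, so that $\bca_{P\in U}P\se Q$; the claim yields $P\se Q$ for some $P\in U$, and since $U$ is an upper set in the specialization order, $Q\in U$. I would handle the claim itself by transporting everything across the standard bijection between completely prime filters of $L$ and primes of $L$: writing $Q=L\sm \da q$ and $P_i=L\sm \da p_i$, the hypothesis $\bca_i P_i\se Q$ translates into $\da q\se \bcu_i \da p_i$, i.e.\ $q\leq p_i$ for some $i$, which is exactly $P_i\se Q$.

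With this in place, $\iota$ and $\iota'$ are mutually inverse order-preserving maps, giving the desired isomorphism of complete lattices $\ca{I}(\fcp(L))^{op}\cong \ca{U}(\pt(L))$.
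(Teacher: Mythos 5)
Your proof is correct and self-contained. There is nothing in the paper to compare it against: the lemma is imported from \cite{jakl24} (Lemma 5.4) without proof, so you are supplying an argument the paper omits. The only substantive step is indeed your key claim — if $\bca_i P_i\se Q$ with all $P_i$ and $Q$ completely prime, then $P_i\se Q$ for some $i$ — and your reduction through the bijection with primes ($P=L{\sm}\da p$) establishes it correctly; this is the same phenomenon the paper itself exploits elsewhere (completely prime filters behave as completely meet-prime elements of $\Fi(L)$, cf.\ Lemma \ref{cpcjp} and Theorem \ref{t:pointsfilters}), so your route fits the paper's machinery. One small point worth making explicit in a final write-up: $\ca{I}(\fcp(L))$ includes the empty intersection, i.e.\ the improper filter $L$, and $\iota(L)=\emptyset$; this case is covered only vacuously by your claim, via the standard fact that completely prime filters are proper (no $Q\in\fcp(L)$ satisfies $L\se Q$, since $0=\bve\emptyset$ cannot lie in a completely prime filter), which is also what licenses writing every $Q\in\fcp(L)$ as $L{\sm}\da q$. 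With that remark added, $\iota$ and $\iota'$ are mutually inverse monotone maps between complete lattices, and the isomorphism follows exactly as you say.
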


\begin{lemma}\label{l:upOmXcp}
    Let $X$ be a $T_0$ space. For a saturated set $Y\se X$, the filter $\up^{\Om(X)}Y$ is completely prime if and only if $Y=\up x$ for some $x\in X$.
\end{lemma}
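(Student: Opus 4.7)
The claim is a biconditional, so I would dispatch the two directions separately; both are short and the key identification happens via Lemma~\ref{cpcjp} applied to the Raney extension $(\Om(X),\ca{U}(X))$ from Example~\ref{example1}.

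For the ``if'' direction, assume $Y=\up x$ for some $x\in X$. Since every open of $X$ is saturated (Proposition~\ref{upset}), one computes
\[
\up^{\Om(X)}(\up x)=\{U\in \Om(X):\up x\se U\}=\{U\in \Om(X):x\in U\},
\]
i.e.\ the neighbourhood filter of $x$. This is completely prime in $\Om(X)$ because joins in $\Om(X)$ are unions: if $x\in \bcu_i U_i$ then $x\in U_{i_0}$ for some index.

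For the ``only if'' direction, suppose $\up^{\Om(X)} Y$ is a completely prime filter of $\Om(X)$. Applying Lemma~\ref{cpcjp} to the Raney extension $(\Om(X),\ca{U}(X))$, the element $Y\in \ca{U}(X)$ must be completely join-prime. Since joins in $\ca{U}(X)$ are set-theoretic unions and trivially $Y=\bcu_{x\in Y}\up x$, complete join-primality yields some $x\in Y$ with $Y\se \up x$, and the reverse inclusion is automatic; hence $Y=\up x$, as required.

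The only step that requires any care is checking that Lemma~\ref{cpcjp} applies with the correct reading of $\up^{\Om(X)}$: namely that $\up^{\Om(X)} Y$ as defined in the Raney-extension framework coincides with the filter $\{U\in \Om(X):Y\se U\}$ appearing in the statement. This is immediate from the definition $\up^L c=\up c\cap L$, reading $\se$ as the order on $\ca{U}(X)$. So I do not expect a substantive obstacle; the lemma is essentially a direct consequence of Lemma~\ref{cpcjp} plus the standard identification of completely join-primes in an upset lattice. The $T_0$ hypothesis is not strictly needed for the biconditional itself, but it matches the standing framing in which the representation $x\mapsto \up x$ is injective.
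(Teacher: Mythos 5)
Your proof is correct, but for the nontrivial direction it takes a genuinely different route from the paper. The paper argues directly with points and opens: assuming $Y\nsubseteq \up y$ for every $y\in Y$, it picks for each $y$ some $y'\in Y\sm \up y$ and a separating open $U_y$ with $y\in U_y$ and $y'\notin U_y$; then $\bigcup_y U_y$ belongs to $\up^{\Om(X)}Y$ while no single $U_y$ does, contradicting complete primality. You instead invoke Lemma \ref{cpcjp} for the Raney extension $(\Om(X),\ca{U}(X))$ of Example \ref{example1} to convert complete primality of $\up^{\Om(X)}Y$ into complete join-primality of $Y$ in $\ca{U}(X)$, and then conclude from the decomposition $Y=\bigcup_{x\in Y}\up x$ (joins in $\ca{U}(X)$ are unions, and $\up x\se Y$ since $Y$ is saturated, i.e.\ an upset). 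This is legitimate and non-circular: Lemma \ref{cpcjp} appears earlier in the paper and its proof does not depend on the present lemma --- indeed the paper explicitly calls it a ``pointfree version'' of this statement, so you are deriving the spatial lemma from the pointfree one rather than the other way around. What your route buys is brevity and a cleaner structural dependency; what the paper's route buys is a self-contained point-set argument that makes explicit where the separating opens (i.e.\ the definition of the specialization preorder) enter. Two small remarks: the paper leaves the easy ``if'' direction implicit, which you spell out correctly via the identification of $\up^{\Om(X)}(\up x)$ with the neighbourhood filter of $x$; and your observation that $T_0$ is not strictly needed is consistent with the paper's proof, which likewise uses only the preorder.
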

\begin{proof}
For all $y\in Y$, as $Y$ is saturated, $\up y\se Y$. Towards contradiction, suppose that $Y\nsubseteq \up y$ for each $y\in Y$. For each $y\in Y$, let $y'\in Y{\sm}\up y$, so $y\nleq y'$, and let $U_y\in \Om(X)$ be such that $y\in U_y$ and $y'\notin U_y$. Then, $Y\se \bigcup \{U_y:y\in Y\}$. But $Y\nsubseteq U_y$ for all $y\in Y$ as $y'\in Y{\sm}U_y$. This contradicts complete primality of $\up^{\Om(X)}Y$.
\end{proof}

\begin{proposition}\label{p:charsober}
 A $T_0$ topological space $X$ is sober if and only if $(\Om(X),\ca{U}(X))$ is $\ca{CP}$-compact. In particular, a $T_0$ space $X$ is sober if and only if the pair $(\Om(X),\ca{U}(X))$ is a realization of the $\ca{CP}$-canonical Raney extension.
\end{proposition}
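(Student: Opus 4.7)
The plan is to unwind the definition of $\ca{CP}$-compactness for the Raney extension $(\Om(X),\ca{U}(X))$ and recognize it as the sobriety condition on completely prime filters of $\Om(X)$. For the inclusion $e:\Om(X)\se \ca{U}(X)$, the meet $\bwe e[F]$ computed in $\ca{U}(X)$ is the set-theoretic intersection $\bca F$, which is a saturated set. Hence $\ca{CP}$-compactness of $(\Om(X),\ca{U}(X))$ unpacks to: for every completely prime filter $F$ of $\Om(X)$ and every $U\in \Om(X)$, $\bca F\se U$ implies $U\in F$. Combined with the trivial reverse implication, this says exactly that $F=\up^{\Om(X)}(\bca F)$.

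For the ($\Leftarrow$) direction, suppose $\ca{CP}$-compactness holds and let $F$ be a completely prime filter of $\Om(X)$. Then $\up^{\Om(X)}(\bca F)=F$ is completely prime, so by Lemma \ref{l:upOmXcp} there exists $x\in X$ with $\bca F=\up x$. Consequently $F=\{U\in \Om(X):\up x\se U\}=\{U\in \Om(X):x\in U\}$, the neighborhood filter of $x$; the $T_0$ assumption makes $x$ unique, so $X$ is sober. For the ($\Rightarrow$) direction, in a sober space every completely prime filter $F$ has the form $\{U:x\in U\}$ for some $x\in X$, with $\bca F=\up x$; if $\up x\se U$ for open $U$, then $x\in U$, so $U\in F$, giving $\ca{CP}$-compactness directly.

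For the second half of the statement, I would observe that $(\Om(X),\ca{U}(X))$ is always a spatial Raney extension, since the completely join-prime elements of $\ca{U}(X)$ are precisely the principal upsets $\up x$ with $x\in X$, and these join-generate $\ca{U}(X)$. By Corollary \ref{Xcpdense}, spatiality implies $\ca{CP}$-density, so $(\Om(X),\ca{U}(X))$ is always $\ca{CP}$-dense. Combining with the equivalence just established, $(\Om(X),\ca{U}(X))$ is $\ca{CP}$-canonical if and only if $X$ is sober. Uniqueness of the $\ca{CP}$-canonical Raney extension over $\Om(X)$, when it exists, is guaranteed by Theorem \ref{containsprincipal} applied to $\ca{F}=\fcp(\Om(X))$, so being $\ca{CP}$-canonical is the same as being a realization of the $\ca{CP}$-canonical Raney extension.

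The proof is essentially a direct translation of definitions, so I do not anticipate a real obstacle; the only subtlety is the passage from the condition $F=\up^{\Om(X)}(\bca F)$ to the existence of a representing point $x$, which is exactly the content of Lemma \ref{l:upOmXcp}. This lemma, interpreted through the adjunction $\up^L\dashv \bwe$ of Theorem \ref{charC*}, is what converts the algebraic compactness condition into the classical topological statement that every completely prime filter is a neighborhood filter.
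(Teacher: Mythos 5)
Your proof is correct and follows essentially the same route as the paper: both directions hinge on identifying $\ca{CP}$-compactness with every completely prime filter being of the form $\up^{\Om(X)}Y$ for a saturated $Y$, then invoking Lemma \ref{l:upOmXcp} to get $Y=\up x$, and the canonicity claim likewise comes from $\ca{CP}$-density via Corollary \ref{Xcpdense} together with the uniqueness in Theorem \ref{containsprincipal}. The only cosmetic difference is that you unwind the compactness condition directly, where the paper cites Proposition \ref{C*canonical}.
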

\begin{proof}
Suppose that $X$ is a sober space, and let $P$ be a completely prime filter. By sobriety, we may assume $P=N(x)$ for some $x\in X$. Since $\bca N(x)=\up x$, indeed, $\bca N(x)\se U$ implies that $x\in U$, and so $U\in N(x)$. Then, $(\Om(X),\ca{U}(X))$ is $\ca{CP}$-compact. For the second part of the claim, assume that for some $T_0$ space $X$ the pair $(\Om(X),\ca{U}(X))$ is $\ca{CP}$-compact. We show sobriety by showing that any completely prime filter $P$ is a neighborhood filter. By the characterization in Proposition \ref{C*canonical}, this is of the form $\up^{\Om(X)}Y$, for some saturated set $Y$. By Lemma \ref{l:upOmXcp}, $Y=\up x$ for some $x\in X$. Hence $P=N(x)$. The last part of the claim follows by combining this characterization of sobriety with Corollary \ref{Xcpdense}. 
\end{proof}
Motivated by this result, we define a Raney extension $(L,C)$ to be \emph{sober} if it is $\ca{CP}$-compact. In Section \ref{S4} we will show the existence of sober coreflections in $\bd{Raney}$. Let us now compare sobriety with spatiality for Raney extensions.

\begin{lemma}\label{cpcanonical}
    A Raney extension $(L,C)$ is sober and spatial if and only if it is $\ca{CP}$-canonical.
\end{lemma}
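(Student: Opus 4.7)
The plan is to deduce the equivalence by combining Proposition \ref{C*canonical}, Proposition \ref{charspatiality}, and Corollary \ref{Xcpdense}, translating each of the three conditions (sober, spatial, $\ca{CP}$-canonical) into a statement about the sublocale $C^* \se \Fi(L)$.

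For the forward direction, suppose $(L,C)$ is sober and spatial. Sobriety is by definition $\ca{CP}$-compactness, so it remains to show $\ca{CP}$-density. This is immediate from Corollary \ref{Xcpdense}, which says spatial Raney extensions are always $\ca{CP}$-dense. Hence $(L,C)$ is $\ca{CP}$-canonical.

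For the reverse direction, assume $(L,C)$ is $\ca{CP}$-canonical. Again, $\ca{CP}$-compactness is sobriety, so the content is in showing spatiality. By Proposition \ref{charspatiality}, spatiality amounts to $C^* \se \ca{I}(C^* \cap \fcp(L))$. The key observation is that $\ca{CP}$-compactness, via Proposition \ref{C*canonical}, gives $\fcp(L) \se C^*$, and since every element of $C^*$ is a fixpoint of $\up^L \circ \bwe$, we get that for every $P \in \fcp(L)$, $\up^L \bwe P = P$. Therefore $\fcp(L)^* = \fcp(L) = \fcp(L) \cap C^*$. Applying Proposition \ref{C*canonical} to the $\ca{CP}$-density assumption then yields $C^* \se \ca{I}(\fcp(L)^*) = \ca{I}(C^* \cap \fcp(L))$, which is spatiality by Proposition \ref{charspatiality}.

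I do not expect any genuine obstacle; the proof is essentially bookkeeping once one notices that $\ca{CP}$-compactness makes the collection $\fcp(L)^*$ that appears in the density condition coincide with the collection $C^* \cap \fcp(L)$ that appears in the spatiality criterion. This is the same computational trick that underlies the general principle $\ca{I}(\ca{F})^{op} = C^*$ noted at the end of Proposition \ref{C*canonical}, specialized to $\ca{F} = \fcp(L)$.
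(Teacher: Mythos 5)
Your proof is correct and follows essentially the same route as the paper's: both translate sobriety, spatiality, and $\ca{CP}$-canonicity into statements about $C^*$ via Proposition \ref{C*canonical} and Proposition \ref{charspatiality}, with the key point being that $\ca{CP}$-compactness forces $\fcp(L)\se C^*$ so that $\fcp(L)^*=\fcp(L)=C^*\cap\fcp(L)$. The paper's version is just a terser statement of the same bookkeeping, concluding that sober plus spatial is equivalent to $C^*=\ca{I}(\fcp(L))$, i.e.\ to $\ca{CP}$-canonicity.
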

\begin{proof}
It follows from Proposition \ref{charspatiality} and by Proposition \ref{C*canonical} that a Raney extension $(L,C)$ is sober and spatial if and only if $C^*=\ca{I}(\fcp(L))$. This holds if and only if the Raney extension is $\ca{CP}$-canonical. 
\end{proof}

\begin{proposition}
For a spatial frame $L$, $(L,\opp{\ca{I}(\fcp(L))})$ is the unique (up to isomorphism) sober and spatial Raney extension.
\end{proposition}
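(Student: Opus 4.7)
The plan is to bundle together Lemma \ref{cpcanonical} with Theorem \ref{containsprincipal}. By Lemma \ref{cpcanonical}, saying that a Raney extension $(L,C)$ is sober and spatial is the same as saying that it is $\ca{CP}$-canonical, so the task reduces to showing that, when $L$ is spatial, there is a unique (up to isomorphism) $\ca{CP}$-canonical Raney extension on $L$, and that it is given by $(L,\opp{\ca{I}(\fcp(L))})$.

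For this, I would verify the three conditions of Theorem \ref{containsprincipal} for the collection $\ca{F}=\fcp(L)$. Spatiality of $L$ gives, by Proposition \ref{famouschar}, that $\ca{I}(\fcp(L))$ contains all principal filters. Theorem \ref{posetofsubloc} provides the subcolocale inclusion $\ca{I}(\fcp(L))^{op}\se \Fi(L)^{op}$, since $\ca{I}(\fcp(L))\se \ca{I}(\fso(L))\se \fse(L)\se \Fi(L)$ is a chain of sublocale inclusions. The same chain shows that every completely prime filter is strongly exact, giving the third condition. Theorem \ref{containsprincipal} then yields both that $(L,\opp{\ca{I}(\fcp(L))})$ is a Raney extension and that it is the unique $\ca{CP}$-canonical one up to isomorphism.

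I do not expect any serious obstacle here: the statement is essentially an instance of the existence-and-uniqueness principle of Theorem \ref{containsprincipal}, once Lemma \ref{cpcanonical} has been used to translate ``sober and spatial'' into ``$\ca{CP}$-canonical''. The only point that requires a moment's care is invoking spatiality of $L$ at the right step, namely to ensure that $\ca{I}(\fcp(L))$ contains all principal filters, which is exactly what Proposition \ref{famouschar} guarantees.
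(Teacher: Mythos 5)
Your proposal is correct and follows essentially the same route as the paper: translate ``sober and spatial'' into $\ca{CP}$-canonical via Lemma \ref{cpcanonical}, then invoke the existence-and-uniqueness statement of Theorem \ref{containsprincipal}, with spatiality entering through Proposition \ref{famouschar}. The only difference is that you re-verify the three hypotheses of Theorem \ref{containsprincipal} explicitly, whereas the paper cites the earlier proposition in Subsection \ref{ss:concr} where this verification was already carried out.
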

\begin{proof}
   By Lemma \ref{cpcanonical}, when a sober and spatial Raney extension exists, it is unique, up to isomorphism, by Theorem \ref{containsprincipal}. If $L$ is a spatial frame, then $(L,\opp{\ca{I}(\fcp(L))})$ is a Raney extension by Proposition \ref{famouschar}, and it is the $\ca{CP}$-canonical Raney extension by Theorem \ref{containsprincipal}.
\end{proof}

\subsection{The \texorpdfstring{T\textsubscript{D}}{TD} axiom}

Let us now look at the Raney analogue of the $T_D$ axiom. 

\begin{lemma}\label{l:N(x)exact}
    A $T_0$ space is $T_D$ if and only if all neighborhood filters are exact.
\end{lemma}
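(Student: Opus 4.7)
The plan is to derive this characterization directly from Theorem \ref{t:ball}, which says that a $T_0$ space $X$ is $T_D$ precisely when every exact meet in $\Om(X)$ coincides with the corresponding set-theoretic intersection. Under this characterization, the bridge between the property of the space and the property of individual neighborhood filters is the obvious fact that $U\in N(x)$ means exactly $x\in U$, so membership of $N(x)$ detects which points lie in a given open.

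For the forward direction, assuming $X$ is $T_D$, I would fix a point $x\in X$ and a family $\{U_i\}_{i\in I}\se N(x)$ whose meet $\bwe_i U_i$ in $\Om(X)$ is exact. By Theorem \ref{t:ball}, $\bwe_i U_i = \bca_i U_i$. Since each $U_i$ contains $x$, so does the intersection, hence $\bwe_i U_i\in N(x)$. Thus $N(x)$ is exact.

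For the converse, assuming every neighborhood filter is exact, I would prove the criterion of Theorem \ref{t:ball}. Take an exact meet $\bwe_i U_i$ in $\Om(X)$. The inclusion $\bwe_i U_i\se \bca_i U_i$ holds always, since the frame meet is the interior of the set-theoretic intersection. For the reverse, pick $x\in \bca_i U_i$; then each $U_i$ lies in $N(x)$, so by exactness of $N(x)$ the meet $\bwe_i U_i$ also lies in $N(x)$, meaning $x\in \bwe_i U_i$. Hence the two sets agree, and Theorem \ref{t:ball} gives $T_D$.

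The whole argument is short once Theorem \ref{t:ball} is in hand; there is no real obstacle, just a careful deployment of the definition of neighborhood filter and the fact that the $T_0$ hypothesis is already built into Theorem \ref{t:ball}. The only subtlety worth noting is that the direction of the inclusion $\bwe_i U_i\se\bca_i U_i$ is automatic and not symmetric, so the work in both directions is really the same observation \emph{exactness of $N(x)$ = stability under exact meets of opens containing $x$} used in contrapositive forms.
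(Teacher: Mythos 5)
Your proof is correct, but it takes a genuinely different route from the paper's. You derive both directions from Theorem \ref{t:ball} (the characterization that a $T_0$ space is $T_D$ iff every exact meet $\bwe_i U_i$ in $\Om(X)$ equals $\bca_i U_i$), with the neighborhood filters acting as the pointwise witnesses: $N(x)$ detects membership of $x$, so closure of each $N(x)$ under exact meets is exactly the statement that exact meets pick up every point of the intersection. The paper instead argues through its covered-prime machinery: it identifies $N(x)$ with the completely prime filter $\Om(X){\sm}{\downarrow}p$ for the prime $p=X{\sm}\overline{\{x\}}$, invokes Proposition \ref{allpxarecov} ($X$ is $T_D$ iff all such primes are covered) and Proposition \ref{p:cpexact} (a completely prime filter is exact iff its prime is covered). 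Your argument is more elementary modulo Theorem \ref{t:ball}, and it makes transparent why the lemma is essentially a reformulation of condition (5) in Theorem \ref{t:chartd}; the cost is that you lean on the cited external theorem in both directions, whereas the paper's proof is internal to its own filter-theoretic development and reuses the coveredness--exactness link (Proposition \ref{p:cpexact}) that also drives later results such as Lemma \ref{spectrumofef}. There is no circularity in your approach, since Theorem \ref{t:ball} is an imported result not derived from this lemma.
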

\begin{proof}
    Suppose that $X$ is a $T_D$ topological space. Neighborhood filters are completely prime, and by Proposition \ref{allpxarecov} all primes of the form $X{\sm}\overline{\{x\}}$ are covered. Hence, by the characterization in Proposition \ref{p:cpexact}, the corresponding neighborhood filters are exact. Conversely, if $X$ is not $T_D$ there must be a point $x\in X$ whose prime is not covered, and by Proposition \ref{p:cpexact} again, this means that its completely prime filter is not exact.    
\end{proof}

\begin{theorem}\label{t:chartd}
The following are equivalent for a $T_0$ space $X$.
\begin{enumerate}
    \item The space $X$ is $T_D$. 
    \item The Raney extension $(\Om(X),\ca{U}(X))$ is $\ca{E}$-dense.
    \item The Raney extension $(\Om(X),\ca{U}(X))$ is $\ca{E}$-canonical.
    \item The Raney extension $(\Om(X),\ca{U}(X))$ is isomorphic to $(\Om(X),\fe(\Om(X)))$.
    \item The inclusion $\Om(X)\se \ca{U}(X)$ preserves exact meets.
\end{enumerate}
\end{theorem}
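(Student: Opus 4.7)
The plan is to route everything through the three easy equivalences $(2)\Leftrightarrow(3)\Leftrightarrow(4)$, using Lemma~\ref{l:N(x)exact} and Theorem~\ref{t:ball} as the bridges to $(1)$ and $(5)$. The equivalence $(1)\Leftrightarrow(5)$ is already recorded in Theorem~\ref{t:ball}, so the substantive work lies in linking $(1)$ to items $(2)$, $(3)$, $(4)$.

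The first structural observation I would make is that $(\Om(X),\ca{U}(X))$ is automatically $\ca{E}$-compact. Indeed, by Lemma~\ref{sufficient}, $\ca{U}(X)^{*}\se \Fi(\Om(X))$ is a sublocale that contains every principal filter, while by Lemma~\ref{principalmin} the collection $\fe(\Om(X))$ is the smallest such sublocale; hence $\fe(\Om(X))\se \ca{U}(X)^{*}$, which by Proposition~\ref{C*canonical} is exactly $\ca{E}$-compactness. This gives $(2)\Leftrightarrow(3)$ with no hypothesis on $X$. For $(3)\Leftrightarrow(4)$, I would note that $\fe(\Om(X))$ satisfies the hypotheses of Theorem~\ref{containsprincipal}: it is a sublocale of $\fse(\Om(X))$ by Theorem~\ref{posetofsubloc}, it contains all principal filters (these being closed under arbitrary meets, hence exact), and its members are strongly exact. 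Thus the $\ca{E}$-canonical Raney extension on $\Om(X)$ is unique up to isomorphism and is realised by $(\Om(X),\opp{\fe(\Om(X))})$, so $(3)$ and $(4)$ coincide.

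For $(4)\Rightarrow(1)$, an isomorphism $(\Om(X),\ca{U}(X))\cong (\Om(X),\opp{\fe(\Om(X))})$ forces $\ca{U}(X)^{*}=\fe(\Om(X))$; in particular each neighborhood filter $N(x)=\up^{\Om(X)}\up x$ lies in $\fe(\Om(X))$, so is exact, and by Lemma~\ref{l:N(x)exact} the space $X$ is $T_D$. Conversely, for $(1)\Rightarrow(2)$, if $X$ is $T_D$ then Lemma~\ref{l:N(x)exact} makes every $N(x)$ exact, and $\bca N(x)=\up x$ holds in $\ca{U}(X)$. Since $\ca{U}(X)$ is join-generated by the principal upsets $\{\up x:x\in X\}$, it is join-generated by $\{\bca F:F\in \fe(\Om(X))\}$, which is exactly $\ca{E}$-density.

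The principal subtlety is recognising that $\ca{E}$-compactness is automatic for $(\Om(X),\ca{U}(X))$, so density alone is strong enough to force the $T_D$ axiom; in general density and compactness are independent conditions. Once that observation is in place, every arrow in the cycle reduces to either an appeal to the uniqueness part of Theorem~\ref{containsprincipal} or a direct application of Lemma~\ref{l:N(x)exact}.
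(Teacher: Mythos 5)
Your proposal is correct and follows essentially the same route as the paper: the same key ingredients (Lemma~\ref{l:N(x)exact}, the automatic $\ca{E}$-compactness via Proposition~\ref{raneymin}/Proposition~\ref{C*canonical}, the uniqueness part of Theorem~\ref{containsprincipal}, and Theorem~\ref{t:ball}) carry the same weight in both arguments. The only difference is cosmetic: the paper closes the cycle as $(4)\Rightarrow(5)\Rightarrow(1)$, deriving preservation of exact meets directly from the identification $\ca{U}(X)^*=\fe(\Om(X))$ and then invoking only one direction of Theorem~\ref{t:ball}, whereas you go $(4)\Rightarrow(1)$ through exactness of the neighborhood filters and attach $(5)$ by citing the full equivalence in Theorem~\ref{t:ball}.
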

\begin{proof}
Let $X$ be a $T_0$ space. If this is a $T_D$ space, then by Lemma \ref{l:N(x)exact} all neighborhood filters are exact, and this means that all filters of the form $\up^{\Om(X)}\up x$ for $x\in X$ are exact. As, for all $x\in X$, $\bca \up^{\Om(X)}\up x=\up x$, and the principal filters generate the collection $\ca{U}(X)$, (2) follows. Suppose, now, that (2) holds. By Proposition \ref{raneymin}, and the characterization of compactness of Proposition \ref{C*canonical}, the Raney extension is $\ca{E}$-compact, hence $\ca{E}$-canonical by our initial hypothesis. Items (3) and (4) are equivalent by the uniqueness result of Theorem \ref{containsprincipal}. Suppose that (4) holds. We will identify $(\Om(X),\ca{U}(X))$ with the isomorphic Raney extension $(\Om(X),\opp{\fe(\Om(X))})$. If $U_i\in \Om(X)$ is a family such that their meet is exact, then the least upper bound of the family $\up U_i$ in $\fe(\Om(X))$ must be $\up \bwe_i U_i$, by definition of exact filter. This means that the meet is preserved by the embedding $\Om(X)\to \opp{\fe(\Om(X))}$. Finally, (5) implies (1) by the characterization in Theorem \ref{t:ball}.
\end{proof}

Motivated by the last result, we call a Raney extension $T_D$ if it is $\ca{E}$-dense. All Raney extensions are $\ca{E}$-compact, by Proposition \ref{raneymin} and the characterization of compactness in Proposition \ref{C*canonical}. Thus, the $T_D$ Raney extensions are those which are $\ca{E}$-canonical, and by the uniqueness result of Theorem \ref{containsprincipal} these are the Raney extensions which are, up to isomorphism, $(L,\opp{\fe(L)})$ for some frame $L$. In Section \ref{S4}, we will study $T_D$ Raney extensions.

\subsection{The \texorpdfstring{T\textsubscript{1}}{T1} axiom}
Let us now look at the $T_1$ axiom. The axiom $T_1$, too, can be characterized in terms of filters.
\begin{lemma}\label{l:N(x)regular}
    A $T_0$ space is $T_1$ if and only if all its neighborhood filters are regular.
\end{lemma}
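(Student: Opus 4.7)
\emph{Plan.} I would prove the sharper pointwise fact that for each $x \in X$ the neighborhood filter $N(x)$ is regular in $\Fi(\Om(X))$ if and only if the singleton $\{x\}$ is closed in $X$; the lemma then follows since $X$ is $T_1$ precisely when every singleton is closed.

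For the ``closed implies regular'' direction, assume $\{x\}$ is closed and set $U_x := X \sm \{x\}$, which is then open. Applying the formula $\neg \up a = \{z : z \vee a = 1\}$ from Proposition~\ref{reg} to the principal filter $\up U_x \in \Fi(\Om(X))$, I compute
\[
\neg \up U_x = \{V \in \Om(X) : V \cup U_x = X\} = \{V \in \Om(X) : x \in V\} = N(x),
\]
so $N(x)$ is of the form $\neg G$ and is therefore a regular element of $\Fi(\Om(X))$.

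For the converse I would argue by contraposition. If $\{x\}$ is not closed, pick $y \in \overline{\{x\}} \sm \{x\}$. By definition of the specialization preorder this means $y \leq x$, while $T_0$ forces $x \nleq y$; in particular $y \in X \sm \up x$, where $\up x = \bca N(x)$. Expanding the pseudocomplement via Proposition~\ref{reg} yields
\[
\neg N(x) = \{U \in \Om(X) : U \cup V = X \mbox{ for every } V \in N(x)\} = \{U \in \Om(X) : U \supseteq X \sm \up x\}.
\]
Any such $U$ contains $y$, so by the specialization relation $y \leq x$ it also contains $x$, and being an open set it is saturated and hence contains the entire upper set $\up x$. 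Combined with $U \supseteq X \sm \up x$ this forces $U = X$, so $\neg N(x) = \{X\}$ and therefore $\neg\neg N(x) = \Om(X)$. Since $\emptyset \notin N(x)$ we conclude $N(x) \subsetneq \neg\neg N(x)$, i.e.\ $N(x)$ is not regular.

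The main obstacle is the collapse $\neg N(x) = \{X\}$: it requires combining the specialization $y \leq x$ (to lift $y \in U$ to $x \in U$) with the saturation of open sets (to lift $x \in U$ to $\up x \subseteq U$), and this pair of moves is precisely where the failure of $T_1$ is exploited.
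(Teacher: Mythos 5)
Your proof is correct. The forward direction is essentially the paper's: both exhibit $N(x)$ as $\neg\up(X{\sm}\{x\})$ using the formula $\neg\up a=\{z:z\ve a=1\}$. The converse, however, takes a genuinely different route. The paper argues directly: from regularity and Proposition \ref{reg} it writes $N(x)$ as an intersection of filters $\{U:U\cup V=X\}$, then uses complete primality of $N(x)$ to collapse this to a \emph{single} open $V$, and from that form deduces $\da x\se V^c\se \up x$, i.e.\ $\{x\}$ is closed. You instead argue by contraposition and never invoke complete primality or the intersection characterization: assuming $\{x\}$ not closed, you compute $\neg N(x)=\{U:U\supseteq X{\sm}\up x\}$ from the pseudocomplement formula alone, use the witness $y\leq x$, $y\neq x$ (here $T_0$ is correctly exploited to get $x\nleq y$) together with saturation of opens to force $\neg N(x)=\{X\}$, hence $\neg\neg N(x)=\Om(X)\neq N(x)$, so $N(x)$ is not a regular element. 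Your version is slightly sharper (it isolates the pointwise equivalence ``$N(x)$ regular iff $\{x\}$ closed'') and avoids the collapse-to-one-generator step, which is the most delicate point of the paper's argument; the paper's version, on the other hand, yields the explicit presentation $N(x)=\{U:U\cup V=X\}$ and stays aligned with the filter-theoretic reasoning it reuses elsewhere (e.g.\ Proposition \ref{minimalcpf}). Either argument establishes the lemma.
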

\begin{proof}
    Suppose that $X$ is a $T_1$ space, and let $x\in X$. As $X$ is $T_1$, the set $X{\sm}\{x\}$ is open. Then $N(x)=\{U\in \Om(X):U\cup (X{\sm}\{x\})=X\}$. By the characterization of regular filters in Proposition \ref{reg}, this is a regular filter. For the converse, suppose that $X$ is a $T_0$ space where all neighborhood filters are regular. Let $x\in X$. We will show that $\{x\}$ is closed by showing $\da x=\{x\}$. By the characterization in Proposition \ref{reg}, and because neighborhood filters are completely prime, there is some open $V\in \Om(X)$ such that: 
\[
N(x)=\{U\in \Om(X):U\cup V=X\}.
\]
Observe that $\bca N(x)\cup V=\up x\cup V=X$, thus $V^c\se \up x$. Since $\emptyset \notin N(x)$, $V\cup P=\emptyset \cup V\neq X$. Then, $V\notin N(x)$ and so $x\in V^c$. As $V^c$ is a downset in the specialization order, $\da x\se V^c$. But this means $\da x\se V^c\se \up x$, hence $\da x=\{x\}$. 
\end{proof}

\begin{lemma}\label{l:t1P(x)}
    If $(\Om(X),\ca{P}(X))$ is a Raney extension then it is $(\Om(X),\opp{\fr(\Om(X))})$, up to isomorphism.
\end{lemma}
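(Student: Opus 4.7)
The plan is to identify the filter representation $\ca{P}(X)^*\se \Fi(\Om(X))$ of the hypothesized Raney extension (provided by Theorem \ref{charC*}) with $\fr(\Om(X))$. First I would note that for $(\Om(X),\ca{P}(X))$ to be a Raney extension, $\Om(X)$ must meet-generate $\ca{P}(X)$; but the meet-closure of $\Om(X)$ inside $\ca{P}(X)$ is exactly the collection $\ca{U}(X)$ of saturated sets. Hence $\ca{P}(X)=\ca{U}(X)$, and by Proposition \ref{upset} the specialization preorder on $X$ is discrete. In particular $X$ is $T_1$, and so Lemma \ref{l:N(x)regular} gives that every neighborhood filter $N(x)$ is regular.

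Next, I would apply Theorem \ref{charC*} to obtain the isomorphism of Raney extensions $(\Om(X),\ca{P}(X))\cong (\Om(X),\opp{\ca{P}(X)^*})$ with $\ca{P}(X)^*=\{\up^{\Om(X)}Y:Y\se X\}$. Since $\up^{\Om(X)}Y=\bca_{y\in Y}N(y)$ for every $Y\se X$, it then suffices to show $\ca{P}(X)^*=\fr(\Om(X))$. The inclusion $\ca{P}(X)^*\se \fr(\Om(X))$ follows by writing each $\up^{\Om(X)}Y$ as an intersection of the regular filters $N(y)$ and invoking that $\fr(\Om(X))$, being the Booleanization of $\Fi(\Om(X))$, is a sublocale, hence closed under arbitrary intersections.

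For the reverse inclusion, Proposition \ref{reg} realizes every regular filter as an intersection of filters of the form $\{U\in \Om(X):U\cup V=X\}$, and a direct check shows $\{U\in \Om(X):U\cup V=X\}=\up^{\Om(X)}(X\sm V)$. Each such filter therefore belongs to $\ca{P}(X)^*$, which is itself a sublocale of $\Fi(\Om(X))$ by Theorem \ref{charC*} and hence closed under intersections. Combining the two inclusions gives $\ca{P}(X)^*=\fr(\Om(X))$, completing the identification.

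I do not anticipate a serious obstacle; the statement essentially reduces to matching the two sublocales $\ca{P}(X)^*$ and $\fr(\Om(X))$ of $\Fi(\Om(X))$ using generators supplied by the neighborhood filters on one side and by Proposition \ref{reg} on the other. The only step deserving attention is extracting the $T_1$-ness of $X$ from the hypothesis, since this is what unlocks Lemma \ref{l:N(x)regular} and makes the neighborhood filters regular.
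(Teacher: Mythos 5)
Your proof is correct and follows essentially the same route as the paper: deduce $T_1$ from meet-generativity and then identify $\ca{P}(X)^*$ with $\fr(\Om(X))$ using Proposition \ref{reg}. The only cosmetic differences are that you get the inclusion $\ca{P}(X)^*\se\fr(\Om(X))$ by writing $\up^{\Om(X)}Y$ as an intersection of the regular neighborhood filters (via Lemma \ref{l:N(x)regular}) rather than computing it directly as $\{U\in\Om(X):U\cup Y^c=X\}$ with $Y$ closed, and that you spell out the reverse inclusion, which the paper's proof leaves implicit.
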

\begin{proof}
    Let $X$ be a space. We observe that if $(\Om(X),\ca{P}(X))$ is a Raney extension, each subset is an intersection of opens, by meet generativity, and so $X$ is $T_1$. To show our claim, it suffices to show that $\ca{P}(X)^*=\fr(\Om(X))$. Filters in $\ca{P}(X)^*$ are those of the form $\up^{\Om(X)}S$ for arbitrary subsets $S\se X$. As $X$ is $T_1$, $S$ is closed, so for each $S\se X$, $\up^{\Om(X)}S=\{U\in \Om(X):S^c\cup U=X\}$. Since $S^c$ is open, this filter is regular by Proposition \ref{reg}.
\end{proof}

\begin{theorem}
The following are equivalent for a $T_0$ space $X$.
\begin{enumerate}
    \item The space $X$ is $T_1$. 
    \item The pair $(\Om(X),\ca{P}(X))$ is a Raney extension, and equals $(\Om(X),\ca{U}(X))$.
    \item The Raney extension $(\Om(X),\ca{U}(X))$ is $\ca{R}$-dense.
    \item The Raney extension $(\Om(X),\ca{U}(X))$ is $\ca{R}$-canonical.
    \item The Raney extension $(\Om(X),\ca{U}(X))$ is isomorphic to $(\Om(X),\fr(\Om(X)))$.
    
\end{enumerate}
\begin{proof}
The equivalence between (1) and (2) holds as $\Om(X)$ meet-generates $\ca{P}(X)$ if and only if $X$ is $T_1$, and if $\Om(X)\se \ca{P}(X)$ meet-generates it, the other properties of Raney extensions are easy to check. If (2) holds, then $\ca{R}$-density by Lemma \ref{l:t1P(x)}, and by the characterization in Proposition \ref{C*canonical}. By Proposition \ref{raneymin}, and since every regular filter is exact, any Raney extension is $\ca{R}$-compact, thus (3) implies (4). If (4) holds, then (5) follows from the uniqueness result in Theorem \ref{containsprincipal}. If (4) holds, then $\ca{U}(X)^*=\fr(\Om(X))$, and so all neighborhood filters are regular. By Lemma \ref{l:N(x)regular}, (1) follows.
\end{proof}

\end{theorem}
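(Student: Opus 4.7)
The plan is to run the cycle $(1)\Rightarrow(2)\Rightarrow(3)\Rightarrow(4)\Rightarrow(5)\Rightarrow(1)$, mirroring the structure of the proof of Theorem~\ref{t:chartd}. The new ingredient compared with the $T_D$ case is Lemma~\ref{l:t1P(x)}, which identifies $\ca{P}(X)^*$ with $\fr(\Om(X))$ whenever the pair is a Raney extension. Together with Proposition~\ref{C*canonical}, Proposition~\ref{raneymin}, and the sublocale inclusions $\fr(L)\se\fe(L)\se\fse(L)$ from Theorem~\ref{posetofsubloc}, this takes care of most of the bookkeeping.

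For $(1)\Leftrightarrow(2)$ I would argue as in the theorem's own opening line: if $X$ is $T_1$, the specialization order is discrete, so $\ca{U}(X)=\ca{P}(X)$, and each singleton is the intersection of the opens that contain it, giving meet-generativity of $\ca{P}(X)$ by $\Om(X)$; preservation of strongly exact meets is Proposition~\ref{p:pres-se}. Conversely, meet-generativity of $\ca{P}(X)$ by $\Om(X)$ directly forces each $\{x\}$ to be an intersection of opens.

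For $(2)\Rightarrow(3)$, Lemma~\ref{l:t1P(x)} identifies $\ca{U}(X)^*$ with $\fr(\Om(X))$. Since $\fr(\Om(X))$ is the Booleanization of $\Fi(\Om(X))$, it is already closed under arbitrary intersections, so $\ca{I}(\fr)=\fr$; and since every $F\in\fr$ lies in $C^*$ and is thus a fixpoint of $\up^L\bwe$, we have $\fr^*=\fr$ in the notation of Proposition~\ref{C*canonical}. The density criterion $C^*\se\ca{I}(\ca{R}^*)$ then collapses to the tautology $\fr\se\fr$. For $(3)\Rightarrow(4)$, every Raney extension is automatically $\ca{R}$-compact: Proposition~\ref{raneymin} yields $\fe(L)\se C^*$, Theorem~\ref{posetofsubloc} yields $\fr(L)\se\fe(L)$, and Proposition~\ref{C*canonical}(2) gives compactness. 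For $(4)\Rightarrow(5)$, Theorem~\ref{containsprincipal} supplies uniqueness of the $\ca{R}$-canonical extension, which is $(L,\opp{\ca{I}(\fr(L))})=(L,\opp{\fr(L)})$. Finally, $(5)\Rightarrow(1)$ is immediate: if $\ca{U}(X)^*=\fr(\Om(X))$, then each neighborhood filter $N(x)=\up^{\Om(X)}\up x$ lies in $C^*$ and is therefore regular, and Lemma~\ref{l:N(x)regular} delivers $T_1$.

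The only delicate point is at $(4)\Rightarrow(5)$, where Theorem~\ref{containsprincipal} requires that $\fr(\Om(X))$ contain all principal filters. This is automatic from the hypothesis: the given $\ca{R}$-canonical extension has $C^*=\fr(\Om(X))$ by Proposition~\ref{C*canonical}, and $C^*$ always contains the principal filters by Lemma~\ref{sufficient}, so no circular appeal to Proposition~\ref{famouschar} (subfitness) is needed.
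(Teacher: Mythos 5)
Your proof is correct and takes essentially the same route as the paper: the same cycle through (1)--(5) with the same ingredients (Lemma \ref{l:t1P(x)}, Proposition \ref{C*canonical}, Proposition \ref{raneymin}, Theorem \ref{containsprincipal}, Lemma \ref{sufficient}, Lemma \ref{l:N(x)regular}). The only divergence is that you close the loop with (5)$\Rightarrow$(1) while the paper closes it with (4)$\Rightarrow$(1); your step is fine, but note that (5) gives $\ca{U}(X)^*=\fr(\Om(X))$ only up to a frame automorphism of $\Om(X)$, which is harmless since regularity of filters is preserved under frame isomorphisms, whereas under (4) the equality is read off $\ca{R}$-canonicity on the nose.
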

Let us then study the $T_1$ axiom more pointfreely. A space $X$ is $T_1$ if and only if $\ca{U}(X)$ is the same as the powerset $\ca{P}(X)$. Motivated by this, we define a Raney extension $(L,C)$ to be $T_1$ if and only if $C$ is a Boolean algebra.

\begin{theorem}\label{charsubfit}
    For a frame $L$, the following are equivalent.
    
    \begin{enumerate}
        \item $L$ is subfit.
        \item All exact filters of $L$ are regular.
        \item $(L,\opp{\fe(L)})$ is a $T_1$ Raney extension.
        \item There exists a $T_1$ Raney extension $(L,C)$.
        \item There is a unique $T_1$ Raney extension on $L$, up to isomorphism. This is $(L,\opp{\fr(L)})$.
        \end{enumerate}
\end{theorem}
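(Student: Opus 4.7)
The plan is to prove $(1) \Leftrightarrow (2)$ and then close the remaining equivalences via the cycle $(2) \Rightarrow (3) \Rightarrow (4) \Rightarrow (5) \Rightarrow (1)$. For $(1) \Leftrightarrow (2)$: by Proposition~\ref{famouschar}, subfitness of $L$ is equivalent to $\fr(L)$ containing all principal filters; by Lemma~\ref{principalmin}, $\fe(L)$ is the smallest sublocale of $\Fi(L)$ containing all principal filters; and the inclusion $\fr(L) \subseteq \fe(L)$ always holds (regular filters are exact, by Theorem~\ref{posetofsubloc}). Hence subfitness is equivalent to $\fe(L) = \fr(L)$, i.e.\ to all exact filters being regular.

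For the easy implications: $(2) \Rightarrow (3)$ holds because $\fr(L)$ is the Booleanization of $\Fi(L)$ and hence a complete Boolean algebra, so $\fe(L)=\fr(L)$ makes $\opp{\fe(L)}$ Boolean, and $(L, \opp{\fe(L)})$ is then a $T_1$ Raney extension by Proposition~\ref{raneymin}. The implication $(3) \Rightarrow (4)$ is immediate. For $(5) \Rightarrow (1)$, once $C \cong \opp{\fr(L)}$ is established we have $C^* = \fr(L)$, which by Lemma~\ref{sufficient} contains all principal filters, whence subfitness via Proposition~\ref{famouschar}.

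The substantive step is $(4) \Rightarrow (5)$. Suppose $(L,C)$ is a $T_1$ Raney extension; by Theorem~\ref{charC*} identify it with $(L, \opp{C^*})$, where $C^* \subseteq \Fi(L)$ is a sublocale, and $C^*$ is Boolean since $C$ is (Booleanness being preserved under taking opposites). Now $C^*$ is a \emph{dense} sublocale of $\Fi(L)$, since it contains the bottom filter $\up 1 = \{1\}$ by Lemma~\ref{sufficient}. Because $\fr(L)$ is the smallest dense sublocale of $\Fi(L)$ (being its Booleanization), this yields $\fr(L) \subseteq C^*$. For the reverse containment, the sublocale $C^* \subseteq \Fi(L)$ corresponds to a frame surjection $\Fi(L) \epi C^*$; since $C^*$ is Boolean, the universal property of the Booleanization as the largest Boolean quotient of $\Fi(L)$ yields a factorization $\Fi(L) \epi \fr(L) \epi C^*$, whence $C^* \subseteq \fr(L)$ as sublocales. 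Combining, $C^* = \fr(L)$ and so $C \cong \opp{\fr(L)}$, which establishes (5).

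I expect the main obstacle to be the reverse containment $C^* \subseteq \fr(L)$ in the step above, which hinges on invoking the sublocale/frame-surjection duality together with the universal property of the Booleanization to cut down an a priori arbitrary Boolean sublocale to the Booleanization itself; once this key identification is in place, the rest of the argument is essentially book-keeping of results already established in the paper.
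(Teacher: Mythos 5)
Your overall route matches the paper's ((1)$\Leftrightarrow$(2), then (2)$\Rightarrow$(3)$\Rightarrow$(4)$\Rightarrow$(5)$\Rightarrow$(1)), and all steps are fine except the one you yourself single out: the reverse containment $C^*\se \fr(L)$ in (4)$\Rightarrow$(5). The principle you invoke there is false: the Booleanization of a frame is \emph{not} its largest Boolean quotient, and Boolean sublocales need not be contained in it. Indeed, for any element $a$ of a frame $M$ the sublocale $\bl(a)=\{x\to a:x\in M\}$ is Boolean, while $\bl(a)\se \bl(0)$ forces $a=1\to a$ to be regular, i.e. $a=\neg\neg a$; so any non-regular $a$ (for instance a non-regular prime, giving a two-element Boolean sublocale) yields a Boolean quotient of $M$ that does not factor through $M\epi \bl(0)$. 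Complete Boolean algebras are neither reflective nor coreflective in $\bd{Frm}$, so there is no universal property of the Booleanization of the kind you appeal to, and Booleanness of $C^*$ alone cannot produce the factorization $\Fi(L)\epi \fr(L)\epi C^*$.

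The gap is easily closed using a fact you already established but used only in one direction: density. By Lemma \ref{sufficient}, $\up 1=\{1\}\in C^*$, so $C^*$ is a dense sublocale of $\Fi(L)$, giving $\fr(L)\se C^*$ as you say; by Lemma \ref{slofsl}, $\fr(L)$ is then a sublocale of $C^*$, and it is dense there because the bottom of $C^*$ is $\{1\}$. Since $C^*$ is Boolean, its only dense sublocale is itself (equivalently: the only sublocale of $\Fi(L)$ that is both dense and Boolean is the Booleanization $\fr(L)$), whence $C^*=\fr(L)$. This is precisely how the paper argues (4)$\Rightarrow$(5), phrased via $\fe(L)\se C^*$ and density of $\fe(L)$; with this one substitution your proof is correct and essentially identical to the paper's.
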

\begin{proof}
    Suppose that $L$ is a subfit frame. By Proposition \ref{famouschar}, all principal filters are regular filters. By Lemma \ref{principalmin}, this implies that $\fe(L)\se \fr(L)$. Now, suppose that $\fe(L)\se \fr(L)$. This implies that $(L,\opp{\fe(L)})=(L,\opp{\fr(L)})$, as regular filters are exact for every frame. By Proposition \ref{reg}, the coframe $\opp{\fr(L)}$ is a Boolean algebra. It is clear that condition (3) implies condition (4). Let us show that (4) implies (5). If $(L,B)$ is a Raney extension such that $B$ is Boolean, $\fe(L)\se B^*$ by Proposition \ref{raneymin}. As $\fe(L)\se \Fi(L)$ is dense, $B^*$ is dense, too, and as the only sublocale that is both Boolean and dense is the Booleanization this means $\fr(L)=\fe(L)=B^*$. Thus, $(L,B)$ and $(L,\opp{\fr(L)})$ are isomorphic. Now, suppose that (5) holds. Then, $(L,\opp{\fr(L)})$ is a Raney extension. This means that all principal filters are regular, and so by Proposition \ref{famouschar} the frame $L$ must be subfit.
\end{proof}

\subsection{Scatteredness}
The notion of scattered space is already present in classical topology, see for example \cite{willard70}. In \cite{simmons80} it is proven that a $T_0$ space is scattered if and only if $\mf{S}(\Om(X))$ is Boolean. This motivates the definition of scattered frame (see \cite{plewe00}): a frame is \emph{scattered} if the coframe $\mf{S}(L)$ is Boolean. As proven in \cite{ball16}, a frame is scattered and subfit if and only if $\mf{S}(L)=\Sc(L)$. Subfit scattered frames are also fit, and so $\mf{S}(L)=\mf{S}_{\op}(L)$.

\begin{proposition}
For a subfit frame $L$, the following are equivalent.
\begin{enumerate}
    \item The frame $L$ is scattered.
    \item $\fse(L)=\fe(L)=\fr(L)$.
    \item $\fse(L)=\fe(L)$.
     \item The frame has a unique Raney extension, up to isomorphism.
    \item $\mf{S}_{\op}(L)=\mf{S}_{\cl}(L)$.
    \item The frame has a unique Raney extension, up to isomorphism, and this is $(L,\sll)$.
\end{enumerate}
\end{proposition}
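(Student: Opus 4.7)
The plan is to establish all six equivalences through a cycle $(1)\Rightarrow(2)\Rightarrow(3)\Leftrightarrow(4)\Rightarrow(1)$, supplemented by $(1)\Leftrightarrow(5)$ and $(1)\Leftrightarrow(6)$. The main tools are Theorem~\ref{eandse} (identifying $\mf{S}_{\op}(L)\cong\opp{\fse(L)}$ and $\mf{S}_{\cl}(L)\cong\fe(L)$), Theorem~\ref{charsubfit} (yielding $\fe(L)=\fr(L)$ for subfit $L$), Theorem~\ref{Rboundaries} (parameterizing Raney extensions on $L$ by the interval $[\fe(L),\fse(L)]$), and the Ball criterion cited in the excerpt (subfit + scattered iff $\mf{S}(L)=\Sc(L)$, forcing fitness as well).

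For the easier implications: $(2)\Rightarrow(3)$ is immediate, and $(3)\Leftrightarrow(4)$ follows from Theorem~\ref{Rboundaries}, since uniqueness of the Raney extension collapses the interval $[\fe(L),\fse(L)]$ to a single element. For $(1)\Leftrightarrow(5)$, subfit + scattered gives $\mf{S}(L)=\Sc(L)=\mf{S}_{\cl}(L)$ by Ball and $\mf{S}(L)=\mf{S}_{\op}(L)$ by fitness, so $\mf{S}_{\op}(L)=\mf{S}_{\cl}(L)$; conversely, $(5)$ places each $\cl(a)$ in $\mf{S}_{\op}(L)$, making $L$ fit, whence $\mf{S}(L)=\mf{S}_{\op}(L)=\mf{S}_{\cl}(L)=\Sc(L)$ and Ball's criterion yields scatteredness. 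For $(1)\Rightarrow(2)$: scatteredness makes $\mf{S}(L)$ Boolean, and for any sublocale $S$ the Boolean equivalence $S\se\op(a)\Leftrightarrow\cl(a)\se S^*$ gives $\mi{ker}(S)=\mb{coker}(S^*)$; passing to images through Theorem~\ref{eandse} and the self-bijection $S\mapsto S^*$ yields $\fse(L)=\fe(L)$, and Theorem~\ref{charsubfit} refines this to $\fse(L)=\fe(L)=\fr(L)$.

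The crux is closing the cycle with $(3)\Rightarrow(1)$. Assuming $(3)$ and subfitness, Theorem~\ref{charsubfit} gives $\fse(L)=\fe(L)=\fr(L)$, which is a Boolean frame. By Theorem~\ref{eandse}, $\mf{S}_{\cl}(L)\cong\fe(L)$ is Boolean, and its binary meets and arbitrary joins coincide with those of $\mf{S}(L)$ (the former by Proposition~\ref{manyfacts}(4), the latter by the definition of $\mf{S}_{\cl}(L)$ as closed under joins in $\mf{S}(L)$). Hence the Boolean complement of $\cl(a)$ in $\mf{S}_{\cl}(L)$ also satisfies the complement relations in $\mf{S}(L)$, and by uniqueness of complements in the distributive lattice $\mf{S}(L)$ it must coincide with $\op(a)$; thus $\op(a)\in\mf{S}_{\cl}(L)$ for every $a\in L$. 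A De Morgan argument inside the Boolean $\mf{S}_{\cl}(L)$, combined with the agreement of complements and joins with those of $\mf{S}(L)$, then shows $\mf{S}_{\cl}(L)$ is closed under arbitrary $\mf{S}(L)$-intersections; since every sublocale of $L$ is an $\mf{S}(L)$-intersection of sublocales of the form $\op(x)\ve\cl(y)\in\mf{S}_{\cl}(L)$, this forces $\mf{S}_{\cl}(L)=\mf{S}(L)$, which is therefore Boolean, so $L$ is scattered.

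For $(1)\Leftrightarrow(6)$: under $(1)$, $L$ is fit, so $\mf{S}(L)=\mf{S}_{\op}(L)$ and $(L,\sll)$ is a Raney extension---the unique one by $(4)$; conversely $(6)\Rightarrow(4)\Rightarrow(3)\Rightarrow(1)$. The main technical hurdle I expect is the De Morgan step in $(3)\Rightarrow(1)$: verifying that for $\{S_i\}\se\mf{S}_{\cl}(L)$ one has $\bigcap_i S_i=(\bigvee_i S_i^*)^*$ in $\mf{S}(L)$, which rests on the fact that $\mf{S}_{\cl}(L)$-complements, being unique complements in the distributive $\mf{S}(L)$, lift to genuine $\mf{S}(L)$-complements of the corresponding joins, together with the linearity of complemented sublocales from Lemma~\ref{l:linear}.
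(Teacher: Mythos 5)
Most of your implications are fine, and several genuinely differ from the paper's route (the paper runs a single cycle $(1)\Rightarrow(2)\Rightarrow\dots\Rightarrow(6)\Rightarrow(1)$, never proving $(3)\Rightarrow(1)$ head-on). But the step you yourself identify as the crux, $(3)\Rightarrow(1)$, has a genuine gap. From $\fse(L)=\fe(L)=\fr(L)$ you correctly get that $\Sc(L)\cong\fe(L)$ is Boolean, with top, bottom and \emph{joins} agreeing with those of $\Ss(L)$. However, your claim that binary meets in $\Sc(L)$ coincide with intersections in $\Ss(L)$ is not justified by Proposition \ref{manyfacts}(4): that proposition only says intersections of \emph{closed} sublocales are closed. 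What linearity (Lemma \ref{l:linear}) gives you is that the intersection of any member of $\Sc(L)$ with a single closed (or complemented) sublocale lies in $\Sc(L)$ and hence is the $\Sc(L)$-meet; this is enough to conclude $\op(a)\in\Sc(L)$ (the $\Sc(L)$-complement of $\cl(a)$ is an $\Ss(L)$-complement, hence equals $\op(a)$), but it does \emph{not} extend to arbitrary $S=\bve_i\cl(a_i)\in\Sc(L)$. To transfer the $\Sc(L)$-complement $S'$ of such an $S$ to an $\Ss(L)$-complement you would need $S\cap S'=\bve_k\bigl(S\cap\cl(b_k)\bigr)$ for $S'=\bve_k\cl(b_k)$, i.e.\ linearity of $S$ itself, which is not available: joins of closed sublocales are not known to be linear, and $\Sc(L)$ is not known to be closed under intersections with joins. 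The subsequent De Morgan step is worse off still: even granting the identity $\bca_i S_i=(\bve_i S_i^*)^*$, nothing in your toolkit puts this supplement back into $\Sc(L)$; in effect, closure of $\Sc(L)$ under arbitrary $\Ss(L)$-intersections is essentially equivalent to the equality $\Sc(L)=\Ss(L)$ you are trying to prove, so the argument is circular at its decisive point.

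For comparison, the paper closes the loop without ever needing such closure properties: from $(3)$ it deduces that $L$ has a unique Raney extension (the interval $[\fe(L),\fse(L)]$ of Theorem \ref{Rboundaries} degenerates); subfitness supplies a Boolean ($T_1$) extension via Theorem \ref{charsubfit}, so uniqueness forces $\So(L)\cong\opp{\fse(L)}$ to be Boolean, which yields $(5)$, then $(6)$, and finally Booleanness of $\sll$ itself, i.e.\ scatteredness. You could rescue your plan by switching from $\Sc(L)$ to $\So(L)$ (whose meets are intersections) at the complementation step, which is in effect what the paper does. Two smaller points: in $(4)\Rightarrow(3)$, ``uniqueness collapses the interval'' glosses over ``up to isomorphism'' -- you need the one-line observation that an isomorphism of Raney extensions over $L$ restricts to a frame automorphism of $L$, which preserves exactness and strong exactness of filters, so the endpoints $\fe(L)$ and $\fse(L)$ cannot be interchanged unless they are equal; and in $(1)\Rightarrow(2)$ your identity $\mi{ker}(S)=\mb{coker}(S^*)$ needs $S^*\in\Sc(L)$ (available from Ball's criterion, which you list but do not invoke there) before Theorem \ref{eandse} certifies the filter as exact.
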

\begin{proof}
Suppose that $L$ is a scattered subfit frame. Let $F$ be a strongly exact filter, by Theorem \ref{eandse} this is $\{x\in L:S\se \op(x)\}$ for some sublocale $S$. By hypothesis, $S$ is a join $\bve_i \cl(x_i)$ of closed sublocales, so that 
\[
F=\{x\in L:x_i\ve x=1\text{ for all }i\in I\}=\bca_i \{x\in L:x\ve x_i=1\}.
\]
By the characterization of regular filters in Proposition \ref{reg}, then, $\fse(L)\se \fr(L)$. This implies (2), as for all frames $\fr(L)\se \fe(L)\se \fse(L)$. It is clear that (2) implies (3). Let us show that (3) implies (4). The inclusion $\fe(L)\se \fse(L)$ holds for every frame. Now, suppose that in $L$ every strongly exact filter is exact. For any Raney extension $(L,C)$, we must have $\fe(L)\se C^*\se \fse(L)$. Our assumption, then, implies $\fe(L)=C^*=\fse(L)$. Suppose, now, that $L$ has a unique Raney extension, up to isomorphism. The pair $(L,\So(L))$ is a Raney extension. As $L$ is subfit, this must be a Boolean extension, by Theorem \ref{charsubfit}. As $\mf{S}_{\op}(L)$ is a subcoframe of $\mf{S}(L)$, this means that in $\mf{S}(L)$ every fitted sublocale has a complement, which is itself a fitted sublocale. In particular, all joins of closed sublocales are fitted and so $\mf{S}_{\cl}(L)\se\mf{S}_{\op}(L)$. Finally, recall that the lattice $\mf{S}_b(L)$ of joins of complemented sublocales is $\mf{S}_{\cl}(L)$ for subfit frames. We then also have the reverse set inclusion $\mf{S}_{\op}(L)\se \mf{S}_{\cl}(L)$. If (5) holds, by subfitness $(L,\mf{S}_{\op}(L))$ is a Boolean extension. Since this is the largest Raney extension, all its Raney extensions must be Boolean. By Theorem \ref{charsubfit}, when Boolean extensions exist, they are unique. Note also that $\So(L)=\Sc(L)$ implies that every closed sublocale is fitted, and this implies that the frame $L$ is subfit, hence $\So(L)=\mf{S}(L)$. Suppose, finally, that (6) holds. Because all subfit frames have a Boolean extension, by Theorem \ref{charsubfit}, $\sll$ must be Boolean, and so $L$ is scattered.
\end{proof}

\section{Free and cofree constructions}\label{S4}
\subsection{Extensions of frame maps}
We ask when a map $f:L\to M$ of frames can be extended to Raney extensions of these frames. In general, for Raney extensions $(L,C)$ and $(M,D)$, if such $\overline{f}:C\to D$ exists it has to be defined as $\overline{f}(c)=\bwe \{f(a):a\in \up^L c\}$.

\begin{theorem}\label{whenliftsfilters}
    A frame map $f:L\to M$ extends to a map $\overline{f}:(L,C)\to (M,D)$ between Raney extensions if and only if $f^{-1}(F)\in C^*$ for every $F\in D^*$.
\end{theorem}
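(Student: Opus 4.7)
The plan is to build an adjoint pair $\overline{f}^{\flat}\dashv\overline{f}$ from $f$ via the formulas
\[
\overline{f}(c)=\bwe \{f(a):a\in \up^Lc\}\quad\text{and}\quad \overline{f}^{\flat}(d)=\bwe \{a\in L:d\leq f(a)\}=\bwe f^{-1}(\up^Md),
\]
and to show that the hypothesis $f^{-1}(F)\in C^*$ for all $F\in D^*$ is precisely what makes the adjunction close up so that $\overline{f}$ becomes a coframe map.

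For necessity, assume an extension $\overline{f}\colon(L,C)\to(M,D)$ exists. Being a coframe map, $\overline{f}$ preserves arbitrary meets and so has a left adjoint $\overline{f}^{\flat}\colon D\to C$. For any $F\in D^*$, writing $F=\up^Md$, the condition $\overline{f}|_L=f$ together with the adjunction yields
\[
f^{-1}(\up^Md)=\{a\in L:d\leq f(a)\}=\{a\in L:\overline{f}^{\flat}(d)\leq a\}=\up^L\overline{f}^{\flat}(d),
\]
which lies in $C^*$ by definition of $C^*$.

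For sufficiency, I first note $\overline{f}|_L=f$, because for $a_0\in L$ the infimum $\bwe\{f(a):a\in L,\,a\geq a_0\}$ is attained at $a_0$ by monotonicity of $f$. To establish $\overline{f}^{\flat}\dashv\overline{f}$, I compute that $d\leq\overline{f}(c)$ is equivalent to $\up^Lc\se f^{-1}(\up^Md)$. Here the hypothesis enters: since $f^{-1}(\up^Md)\in C^*$, it equals $\up^L\bwe f^{-1}(\up^Md)=\up^L\overline{f}^{\flat}(d)$, so $\up^Lc\se f^{-1}(\up^Md)$ iff $\up^Lc\se\up^L\overline{f}^{\flat}(d)$ iff $\overline{f}^{\flat}(d)\leq c$. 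As a right adjoint between posets, $\overline{f}$ then preserves arbitrary meets automatically.

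The main obstacle is preservation of finite joins, which is not automatic from the adjunction. Monotonicity gives only $\overline{f}(c_1)\vee\overline{f}(c_2)\leq\overline{f}(c_1\vee c_2)$. For the reverse, meet-generativity of $L$ in $C$ implies $\up^L(c_1\vee c_2)=\{b\vee a:b\in\up^Lc_1,\,a\in\up^Lc_2\}$, which together with preservation of binary joins by $f$ gives
\[
\overline{f}(c_1\vee c_2)=\bwe\{f(b)\vee f(a):b\in\up^Lc_1,\,a\in\up^Lc_2\};
\]
applying coframe distributivity in $D$ twice to $\bwe f(\up^Lc_1)\vee\bwe f(\up^Lc_2)$ yields the same expression, so the two agree. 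Together with $\overline{f}(0_C)=\bwe f(L)=f(0)=0_D$, this shows $\overline{f}$ preserves finite joins. Since $\overline{f}|_L=f$ is a frame map into $M$ by hypothesis, $\overline{f}$ is a Raney morphism extending $f$.
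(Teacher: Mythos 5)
Your proof is correct and follows essentially the same route as the paper: necessity via the left adjoint of the coframe map, and sufficiency by building the adjoint pair from the hypothesis, getting meet preservation for free and finite joins from coframe distributivity together with meet-generation of $C$ by $L$. The only cosmetic difference is that you verify the adjunction $\overline{f}^{\flat}\dashv\overline{f}$ directly from the explicit formulas (which is where the hypothesis $f^{-1}(\up^M d)\in C^*$ enters), whereas the paper shows the candidate left adjoint preserves joins and then invokes the existence of its right adjoint.
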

\begin{proof}
First, suppose that there is a map $\overline{f}:(L,C)\to (M,D)$ of Raney extensions extending $f$. In particular, $\overline{f}$ preserves all meets and so it has a left adjoint $\overline{f}^*$. Consider $d\in D$. By adjointness, $f^{-1}(\up^M d)=\up^L \overline{f}^*(d)$. For the converse, suppose $f^{-1}(F)\in C^*$ for every $F\in D^*$. This means that for every $d\in D$ there is some $\overline{f}^*(d)\in C$ such that $f^{-1}(\up^M d)=\up^L \overline{f}^*(d)$, which is unique as $L$ meet-generates $C$. First, we claim that $\overline{f}^*:D\to C$ preserves all joins. To show $\bigvee_i \overline{f}^*(d_i)=\overline{f}^*(\bigvee_i d_i)$, it suffices to show $\up^L \bigvee_i \overline{f}^*(d_i)=\up^L \overline{f}^*(\bigvee_i d_i)$.
\begin{align*}
\up^L \bigvee_i \overline{f}^*(d_i)&=\bigcap_i \up^L\overline{f}^*(d_i)=\bigcap_i f^{-1}(\up^L d_i)=f^{-1}(\bca_i \up^L d_i)=f^{-1}(\up^L \bve_i d_i)=\up^L \overline{f}^*(\bve_i d_i).
\end{align*}
Then, $\overline{f}^*:D\to C$ has a right adjoint, which we call $\overline{f}$. We claim that this is the required map. First, we show that it extends $f$. For $a\in L$, 
\begin{align*}
    \overline{f}(a)&=\bve \{d\in D:\overline{f}^*(d)\leq a\}=\bigvee\{d\in D:a\in \up^L\overline{f}^*(d)\}\\
    & =\bigvee\{d\in D:a\in f^{-1}(\up^L d)\}=\bigvee\{d\in D:d\leq f(a)\}=f(a).
\end{align*}
As it is a right adjoint, it preserves all meets. For finite joins, consider two arbitrary elements $\bwe_i a_i,\bwe_jb_j\in C$ with $a_i,b_j\in L$. Then 
\begin{align*}
\overline{f}(\bwe_i a_i\ve \bwe_j b_j)=\bwe_{i,j}\overline{f}(a_i\ve b_j)=\bwe_{i,j}\overline{f}(a_i)\ve \overline{f}(b_j)=\overline{f}(\bwe_i a_i)\ve \overline{f}(\bwe_j b_j),    
\end{align*}
where we have used that $\overline{f}$ preserves joins of $L$ as well as coframe distributivity. Then, the map $\overline{f}$ is the sought for map.
\end{proof}
In the coming subsections, we use this theorem to construct various free and cofree objects.

 \subsection{Free Raney extension on a frame}
 
There is a forgetful functor $\pi_1:\bd{Raney}\ra \bd{Frm}$ which forgets about the second component of the extension. We will show that this has a left adjoint.
\begin{lemma}\label{selifts}
    Any frame morphism $f:L\ra M$ extends to a Raney morphism
    \[
    f_{\ca{SE}}:(L,\opp{\fse(L)})\ra (M,\opp{\fse(M)}).
    \]
\end{lemma}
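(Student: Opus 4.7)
The plan is to invoke Theorem~\ref{whenliftsfilters}. For the Raney extensions $(L,\opp{\fse(L)})$ and $(M,\opp{\fse(M)})$, Theorem~\ref{charC*} identifies the starred collections as $\fse(L)$ and $\fse(M)$ respectively. So it suffices to prove that for every strongly exact filter $F\se M$, the preimage $f^{-1}(F)$ is a strongly exact filter of $L$. That $f^{-1}(F)$ is a filter is immediate from $f$ preserving finite meets and the order.

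The heart of the proof is the intermediate fact that frame maps preserve strongly exact meets: if $\bwe_i x_i$ is strongly exact in $L$, then $f(\bwe_i x_i)=\bwe_i f(x_i)$ in $M$, and this meet is itself strongly exact in $M$. I would establish this via the sublocale characterization of strong exactness: the meet $\bwe_i x_i$ in $L$ is strongly exact if and only if $\bca_i \op(x_i)=\op(\bwe_i x_i)$ in $\mf{S}(L)$. The locale map $f_*:M\to L$ induces a preimage operation on sublocales that sends $\op(a)\mapsto \op(f(a))$ and preserves arbitrary intersections. Applying it to the above identity yields $\bca_i \op(f(x_i))=\op(f(\bwe_i x_i))$ in $\mf{S}(M)$, from which one reads off both $\bwe_i f(x_i)=f(\bwe_i x_i)$ (since $\op:M\to \mf{S}(M)$ is an order-embedding) and the strong exactness of this meet.

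Granted the key fact, the conclusion is immediate. Given $\{x_i\}\se f^{-1}(F)$ with $\bwe_i x_i$ strongly exact in $L$, we have $\{f(x_i)\}\se F$, and by the key fact $f(\bwe_i x_i)=\bwe_i f(x_i)$ is strongly exact in $M$. Since $F$ is closed under strongly exact meets, $f(\bwe_i x_i)\in F$, whence $\bwe_i x_i\in f^{-1}(F)$, as required.

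The main obstacle is the intermediate lemma on preservation of strongly exact meets. This is the pointfree analogue of the topological fact that continuous maps pull open sets back to open sets: it expresses that the induced preimage functor on sublocales preserves open sublocales and arbitrary intersections simultaneously. Once this is isolated and proven (or cited from the existing literature on exact and strongly exact meets), the theorem reduces to the already-established Theorem~\ref{whenliftsfilters}.
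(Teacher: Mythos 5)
Your proof is correct and takes essentially the same route as the paper: both reduce, via Theorem~\ref{whenliftsfilters}, to showing that $f^{-1}$ of a strongly exact filter is strongly exact, which rests on the fact that frame homomorphisms preserve strongly exact meets together with their strong exactness. The only difference is that the paper simply invokes this preservation fact, whereas you supply a proof of it through the characterization $\bca_i \op(x_i)=\op(\bwe_i x_i)$ and the localic preimage (right adjoint to image, hence intersection-preserving, with $\op(a)\mapsto \op(f(a))$), which is a sound argument.
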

\begin{proof}
By Theorem \ref{whenliftsfilters}, a frame morphism $f:L\ra M$ lifts as required if preimages of strongly exact filters are strongly exact. Suppose, then, that $F\se M$ is strongly exact. Suppose that the meet $\bwe_i x_i$ is strongly exact, and that $f(x_i)\in F$. Because all frame morphisms preserve strongly exact meets, as well as strong exactness of meets, $\bwe_i f(x_i)=f(\bwe_i x_i)\in F$, as desired.
\end{proof}

Then, there is a functor $\fse:\bd{Frm}\to \bd{Raney}$ assigning to a frame $L$ the Raney extension $(L,\opp{\fse(L)})$, and to a morphism $f:L\to M$ the Raney map $f_{\ca{SE}}:(L,\opp{\fse(L)})\to (M,\opp{\fse(M)})$ extending it, which exists by Lemma \ref{selifts}.

\begin{theorem}\label{raneymax}
 For a frame $L$, the pair $(L,\opp{\fse(L)})$ is the free Raney extension over it, that is, $\Fi_{\ca{SE}}\dashv \pi_1$. In particular, the category of frames is a full coreflective subcategory of $\bd{Raney}$.
\end{theorem}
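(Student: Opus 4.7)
The plan is to establish the adjunction $\Fi_{\ca{SE}}\dashv \pi_1$ by producing, for every frame $L$ and every Raney extension $(M,D)$, a natural bijection
\[
\bd{Raney}\bigl((L,\opp{\fse(L)}),(M,D)\bigr)\cong\bd{Frm}(L,M),
\]
with one direction given simply by the restriction $\overline{f}\mapsto \overline{f}|_L=\pi_1(\overline{f})$.

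For the other direction, I would feed a frame map $f:L\to M$ into Theorem \ref{whenliftsfilters}. The hypothesis to verify is that $f^{-1}(F)$ lies in $(\opp{\fse(L)})^*=\fse(L)$ for every $F\in D^*$. By Lemma \ref{sufficient}, each $F\in D^*$ is a strongly exact filter of $M$. Since frame maps preserve strongly exact meets, if $x_i\in f^{-1}(F)$ and $\bwe_i x_i$ is strongly exact in $L$, then $f(\bwe_i x_i)=\bwe_i f(x_i)\in F$, whence $\bwe_i x_i\in f^{-1}(F)$. Thus $f^{-1}(F)\in\fse(L)$, and Theorem \ref{whenliftsfilters} delivers the required extension $\overline{f}:(L,\opp{\fse(L)})\to (M,D)$.

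The two assignments are mutually inverse because any Raney morphism out of $(L,\opp{\fse(L)})$ is fully determined by its restriction to $L$: meet-preservation forces $\overline{f}(c)=\bwe\{\overline{f}(a):a\in \up^L c\}$ for each $c$, and $L$ meet-generates $\opp{\fse(L)}$. Naturality in $L$ and in $(M,D)$ is a routine functoriality check on restriction and extension.

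For the coreflectivity statement, note that $\pi_1\circ \Fi_{\ca{SE}}=\mi{id}_{\bd{Frm}}$, so the unit of the adjunction is the identity; equivalently, specializing the above bijection at $D=\opp{\fse(M)}$ exhibits $\Fi_{\ca{SE}}$ as fully faithful. This realizes $\bd{Frm}$ as a full coreflective subcategory of $\bd{Raney}$, with coreflector $\pi_1$. No real obstacle arises: the only delicate point is that the filter-preimage condition of Theorem \ref{whenliftsfilters} must be tested against $D^*$ rather than all of $\fse(M)$, but the containment $D^*\se\fse(M)$ from Lemma \ref{sufficient} reduces the verification to the standard stability of strongly exact filters under frame-map preimages.
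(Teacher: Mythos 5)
Your proposal is correct and follows essentially the same route as the paper: Lemma \ref{sufficient} gives $D^*\se\fse(M)$, preimages of strongly exact filters under frame maps are strongly exact (this is the paper's Lemma \ref{selifts}, which you re-prove inline --- note that concluding $\bwe_i f(x_i)\in F$ uses that frame maps preserve strong exactness of the meet as well as its value, since $F$ is only closed under strongly exact meets), and Theorem \ref{whenliftsfilters} then yields the extension. The uniqueness, naturality, and coreflectivity observations you add are the routine checks the paper leaves implicit.
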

\begin{proof}
 Suppose that $f:L\ra M$ is a frame map. Let $(M,D)$ be a Raney extension. By Lemma \ref{sufficient}, $D^*\se \opp{\fse(M)}$, and by Lemma \ref{selifts}, preimages of strongly exact filters are strongly exact. Therefore, preimages of elements in $D^*$ are in $\fse(L)$. By Theorem \ref{whenliftsfilters}, the frame map lifts to a map $\overline{f}:(L,\opp{\fse(L)})\to (M,D)$ extending $f$.
\end{proof}

\subsection{Free spatial Raney extension on a spatial frame}

It is known that, for all frame maps, preimages of completely prime filters are completely prime. Recall that the preimage map also preserves arbitrary intersections. This, together with Theorem \ref{whenliftsfilters}, gives us the following.
\begin{lemma}\label{cplifts}
Any frame morphism $f:L\ra M$ between spatial frames extends to a Raney morphism
\[
f_{\ca{CP}}:(L,\opp{\ca{I}(\fcp(L))})\ra (M,\opp{\ca{I}(\fcp(M))}).
\]
\end{lemma}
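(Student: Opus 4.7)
The plan is to apply Theorem \ref{whenliftsfilters} after identifying the two coframe components with their filter counterparts via Theorem \ref{charC*}. Under this identification, if $(L,C) = (L,\opp{\ca{I}(\fcp(L))})$ and $(M,D) = (M,\opp{\ca{I}(\fcp(M))})$, then $C^* = \ca{I}(\fcp(L))$ and $D^* = \ca{I}(\fcp(M))$. Both of these are genuine Raney extensions in the spatial case by the earlier list of notable examples, so the setup of Theorem \ref{whenliftsfilters} applies. It then suffices to show that for every $F\in \ca{I}(\fcp(M))$ one has $f^{-1}(F)\in \ca{I}(\fcp(L))$.

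The first step I would record explicitly is the two standard facts indicated in the paragraph preceding the lemma. First, the set-theoretic preimage map $f^{-1}:\ca{P}(M)\to \ca{P}(L)$ preserves arbitrary intersections, hence if $F=\bca_i P_i$ then $f^{-1}(F)=\bca_i f^{-1}(P_i)$. Second, if $P\se M$ is a completely prime filter, then $f^{-1}(P)\se L$ is a completely prime filter of $L$; this uses that $f$ preserves finite meets (so $f^{-1}(P)$ is a filter) and that $f$ preserves arbitrary joins (so $\bve_i a_i\in f^{-1}(P)$ forces $f(a_i)\in P$ for some $i$, i.e.\ $a_i\in f^{-1}(P)$).

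Combining these two observations, any $F\in \ca{I}(\fcp(M))$, written as $F=\bca_i P_i$ for some family $P_i\in \fcp(M)$ (including the empty intersection $F=M$, whose preimage $L$ lies trivially in $\ca{I}(\fcp(L))$), has preimage $f^{-1}(F)=\bca_i f^{-1}(P_i)$, an intersection of completely prime filters of $L$, hence an element of $\ca{I}(\fcp(L))$. By Theorem \ref{whenliftsfilters}, $f$ then extends to a Raney morphism $f_{\ca{CP}}:(L,\opp{\ca{I}(\fcp(L))})\to (M,\opp{\ca{I}(\fcp(M))})$, which is the required lift.

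There is essentially no obstacle here: the whole argument is a direct unpacking of the criterion in Theorem \ref{whenliftsfilters} against the two well-known preservation properties of the preimage map for frame morphisms. The only point worth flagging, rather than a real difficulty, is that spatiality of $L$ and $M$ is used exclusively to ensure that the two pairs in the statement are Raney extensions in the first place (so that $\ca{I}(\fcp(L))$ and $\ca{I}(\fcp(M))$ contain all principal filters, by Proposition \ref{famouschar}); spatiality is not needed for the extension step itself, which works for any frame map between frames on which these collections form Raney extensions.
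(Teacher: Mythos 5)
Your proposal is correct and matches the paper's own argument: the paper likewise observes that preimages under frame maps of completely prime filters are completely prime and that the preimage map preserves arbitrary intersections, then invokes Theorem \ref{whenliftsfilters}. Your added remark that spatiality serves only to guarantee the two pairs are Raney extensions (via Proposition \ref{famouschar}) is accurate and consistent with the paper's setup.
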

The lemma above shows that there is a functor $\fcp:\bd{spFrm}\to \bd{Raney}$ from the category of spatial frames, assigning to a frame $L$ the Raney extension $(L,\opp{\fcp(L)})$, and to a morphism $f:L\to M$ the Raney map $f_{\ca{CP}}:(L,\opp{\fcp(L)})\to (M,\opp{\fcp(M)})$ extending it.
\begin{proposition}
    For a spatial frame $L$, the pair $(L,\opp{\ca{I}(\fcp(L))})$ is the free spatial Raney extension over it. In particular, the category of spatial frames is a full coreflective subcategory of that of spatial Raney extensions.
\end{proposition}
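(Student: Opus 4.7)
The plan is to mirror the proof of Theorem \ref{raneymax} and invoke Theorem \ref{whenliftsfilters} as the key lifting tool. Fix a spatial frame $L$, a spatial Raney extension $(M,D)$, and a frame map $f:L\to M$. I must construct a unique Raney extension $\overline{f}:(L,\opp{\ca{I}(\fcp(L))})\to (M,D)$ whose restriction to $L$ is $f$.

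First I would identify the relevant fixpoint collections. By Proposition \ref{famouschar}, for a spatial frame $L$, the collection $\ca{I}(\fcp(L))$ contains all principal filters; combined with Theorem \ref{posetofsubloc}, Theorem \ref{containsprincipal} applies to $\ca{F}=\fcp(L)$, giving $C^{\ast}=\ca{I}(\fcp(L))$ for the Raney extension $(L,\opp{\ca{I}(\fcp(L))})$. Spatiality of $(M,D)$, via Proposition \ref{charspatiality}, yields $D^{\ast}\se\ca{I}(D^{\ast}\cap \fcp(M))\se\ca{I}(\fcp(M))$, so every $F\in D^{\ast}$ is an intersection of completely prime filters of $M$.

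The crucial step is to verify the hypothesis of Theorem \ref{whenliftsfilters}, namely $f^{-1}(F)\in C^{\ast}$ for every $F\in D^{\ast}$. This rests on two standard facts: preimages under a frame map carry completely prime filters to completely prime filters, and preimages preserve arbitrary intersections. Writing $F=\bca_i P_i$ with $P_i\in \fcp(M)$, we obtain $f^{-1}(F)=\bca_i f^{-1}(P_i)\in \ca{I}(\fcp(L))=C^{\ast}$. Theorem \ref{whenliftsfilters} then delivers the desired Raney morphism $\overline{f}$; uniqueness follows because $\overline{f}$, being a right adjoint extending $f$, is forced on a general $c\in \opp{\ca{I}(\fcp(L))}$ by the formula $c\mapsto \bwe\{f(a):a\in \up^L c\}$.

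For the coreflection claim I would argue as follows. The assignment $\fcp$ is functorial (by Lemma \ref{cplifts}), and the universal property just established identifies $\mathrm{Hom}_{\bd{Raney}}(\fcp(L),(M,D))$ with $\mathrm{Hom}_{\bd{Frm}}(L,M)$ for any spatial $(M,D)$, via restriction to the frame component; this exhibits $\fcp$ as left adjoint to $\pi_1$ on the spatial side. Fully faithfulness of $\fcp$ is immediate: a Raney map between $\fcp$-images is determined by its restriction to the meet-generating frame, and the lift exists by the argument above. I do not anticipate a genuine obstacle; the main point to check carefully is that spatiality of the \emph{target} (not mere $\ca{CP}$-canonicity) is what forces $D^{\ast}\se\ca{I}(\fcp(M))$, and hence ensures that preimages land inside $\ca{I}(\fcp(L))$.
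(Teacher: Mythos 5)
Your proposal is correct and follows essentially the same route as the paper: spatiality of the target forces $D^{*}\se \ca{I}(\fcp(M))$ via Proposition \ref{charspatiality}, preimages of completely prime filters are completely prime and preimages commute with intersections, so Theorem \ref{whenliftsfilters} yields the lift, with uniqueness forced by the meet-formula and functoriality supplied by Lemma \ref{cplifts}. The only additions are explicit bookkeeping (identifying $C^{*}=\ca{I}(\fcp(L))$ and spelling out the hom-set bijection), which the paper leaves implicit.
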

\begin{proof}
Suppose that $f:L\ra M$ is a map between spatial frames, and that $(M,C)$ is a spatial Raney extension. By spatiality, we must have $C^*\se \ca{I}(\fcp(M))$, by Proposition \ref{charspatiality}. Preimages under $f$ of completely prime filters are completely prime. This means that preimages of filters in $C^*$ are in $\ca{I}(\fcp(L))$. By Theorem \ref{whenliftsfilters}, there is a morphism $(L,\opp{\ca{I}(\fcp(L))})\ra(M,C)$ which extends the frame map $f:L\ra M$. 
\end{proof}

\subsection{Cofree Raney extension for frames with exact maps}
Since for any Raney extension $(L,C)$ there is a subcolocale inclusion $\fe(L)\se C^*$, there is a Raney surjection $(L,C)\to (L,\opp{\fe(L)})$. In light of Theorem \ref{raneymax} it is natural to wonder if $L\mapsto(L,\opp{\fe(L)})$ is the object part of a right adjoint to $\pi_1:\bd{Raney}\to \bd{Frm}$. This is not the case. As shown in \cite{ball19} -- where the question is explored for structure $\Sc(L)$, isomorphic to $\fe(L)$ -- not all frames maps $f:L\to M$ can be extended to their coframes of exact filters. For a frame morphism $f:L\ra M$, we will say that it is \emph{exact} if whenever the meet of a family $\{x_i:i\in I\}\se L$ is exact, so is the meet of $\{f(x_i):i\in I\}$, and furthermore $\bwe_i f(x_i)=f(\bwe_i x_i)$.

\begin{proposition}\label{whenelifts}
A morphism $f:L\ra M$ is exact if and only if preimages of exact filters are exact. This holds if and only if the morphism can be extended to a Raney morphism 
\[
f_{\ca{E}}:(L,\opp{\fe(L)})\ra (M,\opp{\fe(M)}).
\]
\end{proposition}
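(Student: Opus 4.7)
The plan is to establish a triangle of implications among the three properties: (A) $f$ being exact, (B) $f^{-1}$ sending exact filters to exact filters, and (C) the existence of the Raney extension $f_{\ca{E}}$. The equivalence of (B) and (C) is essentially free: by Theorem \ref{whenliftsfilters}, a Raney extension of $f$ to $(L,\opp{\fe(L)})\to(M,\opp{\fe(M)})$ exists iff for every $F\in \opp{\fe(M)}^*=\fe(M)$ the preimage $f^{-1}(F)$ lies in $\opp{\fe(L)}^*=\fe(L)$. The real content is the equivalence of (A) and (B).

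For $(\text{A})\Rightarrow(\text{B})$, suppose $f$ is exact and let $G$ be an exact filter of $M$. Take a family $\{x_i\}_{i\in I}\se f^{-1}(G)$ whose meet $\bwe_i x_i$ is exact in $L$. Since $f$ is exact, $\bwe_i f(x_i)=f(\bwe_i x_i)$ and this meet is exact in $M$. Because $\{f(x_i)\}\se G$ and $G$ is closed under exact meets, $f(\bwe_i x_i)\in G$, so $\bwe_i x_i\in f^{-1}(G)$. Thus $f^{-1}(G)$ is exact.

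For $(\text{B})\Rightarrow(\text{A})$ I split the work in two. Let $\bwe_i x_i$ be an exact meet in $L$. First, for the identity $f(\bwe_i x_i)=\bwe_i f(x_i)$, set $y=\bwe_i f(x_i)$; the principal filter $\up y\in \fe(M)$ (principal filters are trivially exact), so by hypothesis $f^{-1}(\up y)$ is an exact filter of $L$. Each $x_i$ lies in it, and since the meet $\bwe_i x_i$ is exact, exactness of $f^{-1}(\up y)$ forces $\bwe_i x_i\in f^{-1}(\up y)$, giving $f(\bwe_i x_i)\geq y$; the reverse inequality is monotonicity. The main obstacle is the second step: showing that the meet $\bwe_i f(x_i)$ is itself exact in $M$, i.e.\ that $\bwe_i(f(x_i)\ve b)\leq f(\bwe_i x_i)\ve b$ for every $b\in M$. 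Here I invoke the concrete shape of exact filters from Lemma \ref{charexact}: setting $z:=\bwe_i(f(x_i)\ve b)$, the filter $\up b\ra \up z=\{y\in M:z\leq y\ve b\}$ is exact, so $f^{-1}(\up b\ra \up z)$ is exact by hypothesis. Each $f(x_i)$ satisfies $f(x_i)\ve b\geq z$, so $x_i\in f^{-1}(\up b\ra \up z)$; using exactness of the meet $\bwe_i x_i$ once more, $\bwe_i x_i\in f^{-1}(\up b\ra \up z)$, which unpacks to $z\leq f(\bwe_i x_i)\ve b$, completing the proof.
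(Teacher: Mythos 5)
Your proof is correct and follows essentially the same route as the paper: the equivalence with the Raney extension comes from Theorem \ref{whenliftsfilters}, and the filter-theoretic equivalence uses the same two test filters (a principal filter $\up\bwe_i f(x_i)$ and a filter $\up b\ra\up\bwe_i(f(x_i)\ve b)$, exact by Lemma \ref{charexact}). The only cosmetic difference is that you argue the direction "preimages exact $\Rightarrow$ $f$ exact" directly, whereas the paper argues it by contraposition, splitting into the two possible failures of exactness.
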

\begin{proof}
Suppose that $f:L\ra M$ is an exact frame map, and that $G\se M$ is an exact filter. Suppose that $\bwe_i x_i\in L$ is an exact meet such that $f(x_i)\in G$. By exactness of this map, the meet $\bwe_i f(x_i)$ is exact and so $\bwe_i f(x_i)\in G$. Again, by exactness of $f$, $\bwe_i f(x_i)=f(\bwe_i x_i)$. Indeed, then, $\bwe_i x_i\in f^{-1}(G)$. Conversely, suppose that there is a frame map $f:L\ra M$ that it is not exact. This means that either there is an exact meet $\bwe_i x_i\in L$ such that it is not preserved by $f$, or there is an exact meet $\bwe_i x_i\in L$ such that $\bwe_i f(x_i)$ is not exact. We consider these two cases in turn. In the first case, we consider the principal filter $\up \bwe_i f(x_i)$. This is exact, as it is closed under all meets. We notice that by our hypothesis $f(\bwe_i x_i)$ is not an element of this filter. Let us call $F$ the preimage of this filter. Both $x_i\in F$ and $\bwe_i x_i\notin F$, and so $F$ is not exact. In the second case, consider an exact meet $\bwe_i x_i\in L$ such that $\bwe_i f(x_i)$ is not exact. In particular, let $y\in M$ be such that $\bwe_i (f(x_i)\ve y)\nleq (\bwe_i f(x_i))\ve y$. We now consider the exact filter 
\[
\up y \ra \up\bwe_i (f(x_i)\ve y)=\{m\in M:\bwe_i (f(x_i)\ve y)\leq y\ve m\}.
\]
That this is an exact filter follows from the characterization of Lemma \ref{charexact}. Let $F$ be the preimage of this filter. For each $i\in I$, $x_i\in F$. We claim that $\bwe_i x_i\notin F$. This follows from the fact that by our hypothesis $\bwe_i (f(x_i)\ve y)\nleq (\bwe_i f(x_i))\ve y$ and $f(\bwe_i x_i)\leq \bwe_i f(x_i)$. The rest of the claim follows by Theorem \ref{whenliftsfilters}.
\end{proof}

Let us call $\bd{Frm}_{\ca{E}}$ the category of frames with exact maps. Let us also call $\bd{Raney}_{\ca{E}}$ the category of Raney extensions with morphisms $f$ such that $\pi_1(f)$ is exact. The assignment $L\mapsto (L,\opp{\fe(L)})$, then, determines a functor $\Fi_{\ca{E}}:\bd{Frm}_{\ca{E}}\to \bd{Raney}_{\ca{E}}$.

\begin{theorem}
   There is an adjunction $\pi_1:\bd{Raney}_{\ca{E}}\lra \bd{Frm}_{\ca{E}}:\Fi_{\ca{E}}$ with $\pi_1\dashv \Fi_{\ca{E}}$.
\end{theorem}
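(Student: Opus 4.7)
The plan is to exhibit the unit of the adjunction and verify the universal property; the hom-set bijection will then follow from uniqueness of Raney extensions of frame morphisms. For each $(L,C)$ in $\bd{Raney}_{\ca{E}}$ I would define the unit
\[
\eta_{(L,C)}:(L,C)\to (L,\opp{\fe(L)})=\Fi_{\ca{E}}(\pi_1(L,C))
\]
as the Raney morphism extending $\mi{id}_L:L\to L$. To see this is well-defined, apply Theorem \ref{whenliftsfilters}: the condition to check is that for every $F\in\fe(L)$ one has $\mi{id}_L^{-1}(F)=F\in C^*$, which is exactly the content of Proposition \ref{raneymin}. Its frame component is $\mi{id}_L$, which is exact, so $\eta_{(L,C)}$ lies in $\bd{Raney}_{\ca{E}}$.

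Next, given $(L,C)\in\bd{Raney}_{\ca{E}}$, $M\in\bd{Frm}_{\ca{E}}$, and an exact frame morphism $f:L\to M$, I would produce a Raney morphism $\widehat{f}:(L,C)\to (M,\opp{\fe(M)})$ with $\pi_1(\widehat{f})=f$ by composing
\[
\widehat{f}:=f_{\ca{E}}\circ \eta_{(L,C)},
\]
where $f_{\ca{E}}:(L,\opp{\fe(L)})\to(M,\opp{\fe(M)})$ is the Raney lifting of $f$ granted by Proposition \ref{whenelifts}. Its frame component is $f\circ\mi{id}_L=f$, which is exact, so $\widehat{f}$ is a morphism in $\bd{Raney}_{\ca{E}}$. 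Uniqueness is the essential point: any Raney morphism $g:(L,C)\to(M,\opp{\fe(M)})$ extending $f$ must preserve all meets and agree with $f$ on $L$, and since $L$ meet-generates $C$, the explicit formula $g(c)=\bwe\{f(a):a\in\up^L c\}$ from the proof of Theorem \ref{whenliftsfilters} forces $g=\widehat{f}$. This yields a bijection
\[
\bd{Raney}_{\ca{E}}((L,C),\Fi_{\ca{E}}(M))\;\cong\;\bd{Frm}_{\ca{E}}(\pi_1(L,C),M),\qquad g\mapsto \pi_1(g),
\]
with inverse $f\mapsto f_{\ca{E}}\circ\eta_{(L,C)}$.

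Naturality of the unit (and hence of the hom-set bijection) reduces again to this uniqueness: for a Raney morphism $h:(L,C)\to(L',C')$ in $\bd{Raney}_{\ca{E}}$, both $\Fi_{\ca{E}}(\pi_1(h))\circ\eta_{(L,C)}$ and $\eta_{(L',C')}\circ h$ are Raney morphisms $(L,C)\to(L',\opp{\fe(L')})$ whose frame component is the exact map $\pi_1(h)$, so they must coincide. The main obstacle is really the construction of the unit, which hinges on verifying that identities on $L$ satisfy the hypothesis of Theorem \ref{whenliftsfilters} relative to the target coframe $\opp{\fe(L)}$; this is precisely what Proposition \ref{raneymin} provides, and everything else is routine.
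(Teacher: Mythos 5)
Your proof is correct and uses exactly the same ingredients as the paper's: Proposition \ref{raneymin} to get $\fe(L)\se C^*$, Proposition \ref{whenelifts} for preimages of exact filters, and Theorem \ref{whenliftsfilters} to produce the extension, so the composite $f_{\ca{E}}\circ\eta_{(L,C)}$ is precisely the map the paper constructs directly by checking that $f$-preimages of filters in $\fe(M)$ land in $C^*$. The only difference is that you make the unit, the uniqueness forced by meet-generation, and naturality explicit, which the paper leaves implicit.
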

\begin{proof}
Suppose that $f:L\ra M$ is an exact frame map, and that $(L,C)$ is a Raney extension. By Proposition \ref{whenelifts}, as $f$ is exact, preimages of filters in $\fe(M)$ are in $\fe(L)$. Furthermore, $\fe(L)\se C^*$ by Proposition \ref{raneymin}. Then, preimages of exact filters of $M$ are in $C^*$. By Theorem \ref{whenliftsfilters}, then, there is a map of Raney extensions $(L,C)\ra (M,\opp{\fe(M)})$ extending $f$, as desired. 
\end{proof}

Recall that we have defined the Raney extensions of the form $(L,\opp{\fe(L)})$ as the $T_D$ ones. Let $\bd{Raney_D}$ be the full subcategory of $\bd{Raney}$ given by the $T_D$ Raney extensions.
\begin{proposition}
    The forgetful functor $\pi_1:\bd{Raney}\to \bd{Frm}$ restricts to an isomorphism $\bd{Frm}_{\ca{E}}\cong \bd{Raney_D}$.
\end{proposition}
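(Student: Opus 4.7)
The plan is to exhibit the restricted forgetful functor $\pi_1|_{\bd{Raney_D}}\colon \bd{Raney_D}\to \bd{Frm}_{\ca{E}}$ as bijective on objects and bijective on hom-sets, with the bijections compatible with composition. On objects, since $\bd{Raney_D}$ is defined as the full subcategory whose objects are exactly $(L,\opp{\fe(L)})$ for $L\in \bd{Frm}$, the assignment $(L,\opp{\fe(L)})\mapsto L$ is tautologically a bijection with inverse $L\mapsto (L,\opp{\fe(L)})$. That this inverse really is a well-defined assignment of objects of $\bd{Raney_D}$ is built into the definition of the latter.

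For morphisms, I would first check that $\pi_1$ actually lands in $\bd{Frm}_{\ca{E}}$ when restricted to $\bd{Raney_D}$. Given a Raney morphism $\overline{f}\colon (L,\opp{\fe(L)})\to (M,\opp{\fe(M)})$, its restriction $f=\overline{f}|_L$ is by definition a frame morphism, and one direction of Proposition \ref{whenelifts} forces $f$ to be exact (because the extension exists). Conversely, the other direction of Proposition \ref{whenelifts} produces, for every exact $f\colon L\to M$, a Raney extension $f_{\ca{E}}\colon (L,\opp{\fe(L)})\to (M,\opp{\fe(M)})$ restricting to $f$. Uniqueness of this extension is immediate from the general formula stated just before Theorem \ref{whenliftsfilters}: any lift $\overline{f}$ of $f$ must satisfy
\[
\overline{f}(c)=\bwe\{f(a)\mid a\in \up^L c\},
\]
so $\overline{f}$ is determined by $f$. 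This gives a bijection on hom-sets.

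Finally, to promote this to an isomorphism of categories, I would verify that the inverse assignment $f\mapsto f_{\ca{E}}$ is functorial. The identity morphism $1_L\colon L\to L$ lifts to $1_{(L,\opp{\fe(L)})}$, and since the lift is unique we get $(1_L)_{\ca{E}}=1_{(L,\opp{\fe(L)})}$. For composition, given exact $f\colon L\to M$ and $g\colon M\to N$, the composite $g_{\ca{E}}\circ f_{\ca{E}}$ is a Raney morphism whose restriction to $L$ is $g\circ f$; by uniqueness, $g_{\ca{E}}\circ f_{\ca{E}}=(g\circ f)_{\ca{E}}$. The two assignments are therefore mutually inverse functors.

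There is no substantial obstacle here: all the real work has already been done in Proposition \ref{whenelifts} (which matches exact frame morphisms with liftable ones) and in the uniqueness-of-lift formula preceding Theorem \ref{whenliftsfilters}. The proof is essentially a bookkeeping assembly of these two facts.
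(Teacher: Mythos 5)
Your proof is correct and takes essentially the same route as the paper: both directions of the object/morphism correspondence come from Proposition \ref{whenelifts} (equivalently Theorem \ref{whenliftsfilters}), and functoriality of the inverse is settled by the uniqueness of lifts via the formula $\overline{f}(c)=\bwe\{f(a)\mid a\in \up^L c\}$, which you simply spell out more explicitly than the paper does.
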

\begin{proof}
    For a map $f:(L,\opp{\fe(L)})\to (M,\opp{\fe(L)})$ of $T_D$ Raney extensions, by Theorem \ref{whenliftsfilters} the restriction $f|_{L}:L\to M$ is a map in $\bd{Frm}_{\ca{E}}$. Thus, the restriction and co-restriction of $\pi_1$ is well-defined. The inverse functor maps each frame $L$ to the Raney extension $(L,\opp{\fe(L)})$, and this assignment is functorial by Proposition \ref{whenelifts}.
\end{proof}

In contrast with the frame setting, the $T_D$ objects form a full subcategory of our pointfree category. We will explore the consequences of this in relation to the $T_D$ duality in Subsection \ref{specialTD}

\subsection{Sober coreflection of a Raney extension}
Theorem \ref{whenliftsfilters}, more broadly, can also be used to construct coreflections and reflections.
For a Raney extension $(L,C)$ we call a map $\sigma:S(L,C)\ra (L,C)$ of the category $\bd{Raney}$ a \emph{sobrification} if $S(L,C)$ is sober, and if whenever $f:(M,D)\ra (L,C)$ is a morphism from a sober Raney extension, there a commuting diagram
\[
\begin{tikzcd}
S(L,C)
\ar[r,"\sigma"]
& (L,C).\\
(M,D)
\ar[u,"f_{\sigma}"]
\ar[ur,swap,"f"]
\end{tikzcd}
\]

\begin{theorem}
For a Raney extension $(L,C)$, the map
\begin{gather*}
\sigma:(L,\opp{\ca{I}(C^*\cup \fcp(L))})\ra (L,C)\\
F\mapsto \bwe F
\end{gather*}
is its sobrification.
\end{theorem}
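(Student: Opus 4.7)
The plan is to verify four things in sequence: that the pair $(L,\opp{\ca{I}(C^*\cup \fcp(L))})$ is a Raney extension; that $\sigma$ is a Raney morphism; that this extension is sober; and that it satisfies the required universal property.

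For the first claim, I would apply Theorem \ref{containsprincipal} to $\ca{F}:=C^*\cup\fcp(L)$. That $\ca{I}(\ca{F})$ contains all principal filters is inherited from $C^*$ via Lemma \ref{sufficient}, and strong exactness of every element of $\ca{I}(\ca{F})$ follows from $C^*\se\fse(L)$ (Lemma \ref{sufficient}) and $\fcp(L)\se\fse(L)$ (via Theorem \ref{posetofsubloc}) together with closure of the sublocale $\fse(L)$ under intersection. The delicate point is that $\ca{I}(\ca{F})$ must itself be a sublocale of $\Fi(L)$; by Lemma \ref{meetofheyting} this reduces to stability of $\ca{F}$ under the Heyting implications $\up a\to(-)$, which is delivered by Lemma \ref{l:heytingcp}: for $F\in\fcp(L)$ the filter $\up a\to F$ is either $L$ (a principal filter, hence in $C^*$) or $F$ itself, while for $F\in C^*$ it stays in $C^*$ because $C^*$ is already a sublocale.

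The second claim is then immediate from Theorem \ref{whenliftsfilters}: the identity $\mb{id}_L$ extends to a Raney morphism $\sigma:(L,\opp{\ca{I}(\ca{F})})\to(L,C)$ because preimages of filters in $C^*$ under $\mb{id}_L$ are themselves, and $C^*\se\ca{I}(\ca{F})$; the formula $\sigma(F)=\bwe F$ matches the general recipe $c\mapsto\bwe\{f(a):a\in\up^L c\}$ recorded after that theorem, since $\up^L F = F$. For sobriety, by definition we need $\ca{CP}$-compactness, which by Proposition \ref{C*canonical} amounts to $\fcp(L)\se\ca{I}(\ca{F})$, and this holds by construction.

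For the universal property, let $f:(M,D)\to(L,C)$ be a Raney morphism with $(M,D)$ sober, and write $g:=f|_M:M\to L$. The lift $f_\sigma:(M,D)\to(L,\opp{\ca{I}(\ca{F})})$ will be produced by a second application of Theorem \ref{whenliftsfilters}, so it suffices to verify $g^{-1}(G)\in D^*$ for every $G\in\ca{I}(\ca{F})$. When $G\in C^*$ this is exactly the statement that $f$ itself is a Raney morphism; when $G\in\fcp(L)$, $g^{-1}(G)$ is a completely prime filter of $M$, and sobriety of $(M,D)$ translates via Proposition \ref{C*canonical} to $\fcp(M)\se D^*$; the general case follows since $D^*$ is a sublocale of $\Fi(M)$, hence closed under intersections. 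Both commutativity $\sigma\circ f_\sigma=f$ and uniqueness of $f_\sigma$ then follow because a Raney morphism between fixed Raney extensions is determined by its frame part. The main technical obstacle is the sublocale check in the first step; everything else is a bookkeeping exercise with Theorem \ref{whenliftsfilters} and Proposition \ref{C*canonical}.
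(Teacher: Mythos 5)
Your proposal is correct and follows essentially the same route as the paper's proof: establish that $\ca{I}(C^*\cup \fcp(L))$ is a sublocale via Lemmas \ref{meetofheyting} and \ref{l:heytingcp}, get $\ca{CP}$-compactness from Proposition \ref{C*canonical}, and apply Theorem \ref{whenliftsfilters} twice, once for $\sigma$ and once for the lift $f_\sigma$. You merely make explicit a few points the paper leaves implicit (the strong-exactness and principal-filter checks for Theorem \ref{containsprincipal}, closure of $D^*$ under intersections, and the commutativity/uniqueness of the factorization via meet-generation), which is harmless.
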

\begin{proof}
Observe that, as $C^*\se \Fi(L)$ is a sublocale and by Lemmas \ref{meetofheyting} and \ref{l:heytingcp}, $\ca{I}(C^*\cup \fcp(L))$ is a sublocale. As $(L,\opp{\ca{I}(C^*\cup \fcp(L))})$ contains all completely prime filters of $L$, indeed, by Proposition \ref{C*canonical} it is $\ca{CP}$-compact. Since $C^*\se \ca{I}(C^*\cup \fcp(L))$, by Theorem \ref{whenliftsfilters} it means that the identity on $L$ extends to a surjective map of Raney extensions
\begin{gather*}
    \sigma:(L,\opp{\ca{I}(C^*\cup \fcp(L))})\to (L,C)\\
    F\mapsto \bwe F.
\end{gather*}
Let us show that this map has the required universal property. Suppose that $f:(M,D)\ra (L,C)$ is a Raney map from a sober Raney extension. Consider the frame map $f|_{M}:M\ra L$. By Theorem \ref{whenliftsfilters}, to show that the map lifts it suffices to show that the preimage of each filter in $\fcp(L)$ as well as each filter in $C^*$ is in $D^*$. For filters in $C^*$, this holds because there is a map $f:(M,D)\ra (L,C)$. For a completely prime filter $P\se L$, recall that $f^{-1}(P)\in \fcp(M)$, and by definition of sobriety and Proposition \ref{C*canonical}, also $\fcp(M)\se D^*$. 
\end{proof}

\subsection{\texorpdfstring{T\textsubscript{D}}{TD} reflection for Raney extensions and exact maps}

We have seen that in $\bd{Raney}$, in contrast to $\bd{Frm}$, one can speak of $T_D$ objects that are not necessarily spatial. This enables us to define a notion of $T_D$ reflection of an object which does not spatialize said object. Once again, the morphisms have to be restricted. For a Raney extension $(L,C)$, we call a \emph{$T_D$ reflection} a map $\delta:(L,C)\to D(L,C)$ such that $D(L,C)$ is $T_D$, and such that whenever $f:(L,C)\to (M,D)$ is a map to a $T_D$ Raney extension, these is a commuting diagram as follows.
\[
\begin{tikzcd}
(L,C)
\ar[r,"\delta"]
\ar[dr,"f",swap]
& D(L,C)
\ar[d,"f_{\delta}"]\\
& (M,D).
\end{tikzcd}
\]

\begin{proposition}
    In the category $\bd{Raney}_{\ca{E}}$, every Raney extension admits a $T_D$ reflection.
\end{proposition}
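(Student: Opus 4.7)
The plan is to show that the $T_D$ reflection of a Raney extension $(L,C)$ is $D(L,C):=(L,\opp{\fe(L)})$, with $\delta:(L,C)\to (L,\opp{\fe(L)})$ the morphism extending the identity on $L$. Intuitively, $(L,\opp{\fe(L)})$ is the smallest (and only) $T_D$ Raney extension over $L$, and since the underlying frame map of any morphism in $\bd{Raney}_{\ca{E}}$ is exact, it lifts uniquely to the $T_D$ extensions on either side.

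First I would construct the unit $\delta$. By Proposition \ref{raneymin}, for any Raney extension $(L,C)$ we have $\fe(L)\se C^*$. Applied to the identity map $\mb{id}_L:L\to L$, Theorem \ref{whenliftsfilters} then yields a Raney morphism $\delta:(L,C)\to (L,\opp{\fe(L)})$ extending $\mb{id}_L$; concretely, it is the surjection $C\epi \opp{\fe(L)}$ collapsing $c$ to $\up^L c\cap \fe(L)$. Since $\pi_1(\delta)=\mb{id}_L$ is trivially exact, $\delta$ is a morphism in $\bd{Raney}_{\ca{E}}$, and $(L,\opp{\fe(L)})$ is $T_D$ by the characterization after Theorem \ref{t:chartd}.

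Next I would verify the universal property. Let $f:(L,C)\to (M,D)$ be a morphism in $\bd{Raney}_{\ca{E}}$ with $(M,D)$ a $T_D$ Raney extension, so $D\cong \opp{\fe(M)}$ by the uniqueness result in Theorem \ref{containsprincipal}. By definition of $\bd{Raney}_{\ca{E}}$, the map $\pi_1(f)=f|_L:L\to M$ is exact. By Proposition \ref{whenelifts}, it then extends to a Raney morphism $f_\delta:(L,\opp{\fe(L)})\to (M,\opp{\fe(M)})$. Since both $f$ and $f_\delta\circ\delta$ are Raney morphisms $(L,C)\to (M,D)$ whose restrictions to $L$ coincide (both equal $f|_L$), and since any Raney morphism out of $(L,C)$ is uniquely determined by its restriction to $L$ via the formula $c\mapsto \bwe\{g(a):a\in \up^L c\}$ forced by meet-preservation and meet-generation of $L$ in $C$, we conclude $f=f_\delta\circ \delta$. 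The same determination argument also shows uniqueness of $f_\delta$.

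The main point to watch, rather than a genuine obstacle, is that the construction only produces a reflection once we restrict to exact morphisms: without this restriction, the frame map $f|_L$ need not extend to the $T_D$ extensions, exactly as observed in the discussion of \cite{ball19} preceding Proposition \ref{whenelifts}. Restricting to $\bd{Raney}_{\ca{E}}$ is precisely what removes this obstruction and makes the assignment $(L,C)\mapsto (L,\opp{\fe(L)})$ functorial and left-adjoint-like to the inclusion of $\bd{Raney_D}$.
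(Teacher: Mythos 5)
Your proposal is correct and follows essentially the same route as the paper: take $\delta:(L,C)\to(L,\opp{\fe(L)})$ coming from $\fe(L)\se C^*$ (Proposition \ref{raneymin} plus Theorem \ref{whenliftsfilters}), and lift any exact $f|_L$ to the $T_D$ extensions via Proposition \ref{whenelifts}/Theorem \ref{whenliftsfilters}, using $D^*=\fe(M)$ for a $T_D$ codomain. Your explicit check of the commuting triangle and uniqueness of $f_\delta$ via meet-generation is left implicit in the paper but is a harmless (indeed welcome) addition; just note, as the paper does, that $f_\delta$ itself lies in $\bd{Raney}_{\ca{E}}$ because it extends the exact map $f|_L$.
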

\begin{proof}
   We claim that the required map for $(L,C)$ is the surjection $\delta:(L,C)\to (L,\opp{\fe(L)})$ following from Proposition \ref{raneymin}. This is a map in $\bd{Raney}_{\ca{E}}$, as it restricts to the identity on $L$. Now, suppose that there is a $T_D$ Raney extension $(M,D)$ such that there is a morphism $f:(L,C)\to (M,D)$ in $\bd{Raney}_{\ca{E}}$. Because $(M,D)$ is $T_D$, by definition $D^*=\opp{\fe(L)}$. By assumption on $f$, then, the preimage map relative to $f|_{L}$ maps filters in $D^*$ to exact filters of $L$. Hence, by Theorem \ref{whenliftsfilters}, there is a map $f_{\delta}:(L,\opp{\fe(L)})\to (M,D)$ as required. Finally, this map is in $\bd{Raney}_{\ca{E}}$ as it extends $f|_L$.
\end{proof}

\subsection{Canonical extension as free algebraic Raney extension on a pre-spatial frame}\label{s:pit}

We now view the canonical extension of a frame from \cite{jakl20} as a Raney extension, and characterize it as a free construction. For a pre-spatial frame $L$, we will call its \emph{canonical extension} the Raney extension $(L,\opp{\ca{I}(\fso(L))})$. For this pair to be a Raney extension, we do need pre-spatiality, by Proposition \ref{famouschar}.

\begin{lemma}\label{solifts}
    Any frame morphism $f:L\ra M$ between pre-spatial frames extends to a Raney morphism
    \[
    f_{\ca{SO}}:(L,\opp{\ca{I}(\fso(L))})\ra (M,\opp{\ca{I}(\fso(M))}).
    \]
\end{lemma}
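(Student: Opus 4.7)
The plan is to apply Theorem \ref{whenliftsfilters}: it suffices to verify that for every $F\in \ca{I}(\fso(M))$, the preimage $f^{-1}(F)$ lies in $\ca{I}(\fso(L))$. Since taking preimages commutes with arbitrary set-theoretic intersections, this reduces to the single claim that $f^{-1}(F)\in \fso(L)$ whenever $F\in \fso(M)$.

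First I would check that $f^{-1}(F)$ is a filter, which is immediate from $f$ preserving finite meets and the top element. The heart of the matter is Scott-openness. Here I would use the fact that a frame morphism preserves all joins, in particular directed ones. Suppose $\bve^{\up}_i x_i\in f^{-1}(F)$ for some directed family $\{x_i\}\se L$. Then $f(\bve^{\up}_i x_i)=\bve^{\up}_i f(x_i)\in F$, and since $\{f(x_i)\}\se M$ is again directed (as $f$ is monotone) and $F$ is Scott-open in $M$, some $f(x_{i_0})\in F$, i.e.\ $x_{i_0}\in f^{-1}(F)$. Hence $f^{-1}(F)$ is Scott-open in $L$.

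Combining these two observations, the preimage map sends $\fso(M)$ into $\fso(L)$ and $\ca{I}(\fso(M))$ into $\ca{I}(\fso(L))$. Under the identifications provided by Theorem \ref{charC*}, this is exactly the hypothesis of Theorem \ref{whenliftsfilters} for the Raney extensions $(L,\opp{\ca{I}(\fso(L))})$ and $(M,\opp{\ca{I}(\fso(M))})$ (both of which are genuine Raney extensions precisely by pre-spatiality, via Proposition \ref{famouschar} and Theorem \ref{containsprincipal}). The theorem then furnishes the desired Raney morphism $f_{\ca{SO}}$ extending $f$.

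I do not anticipate any real obstacle: the only step that required a choice was deciding how to handle the closure under intersections on the codomain, and preservation of preimages under intersections disposes of this cleanly. The step that must not be skipped is verifying pre-spatiality of both frames so that the two targets are genuine Raney extensions, but this is given by hypothesis.
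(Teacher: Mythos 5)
Your proof is correct and follows essentially the same route as the paper: reduce via Theorem \ref{whenliftsfilters} to showing that preimages of (intersections of) Scott-open filters are intersections of Scott-open filters, and establish Scott-openness of $f^{-1}(F)$ from preservation of directed joins. Your explicit remark that preimages commute with intersections is a detail the paper leaves implicit, but the argument is the same.
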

\begin{proof}
By Theorem \ref{whenliftsfilters}, it suffices to show that for a frame morphism $f:L\ra M$ between pre-spatial frames preimages of Scott-open filters are Scott-open. Suppose that $F\se L$ is a Scott-open filter, and that $\{x_i:i\in I\}\se L$ is a directed family such that $f(\bve_i x_i)=\bve f(x_i)\in F$. Observe that the family $\{f(x_i):i\in I\}$ is directed, and so by Scott-openness of $F$ we must have $f(x_i)\in F$ for some $i\in I$, as desired.
\end{proof}

 For a Raney extension $(L,C)$, we say that an element $c\in C$ is \emph{compact} if, for every directed collection $D\se L$, $c\leq \bve D$ implies that $c\leq d$ for some $d\in D$. We say that a Raney extension $(L,C)$ is \emph{algebraic} if every element of $c$ is the join of compact elements. 
 
\begin{lemma}\label{meetofscott}
    A Raney extension $(L,C)$ is algebraic if and only if $C^*\se \ca{I}(C^*\cap \fso(L))$.
\end{lemma}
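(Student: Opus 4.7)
The plan is to translate the definitions of \emph{compact element} and \emph{algebraic} from $C$ into the filter side $C^*$ via the isomorphism $\bigwedge: C^* \leftrightarrows C : \up^L$ of Theorem \ref{charC*}. The key point is that compact elements of $C$ correspond exactly to Scott-open filters in $C^*$.

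First I would establish this correspondence. For $c \in C$ and a directed family $D \se L$, note that $c \leq \bve D$ is equivalent to $\bve D \in \up^L c$, and $c \leq d$ is equivalent to $d \in \up^L c$. Thus the condition defining compactness of $c$ (for every directed $D \se L$, $c \leq \bve D$ implies $c \leq d$ for some $d \in D$) is literally the same as the condition defining Scott-openness of the filter $\up^L c$ of $L$. Hence the compact elements of $C$ correspond bijectively, under $\up^L$, to filters in $C^* \cap \fso(L)$.

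Next I would translate joins. Since $\up^L:C \to \opp{\Fi(L)}$ is a left adjoint (Theorem \ref{charC*}), it preserves joins, and under the reverse-inclusion order on $C^* \se \Fi(L)$ these are computed as set-theoretic intersections of filters. Therefore, an element $c \in C$ is a join $\bve_i k_i$ of compact elements $k_i$ if and only if $\up^L c = \bca_i \up^L k_i$ where each $\up^L k_i \in C^* \cap \fso(L)$. Equivalently, $\up^L c$ lies in $\ca{I}(C^* \cap \fso(L))$.

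Putting the two translations together gives the result immediately. If $(L,C)$ is algebraic, then every $c \in C$ is a join of compacts, so every element of $C^*$ lies in $\ca{I}(C^* \cap \fso(L))$. Conversely, if $C^* \se \ca{I}(C^* \cap \fso(L))$, then for every $c \in C$ we can write $\up^L c = \bca_i F_i$ with $F_i \in C^* \cap \fso(L)$; writing each $F_i = \up^L k_i$ (possible since $F_i \in C^*$) yields compact $k_i$, and applying $\bwe$ (or equivalently using that $\up^L$ is injective on $C^* \cong C$) gives $c = \bve_i k_i$. I do not anticipate any significant obstacle: the statement is essentially a direct transcription of the definition through the adjunction, and the only substantive content is the identification of compact elements of $C$ with Scott-open filters in $C^*$, which follows at once by unwinding definitions.
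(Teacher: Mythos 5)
Your proposal is correct and follows essentially the same route as the paper: identify the compact elements of $C$ with the Scott-open filters in $C^*$ via $\up^L$, then use that $\up^L$ (equivalently, the coframe structure of $C^*$ inside $\opp{\Fi(L)}$) turns joins into intersections, so algebraicity becomes exactly $C^*\se \ca{I}(C^*\cap \fso(L))$. The only cosmetic difference is that you invoke join-preservation of the left adjoint $\up^L$ where the paper cites the subcolocale inclusion $C^*\se\opp{\Fi(L)}$, which amounts to the same thing.
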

\begin{proof}
Notice that an element $x\in C$ is compact if and only if the filter $\up^L x$ is Scott-open. Consider the isomorphism $\up^L:C\cong C^*$. The Raney extension $(L,C)$ is algebraic if and only if in $C^*$ every element is a join of Scott-open filters of the form $\up^L x$ for some $x\in L$. The inclusion $C^*\se \opp{\Fi(L)}$ is a subcolocale inclusion, and subcolocale inclusions preserves all joins, and joins in $\opp{\Fi(L)}$ are intersections. Therefore, algebraicity of $(L,C)$ is equivalent to every filter in $C^*$ being an intersection of Scott-open filters in $C^*$.
\end{proof}

\begin{lemma}\label{prespatial}
A frame admits an algebraic Raney extension if and only if it is pre-spatial.
\end{lemma}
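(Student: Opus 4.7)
The plan is to prove both directions by combining Lemma \ref{meetofscott} (characterization of algebraicity via $C^* \subseteq \ca{I}(C^* \cap \fso(L))$), Lemma \ref{sufficient} (principal filters lie in $C^*$), and Proposition \ref{famouschar} (pre-spatiality iff $\ca{I}(\fso(L))$ contains all principal filters).

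For the ``if'' direction, suppose $L$ is pre-spatial. Then by Proposition \ref{famouschar}, $\ca{I}(\fso(L))$ contains all principal filters, and together with the other conditions of Theorem \ref{containsprincipal} (which hold by Theorem \ref{posetofsubloc}, since $\fso(L) \subseteq \fse(L)$ and $\ca{I}(\fso(L))$ is a sublocale of $\Fi(L)$), the pair $(L, \opp{\ca{I}(\fso(L))})$ is a Raney extension. For this extension, $C^* = \ca{I}(\fso(L))$, so $C^* \cap \fso(L) = \fso(L)$ and trivially $C^* = \ca{I}(\fso(L)) \subseteq \ca{I}(C^* \cap \fso(L))$. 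By Lemma \ref{meetofscott}, this Raney extension is algebraic.

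For the ``only if'' direction, suppose that $(L,C)$ is an algebraic Raney extension. By Lemma \ref{meetofscott}, $C^* \subseteq \ca{I}(C^* \cap \fso(L)) \subseteq \ca{I}(\fso(L))$. By Lemma \ref{sufficient}, $C^*$ contains every principal filter of $L$, so every principal filter lies in $\ca{I}(\fso(L))$. By Proposition \ref{famouschar}, $L$ is pre-spatial.

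No step presents a real obstacle: both directions are short chains of previously established facts. The only thing to be careful about is confirming that the canonical extension really satisfies the three hypotheses of Theorem \ref{containsprincipal} (sublocale inclusion into $\Fi(L)$, strong exactness of its filters, and containment of principal filters), all of which have been recorded in the preceding material.
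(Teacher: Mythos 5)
Your proof is correct and follows essentially the same route as the paper: the forward direction uses Lemma \ref{meetofscott} together with the fact that principal filters lie in $C^*$ (Lemma \ref{sufficient}) and Proposition \ref{famouschar}, while the converse exhibits $(L,\opp{\ca{I}(\fso(L))})$ as an algebraic Raney extension. Your version merely spells out the verification of the hypotheses of Theorem \ref{containsprincipal} and the algebraicity check via $C^*=\ca{I}(\fso(L))$, which the paper leaves implicit by citing its earlier example list.
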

\begin{proof}
First, we observe that if a frame admits an algebraic Raney extension this means that principal filters must all be intersections of Scott-open filters, by Lemma \ref{meetofscott}. By Proposition \ref{famouschar}, the frames with this property are exactly the pre-spatial ones. For a pre-spatial frame $L$, an algebraic Raney extension is $(L,\opp{\ca{I}(\fso(L))})$. 
\end{proof}

We are now ready to characterize canonical extensions of frames as free algebraic Raney extensions.
\begin{theorem}
     For a pre-spatial frame $L$, its canonical extension is the free algebraic Raney extension over it.
\end{theorem}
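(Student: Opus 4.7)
The plan is to invoke Theorem \ref{whenliftsfilters} together with the characterization of algebraicity in Lemma \ref{meetofscott}. First, I would verify that the canonical extension $(L,\opp{\ca{I}(\fso(L))})$ is itself algebraic as a Raney extension: under the isomorphism of Theorem \ref{charC*}, the associated filter collection is $C^{*}=\ca{I}(\fso(L))$, and since every Scott-open filter occurs among these (as the empty intersection being $L$ together with the singleton intersections), we trivially have $C^{*}\se \ca{I}(C^{*}\cap \fso(L))$, which is exactly the criterion of Lemma \ref{meetofscott}.

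Next, I would establish the universal property. Suppose $(M,D)$ is any algebraic Raney extension and $f:L\ra M$ is a frame morphism. To apply Theorem \ref{whenliftsfilters}, I need to show that $f^{-1}(G)\in \ca{I}(\fso(L))$ for every $G\in D^{*}$. By algebraicity of $(M,D)$ and Lemma \ref{meetofscott}, each such $G$ is an intersection of filters in $D^{*}\cap \fso(M)$, in particular an intersection of Scott-open filters of $M$. The preimage map $f^{-1}:\Fi(M)\to \Fi(L)$ preserves arbitrary intersections. Moreover, $f^{-1}$ sends Scott-open filters of $M$ to Scott-open filters of $L$: if $F\se M$ is Scott-open and $\{x_i\}\se L$ is directed with $\bve_i x_i\in f^{-1}(F)$, then $\{f(x_i)\}$ is directed with $\bve_i f(x_i)=f(\bve_i x_i)\in F$, so some $f(x_i)\in F$, whence some $x_i\in f^{-1}(F)$. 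Combining, $f^{-1}(G)$ is an intersection of Scott-open filters of $L$, hence an element of $\ca{I}(\fso(L))$.

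Therefore, Theorem \ref{whenliftsfilters} produces a Raney morphism $\overline{f}:(L,\opp{\ca{I}(\fso(L))})\to (M,D)$ extending $f$. Uniqueness is immediate: any such extension is a coframe morphism preserving all meets, and since $L$ meet-generates the coframe $\opp{\ca{I}(\fso(L))}$, the values of $\overline{f}$ are forced by its restriction to $L$, namely $f$. The only subtlety worth checking — and the only place where one might expect trouble — is the interaction between preimages and Scott-openness, but as noted above this is an unconditional property of frame morphisms, so there is no real obstacle.
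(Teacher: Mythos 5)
Your proposal is correct and follows essentially the same route as the paper: algebraicity of the target forces $D^{*}\se \ca{I}(\fso(M))$ by Lemma \ref{meetofscott}, preimages of Scott-open filters are Scott-open (the paper's Lemma \ref{solifts}), the preimage map preserves intersections, and Theorem \ref{whenliftsfilters} then yields the extension. Your extra checks (algebraicity of the canonical extension itself, and uniqueness via meet-generation) are correct and only make explicit what the paper covers through Lemma \ref{prespatial} and the meet-generating property.
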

\begin{proof}
Suppose that $L$ is a pre-spatial frame, and that $(M,C)$ is an algebraic Raney extension. Suppose that there is a frame map $f:L\ra M$. Consider the canonical extension $(L,\opp{\ca{I}(\fso(L))})$. As $(M,C)$ is algebraic, $C^*\se \opp{\ca{I}(\fso(M))}$, by Lemma \ref{meetofscott}. By Lemma \ref{solifts}, preimages of Scott-open filters are Scott-open. Then, preimages of filters in $C^*$ are in $\ca{I}(\fso(L))$. By Theorem \ref{whenliftsfilters} this means that there is a map of Raney extensions $(L,\opp{\ca{I}(\fso(L))})\ra (M,C)$ extending the frame map $f:L\ra M$. 
\end{proof}

\section{Special topics}\label{S5}

\subsection{Sobriety and strict sobriety}\label{S51}
We work towards characterizing sobriety and strict sobriety of spaces in terms of Raney extensions.

\begin{lemma}\label{l:ssiffsocompact}
    A $T_0$ space $X$ is strictly sober if and only if $(\Om(X),\ca{U}(X))$ is a $\ca{SO}$-compact Raney extension.
\end{lemma}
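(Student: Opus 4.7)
The plan is to deduce the equivalence essentially directly from the characterization of compactness given in Proposition \ref{C*canonical}, together with the concrete description of $\ca{U}(X)^*$ coming from Theorem \ref{charC*}. The lemma is really an unfolding of definitions once the right bridge is in place.

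First, I would apply Proposition \ref{C*canonical}(2) to the Raney extension $(\Om(X),\ca{U}(X))$ with $\ca{F}=\fso(\Om(X))$. This gives that $(\Om(X),\ca{U}(X))$ is $\ca{SO}$-compact if and only if $\fso(\Om(X))\se \ca{U}(X)^*$, where by definition $\ca{U}(X)^*=\{\up^{\Om(X)}c:c\in \ca{U}(X)\}$.

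Next, I would identify concretely what filters belong to $\ca{U}(X)^*$. For a saturated set $c\in \ca{U}(X)$, the filter $\up^{\Om(X)}c$ is by definition $\{U\in \Om(X):c\se U\}$. Hence the inclusion $\fso(\Om(X))\se \ca{U}(X)^*$ unravels to: every Scott-open filter of $\Om(X)$ is of the form $\{U\in \Om(X):c\se U\}$ for some saturated set $c\in \ca{U}(X)$. This is precisely the condition in the definition of strict sobriety (the starting $T_0$ hypothesis matches on both sides of the lemma). Moreover, one can verify that in that case $c$ must be $\bca F$, which is automatically saturated, so nothing more than the displayed equivalence is needed.

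The argument thus reduces to two routine invocations (Proposition \ref{C*canonical}(2) and the description of $\ca{U}(X)^*$) and the rest is bookkeeping. There is no real obstacle: the only point that deserves a sentence is the observation that, for a saturated set $c$, the filter $\up^{\Om(X)}c=\{U\in \Om(X):c\se U\}$ coincides with the "saturation-indexed" filters appearing in Ern\'e's definition of strict sobriety. I would state this explicitly so the reader can match the two presentations.
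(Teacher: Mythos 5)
Your argument is correct and is essentially the paper's own proof: the paper likewise invokes Proposition \ref{C*canonical} to reduce $\ca{SO}$-compactness to the inclusion $\fso(\Om(X))\se \ca{U}(X)^*$ and then appeals to the definition of strict sobriety. You simply make explicit the identification $\up^{\Om(X)}c=\{U\in\Om(X):c\se U\}$ for saturated $c$, which the paper leaves implicit; no difference in substance.
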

\begin{proof}
    By Proposition \ref{C*canonical}, a Raney extension is $\ca{SO}$-compact if and only if $\fso(L)\se C^*$. The claim follows by definition of strict sobriety.
\end{proof}

\begin{proposition}\label{charpostsober}
   A $T_0$ space is strictly sober if and only if $(\Om(X),\ca{U}(X))$ is the canonical extension of $\Om(X)$.  
\end{proposition}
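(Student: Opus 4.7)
My plan is to show that both $\ca{SO}$-compactness and $\ca{SO}$-density of $(\Om(X),\ca{U}(X))$ are equivalent respectively to strict sobriety and to the trivial/automatic property, so that the statement reduces to Lemma \ref{l:ssiffsocompact} together with a direct verification.

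First, I would observe that $(\Om(X),\ca{U}(X))$ is $\ca{SO}$-dense for \emph{any} $T_0$ space $X$. Indeed, $\ca{U}(X)$ is join-generated by the principal upsets $\up x$ for $x\in X$, and for every $x\in X$ the neighborhood filter $N(x)=\{U\in\Om(X):x\in U\}$ is completely prime, hence Scott-open, and satisfies $\bca N(x)=\up x$ (where the meet in $\ca{U}(X)$ is the set-theoretic intersection, since the embedding $\Om(X)\se\ca{U}(X)$ preserves all meets by Example \ref{example1} combined with the fact that intersections of saturated sets are saturated). Thus the elements $\bwe e[F]=\bca F$ for $F\in\fso(\Om(X))$ join-generate $\ca{U}(X)$, which is $\ca{SO}$-density.

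Next, for the forward direction, assume $X$ is strictly sober. By Lemma \ref{l:ssiffsocompact}, the Raney extension $(\Om(X),\ca{U}(X))$ is $\ca{SO}$-compact. Combined with the $\ca{SO}$-density just established, $(\Om(X),\ca{U}(X))$ is $\ca{SO}$-canonical. In particular, all principal filters of $\Om(X)$ lie in $\ca{I}(\fso(\Om(X)))$ (use Proposition \ref{C*canonical} applied to the principal filter $\up a=\up^{\Om(X)}a$, since $a\in \Om(X)\se \ca{U}(X)$), so $\Om(X)$ is pre-spatial by Proposition \ref{famouschar}. Hence the canonical extension of $\Om(X)$ exists as a Raney extension, and by the uniqueness part of Theorem \ref{containsprincipal} it is (up to isomorphism) $(\Om(X),\ca{U}(X))$.

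For the converse, assume $(\Om(X),\ca{U}(X))$ is the canonical extension of $\Om(X)$, i.e., it is isomorphic to $(\Om(X),\opp{\ca{I}(\fso(\Om(X)))})$. By construction this Raney extension is $\ca{SO}$-canonical, and in particular $\ca{SO}$-compact. Lemma \ref{l:ssiffsocompact} then yields that $X$ is strictly sober. I do not expect any substantial obstacle here: the $\ca{SO}$-density observation is the only ingredient beyond the lemma, and it is a direct consequence of the join-generation of $\ca{U}(X)$ by principal upsets plus the (trivial) fact that neighborhood filters are Scott-open.
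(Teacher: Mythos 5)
Your proof is correct and takes essentially the same route as the paper: both directions hinge on Lemma \ref{l:ssiffsocompact} for the equivalence of strict sobriety with $\ca{SO}$-compactness, plus $\ca{SO}$-density of $(\Om(X),\ca{U}(X))$, which the paper obtains from Corollary \ref{Xcpdense} (spatial $\Rightarrow$ $\ca{CP}$-dense $\Rightarrow$ $\ca{SO}$-dense) while you verify it directly via principal upsets and neighborhood filters, and your explicit pre-spatiality check only spells out what the paper leaves implicit. One minor wording slip: the embedding $\Om(X)\se \ca{U}(X)$ does not preserve all meets (only strongly exact ones), but the fact you actually use --- that meets in $\ca{U}(X)$ are set-theoretic intersections, so $\bwe$ of a filter of opens computed in $\ca{U}(X)$ is $\bca F$ --- is correct and suffices.
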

\begin{proof}
If $X$ is strictly sober, $(\Om(X),\ca{U}(X))$ is $\ca{SO}$-compact, by Lemma \ref{l:ssiffsocompact}. It is also $\ca{CP}$-dense, by Corollary \ref{Xcpdense}, and as completely prime filters are Scott-open it is also $\ca{SO}$-dense. Conversely, if $X$ is a space such that $(\Om(X),\ca{U}(X))$ is a canonical extension, in particular this Raney extension is $\ca{SO}$-compact, hence $X$ is strictly sober by Lemma \ref{l:ssiffsocompact}.
\end{proof}

It is known that if we do not assume choice principles it is not the case that sobriety implies strict sobriety. Let us look at a concrete counterexample for this.

\begin{example}\label{counterpit}
We assume the negation of the Ultrafilter Lemma and deduce that there exists a sober space which is not strictly sober. Let $X$ be a set and let $\ca{P}(X)$ be its powerset, let $F\se \ca{P}(X)$ be a filter such that it is not contained in any ultrafilter. Now, consider the Stone dual $X^S$ of $\ca{P}(X)$, let $\varphi$ be its topologizing map. Note that this is an isomorphism of Boolean algebras, as $\ca{P}(X)$ is atomic. All elements of the form $\varphi(Y)$ for $Y\se X$ are clopens of the space $X^S$, hence compact. We then have that the filter of opens $\up \varphi[F]$ is Scott-open. By assumption, $\bca \varphi[F]=\emptyset$, and so if $\up \varphi[F]$ is a neighborhood filter of some compact open. This must be $\emptyset$, but this is not the case as the neighborhood filter of $\emptyset$ contains $\emptyset$, and $\emptyset\notin \varphi[F]$ by injectivity of $\varphi$. Thus, the space $X^S$ is sober, as it is a Stone space, but it is not strictly sober.

\end{example}

We want to rephrase the $\textbf{SPET}$ property in terms of Raney extensions.

\begin{lemma}\label{spet}
The Prime Ideal Theorem is equivalent to the statement that $\fso(L)\se \ca{I}(\fcp(L))$ for every frame $L$.
\end{lemma}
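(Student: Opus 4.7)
The plan is to derive both directions by relating the hypothesis to the Strong Prime Element Theorem ($\textbf{SPET}$) for frames, which by the remarks preceding the statement is equivalent to $\textbf{PIT}$, and to Erné's equivalence between $\textbf{PIT}$ and the statement that every pre-spatial frame is spatial.

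For the forward direction, I would assume $\textbf{PIT}$, hence $\textbf{SPET}$. Since every frame is a complete distributive lattice, $\textbf{SPET}$ applies directly: for any frame $L$, any Scott-open filter $F \se L$, and any $a \in L \sm F$, there is a prime element $p \in L$ with $a \le p$ and $p \notin F$. The associated completely prime filter $P := L \sm \da p$ (see Lemma \ref{l:prime}) then contains $F$, since $F$ is an upper set and $p \notin F$ forces $F \cap \da p = \emptyset$, and omits $a$, since $a \le p$. Thus for every $a \notin F$ there is a completely prime filter containing $F$ and omitting $a$, so $F = \bca \{P \in \fcp(L) : F \se P\}$ and hence $F \in \ca{I}(\fcp(L))$.

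For the converse, I would assume the inclusion $\fso(L) \se \ca{I}(\fcp(L))$ for every frame $L$, and use Erné's equivalence from \cite{erne07}: it suffices to show that every pre-spatial frame is spatial. Let $L$ be pre-spatial and $a \nleq b$ in $L$. Pre-spatiality provides a Scott-open filter $F$ with $a \in F$ and $b \notin F$; writing $F = \bca_i P_i$ with each $P_i$ completely prime, some $P_i$ must omit $b$, and necessarily $a \in F \se P_i$. This completely prime filter separates $a$ from $b$, establishing spatiality. No real obstacle is anticipated here: once primes are identified with completely prime filters via $p \mapsto L \sm \da p$, the hypothesis is essentially $\textbf{SPET}$ restricted to frames, and the argument reduces to invoking the two equivalences of $\textbf{PIT}$ already recalled in the excerpt.
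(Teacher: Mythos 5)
Your proof is correct. The forward direction coincides with the paper's: both apply $\textbf{SPET}$ to the frame itself and use the completely prime filter $L{\sm}{\da p}$ to separate $a$ from the Scott-open filter $F$, concluding $F\in \ca{I}(\fcp(L))$. The converse takes a genuinely different route. The paper unwinds the hypothesis directly back into $\textbf{SPET}$: given $a\notin F$ and $F=\bca_i P_i$ with each $P_i$ completely prime, it picks a $P_i=L{\sm}{\da p}$ omitting $a$, so $a\leq p$ and $p\notin F$, and then relies on the background fact that $\textbf{SPET}$ implies $\textbf{PIT}$. You instead route through Ern\'e's equivalence ($\textbf{PIT}$ iff every pre-spatial frame is spatial): pre-spatiality gives a Scott-open filter separating $a\nleq b$, the hypothesis writes it as an intersection of completely prime filters, and one of these separates $a$ from $b$, yielding spatiality. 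Both converses lean on an equivalence already recalled in the paper's background, so neither is more self-contained than the other; the paper's version has the small advantage of exhibiting the lemma as literally a reformulation of $\textbf{SPET}$ (which is how it is phrased in its proof), while yours makes the connection to the pre-spatial/spatial dichotomy explicit, which fits naturally with Proposition \ref{famouschar} and the later discussion of canonical extensions. One small point worth making explicit in your converse: the intersection representing $F$ must be over a nonempty family, but this is automatic since the empty intersection is all of $L$, which would contain $b$.
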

\begin{proof}
We need to show that every Scott-open filter being an intersection of completely prime filters is equivalent to $\textbf{SPET}$. Suppose that $\textbf{SPET}$ holds, and that $L$ is a frame and $F\se L$ a Scott-open filter. Suppose, towards contradiction, that there is some $a\notin F$ such that $a\in P$ whenever $P$ is a completely prime filter with $F\se P$. By $\textbf{SPET}$, there is a prime element $p\in L$ with $a\leq p$ and $p\notin F$. The completely prime filter $L{\sm}\da p$ contains $F$ but not $a$, and this is a contradiction. Conversely, suppose that every Scott-open filter is in $\ca{I}(\fcp(L))$. Let $F\se L$ be a Scott-open filter, and suppose that $a\notin F$. There has to be a prime $p\in L$ such that $F\se L{\sm}\da p$ and such that $a\notin L{\sm}\da p$.
\end{proof}

\begin{proposition}
    The following are equivalent.
    \begin{enumerate}
        \item The Prime Ideal Theorem holds.
        \item $\fso(L)\se \ca{I}(\fcp(L))$ for every frame $L$.
        \item $\ca{CP}$-compact Raney extensions are $\ca{SO}$-compact.
        \item Sober spaces are strictly sober.
        \item For a sober space $X$, the canonical extension of its frame of opens is $(\Om(X),\ca{U}(X))$.
    \end{enumerate}
\end{proposition}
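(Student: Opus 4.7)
The plan is to prove a cycle of implications, most of which are immediate translations via characterizations already established in the paper; the one substantive step is closing the cycle via $(4) \Rightarrow (1)$, which rests on an external result of Ern\'e.

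First, $(1) \Leftrightarrow (2)$ is exactly Lemma \ref{spet}, so there is nothing further to do there. For $(2) \Rightarrow (3)$ I would let $(L,C)$ be a $\ca{CP}$-compact Raney extension and use Proposition \ref{C*canonical} to conclude $\fcp(L) \subseteq C^*$. Since $C^*$ is a subcolocale of $\Fi(L)$ it is closed under arbitrary intersections, so $\ca{I}(\fcp(L)) \subseteq C^*$. Invoking $(2)$ gives $\fso(L) \subseteq \ca{I}(\fcp(L)) \subseteq C^*$, which by Proposition \ref{C*canonical} is $\ca{SO}$-compactness.

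For $(3) \Rightarrow (4)$ I would let $X$ be a sober $T_0$ space. By Proposition \ref{p:charsober}, the Raney extension $(\Om(X), \ca{U}(X))$ is $\ca{CP}$-compact; hence by $(3)$ it is also $\ca{SO}$-compact, and Lemma \ref{l:ssiffsocompact} then yields that $X$ is strictly sober. The equivalence $(4) \Leftrightarrow (5)$ is immediate from Proposition \ref{charpostsober}, which already gives the biconditional between strict sobriety of $X$ and $(\Om(X), \ca{U}(X))$ being the canonical extension of $\Om(X)$; quantifying over all sober $X$ translates one statement to the other.

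To close the cycle, I would use $(4) \Rightarrow (1)$. This is the one point where the argument reaches outside the internal machinery set up in the paper: it is the classical theorem of Ern\'e (\cite{erne07}) that the implication ``every sober space is strictly sober'' is equivalent to the Ultrafilter Principle, which is classically equivalent to the Prime Ideal Theorem. This is the main obstacle in the sense that it is not internal; however, the paper has already flagged this reliance in the background section and in Example \ref{counterpit}, which concretely shows how the failure of the Ultrafilter Lemma produces a sober space that is not strictly sober, so a pointer to \cite{erne07} together with this example is the natural way to close the loop.
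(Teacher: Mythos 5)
Your proposal is correct and follows essentially the same route as the paper: $(1)\Leftrightarrow(2)$ via Lemma \ref{spet}, $(2)\Rightarrow(3)$ via the compactness characterization in Proposition \ref{C*canonical} (you just spell out the closure of $C^*$ under intersections, which the paper leaves implicit), $(3)\Rightarrow(4)$ via Propositions \ref{p:charsober} and Lemma \ref{l:ssiffsocompact}, $(4)\Leftrightarrow(5)$ via Proposition \ref{charpostsober}, and $(4)\Rightarrow(1)$ via Example \ref{counterpit} (the paper closes the loop with that example alone, while you additionally cite Ern\'e, which amounts to the same argument).
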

\begin{proof}
    That (1) and (2) are equivalent follows from Lemma \ref{spet}. If (2) holds, (3) follows by the characterization in Proposition \ref{C*canonical}. Suppose, now, that (3) holds. For a sober space $X$, the Raney extension $(\Om(X),\ca{U}(X))$ is $\ca{CP}$-compact, by Proposition \ref{p:charsober}. Therefore, $(\Om(X),\ca{U}(X))$ is $\ca{SO}$-compact, by hypothesis, hence strictly sober, by the characterization in Lemma \ref{l:ssiffsocompact}. Items (4) and (5) are equivalent by Proposition \ref{charpostsober}. Finally, (4) implies (1) by Example \ref{counterpit}.
\end{proof}

Finally, we give another proof, based on Raney extensions, of the result in \cite{bezhanishvili22} that the canonical extension of a Boolean algebra $B$ is the Booleanization of $\ca{U}(\mf{Idl}(B))$.

\begin{proposition}\label{distrce}(\cite{jakl20}, Proposition 8.1)
   For a coherent frame $L$, its canonical extension is the canonical extension of the distributive lattice $K(L)$ of its compact elements.
\end{proposition}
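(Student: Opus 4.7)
The plan is to show that the composite $\iota : K \hookrightarrow L \hookrightarrow L^{\delta}$, with $K := K(L)$ and $L^{\delta} := \opp{\ca{I}(\fso(L))}$ the frame canonical extension of $L$, satisfies the two defining axioms of the Gehrke--Jónsson canonical extension of $K$ viewed as a distributive lattice; uniqueness then identifies the two. Two basic calculations, both read under the order reversal $L^{\delta} = \opp{\ca{I}(\fso(L))}$, lie at the heart of the argument. For a family $\{k_i\}_{i \in I} \se K$, the meet $\bwe_i^{L^{\delta}} \iota(k_i)$ corresponds to the join in $\Fi(L)$ of the principal filters $\up^L k_i$, which (since $K$ is meet-closed) is the filter of $L$ generated by $\{k_i\}$, namely $\bcu_{k \in G} \up^L k$ with $G$ the filter of $K$ generated by the $k_i$; dually, $\bve_i^{L^{\delta}} \iota(k_i)$ corresponds to $\bca_i \up^L k_i = \up^L \bve_i k_i$, which lies in $\ca{I}(\fso(L))$ because each $\up^L k_i$ is Scott-open by compactness of $k_i$. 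Coherence ensures that $\bve_i k_i$ can be any element of $L$ as $\{k_i\}$ varies, so joins of $K$-elements in $L^{\delta}$ exactly realize the images of $L$-elements.

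For filter-density, take $F \in \ca{I}(\fso(L))$, written $F = \bca_i F_i$ with $F_i \in \fso(L)$; in $L^{\delta}$ this becomes the join $\bve_i F_i$. By coherence and Scott-openness, each $F_i$ is generated as a filter of $L$ by $F_i \cap K$: for $x \in F_i$, expressing $x$ as the directed join of the compacts below it and applying Scott-openness produces some compact $g \leq x$ in $F_i$. Hence $F_i = \bcu_{g \in F_i \cap K} \up^L g$ is the meet in $L^{\delta}$ of $\iota[F_i \cap K]$ by the first calculation, so every element of $L^{\delta}$ is a join of meets of $K$-elements. For ideal-density, any $F \in \Fi(L)$ is the join in $\Fi(L)$ of its principal sub-filters, hence $F = \bwe_{x \in F}^{L^{\delta}} \up^L x$; each $\up^L x$ is in turn a join of $K$-elements in $L^{\delta}$ by the second calculation applied to $\{k \in K : k \leq x\}$, so every element of $L^{\delta}$ is also a meet of joins of $K$-elements.

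For compactness, take $F \in \Fi(K)$ and an ideal $I$ of $K$, and suppose $\bwe \iota[F] \leq \bve \iota[I]$ in $L^{\delta}$. The calculations unfold this, using that $F$ is meet-closed in $K$, to $\bcu_{k \in F} \up^L k \supseteq \up^L \bve^L I$ in $\Fi(L)$, equivalently $\bve^L I \geq k$ for some $k \in F$. Since $k$ is compact and $\bve^L I$ is a directed join in $L$ (the ideal $I$ being directed), some $i \in I$ satisfies $k \leq i$; upward-closure of $F$ inside $K$ then yields $i \in F \cap I$, as required.

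The main obstacle is keeping the order reversal $L^{\delta} = \opp{\ca{I}(\fso(L))}$ straight throughout and verifying \emph{both} density conditions required by the canonical extension of a distributive lattice. Ideal-density is the more subtle one: it relies on coherence to force every join of $K$-elements inside $L^{\delta}$ to collapse onto the embedded image of $L$, which is the precise bridge between the frame canonical extension of $L$ and the DL canonical extension of $K(L)$.
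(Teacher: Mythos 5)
The paper does not actually prove this proposition: it is imported from \cite{jakl20} (Proposition 8.1) as a quoted result, so there is no in-paper argument to compare yours against. Judged on its own, your blind verification is correct and complete in outline: you observe that for coherent $L$ the embedding sends every $a\in L$ to $\up^L a$, you compute meets in $L^{\delta}=\opp{\ca{I}(\fso(L))}$ as generated filters and joins as intersections, and you check filter-density, ideal-density, and filter--ideal compactness for the composite $K(L)\se L\to L^{\delta}$, concluding by uniqueness of the canonical extension of a distributive lattice. This is essentially the expected route (it exploits the correspondence between Scott-open filters of $L$ and filters of $K(L)$ that underlies the cited result), just organized as a direct verification of density and compactness rather than as an isomorphism of filter structures.

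Two small steps should be made explicit. First, when you identify $\bwe_i^{L^{\delta}}\iota(k_i)$ (and, in the compactness argument, $\bwe\iota[F]$ for a filter $F$ of $K(L)$) with the filter of $L$ generated by the compacts in question, you are computing a join in the closure system $\ca{I}(\fso(L))$, so you must check that this generated filter actually belongs to $\ca{I}(\fso(L))$; otherwise the meet in $L^{\delta}$ could a priori be a strictly larger filter and the step ``$\bve^L I\in\bcu_{k\in F}\up^L k$'' would not follow. The check is immediate but uses coherence: since $K(L)$ is closed under finite meets, the generated filter is a directed union of principal filters $\up^L k$ with $k$ compact, and a directed union of Scott-open filters is Scott-open. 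Second, you should record in one line that $K(L)\se L\to L^{\delta}$ is a lattice embedding (injective, preserving finite meets and joins), which again follows from $\iota(a)=\up^L a$ together with $\up^L(a\we b)$ being the filter generated by $\{a,b\}$ and $\up^L(a\ve b)=\up^L a\cap\up^L b$. With these two remarks added, your argument stands as a self-contained proof of the quoted proposition.
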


\begin{lemma}\label{l:complF}
For a frame $L$, if $k\in L$ is a complemented element, then in the frame $\Fi(L)$ the filters $\up k$ and $\up \neg k$ are mutual complements. 
\end{lemma}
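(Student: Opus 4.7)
The plan is to directly verify the two conditions for complementation in $\Fi(L)$: that $\up k \cap \up \neg k$ is the bottom element $\{1\}$ and that $\up k \vee \up \neg k$ is the top element $L$.

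For the meet, which in $\Fi(L)$ is computed as set-theoretical intersection, I would argue that any $x \in \up k \cap \up \neg k$ satisfies both $k \leq x$ and $\neg k \leq x$, so $1 = k \vee \neg k \leq x$, forcing $x = 1$. Hence $\up k \cap \up \neg k = \{1\}$, which is the bottom of $\Fi(L)$.

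For the join, recall that joins in $\Fi(L)$ are obtained by closing the union under the filter operations; concretely, $F \vee G = \{x \in L : f \we g \leq x \text{ for some } f \in F,\, g \in G\}$. Taking $f = k \in \up k$ and $g = \neg k \in \up \neg k$, we get $f \we g = k \we \neg k = 0 \leq x$ for every $x \in L$, so every element of $L$ lies in $\up k \vee \up \neg k$. Thus $\up k \vee \up \neg k = L$, the top of $\Fi(L)$.

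There is no real obstacle here: the statement is essentially a direct computation using that $k \we \neg k = 0$ and $k \ve \neg k = 1$. The only subtlety worth noting explicitly is that the top of the frame $\Fi(L)$ is the improper filter $L$ itself (not a maximal proper filter), so the equality $\up k \vee \up \neg k = L$ is exactly the required statement that the join is the top.
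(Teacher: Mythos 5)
Your proof is correct and is essentially the paper's argument: both reduce to the two one-line computations that $k\ve \neg k=1$ forces $\up k\cap \up\neg k=\{1\}$ and $k\we\neg k=0$ forces $\up k\ve\up\neg k=L$. The only cosmetic difference is that the paper packages this via the explicit formula for pseudocomplements in $\Fi(L)$, first noting $\neg \up k=\{a\in L:a\ve k=1\}=\up\neg k$, whereas you verify the two complementation conditions directly; your description of binary joins of filters and of the top element of $\Fi(L)$ is accurate, so there is no gap.
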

\begin{proof}
Since $\neg \up k=\{a\in L:a\ve k=1\}$, $\neg \up k=\up \neg k$. By definition of complement, also $\up k\cap \up \neg k=\{1\}$ and $\up k\ve \up \neg k=L$.
\end{proof}

\begin{lemma}\label{l:soinzd}
For a compact, zero-dimensional frame $L$, Scott-open filters are exactly the joins of filters of the form $\up k$, where $k\in L$ is a complemented element.
\end{lemma}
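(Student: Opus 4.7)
The plan is to show both inclusions separately. First, that any principal filter $\up k$ with $k \in L$ complemented is Scott-open: given a directed family $D \se L$ with $k \leq \bve D$, letting $\neg k$ be the complement,
\[
1 = k \ve \neg k \leq \bve\{d \ve \neg k : d \in D\},
\]
and compactness of $L$ forces some $d \in D$ with $d \ve \neg k = 1$, whence $k = k \we (d \ve \neg k) = k \we d \leq d$. Thus $\up k$ is Scott-open.

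Next, compute a join $\bve_{i \in I} \up k_i$ in $\Fi(L)$ with each $k_i$ complemented: since joins in $\Fi(L)$ are obtained by upward-closing finite meets from the union of generators, and since finite meets of complemented elements remain complemented (the complemented elements form a Boolean sublattice of $L$), this join equals $\bigcup_{k' \in K'} \up k'$ for some family $K'$ of complemented elements. Each $\up k'$ is Scott-open by the previous step, and a union of Scott-open filters that happens to be a filter is again Scott-open, so the join is Scott-open.

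For the converse, let $F$ be an arbitrary Scott-open filter, and let $K \se L$ denote the set of complemented elements. I claim $F = \bve\{\up k : k \in F \cap K\}$. The inclusion $\supseteq$ is immediate. For $\se$, fix $x \in F$. By zero-dimensionality, $x$ is the join of the complemented elements below it, so $x$ equals the directed join of the family $D_x$ of finite joins of such elements (which are themselves complemented). Scott-openness of $F$ produces $d \in D_x \cap F$; being a finite join of complemented elements, $d \in F \cap K$, and $x \geq d$ places $x$ in $\up d$, hence in $\bve\{\up k : k \in F \cap K\}$.

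The only subtle point is the description of joins in $\Fi(L)$ in the second step: one must verify that closure under finite meets of generators keeps us inside the Boolean sublattice of complemented elements, so that each representative is itself compact. Once this is in place, the proof reduces to the Scott-openness of principal filters at compact elements, which here coincide with the complemented elements thanks to compactness of $L$.
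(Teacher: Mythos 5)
Your proof is correct and follows essentially the same route as the paper's: zero-dimensionality plus Scott-openness to extract a complemented element below any member of $F$, and compactness of $L$ (hence compactness of complemented elements) to see that joins of filters $\up k$ are Scott-open. In fact you are somewhat more careful than the paper's own argument, since you spell out explicitly that the join in $\Fi(L)$ is generated by finite meets of the $k_i$, which remain complemented and hence compact.
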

\begin{proof}
    Let $L$ be a compact, zero-dimensional frame, and let $F$ be a Scott-open filter. Let $f\in F$, and let $\{k_i:i\in I\}$ be the family of complemented elements below it, Then, $F=\bve_i k_i$. Observe that this is directed, and so there must be $j\in I$ with $k_j\in F$. For the converse, if $K$ is any family of complemented elements, suppose that there is a directed family $D\se L$ such that $k\leq \bve D$ for some $k\in D$. As $L$ is compact, $k$ is compact, too, and so $k\leq d$ for some $d\in D$. Then, $\bve_{k\in K}\up k$ is Scott-open. 
\end{proof}

\begin{lemma}\label{cdregisscott}
     For a compact, zero-dimensional frame $L$, $\fr(L)=\ca{I}(\fso(L))$.
\end{lemma}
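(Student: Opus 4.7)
I would prove $\fr(L) = \ca{I}(\fso(L))$ by establishing both inclusions, with compactness of $L$ doing the essential work in each.

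For $\fr(L) \se \ca{I}(\fso(L))$, Proposition \ref{reg} reduces the task to showing that, for each $a \in L$, the filter $F_a := \{x \in L : x \vee a = 1\}$ is Scott-open. Given a directed $D \se L$ with $\bve D \vee a = 1$, the family $\{d \vee a : d \in D\}$ is directed with join $(\bve D) \vee a = 1$, so compactness of $L$ yields some $d \in D$ with $d \vee a = 1$, that is, $d \in F_a$. (Note this direction uses only compactness, not zero-dimensionality.)

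For the reverse inclusion, $\fr(L)$ is the Booleanization of $\Fi(L)$ and hence a sublocale closed under arbitrary intersections, so it suffices to show every Scott-open filter is regular. By Lemma \ref{l:soinzd}, such a filter has the form $G = \bve_{k \in K} \up k$ for some family $K$ of complemented elements of $L$. For complemented $k$ one has $\neg \up k = \{x : x \vee k = 1\} = \up \neg k$, since $x \vee k = 1$ is equivalent to $x \geq \neg k$ when $k$ is complemented. As $\neg$ converts joins to meets in $\Fi(L)$, $\neg G = \bca_K \up \neg k = \up(\bve_K \neg k)$, whence $\neg \neg G = \{x \in L : x \vee \bve_K \neg k = 1\}$. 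The crux is to show this equals $G$: the inclusion $G \se \neg \neg G$ is immediate, while if $x \vee \bve_K \neg k = 1$, applying compactness of $L$ to the directed family of finite sub-joins yields a finite $\{k_{i_1}, \ldots, k_{i_n}\} \se K$ with $x \vee \neg k_{i_1} \vee \cdots \vee \neg k_{i_n} = 1$; since $k_{i_1} \wedge \cdots \wedge k_{i_n}$ is again complemented with complement $\bve_j \neg k_{i_j}$, we obtain $x \geq k_{i_1} \wedge \cdots \wedge k_{i_n}$ and so $x \in G$.

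The main obstacle is the second inclusion, specifically identifying $\neg \neg G$ with $G$; compactness of $L$ is precisely what allows the infinite join $\bve_K \neg k$ to be replaced by a finite one where the Boolean identity $\neg(k_1 \wedge \cdots \wedge k_n) = \neg k_1 \vee \cdots \vee \neg k_n$ and the equivalence $y \vee \neg k = 1 \Leftrightarrow y \geq k$ (for complemented $k$) apply. Zero-dimensionality enters only through Lemma \ref{l:soinzd} to guarantee the existence of enough complemented elements.
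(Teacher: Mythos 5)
Your proof is correct and follows essentially the same route as the paper: compactness makes the relevant filters Scott-open for one inclusion, and Lemma \ref{l:soinzd} together with a double-negation computation over complemented elements gives the other. The only cosmetic differences are that you verify directly that each generator $\{x\in L: x\ve a=1\}$ of $\fr(L)$ is Scott-open, where the paper instead notes $\{1\}$ is Scott-open and uses that the Booleanization is the smallest sublocale containing it, and that you obtain directedness via finite sub-joins and De Morgan for complemented elements rather than assuming the family closed under finite meets.
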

\begin{proof}
    Because $\fr(L)$ is the Booleanization of $\Fi(L)$, and this is the smallest sublocale containing $\{1\}$, for the inclusion $\fr(L)\se \ca{I}(\fso(L))$ it suffices to show that $\{1\}$ is Scott-open, but this follows immediately from compactness of $L$. For the other direction, it suffices to show that, for a Scott-open filter $F$, $\neg \neg F\se F$. Let $F$ be a Scott-open filter. By Lemma \ref{l:soinzd}, this is $\bve_i \up k_i$ for some collection $k_i\in L$ of complemented elements, which we can assume to be closed under finite meets without loss of generality. The equalities $\neg \neg \bve_i \up k_i=\neg \bca_i \neg \up k_i=\neg \bca_i \up \neg k_i$ hold, where Lemma \ref{l:complF} is used for the last equality. Note also that $\neg \bca_i \up \neg k_i=\neg \up \bve_i \neg k_i$. Now, if $x\in \neg \neg F$ this means that $\bve_i \neg k_i\ve x=1$, and by compactness this means that $\neg k_j\ve x=1$ for some $j\in I$. Therefore $k_j\leq x$, and so $x\in F$.
\end{proof}

    \begin{proposition}\label{cd}
        Let $L$ be a compact, zero-dimensional frame. Its canonical extension is 
        \[
        (L,\opp{\fr(L)}).
        \]
        This is also the canonical extension of the Boolean algebra $K(L)$. 
    \end{proposition}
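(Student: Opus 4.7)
The plan is to assemble the result from two ingredients already in place: Lemma \ref{cdregisscott}, which identifies $\ca{I}(\fso(L))$ with $\fr(L)$ for compact zero-dimensional $L$, and Proposition \ref{distrce}, which transfers the canonical extension from a coherent frame to its distributive lattice of compact elements. Most of the actual work is done in the preceding lemma, so here the task is mainly one of packaging.

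First I would check that $L$ is coherent and that $K(L)$ is indeed Boolean, which justifies talking about the canonical extension of $K(L)$ in the first place. In a compact frame, every complemented element is compact (cover it and its complement, then extract a finite subcover), and in a compact zero-dimensional frame every compact element is a finite join of complemented elements, hence complemented. Therefore $K(L)$ coincides with the Boolean algebra of complemented elements of $L$, and $L$ is the ideal completion of $K(L)$, so $L$ is coherent. In particular, $L$ is spatial, and hence pre-spatial, so its canonical extension in the sense of \cite{jakl20} exists and, by the Raney-theoretic reformulation given earlier in this section, is realized by $(L,\opp{\ca{I}(\fso(L))})$.

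Second, I would apply Lemma \ref{cdregisscott} to rewrite $\ca{I}(\fso(L))=\fr(L)$, which immediately yields that the canonical extension of the frame $L$ is $(L,\opp{\fr(L)})$, proving the first half of the statement.

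Finally, for the second assertion I would invoke Proposition \ref{distrce}: since $L$ is coherent with $K(L)$ its lattice of compact elements, the canonical extension of $L$ as a frame coincides with the canonical extension of the Boolean algebra $K(L)$ as a distributive lattice. Combining with the previous step gives that $(L,\opp{\fr(L)})$ is also the canonical extension of $K(L)$. I do not expect a genuine obstacle here; the only mild point worth stating cleanly is the coherence of compact zero-dimensional frames, which underwrites the applicability of Proposition \ref{distrce}.
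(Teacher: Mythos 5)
Your proposal is correct and takes essentially the same route as the paper: the first assertion comes from Lemma \ref{cdregisscott}, the second from Proposition \ref{distrce}, with your explicit verification that $L$ is coherent and $K(L)$ Boolean merely spelling out what the paper leaves implicit. One small caveat: deducing pre-spatiality via ``$L$ is spatial'' silently invokes the Prime Ideal Theorem, which is unnecessary here, since pre-spatiality of a coherent frame is choice-free (if $a\nleq b$, pick compact $k\leq a$ with $k\nleq b$; then $\up k$ is a Scott-open filter containing $a$ and omitting $b$).
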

    \begin{proof}
        The first part of the claim follows from Lemma \ref{cdregisscott}. The second part of the claim follows from Proposition \ref{distrce}, and the fact that $\fr(L)$ is the Booleanization of $\Fi(L)$. This is also the Booleanization of $\ca{U}(L)$, because, for each $x\in L$, $\neg \up x=\{y\in L:x\ve y=1\}$, and so the regular elements of $\ca{U}(L)$ are precisely the intersections of upsets of this form, that this, the regular filters of $L$.
    \end{proof}

\subsection{Exactness and \texorpdfstring{T\textsubscript{D}}{TD} duality}\label{specialTD}

Recall that the category $\bd{Frm}_{\ca{E}}$ is embedded in $\bd{Raney}$ as the full subcategory of $T_D$ objects. In this section, we study $\bd{Frm}_{\ca{E}}$ as a pointfree category of $T_D$ spaces.
\begin{lemma}\label{l:EimpliesD}
    Exact morphisms are D-morphisms.
\end{lemma}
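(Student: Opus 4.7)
The plan is to chain together the two characterization results already proven in the paper: covered primes correspond precisely to exact completely prime filters (Proposition~\ref{p:cpexact}), and exact frame morphisms are precisely those whose preimage maps send exact filters to exact filters (Proposition~\ref{whenelifts}). The proof will be a short diagram chase tying these together via the standard translation between primes and completely prime filters.

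Concretely, I would first fix an exact frame map $f \colon L \to M$ and a covered prime $p \in M$. The task is to show that $f_*(p) \in L$ is a covered prime. By Proposition~\ref{p:cpexact}, the completely prime filter $P := M \sm {\da} p$ is exact, so by Proposition~\ref{whenelifts} the preimage $f^{-1}(P)$ is an exact filter of $L$. A routine adjunction computation, namely $x \in f^{-1}(M \sm {\da} p)$ iff $f(x) \nleq p$ iff $x \nleq f_*(p)$, identifies $f^{-1}(P)$ with $L \sm {\da} f_*(p)$, which is itself a completely prime filter. Applying the other direction of Proposition~\ref{p:cpexact} to this exact completely prime filter yields that $f_*(p)$ is covered.

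There is essentially no obstacle here: the work has already been done in Propositions~\ref{p:cpexact} and~\ref{whenelifts}, and the only technical ingredient beyond them is the elementary identity $f^{-1}(M \sm {\da} p) = L \sm {\da} f_*(p)$, which follows directly from the adjunction $f \dashv f_*$. Thus the proof is just an application of these two characterizations, combined via the usual bijection between primes and completely prime filters.
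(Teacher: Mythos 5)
Your proof is correct and is essentially the paper's own argument: translate the covered prime $p$ into the exact completely prime filter $M\sm{\da}p$ via Proposition~\ref{p:cpexact}, use Proposition~\ref{whenelifts} (exactness of $f$) to get exactness of the preimage, identify that preimage with $L\sm{\da}f_*(p)$ by adjointness, and apply Proposition~\ref{p:cpexact} again. No differences worth noting beyond your (correct) fix of the paper's typo $L\sm{\da}p$ for $M\sm{\da}p$.
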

\begin{proof}
    Suppose that $f:L\to M$ is an exact frame map, and let $p\in M$ be a covered prime. By Proposition \ref{p:cpexact}, $L{\sm}\da p$ is exact. By exactness of $f$, so is $f^{-1}(L{\sm}\da p)$. By adjointness, $f^{-1}(L{\sm}\da p)=L{\sm}\da f_*(p)$. By Proposition \ref{p:cpexact} again, $f_*(p)$ is covered.
\end{proof}
\begin{lemma}\label{l:tdimpliesexact}
Any $T_D$ frame map $f:L\to M$ such that $M$ is $T_D$-spatial is exact. 
\end{lemma}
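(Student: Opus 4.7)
The plan is to invoke Proposition~\ref{whenelifts} to reduce the claim to a filter-theoretic statement: $f$ is exact if and only if for every exact filter $G$ of $M$, the preimage $f^{-1}(G)$ is an exact filter of $L$. So the task becomes proving this preimage property.

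By Lemma~\ref{charexact}, every exact filter of $M$ is an intersection of filters of the form $\up a \to \up b = \{m \in M : b \leq a \ve m\}$ for $a, b \in M$. Since preimage under $f$ commutes with arbitrary intersections, and since exact filters form a sublocale of $\Fi(L)$ by Theorem~\ref{posetofsubloc} (hence are closed under arbitrary intersection), it suffices to prove that $f^{-1}(\up a \to \up b)$ is exact for every $a, b \in M$.

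To analyze this preimage, I will use the $T_D$-spatiality of $M$ to translate the inequality $b \leq a \ve f(x)$: since every element of $M$ is the meet of the covered primes above it, $b \leq a \ve f(x)$ holds if and only if for every covered prime $p \in \pt_D(M)$ with $a \ve f(x) \leq p$, one has $b \leq p$. Since $p$ is prime, $a \ve f(x) \leq p$ amounts to $a \leq p$ and $f(x) \leq p$, and by adjunction $f(x) \leq p$ is $x \leq f_*(p)$. Taking the contrapositive of the universal statement therefore yields
\[
f^{-1}(\up a \to \up b) \;=\; \bca\bigl\{\,L{\sm}\da f_*(p) \;:\; p \in \pt_D(M),\ a \leq p,\ b \nleq p\,\bigr\}.
\]

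The proof concludes by applying the D-morphism hypothesis: for each covered prime $p$ of $M$ appearing in the indexing set, $f_*(p)$ is a covered prime of $L$, hence by Proposition~\ref{p:cpexact} the completely prime filter $L{\sm}\da f_*(p)$ is exact. Thus $f^{-1}(\up a \to \up b)$ is an intersection of exact filters, hence exact. The main point requiring care is the $T_D$-spatial rewriting of $b \leq a \ve f(x)$ into a condition quantified over covered primes of $M$; once that is in place, the conclusion follows by routine manipulation combining Proposition~\ref{p:cpexact} with the D-morphism property of $f$.
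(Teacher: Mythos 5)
Your proposal is correct, but it takes a genuinely different route from the paper. The paper argues directly from the definition of exact morphism: given an exact meet $\bwe_i x_i$ in $L$, it tests $f(\bwe_i x_i)$ and $\bwe_i f(x_i)\ve y$ against covered primes $p$ of $M$, uses $a\ve f(x)\leq p \Leftrightarrow a\leq p$ and $x\leq f_*(p)$ together with coveredness of $f_*(p)$, and concludes by $T_D$-spatiality that the meet is preserved and that $\bwe_i f(x_i)$ is exact. You instead reduce via Proposition \ref{whenelifts} to showing that preimages of exact filters are exact, cut down by Lemma \ref{charexact} to the generators $\up a\to\up b$, and then use $T_D$-spatiality once to rewrite
\[
f^{-1}(\up a\to\up b)\;=\;\bca\bigl\{L{\sm}{\da} f_*(p)\;:\;p\in \pt_D(M),\ a\leq p,\ b\nleq p\bigr\},
\]
which is an intersection of filters that are exact by Proposition \ref{p:cpexact} and the D-morphism hypothesis (note the degenerate case of an empty index set yields the improper filter $L=\up 0$, which is exact, so nothing is lost; also, splitting $a\ve f(x)\leq p$ into $a\leq p$ and $f(x)\leq p$ needs no primality). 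Your route is more structural: it localizes the use of $T_D$-spatiality to a single rewriting, makes visible exactly where coveredness enters (through Proposition \ref{p:cpexact}), and yields an explicit description of the preimage filters; it does, however, lean on the converse direction of Proposition \ref{whenelifts} and on Lemma \ref{charexact}, whereas the paper's argument is more elementary and self-contained, working only with meets and covered primes. Since Proposition \ref{whenelifts}, Lemma \ref{charexact} and Proposition \ref{p:cpexact} all precede this lemma, there is no circularity, and your argument stands.
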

\begin{proof}
   Suppose that $L$ and $M$ are frames and $M$ is $T_D$-spatial, and that there is a frame map $f:L\to M$ such that $f_*(p)$ is a covered prime whenever $p\in M$ is covered. Now, suppose that $\bwe_i x_i\in L$ is an exact meet. We show $\bwe_i f(x_i)\leq f(\bwe_i x_i)$. Suppose that $p\in M$ is a covered prime with $f(\bwe_i x_i)\leq p$. Then $\bwe_i x_i\leq f_*(p)$, that is, $\bwe_i x_i\ve f_*(p)=f_*(p)$. By exactness, $\bwe_i (x_i\ve f_*(p))=f_*(p)$, and by coveredness there is $i\in I$ with $x_i\ve f_*(p)=f_*(p)$. Then, $f(x_i)\leq p$, which implies $\bwe_i f(x_i)\leq p$, and by $T_D$-spatiality this implies $\bwe_i f(x_i)\leq f(\bwe_i x_i)$ as desired. Let us now show that $\bwe_i f(x_i)$ is exact. Let $y\in M$. Suppose that $\bwe_i f(x_i)\ve y\leq p$ for $p\in M$ a covered prime. As shown above, this means $f(\bwe_i x_i)\ve y\leq p$. Similarly as above, we obtain $f(x_i)\ve y\leq p$ for some $i\in I$, and so $\bwe_i (f(x_i)\ve y)\leq p$. By $T_D$-spatiality, $\bwe_i (f(x_i)\ve y)\leq \bwe_i f(x_i)\ve y$, as desired.
   \end{proof}

\begin{theorem}
   There is an adjunction $\Om:\bd{Top}_D\lra \bd{Frm}_{\ca{E}}^{op}:\pt_D$.
\end{theorem}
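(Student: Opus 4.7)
The plan is to exhibit the desired adjunction as a restriction of the known $T_D$ adjunction $\Om:\bd{Top}\lra \bd{Frm}_D^{op}:\pt_D$ (from \cite{banaschewskitd}) along the inclusions $\bd{Top}_D\se \bd{Top}$ and $\bd{Frm}_{\ca{E}}\se \bd{Frm}_D$ (the latter provided by Lemma \ref{l:EimpliesD}). To do this I would verify that each functor restricts/corestricts appropriately and that the hom-set bijection of the source adjunction transfers.

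First, I would show that $\Om$ restricts to a functor $\bd{Top}_D\to \bd{Frm}_{\ca{E}}^{op}$. For a continuous map $f:X\to Y$ between $T_D$ spaces, $\Om(f)=f^{-1}:\Om(Y)\to \Om(X)$ is a D-morphism by the classical $T_D$ duality. Since $X$ is $T_D$, the frame $\Om(X)$ is $T_D$-spatial (every open is an intersection of primes of the form $X\sm \overline{\{x\}}$, which are covered by Proposition \ref{allpxarecov}). Hence Lemma \ref{l:tdimpliesexact} applies and $\Om(f)$ is exact. In the other direction, by Lemma \ref{l:EimpliesD} every exact morphism is a D-morphism, so $\pt_D$ is already defined on $\bd{Frm}_{\ca{E}}$, and its image always lies in $\bd{Top}_D$.

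For the hom-set bijection, fix a $T_D$ space $X$ and a frame $L\in \bd{Frm}_{\ca{E}}$. The $T_D$ adjunction gives a natural bijection
\[
\bd{Frm}_D(L,\Om(X))\cong \bd{Top}(X,\pt_D(L)).
\]
Since $\pt_D(L)$ is always a $T_D$ space, the right-hand side coincides with $\bd{Top}_D(X,\pt_D(L))$. Since $X$ is $T_D$, the codomain $\Om(X)$ is $T_D$-spatial, so by Lemma \ref{l:tdimpliesexact} every D-morphism $L\to \Om(X)$ is exact; the converse inclusion is Lemma \ref{l:EimpliesD}. Hence the left-hand side equals $\bd{Frm}_{\ca{E}}(L,\Om(X))$, and the bijection transfers. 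Naturality is inherited from the source adjunction.

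No serious obstacle is expected: the argument reduces to checking that $T_D$-spatiality of the codomain forces D-morphisms to be exact, which is precisely the content of Lemma \ref{l:tdimpliesexact}. The only delicate point is to keep track of which side needs $T_D$-spatiality in order to apply that lemma, but this is guaranteed exactly by restricting the space category to $\bd{Top}_D$.
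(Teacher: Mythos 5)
Your proposal is correct and follows essentially the same route as the paper: both obtain the adjunction by restricting the classical $T_D$ adjunction $\Om:\bd{Top}\lra \bd{Frm}_D^{op}:\pt_D$, using Lemma \ref{l:EimpliesD} (exact $\Rightarrow$ D-morphism) and Lemma \ref{l:tdimpliesexact} (D-morphism into a $T_D$-spatial frame $\Rightarrow$ exact), the latter applied to codomains $\Om(X)$ with $X$ a $T_D$ space. The paper phrases the verification in terms of the functor $\Om$ landing in exact maps and the spatialization (counit) being exact, while you phrase it as a transfer of hom-set bijections; this is only a presentational difference, and your bookkeeping of which side needs $T_D$-spatiality is accurate.
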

\begin{proof}
It suffices to show that the functor $\Om$ maps continuous maps between $T_D$ spaces to exact frame maps, and that the $T_D$ spatialization map of a frame is exact. By Lemma \ref{l:tdimpliesexact}, it is known that the spatialization map is a $T_D$ morphism, so by Lemma \ref{l:tdimpliesexact} it is also exact. By the same Lemma, a map $f:X\to Y$ between $T_D$ spaces determines an exact frame map $\Om(f):\Om(Y)\to \Om(X)$.
\end{proof}

This means that $T_D$ duality remains intact if we replace $\bd{Frm}_D$ with the subcategory $\bd{Frm}_{\ca{E}}$. The advantage of working in this category is that the definition of the morphisms does not mention points, and that all morphisms $f:L\to M$ lift to morphisms $\Sc(f):\Sc(L)\to \Sc(M)$, by the isomorphism $\Sc(L)\cong \fe(L)$. The situation is illustrated below, where the functor $\mf{S}_{\cl}$ is the one mapping a frame $L$ to the Raney extension $(L,\opp{\scl})$.
\begin{center}
    \begin{tikzcd}[row sep=large,column sep=large]
        \bd{Frm}_{\ca{E}}
        \ar[r,"\pt_D"]
        \ar[d,"\Sc"]
        & \bd{Top}_D
        \\
       \bd{Raney_D},
    \ar[ur,"\rpt"]
    \end{tikzcd}
\end{center}

We now look at the notion of sublocale in $\bd{Frm}_{\ca{E}}$.

\begin{lemma}\label{l:exstability}
For a frame $L$, if a meet $\bwe_i x_i\in L$ is exact, then so is $\bwe_i (x_i\ve y)$ for all $i\in I$.
\end{lemma}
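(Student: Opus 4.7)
The statement (interpreting the final ``for all $i\in I$'' as ``for all $y\in L$'', since the meet ranges over $i$) asserts that the class of exact meets is stable under joining every element with a fixed $y$. The plan is a direct application of the definition of exactness, used twice.

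Concretely, fix $y\in L$ and suppose that $\bwe_i x_i$ is exact. To prove that $\bwe_i(x_i\ve y)$ is exact, I need to check that for every $a\in L$,
\[
\bigl(\bwe_i(x_i\ve y)\bigr)\ve a \;=\; \bwe_i(x_i\ve y\ve a).
\]
First I would rewrite the left-hand side: by exactness of $\bwe_i x_i$ applied to the element $y$, one has $\bwe_i(x_i\ve y)=(\bwe_i x_i)\ve y$, so the left-hand side equals $(\bwe_i x_i)\ve y\ve a$. Next I would apply exactness of $\bwe_i x_i$ a second time, this time to the element $y\ve a$, obtaining $(\bwe_i x_i)\ve(y\ve a)=\bwe_i(x_i\ve y\ve a)$. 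Chaining these equalities with associativity of $\ve$ yields the desired identity.

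There is no real obstacle here: the argument is just two invocations of the definition of an exact meet, together with associativity of join. No additional machinery (sublocales, filters, or Raney extensions) is needed.
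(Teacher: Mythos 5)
Your proof is correct and follows essentially the same route as the paper: both are direct applications of the definition of exactness (you apply it twice, with $y$ and with $y\ve a$, whereas the paper applies it once with $y\ve z$ and combines it with the always-valid inequality $(\bwe_i x_i)\ve y\leq \bwe_i(x_i\ve y)$ to get the nontrivial bound, the reverse bound being automatic in any frame). You are also right that the ``for all $i\in I$'' in the statement should read ``for all $y\in L$''.
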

\begin{proof}
    Observe that, if $\bwe_i x_i$ is exact, for all $z\in L$, $\bwe_i (x_i \ve y \ve z)\leq (\bwe_i x_i)\ve y\ve z\leq (\bwe_i (x_i\ve y))\ve z$.
\end{proof}
\begin{proposition}
A surjective frame map $f:L\to M$ such that it preserves exact meets is exact.
\end{proposition}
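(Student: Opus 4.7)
The goal is to show that if $f:L\to M$ is a surjective frame map that preserves exact meets (i.e.\ satisfies $\bwe_i f(x_i)=f(\bwe_i x_i)$ whenever $\bwe_i x_i$ is exact), then $\bwe_i f(x_i)$ is itself exact in $M$ whenever $\bwe_i x_i$ is exact in $L$. This is the only missing half of the definition of exactness, so the plan is to verify exactness of $\bwe_i f(x_i)$ directly from the definition, using surjectivity to reduce test elements in $M$ to images of elements of $L$.

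The plan is as follows. Fix a family $\{x_i:i\in I\}\se L$ whose meet $\bwe_i x_i$ is exact, and pick an arbitrary $z\in M$; we want to show $(\bwe_i f(x_i))\ve z=\bwe_i(f(x_i)\ve z)$. Using surjectivity of $f$, write $z=f(w)$ for some $w\in L$. By Lemma \ref{l:exstability}, the meet $\bwe_i(x_i\ve w)$ is again exact in $L$, and by exactness of $\bwe_i x_i$ we also have the equality $\bwe_i(x_i\ve w)=(\bwe_i x_i)\ve w$ in $L$.

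Now I apply $f$. Since $f$ preserves exact meets by hypothesis and is a frame map (hence preserves finite joins), I obtain
\[
\bwe_i(f(x_i)\ve z)=\bwe_i f(x_i\ve w)=f\bigl(\bwe_i(x_i\ve w)\bigr)=f\bigl((\bwe_i x_i)\ve w\bigr)=f(\bwe_i x_i)\ve f(w)=(\bwe_i f(x_i))\ve z,
\]
where in the last step I have used preservation of exact meets one more time to replace $f(\bwe_i x_i)$ by $\bwe_i f(x_i)$. This is exactly the identity needed to conclude that $\bwe_i f(x_i)$ is exact in $M$, completing the proof.

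There is no substantial obstacle here: the argument is essentially a bookkeeping computation. The only slightly delicate point is recognizing that surjectivity is precisely what lets us restrict attention to test elements of the form $f(w)$ when checking exactness in $M$, and that Lemma \ref{l:exstability} is exactly what allows the hypothesis ``preserves exact meets'' to be applied to the shifted family $\{x_i\ve w\}$ rather than just to $\{x_i\}$ itself.
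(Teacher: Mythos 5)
Your proof is correct and follows essentially the same route as the paper's: use surjectivity to write an arbitrary test element of $M$ as $f(w)$, invoke Lemma \ref{l:exstability} so that preservation of exact meets applies to the shifted family $\{x_i\ve w\}$, and push the resulting chain of equalities through $f$. The only difference is presentational — you spell out explicitly the intermediate step $\bwe_i(x_i\ve w)=(\bwe_i x_i)\ve w$ that the paper compresses.
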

\begin{proof}
Suppose that $\bwe_i x_i$ is exact and that $f:L\to M$ is a frame surjection which preserves exact meets. For $u\in L$, $\bwe_i (f(x_i)\ve f(u))=\bwe_i f(x_i\ve u)=f(\bwe_i x_i \ve u)=\bwe_i f(x_i)\ve f(u)$. We have used Lemma \ref{l:exstability} for the first equality. Since all elements of $M$ are $f(v)$ for some $v\in L$, the meet $\bwe_i f(x_i)$ is exact.
\end{proof}

We say that a sublocale is \emph{exact} if the corresponding surjection is exact. Let us call $\mf{S}_{\ca{E}}(L)$ the ordered collection of exact sublocales of a frame.

\begin{proposition}\label{p:charexactS}
    A sublocale $S$ is exact if and only if for every exact meet $\bwe_i x_i$ and, for all $x\in L$, $\cl(x_i)\cap S\se \cl(x)$ for all $i\in I$ implies that $\cl(\bwe_i x_i)\cap S\se \cl(x)$.
\end{proposition}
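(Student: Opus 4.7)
The plan is to translate the stated condition into one about the quotient surjection $\nu_S : L \to S$ associated with the sublocale, defined by $\nu_S(x) = \bwe\{s \in S : x \leq s\}$. The key observation I would establish first is the identity $\cl(x) \cap S = \{s \in S : x \leq s\}$, from which one immediately gets that $\cl(x) \cap S \se \cl(y)$ holds precisely when $y$ is a lower bound of $\{s \in S : x \leq s\}$, i.e., when $y \leq \nu_S(x)$. This is a purely formal unpacking using $\cl(z) = \up z$.

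Next I would apply this translation to the condition in the statement. It becomes: for every exact meet $\bwe_i x_i$ in $L$ and every $y \in L$, the implication $(\forall i)(y \leq \nu_S(x_i)) \Rightarrow y \leq \nu_S(\bwe_i x_i)$ holds. Instantiating $y$ with $\bwe_i \nu_S(x_i)$, which lies in $S \se L$ since sublocales are closed under meets, this is equivalent to $\bwe_i \nu_S(x_i) \leq \nu_S(\bwe_i x_i)$. The reverse inequality is automatic from monotonicity of $\nu_S$, so the whole condition is equivalent to $\nu_S$ preserving every exact meet of $L$.

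From here I would finish both directions by quoting results already in the excerpt. If $S$ is exact, then $\nu_S$ is an exact frame map by definition, hence preserves every exact meet, so the condition holds. Conversely, if the condition holds, then $\nu_S$ preserves exact meets, and since it is a surjection, Proposition~\ref{p:charexactS} (the immediately preceding proposition, stating that a surjective frame map preserving exact meets is exact) shows that $\nu_S$ is exact, so $S$ is an exact sublocale.

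The main obstacle is minor: it is the identification of $y$ with $\bwe_i \nu_S(x_i)$ to derive the inequality $\bwe_i \nu_S(x_i) \leq \nu_S(\bwe_i x_i)$ from an implication quantified over $y \in L$. One must verify that this candidate $y$ genuinely lies in $L$, which it does because sublocales are closed under meets; after this small check, both directions reduce to invoking named results.
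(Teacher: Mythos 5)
Your proof is correct and follows essentially the same route as the paper: both reduce exactness of $S$ to the associated surjection $\sigma_S(x)=\bwe\{s\in S:x\leq s\}$ preserving exact meets (exactness of the image meet being automatic for surjections by the immediately preceding proposition), and then observe that the stated condition on $\cl(x_i)\cap S$ is just a reformulation of $\bwe_i\sigma_S(x_i)\leq\sigma_S(\bwe_i x_i)$ via the identity $\cl(x)\cap S=\{s\in S:x\leq s\}$. The only slip is the citation at the end: the result you need is the unlabelled proposition preceding the statement (surjective frame maps preserving exact meets are exact), not Proposition~\ref{p:charexactS} itself.
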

\begin{proof}
The surjection corresponding to a sublocale $S\se L$ is the map $\sigma_S:x\mapsto \bwe\{s\in S:x\leq s\}$. Meets in $\sigma_S[L]=S$ are computed as $\bwe_i^S \sigma_S(x_i)=\bwe \{s\in S:x_i\leq s\mb{ for some $i\in I$}\}$. Exactness of $S$ amounts to having, for every exact meet $\bwe_i x_i$, that $\bwe \{s\in S:x_i\leq s\mb{ for some }i\in I\}\leq \bwe\{s\in S:\bwe_i x_i\leq s\}$. Observe that we can re-write this as $\bwe (\bcu_i S\cap \up x_i)\leq \bwe (S\cap \up\bwe_i x_i)$. By definition of the closure of a sublocale, and by definition of closed sublocale, this means that the condition is also equivalent to $\mi{cl}(S\cap \cl(\bwe_i x_i))\se \mi{cl}(\bve_i (S\cap \cl(x_i)))$, and this is equivalent to the given condition.
\end{proof}
\begin{remark}
    We note that the result above can be generalized: a sublocale $S$ is such that $\sigma_S$ preserves a certain class of meets if and only if for all meets $\bwe_i x_i$ in that class, for all $x\in L$, that $\cl(x_i)\cap S\se \cl(x)$ for all $i\in I$ implies that $\cl(\bwe_i x_i)\cap S\se \cl(x)$.
\end{remark}

\begin{proposition}\label{p:ex-suff}
The collection $\Se(L)$ is closed under all joins, and it contains

\begin{itemize}
    \item All closed sublocales;
    \item All open sublocales;
    \item The two-element sublocales $\bl(p)$ for covered $p$.
\end{itemize}    
\end{proposition}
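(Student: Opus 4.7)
The plan is to verify each item via Proposition~\ref{p:charexactS}, which I will use as the working criterion: a sublocale $S$ is exact iff whenever $\bwe_i x_i$ is exact and $\cl(x_i)\cap S\se \cl(x)$ for all $i$, also $\cl(\bwe_i x_i)\cap S\se \cl(x)$. Closure under joins I would dispatch first and almost formally: if $S=\bve_j S_j$ with each $S_j$ exact, the inclusion $S_j\se S$ gives $\cl(x_i)\cap S_j\se \cl(x_i)\cap S\se \cl(x)$ for each $j$, so exactness of each $S_j$ yields $\cl(\bwe_i x_i)\cap S_j\se \cl(x)$. Linearity of closed sublocales (Lemma~\ref{l:linear}) then distributes the meet past the join, giving $\cl(\bwe_i x_i)\cap S=\bve_j(\cl(\bwe_i x_i)\cap S_j)\se \cl(x)$.

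Next I would dispatch the closed and open cases. For $\cl(a)$, Proposition~\ref{manyfacts}(4) rewrites $\cl(x_i)\cap \cl(a)=\cl(x_i\ve a)$, so the hypothesis becomes $x\leq x_i\ve a$ for each $i$; meeting over $i$ and invoking exactness of $\bwe_i x_i$ gives $x\leq \bwe_i(x_i\ve a)=(\bwe_i x_i)\ve a$, which translates back to the required inclusion. For $\op(a)$, since $\op(a)$ and $\cl(a)$ are complements in $\sll$ by Proposition~\ref{manyfacts}(5), I would use the standard coframe identity that $c\we u\leq v$ is equivalent to $u\leq v\ve c'$ whenever $c$ has complement $c'$. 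This rewrites $\cl(x_i)\cap \op(a)\se \cl(x)$ as $\cl(x_i)\se \cl(x)\ve \cl(a)=\cl(x\we a)$, i.e., $x\we a\leq x_i$; meeting over $i$ gives $x\we a\leq \bwe_i x_i$, which by the same rewriting yields the conclusion. Notably, exactness of the meet is not even needed in the open case.

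The main obstacle will be the two-element sublocale $\bl(p)=\{1,p\}$ for $p$ covered. By Lemma~\ref{l:prime}, $\cl(x_i)\cap \bl(p)$ is either $\{1\}$ (when $x_i\nleq p$, trivially contained in $\cl(x)$) or $\bl(p)$ (when $x_i\leq p$, which forces $x\leq p$). The subtle situation is when \emph{all} $x_i\nleq p$: the hypothesis then imposes no constraint on $x$, yet I still need $\cl(\bwe_i x_i)\cap \bl(p)\se \cl(x)$, and so I must rule out $\bwe_i x_i\leq p$. This is exactly where coveredness meets exactness of the meet: by Proposition~\ref{p:cpexact} the completely prime filter $L\sm\da p$ is exact, so if each $x_i\in L\sm \da p$ and $\bwe_i x_i$ is an exact meet, then $\bwe_i x_i\in L\sm \da p$, i.e., $\bwe_i x_i\nleq p$, collapsing the intersection to $\{1\}$. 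In the remaining case where at least one $x_j\leq p$, the previous analysis already gives $x\leq p$, and $\bwe_i x_i\leq x_j\leq p$ then yields $\cl(\bwe_i x_i)\cap \bl(p)=\bl(p)\se \cl(x)$. Recognising this interplay through Proposition~\ref{p:cpexact} is the heart of the argument, and the only point where both coveredness of $p$ and exactness of the meet are genuinely used.
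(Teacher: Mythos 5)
Your proof is correct and follows essentially the same route as the paper's: verify each item against the criterion of Proposition~\ref{p:charexactS}, using linearity of closed sublocales for joins, the identities of Proposition~\ref{manyfacts} for $\cl(a)$ and $\op(a)$, and the dichotomy at a covered prime for $\bl(p)$. The only (harmless) deviations are that you settle the case where all $x_i\nleq p$ by citing Proposition~\ref{p:cpexact} (exactness of the completely prime filter $L\sm\da p$), where the paper inlines the same coveredness-plus-exactness computation via $\bwe_i(x_i\ve p)=(\bwe_i x_i)\ve p\neq p$, and your observation that exactness of the meet is not actually needed in the open case, which is accurate.
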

\begin{proof}
  By Proposition \ref{p:charexactS}, if $S_j$ is a collection of exact sublocales, and $\bwe_i x_i$ an exact meet, then $\cl(x_i)\cap \bve_j S_j\se \cl(x)$ implies that $\cl(x_i)\cap S_j\se \cl(x)$ for all $j$'s, by Lemma \ref{l:linear}. Therefore, for all $j$'s, $\cl(\bwe_i x_i)\cap S_j\se \cl(x)$, and the result follows again by linearity. To see that it contains all closed sublocales, consider that if $\cl(x_i)\cap \cl(y)\se \cl(x)$ then $\cl(x_i\ve y)\se \cl(x)$, that is $x \leq x_i\ve y$, and so $x\leq \bwe_i x_i\ve y$, by exactness, and this is equivalent to $\cl(\bwe_i x_i)\cap \cl(y)\se \cl(x)$. Finally, for open sublocales, we notice that $\cl(x_i)\cap \op(y)\se \cl(x)$ means $\cl(x_i)\se \cl(y)\ve \cl(x)$, and this, by exactness, means $\cl(\bwe_i x_i)\se \cl(y)\ve \cl(x)$, that is $\cl(\bwe_i x_i)\cap \op(y)\se \cl(x)$, as desired. For the third part, consider a covered prime $p\in L$ and suppose that, for an exact meet $\bwe_i x_i\in L$, $\cl(x_i)\cap \bl(p)\se \cl(x)$. This means that $\bl(p)\se \cl(x)\ve \op(x_i)$ for all $i$'s. Using the properties of prime elements in Lemma \ref{l:prime}, we obtain that either $x\leq p$ or $x_i\nleq p$ for all $i\in I$. In the first case, $\bl(p)\se \cl(x)$, and the desired result follows. In the second case, $\bwe_i (x_i\ve p)=\bwe_i x_i\ve p\neq p$, by exactness and coveredness, and so $\bwe_i x_i\nleq p$, from which the desired claim follows.
\end{proof}

$D$-sublocales, introduced in \cite{arrieta21}, are those sublocales such that the corresponding surjection is in $\bd{Frm}_D$. These may be seen as a (D-)spatial versions of exact sublocales. 
\begin{proposition}
    A D-spatial sublocale is exact if and only if it is a D-sublocale.
\end{proposition}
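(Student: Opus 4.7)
The statement is essentially a direct corollary of Lemmas \ref{l:EimpliesD} and \ref{l:tdimpliesexact}, with the role of the first lemma giving one direction and the second the other. My plan is to unfold the definitions and simply translate these two lemmas to the sublocale setting, working with the canonical surjection $\sigma_S:L\to S$ associated to a sublocale $S\se L$.

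For the forward direction, suppose $S$ is an exact D-spatial sublocale. By the definition given just before the statement, this means $\sigma_S:L\to S$ is an exact frame morphism. By Lemma \ref{l:EimpliesD}, exact morphisms are D-morphisms, so $\sigma_S$ is in $\bd{Frm}_D$. By the definition of D-sublocale recalled from \cite{arrieta21} (and Theorem \ref{t:d-subloc}), this means exactly that $S$ is a D-sublocale. Note that for this implication, D-spatiality of $S$ is not needed.

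For the converse direction, suppose $S$ is a D-spatial D-sublocale. Then $S$, regarded as a frame, is D-spatial in the sense that every element of $S$ is a meet of the covered primes above it. Moreover, since $S$ is a D-sublocale, the surjection $\sigma_S:L\to S$ is a D-morphism. Thus $\sigma_S$ is a D-morphism whose codomain is D-spatial, and hence Lemma \ref{l:tdimpliesexact} applies to give that $\sigma_S$ is exact. By definition, this means that $S$ is an exact sublocale.

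The main step to watch is making sure that the hypothesis of D-spatiality of $S$ in the converse direction is applied at the correct moment: this is precisely where Lemma \ref{l:tdimpliesexact} needs the codomain of the D-morphism to be D-spatial in order to upgrade the D-morphism to an exact one. Since both cited lemmas already do all the heavy lifting, there is no serious obstacle and no further computation is required beyond the above bookkeeping.
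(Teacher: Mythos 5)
Your proof is correct and follows essentially the same route as the paper: both directions are obtained by applying Lemma \ref{l:EimpliesD} and Lemma \ref{l:tdimpliesexact} to the quotient map $\sigma_S$, with D-spatiality of $S$ used exactly where the latter lemma requires a $T_D$-spatial codomain (the paper merely adds the remark that D-spatial sublocales are the joins of two-element sublocales at covered primes). Your observation that D-spatiality is not needed for the "exact $\Rightarrow$ D-sublocale" direction is also consistent with the paper's argument.
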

\begin{proof}
D-spatial sublocales coincide with joins of two-elements sublocales from covered primes. Consider a D-spatial sublocale $\bve_i \bl(p_i)$, for $p_i\in \pt_D(L)$. If this is exact it is also a D-sublocale, by Lemma \ref{l:EimpliesD}. If it is a D-sublocale, then it is exact by Lemma \ref{l:tdimpliesexact}.    
\end{proof}

\begin{corollary}
    A two-element sublocale $\bl(p)$ is exact if and only if $p$ is covered.
\end{corollary}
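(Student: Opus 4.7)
The forward direction (if $p$ is covered then $\bl(p)$ is exact) is already contained in Proposition \ref{p:ex-suff}, so I focus on the converse. The plan is to translate exactness of $\bl(p)$ via Proposition \ref{p:charexactS} into a concrete condition on meets, and then apply this condition to the canonical exact meet provided by Lemma \ref{bigmeet}.

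First, I would unwind what Proposition \ref{p:charexactS} says in the case $S = \bl(p)$. Using Lemma \ref{l:prime}, for any $y \in L$ one has $\cl(y) \cap \bl(p) = \bl(p)$ when $y \leq p$ and $\cl(y) \cap \bl(p) = \{1\} = \cl(1)$ otherwise; similarly $\bl(p) \se \cl(x)$ iff $x \leq p$. Taking $x \in L$ with $x \nleq p$ in the statement of Proposition \ref{p:charexactS}, exactness of $\bl(p)$ becomes equivalent to the implication: for every exact meet $\bwe_i x_i$ with $x_i \nleq p$ for all $i$, also $\bwe_i x_i \nleq p$.

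Next, I would apply this criterion to the exact meet $\bwe\{x \in L : p < x\}$ from Lemma \ref{bigmeet}. Each element $x$ in the indexing set satisfies $x > p$, hence $x \nleq p$, so the criterion above forces $\bwe\{x : p < x\} \nleq p$. Combined with the trivial inequality $p \leq \bwe\{x : p < x\}$, this yields the strict inequality $p < \bwe\{x : p < x\}$.

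Finally, to deduce that $p$ is covered, I would take any family $\{x_i\}$ with $\bwe_i x_i = p$. Each $x_i$ satisfies $p \leq x_i$; if no $x_i$ equals $p$ then $x_i > p$ for all $i$, so $\bwe_i x_i \geq \bwe\{x : p < x\} > p$, contradicting $\bwe_i x_i = p$. Hence some $x_i = p$, and $p$ is covered. The only mildly delicate step is the first one, where one must carefully compute $\cl(y) \cap \bl(p)$ in terms of whether $y \leq p$ using primality; everything after that is a direct application of Lemma \ref{bigmeet} and an elementary contradiction argument.
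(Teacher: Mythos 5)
Your proof is correct. The forward direction is indeed just the third bullet of Proposition \ref{p:ex-suff}, and your reduction of the converse is sound: the computation of $\cl(y)\cap \bl(p)$ via Lemma \ref{l:prime} correctly turns the criterion of Proposition \ref{p:charexactS} for $S=\bl(p)$ into the statement that every exact meet of elements not below $p$ is again not below $p$, and applying this to the exact meet $\bwe\{x\in L:p<x\}$ of Lemma \ref{bigmeet} forces $p$ to be covered. This is, however, a different route from the paper's. The paper obtains the converse through morphisms: an exact surjection $L\to\bl(p)$ is a D-morphism by Lemma \ref{l:EimpliesD}, its right adjoint is the inclusion $\bl(p)\se L$, and $p$ is the covered prime of the two-element frame, so $p$ must be covered in $L$; this is how the corollary falls out of the preceding proposition on D-spatial sublocales. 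Note also that your intermediate condition says precisely that the completely prime filter $L{\sm}{\da}p$ is closed under exact meets, i.e.\ is an exact filter, so your final two steps reprove one half of Proposition \ref{p:cpexact}; once you reach that condition you could simply cite it. What your argument buys is a self-contained proof staying entirely within the sublocale-level characterization of exactness, with no appeal to D-morphisms; what the paper's route buys is brevity, given the machinery already established.
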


\begin{lemma}\label{l:SCLsuff}
If a subcollection $\ca{S}\se \Ss(L)$ is closed under joins and is stable under the operation $-\cap \cl(x)$ and $-\cap \op(x)$ for all $x\in L$, then it is a subcolocale.
\end{lemma}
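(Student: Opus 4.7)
The goal is to verify that $\ca{S} \se \sll$ is a subcolocale of the coframe $\sll$; that is, that it is closed under joins (given) and under the co-Heyting difference $S \setminus T$ for $S \in \ca{S}$ and $T \in \sll$. These two conditions together say exactly that $\ca{S}^{op}$ is a sublocale of the frame $\sll^{op}$, since the Heyting implication in $\sll^{op}$ coincides with $\setminus$ in $\sll$. The whole proof therefore reduces to establishing closure under difference.

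The plan is to fix $S \in \ca{S}$ and $T \in \sll$, and then expand $T$ in the standard basis: every sublocale admits a presentation $T = \bca_{i\in I}(\op(x_i) \ve \cl(y_i))$ for some family $(x_i,y_i) \in L \times L$, as recalled in the Background. Since in the coframe $\sll$ finite joins distribute over arbitrary meets, a direct computation from the defining adjunction $U \setminus V \leq d \iff U \leq V \ve d$ yields
\[
S \setminus \bca_{i\in I} (\op(x_i) \ve \cl(y_i)) \;=\; \bve_{i\in I}\bigl(S \setminus (\op(x_i) \ve \cl(y_i))\bigr).
\]
Closure under joins will therefore deliver $S \setminus T \in \ca{S}$ as soon as each summand lies in $\ca{S}$.

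For the summands, Proposition \ref{manyfacts}(5) gives that $\op(x_i)$ and $\cl(y_i)$ are complemented in $\sll$, with complements $\cl(x_i)$ and $\op(y_i)$ respectively; the de Morgan rule for complemented elements of a distributive lattice then makes $\op(x_i) \ve \cl(y_i)$ complemented with complement $\cl(x_i) \cap \op(y_i)$. For any complemented $c$ in a coframe one has $S \setminus c = S \cap c^*$, so
\[
S \setminus (\op(x_i) \ve \cl(y_i)) \;=\; S \cap \cl(x_i) \cap \op(y_i),
\]
which lies in $\ca{S}$ by two successive applications of the stability hypothesis. The only delicate point is to identify the two coframe identities used — the distributivity of $\setminus$ over meets in its second argument and the simplification of $\setminus$ against a complemented element — but both follow directly from the definition of $\setminus$ and the coframe axioms, so no substantial obstacle remains.
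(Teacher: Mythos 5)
Your proof is correct and follows essentially the same route as the paper: reduce to closure under $S\,{\sm}\,T$ for $S\in\ca{S}$ and $T\in\Ss(L)$, write $T=\bca_i(\op(x_i)\ve\cl(y_i))$, distribute the difference over the meet, and identify each summand $S\,{\sm}\,(\op(x_i)\ve\cl(y_i))$ with $S\cap\cl(x_i)\cap\op(y_i)$ via complementedness. The only difference is that you spell out the two coframe identities the paper uses implicitly.
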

\begin{proof}
Suppose that $\ca{S}\se \Ss(L)$ is closed under all joins and stable under the two operations above. For it to be a subcolocale, it suffices to show that if $S\in \ca{S}$ and $T\in \Ss(L)$ then $S{\sm}T\in \ca{S}$. Every sublocale of $L$ is of the form $\bca_i \op(x_i)\ve \cl(y_i)$, and $S{\sm}\bca_i \op(x_i)\ve \cl(y_i)=\bve_i (S{\sm}(\op(x_i)\ve \cl(y_i)))$. Then, for $\ca{S}$ to be a subcolocale it suffices for it to be stable under $-{\sm}(\op(x)\ve \cl(y))$. If $\ca{S}$ is as required, and $S\in \ca{S}$, and $x,y\in L$, $S\cap \cl(x)\cap \op(y)=S{\sm}(\op(x)\ve \cl(y))\in \ca{S}$.
\end{proof}

\begin{theorem}
    The inclusion $\Se(L)\se \Ss(L)$ is a subcolocale inclusion.
\end{theorem}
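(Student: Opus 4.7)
The plan is to invoke the sufficient condition established in Lemma \ref{l:SCLsuff}: I will show that $\Se(L)$ is closed under arbitrary joins and stable under the operations $-\cap \cl(y)$ and $-\cap \op(y)$ for every $y \in L$. Closure under joins is already given by Proposition \ref{p:ex-suff}, so the work is entirely in verifying the two stability properties, and for both I will use the characterization of exactness in Proposition \ref{p:charexactS}.

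For stability under $-\cap \cl(y)$, suppose $S \in \Se(L)$ and $\bwe_i x_i$ is an exact meet with $\cl(x_i) \cap S \cap \cl(y) \se \cl(x)$ for all $i$. Since $\cl(x_i) \cap \cl(y) = \cl(x_i \ve y)$ by Proposition \ref{manyfacts}, this reads $\cl(x_i \ve y) \cap S \se \cl(x)$. By Lemma \ref{l:exstability} the meet $\bwe_i(x_i \ve y)$ is exact, so Proposition \ref{p:charexactS} applied to $S$ yields $\cl(\bwe_i (x_i \ve y)) \cap S \se \cl(x)$. Exactness of $\bwe_i x_i$ gives $\bwe_i(x_i \ve y) = \bwe_i x_i \ve y$, hence $\cl(\bwe_i x_i) \cap \cl(y) \cap S \se \cl(x)$, as required.

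For stability under $-\cap \op(y)$, suppose $S \in \Se(L)$ and $\bwe_i x_i$ is an exact meet with $\cl(x_i) \cap S \cap \op(y) \se \cl(x)$ for all $i$. Since $\cl(y)$ and $\op(y)$ are complements in $\sll$, this is equivalent to $\cl(x_i) \cap S \se \cl(x) \ve \cl(y) = \cl(x \we y)$. By exactness of $S$, we obtain $\cl(\bwe_i x_i) \cap S \se \cl(x) \ve \cl(y)$. Intersecting with $\op(y)$ and using linearity of open sublocales (Lemma \ref{l:linear}) together with $\cl(y) \cap \op(y) = \{1\}$ gives
\[
\cl(\bwe_i x_i) \cap S \cap \op(y) \se (\cl(x) \ve \cl(y)) \cap \op(y) = \cl(x) \cap \op(y) \se \cl(x),
\]
which is what we needed.

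Having verified both stability conditions, Lemma \ref{l:SCLsuff} immediately concludes that $\Se(L) \se \Ss(L)$ is a subcolocale inclusion. I anticipate no significant obstacle here: the proof is bookkeeping among open/closed sublocales, with the only subtlety being the use of Lemma \ref{l:exstability} to feed the right exact meet into the characterization of $S$'s exactness in the closed case, and the use of complementation together with linearity to collapse the intersection with $\op(y)$ in the open case.
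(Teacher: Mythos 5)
Your proof is correct and follows essentially the same route as the paper: invoke Lemma \ref{l:SCLsuff}, get closure under joins from Proposition \ref{p:ex-suff}, and verify stability under $-\cap\cl(y)$ and $-\cap\op(y)$ via the characterization in Proposition \ref{p:charexactS}, using Lemma \ref{l:exstability} and exactness of $\bwe_i x_i$ in the closed case and the complementation $\cl(x_i)\cap S\cap\op(y)\se\cl(x)\iff\cl(x_i)\cap S\se\cl(x\we y)$ in the open case. The only cosmetic difference is bookkeeping (the paper phrases the open case without explicitly mentioning linearity, which is anyway just finite distributivity), so nothing further is needed.
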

\begin{proof}
   By Lemma \ref{l:SCLsuff}, it suffices to show that the collection is closed under all joins and stable under $-\cap \cl(x)$ and $-\cap \op(x)$ for all $x\in L$. The first claim follows from Proposition \ref{p:charexactS}. For the second, suppose that $y\in L$. Suppose that $S$ is exact. We show that $S\cap \op(y)$ is exact. If, for exact $\bwe_i x_i$, $\cl(x_i)\cap S\cap \op(y)\se \cl(x)$, then $\cl(x_i)\cap S\se \cl(x)\ve \cl(y)=\cl(x\we y)$, and so by hypothesis $\cl(\bwe_i x_i)\cap S\se \cl(x\we y)$, that is $\cl(\bwe_i x_i)\cap S\cap \op(y)\se \cl(x)$. Let us show that $S\cap \cl(y)$ is exact. For exact $\bwe_i x_i$, if $\cl(x_i)\cap S\cap \cl(y)\se \cl(x)$ then $\cl(x_i\ve y)\cap S\se \cl(x)$, and since $\bwe_i (x_i\ve y)$ is exact by Lemma \ref{l:exstability}, and by exactness of $\bwe_i x_i$, this implies that $\cl(\bwe_i x_i\ve y)\cap S\se \cl(x)$, that is $\cl(\bwe_i x_i)\cap S\cap \cl(y)\se \cl(x)$.
\end{proof}

For every frame $L$, there are subcolocale inclusions $\Se(L)\se \mf{S}_{D}(L)\se \mf{S}(L)$. We do not know, yet, how to characterize frames for which $\Se(L)=\mf{S}_D(L)$, or those such that $\Se(L)=\mf{S}(L)$, and leave this as an open question.

\printbibliography

\end{document}